\definecolor{gr}{rgb}   {0.,   0.69,   0.23 }
\definecolor{bl}{rgb}   {0.,   0.5,   1. }
\definecolor{mg}{rgb}   {0.85,  0.,    0.85}
\definecolor{yl}{rgb}   {0.8,  0.7,   0.}
\definecolor{or}{rgb}  {0.7,0.2,0.2}
\newtheorem{oldtheorem}{Theorem}
\newtheorem{theorem}{Theorem} [section]
\newtheorem{lemma}[theorem]{Lemma}
\newtheorem{proposition}[theorem]{Proposition}
\newtheorem{remark}[theorem]{Remark}
\newtheorem{definition}[theorem]{Definition}
\newtheorem{corollary}[theorem]{Corollary}
\DeclareMathOperator*{\intt}{\int}
\DeclareMathOperator*{\supp}{supp}
\newcommand{\1}{\hspace{0.5mm}\text{I}\hspace{0.5mm}}
\newcommand{\II}{\text{I \hspace{-2.8mm} I} }
\newcommand{\III}{\text{I \hspace{-2.9mm} I \hspace{-2.9mm} I}}
\newcommand{\noi}{\noindent}
\newcommand{\Z}{\mathbb{Z}}
\newcommand{\R}{\mathbb{R}}
\newcommand{\T}{\mathbb{T}}
\let\Re=\undefined\DeclareMathOperator*{\Re}{Re}
\let\Im=\undefined\DeclareMathOperator*{\Im}{Im}
\let\P= \undefined
\newcommand{\P}{\mathbf{P}}
\newcommand{\E}{\mathbb{E}}
\newcommand{\RR}{\mathcal{R}}
\newcommand{\F}{\mathcal{F}}
\newcommand{\al}{\alpha}
\newcommand{\dl}{\delta}
\newcommand{\nb}{\nabla}
\newcommand{\Dl}{\Delta}
\newcommand{\eps}{\varepsilon}
\newcommand{\g}{\gamma}
\newcommand{\G}{\Gamma}
\newcommand{\ld}{\lambda}
\newcommand{\Ld}{\Lambda}
\newcommand{\s}{\sigma}
\newcommand{\Si}{\Sigma}
\newcommand{\ft}{\widehat}
\newcommand{\wt}{\widetilde}
\newcommand{\cj}{\overline}
\newcommand{\dx}{\partial_x}
\newcommand{\dt}{\partial_t}
\newcommand{\dd}{\partial}
\newcommand{\embeds}{\hookrightarrow}
\newcommand{\ta}{\theta}
\renewcommand{\l}{\ell}
\renewcommand{\o}{\omega}
\renewcommand{\O}{\Omega}
\newcommand{\les}{\lesssim}
\newcommand{\ges}{\gtrsim}
\newcommand{\jb}[1]
{\langle #1 \rangle}
\newcommand{\ind}{\mathbf 1}
\renewcommand{\S}{\mathcal{S}}
\newcommand{\M}{\mathcal{M}}
\newcommand{\N}{\mathbb{N}}
\newcommand{\NN}{\mathcal{N}}
\newcommand{\Pk}{P^{(k+1)}_2}
\renewcommand{\H}{\mathcal{H}}
\newcommand{\I}{\mathcal{I}}
\newcommand{\TT}{\mathcal{T}}
\newcommand{\BT}{{\bf T}}
\newcommand{\too}{\longrightarrow}
\newcommand{\IP}{\mathfrak{I}}
\tikzset{
	dot/.style={circle,fill=black,draw=black,inner sep=0pt,minimum size=0.5mm},
	>=stealth,
	}
\tikzset{
	ddot/.style={circle,fill=white,draw=black,inner sep=0pt,minimum size=0.8mm},
	>=stealth,
	}
\tikzset{decision/.style={ 
        draw,
        diamond,
        aspect=1.5
    }}
\tikzset{dia2/.style
={diamond,fill=white,draw=black,inner sep=0pt,minimum size=1mm},
	>=stealth,
	}
\tikzset{dia/.style
={star,fill=black,draw=black,inner sep=0pt,minimum size=1mm},
	>=stealth,
	}
\def\DeclareSymbol#1#2#3{\expandafter\gdef\csname MH@symb@#1\endcsname{\tikz[baseline=#2,scale=0.15]{#3}}}
\def\<#1>{\csname MH@symb@#1\endcsname}
\newtheorem*{ackno}{Acknowledgments}
\numberwithin{equation}{section}
\numberwithin{theorem}{section}
\begin{document}
\baselineskip = 14pt

\title[
On  the two-dimensional Wick ordered cubic NLW]
{Uniqueness and non-uniqueness of the Gaussian free field evolution 
under the two-dimensional  Wick ordered cubic wave equation}

\author[T.~Oh, M.~Okamoto, and   N.~Tzvetkov]
{Tadahiro Oh, Mamoru Okamoto,  and Nikolay Tzvetkov}

\address{
Tadahiro Oh, School of Mathematics\\
The University of Edinburgh\\
and The Maxwell Institute for the Mathematical Sciences\\
James Clerk Maxwell Building\\
The King's Buildings\\
Peter Guthrie Tait Road\\
Edinburgh\\ 
EH9 3FD\\
 United Kingdom}

\email{hiro.oh@ed.ac.uk}

\address{
Mamoru Okamoto\\
Department of Mathematics\\
 Graduate School of Science\\ Osaka University\\
Toyonaka\\ Osaka\\ 560-0043\\ Japan}
\email{okamoto@math.sci.osaka-u.ac.jp}

\address{
Nikolay Tzvetkov\\
Ecole Normale Sup\'erieure de Lyon, UMPA, UMR CNRS-ENSL 5669, 46, all\'ee d'Italie, 69364-Lyon Cedex 07, France.}
%

\email{nikolay.tzvetkov@ens-lyon.fr}

\subjclass[2010]{35L71, 60H30}

\keywords{nonlinear wave equation;  Gaussian free field; ill-posedness; norm inflation;
almost sure norm inflation}

\begin{abstract}
We study the nonlinear wave equation (NLW) on the two-dimensional torus~$\T^2$
with Gaussian random initial data on $H^s(\T^2) \times H^{s-1}(\T^2)$, $ s < 0$,
distributed according 
to the base Gaussian free field $\mu$
associated with  the invariant Gibbs measure 
studied 
by Thomann and the first author (2020).
In particular, we investigate the approximation property
of the corresponding solution by smooth (random) solutions.
Our main results in this paper are two-fold.
(i) We show that the  solution map for the renormalized cubic NLW
defined on the Gaussian free field $\mu$ 
is the unique extension of the solution map defined 
for  smoothed Gaussian initial data obtained by  mollification, 
independent of mollification kernels.
(ii)~We also show that there is a regularization
of the Gaussian initial data so that the corresponding smooth solutions
almost surely 
have no limit in the natural topology.
This second result in particular states that 
one can not use arbitrary smooth approximation
for the renormalized cubic NLW dynamics.

As a preliminary step for proving (ii), 
we  establish a (deterministic) norm inflation result 
at general initial data for the (unrenormalized) cubic NLW on $\T^d$ 
and $\R^d$ in negative Sobolev spaces, extending the norm inflation result by 
Christ, Colliander, and Tao (2003).

\end{abstract}


\maketitle

\tableofcontents

\section{Introduction}

\subsection{Nonlinear wave equations} 
We consider 
the  defocusing  nonlinear wave equation (NLW) on $\T^2 = (\R/\Z)^2$: 
\begin{align}
\begin{cases}
\dt^2 u  + (1 -  \Dl)  u    +  u^k  = 0\\
(u, \dt u) |_{t = 0} = (u_0, u_1), 
\end{cases}
\qquad (x, t) \in \T^2 \times \R,
\label{NLW0}
\end{align}

\noi
where  $k \geq 3$ is an odd integer
and the unknown function $u$ is real-valued.\footnote
{The equation \eqref{NLW0}
is also referred to as the nonlinear Klein-Gordon equation.
We, however, simply refer to \eqref{NLW0} as NLW in the following.
Moreover, we only consider real-valued functions in the following.
The modifications required to handle the complex-valued case are straightforward.
See \cite{OTh1}.}
In particular, 
we study  the  Cauchy problem\footnote{More precisely, we
study a renormalized version of \eqref{NLW0}.  See the Wick ordered NLW \eqref{WNLW0} 
below.}~\eqref{NLW0}
with Gaussian random initial data $(u_0^\o, u_1^\o)$ distributed
according to the massive  Gaussian free field\footnote{In fact, 
 $\mu$ is a measure on a vector $(u_0. u_1)$,  given as the tensor product
 of the mass Gaussian free fields on the $u_0$ component
 and the white noise measure on the $u_1$ component.
 For simplicity, however, we refer to $\mu$ as the (massive) Gaussian free field 
 in the following.}
$\mu$
on $\H^s(\T^2)  \stackrel{\text{def}}{=} H^s(\T^2) \times H^{s-1}(\T^2)$, $s < 0$, 
with  the covariance operator $(\text{Id}-\Dl)^{-1+s}$, 
whose density is formally given by\footnote{Henceforth, we use  
$Z$ to denote various  normalization constants
so that the corresponding measures are probability measures when appropriate.} 
\begin{align}
 d\mu =  Z^{-1} e^{-\frac 12 \int_{\T^2}(  u^2 + |\nb u |^2) dx} du\otimes e^{-\frac 12 \int_{\T^2} v^2dx} dv.
\label{G3}
\end{align}

\noi
This problem naturally appears in the study of invariant Gibbs measures
for \eqref{NLW0};
see the next subsection.
In particular, the (renormalized) NLW on $\T^2$
is known to be  almost surely globally well-posed
with respect to the massive Gaussian free field $\mu$ (see Theorem~\ref{THM:OTh} below).

Our main goal in this paper is to study the approximation
property of the  (random) solution 
to the renormalized NLW with\footnote{Given a random variable $X$, 
we use $\mathcal{L}(X)$ to denote the law (= distribution) of $X$.}
 $\mathcal{L}(u_0^\o, u_1^\o) = \mu$ 
(constructed in Theorem \ref{THM:OTh})
by smooth (random) solutions.
In other words, we are interested in understanding
the following question:
``In what sense is the solution map$: (u_0^\o, u_1^\o)\mapsto (u, \dt u)$
to the (renormalized) NLW
with $\mathcal{L}(u_0^\o, u_1^\o) = \mu$
  an extension of the solution map, a priori defined on smooth (random) initial data?''
A  natural way to study this question
 is to approximate the rough initial data by regular functions 
 and see whether the obtained sequence of smooth solutions converges to a unique limit (independent of the choice of the regularization).
   This is the strongest form of  uniqueness and it basically  holds 
   when the problem is  deterministically locally well-posed, 
allowing us to conclude that {\it any} approximation  
would give a good approximating sequence of smooth solutions, 
tending to the unique limit.  It turns out that for our problem  at hand with  $\mathcal{L}(u_0^\o, u_1^\o) = \mu$, 
 this strongest form of uniqueness does  not hold because of the low regularity of the initial data.
 See (ii) below.
This  gives rise to the ``non-uniqueness'' part in the title of this paper. 
On the other hand, if we restrict our attention to 
 regularization by convolution (which is a very particular way
 of  approximating the rough initial data), then 
 the sequence converges to a unique limit, 
 justifying the ``uniqueness'' part of the title.

In this paper, we will establish the following two claims:

\begin{itemize}
\item [(i)]
We show that the solution map$: (u_0^\o, u_1^\o)\mapsto (u, \dt u)$
to the (renormalized) NLW
with $\mathcal{L}(u_0^\o, u_1^\o) = \mu$
is the unique extension of the solution map defined 
for smoothed Gaussian initial data obtained by  mollification
(Theorem \ref{THM:uniq}).
Here, the uniqueness refers to the fact that 
the whole sequence of regularized solutions converges.
Note that convergence of a subsequence typically follows from weak solution
(= compactness) arguments; see \cite{BTT, BTT2, OTh1}.
Moreover, 
the limiting solution map 
is independent of mollification kernels.
See Theorem \ref{THM:uniq}.

\smallskip

\item [(ii)]
We  show that 
 there exists a regularization
of the Gaussian initial data $(u_0^\o, u_1^\o)$
with $\mathcal{L}(u_0^\o, u_1^\o) = \mu$
such that the corresponding smooth solutions
 almost surely have no limit in the natural topology;
 see Theorem~\ref{THM:ill0}.
We prove  this second result by establishing
{\it almost sure norm inflation} for the renormalized NLW
(Proposition~\ref{PROP:ill3}).

\end{itemize}

\noi
As a preliminary step for (ii), 
we prove (deterministic) norm inflation for  NLW
in negative Sobolev spaces
(Theorem \ref{THM:illposed})
by following the argument in \cite{Oh1}.
See Subsection~\ref{SUBSEC:illposed}.

\subsection{Invariant Gibbs measures}
With $v = \dt u$, 
we can write the equation~\eqref{NLW0}  in the following Hamiltonian formulation:
\begin{equation*}
 \dt 
 \begin{pmatrix}
 u \\ v 
 \end{pmatrix}
 =  
 \begin{pmatrix}
 0& 1 \\ -1 & 0
 \end{pmatrix}
\frac{\dd H}{\dd(u, v )}, 
\end{equation*}

\noi
where $H = H(u, v)$ is  the Hamiltonian given by 
\begin{align}
H(u, v) = \frac{1}{2}\int_{\T^2}\,\big( u^2 +  |\nb u|^2\big)\, dx
+ 
\frac{1}{2}\int_{\T^2} v^2dx
+ \frac1{k+1} \int_{\T^2} u^{k+1} dx.
\label{Hamil}
\end{align}

\noi
By drawing an analogy to the finite dimensional setting,
the Hamiltonian structure of the equation and the conservation of the Hamiltonian
suggest that the Gibbs measure $\Pk$ of the form:
\begin{align}
 \text{``}d\Pk = Z^{-1} \exp(- H(u,v ))du\otimes dv \text{''}
\label{G1}
 \end{align}

\noi
is  invariant under the dynamics of~\eqref{NLW0}.
By substituting  \eqref{Hamil} for $H(u, v)$ in the exponent, we can rewrite the formal expression~\eqref{G1}
as 
\begin{align}
d\Pk
& = Z^{-1} 
e^{-\frac 1{k+1} \int_{\T^2} u^{k+1} dx} 
e^{-\frac 12 \int_{\T^2} ( u^2 + |\nb u |^2) dx} du\otimes e^{-\frac 12 \int_{\T^2} v^2 dx} dv \notag\\
& \sim e^{-\frac 1{k+1} \int_{\T^2} u^{k+1} dx} d\mu , 
\label{G2}
 \end{align}

\noi
where  $\mu$ is   the  massive  Gaussian free field defined in \eqref{G3}.

Recall that the Gaussian measure $\mu$ in \eqref{G3} is the induced probability measure
under the map: 
\[\o \in \O \longmapsto (u_0^\o, u_1^\o), \]

\noi
where $(u_0^\o, u_1^\o)$ is given by
the following random Fourier series:\footnote{We drop the harmless factor $2\pi$ in the following.} 
\begin{align}
(u_0^\o,  u_1^\o)
=\bigg( \sum_{n \in \Z^2} \frac{g_{0, n}(\o)}{\jb{n}}e^{in\cdot x},
\sum_{n \in \Z^2} g_{1, n}(\o)e^{in\cdot x}\bigg).
\label{Gauss1}
\end{align}

\noi
Here, 
$\jb{n} = \sqrt{1+|n|^2}$ and 
$\{g_{0, n}, g_{1, n}\}_{n \in \Z^2}$
is a sequence of independent standard complex-valued Gaussian
random variables on a probability space $(\O, \F, P)$
conditioned that ${g_{j, -n} = \cj{g_{j, n}}}$, $n \in \Z^2$, $j = 0, 1$.
It is easy to check that 
$(u_0^\o,  u_1^\o)$ belongs to $\H^s(\T^2)\setminus \H^0(\T^2)$, $s < 0$,
almost surely. 
In particular, for an odd integer $k\geq 3$, we have $\int_{\T^2} u^{k+1} dx = \infty$ almost surely with respect to~$\mu$
and thus 
 the density 
$e^{-\frac 1{k+1} \int_{\T^2} u^{k+1} dx}$ in \eqref{G2} vanishes almost surely.
As a result, 
 the expression in~\eqref{G2} does not make sense as a probability measure.
This forces us to renormalize the potential part of the Hamiltonian, 
which 
enables us to define the Gibbs measure $\Pk$
corresponding to the renormalized Hamiltonian
as a probability measure (absolutely continuous with respect to the Gaussian free field $\mu$).
See \cite{Simon, GJ, DPT1, OTh1} for details.
As a consequence, one is led to  study  the renormalized NLW dynamics (see \eqref{WNLW0} below)
associated with the renormalized Hamiltonian. 
\subsection{Wick ordered NLW}
In this subsection, we go over a derivation of the renormalized NLW by directly introducing a renormalization at the level of the equation. By writing~\eqref{NLW0}  in the Duhamel formulation with the random initial data $(u_0^\o, u_1^\o)$ in \eqref{Gauss1}, we have
\begin{align}
u(t) = S(t)(u_0^\o, u_1^\o) - \int_0^t \frac{\sin ((t - t') \jb{\nb})}{\jb{\nb}}u^k(t') dt', 
\label{NLW1}
\end{align}

\noi
where $\jb{\nb}= \sqrt{1 -\Dl}$
and $S(t)$ denotes the linear wave propagator
given by 
\begin{align}
S(t) (f, g) = \cos (t \jb{\nb}) f+ \frac{\sin (t \jb{\nb})}{\jb{\nb}} g.
\notag
\end{align}

\noi
Let $ z$ denote the random linear solution given by 
\begin{align}
z = z^\o= S(t) (u_0^\o,  u_1^\o).
\label{lin1a}
\end{align}

\noi
Recalling that 
$(u_0^\o,  u_1^\o) \in \H^s(\T^2)\setminus \H^0(\T^2)$, $s < 0$,
almost surely, 
we see that $z(t)$ is merely a Schwartz distribution.
Hence, there is an issue 
 in making sense of the power $z^k(t)$
and thus the full nonlinearity $u^k(t)$ appearing in \eqref{NLW1}.
In fact, 
by following the argument in~\cite{OPTz, OOR}, 
 a phenomenon of triviality
 may be shown for \eqref{NLW0} without renormalization (at least when $k = 3$).
Namely, by considering smooth solutions $u_N$ to 
 \eqref{NLW0} with regularized random initial data, 
we may show that, 
as the regularization is removed, 
$u_N$ converges to  a trivial solution $u \equiv 0$.
This shows the necessity of  a proper renormalization 
at the level of the equation.

%
%
%
%

With \eqref{lin1a},  we easily see that, for any $t \in \R$,  the distribution of $z(t)$ is once again given by  
the massive Gaussian free field $\mu$ in \eqref{G3}.
Namely, $\mu$ is invariant under the linear wave dynamics. Indeed, we have 
\begin{align}
(z(t), \dt z(t) )   =\bigg( \sum_{n \in \Z^2} \frac{g_{0, n}^t}{\jb{n}}e^{in\cdot x},\sum_{n \in \Z^2}  g_{1, n}^t e^{in\cdot x}\bigg),
\label{lin2}
\end{align}

\noi
where
\begin{align}
\begin{split}
g_{0, n}^t & \stackrel{\text{def}}{=}  \cos (t \jb{n}) g_{0, n}
+ \sin (t \jb{n}) g_{1, n},\\
g_{1, n}^t & \stackrel{\text{def}}{=}  - \sin (t \jb{n}) g_{0, n}
+ \cos (t \jb{n}) g_{1, n}.
\end{split}
\label{lin3}
\end{align}

\noi
It is easy to check that 
$\{g_{0, n}^t, g_{1, n}^t\}_{n \in \Z^2}$
forms a  sequence of independent standard complex-valued Gaussian
random variables conditioned that 
\begin{align}
g_{j, -n}^t = \cj{g_{j, n}^t}
\label{B1}
\end{align}
\noi
for any $n \in \Z^2$ and $j = 0, 1$.
This shows that the massive Gaussian free field $\mu$ in \eqref{G3} is invariant under the linear wave dynamics.

Let $\P_N$ denote the frequency projection
onto the spatial frequencies $\{|n|\leq N\}$
and set $z_N = \P_N z$.
Then,   for each $(x, t) \in \T^2\times \R$,  
 $ z_N(x, t)$ 
 is a mean-zero real-valued Gaussian random variable with variance\footnote{While it may be
common  to denote the variance by $\s_N^2$, we chose to use $\s_N$
 to denote the variance in~\eqref{G6}
 so that it is consistent with the notation $H(x; \s)$ for the Hermite polynomial
 with a parameter $\s$,
 which is used for the Wick renormalization \eqref{Wick1};
 see \eqref{H1} and \eqref{H1a}.  See also Kuo's book \cite[Chapter 9]{Kuo}.}
 \begin{align}
\s_N \stackrel{\text{def}}{=} 
\text{Var}(z_N(x, t))
= \E [  z_N^2(x, t)]
= \sum_{|n|\leq N}\frac{1}{\jb{n}^2} \sim \log N.
\label{G6}
\end{align}

\noi
Note that $\s_N$ is independent of $(x, t) \in \T^2\times \R$,
reflecting the translation-invariant nature of the problem.
We then define  the Wick powers
$:\!  z_N^\l  \!:$, $\l \in \N \cup\{0\}$,
by setting
\begin{align}
:\!  z_N^\l (x, t)\!:  \, \stackrel{\text{def}}{=} H_\l( z_N(x, t); \s_N)
\label{Wick1}
\end{align}

\noi
in a pointwise manner, 
where  $H_\l(x; \s)$ denotes the Hermite polynomial of degree $\l$
with a parameter $\s> 0$.
See Section \ref{SEC:2} for more on the Hermite polynomials.
We now recall the following proposition from \cite{OTh2, GKO}.

\begin{proposition}\label{PROP:Z1}
Let $\l \in \N\cup \{0\}$.
Then, 
for any $p < \infty$, $T > 0$,  and $\eps>  0$, 
the sequence
 $\{ : \!z_N^\l \!: \}_{N \in \N}$ is  Cauchy 
 in $L^p(\O; C([-T, T]; W^{-\eps, \infty}(\T^2)) )$.
Denoting the limit by 
\begin{align}
:\! z^\l\!:\
= \ :\! z_\infty^\l\!:\, \stackrel{\textup{def}}{=} \lim_{N \to \infty}
:\! z_N^\l\!:, 
\label{Wick4}
\end{align}

\noi
we have
$:\! z^\l\!:\, \in C([-T, T]; W^{-\eps, \infty}(\T^2)) $, almost surely.

\end{proposition}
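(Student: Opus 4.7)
My plan is to construct $:\!z^\l\!:$ via the Wiener--It\^o (Fourier) chaos expansion of $z_N$, establish an $L^2(\O)$ Cauchy estimate, and upgrade it by hypercontractivity, Sobolev embedding, and Kolmogorov's continuity criterion.

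\textbf{Chaos expansion and $L^2$ bound.} By~\eqref{lin2}, $z_N(x,t) = \sum_{|n|\le N} \jb{n}^{-1} g_{0,n}^t e^{i n\cdot x}$. Writing $H_\l(\,\cdot\,;\s_N)$ as the iterated Wick product of $\l$ copies of $z_N$ yields the chaos expansion
\[
:\!z_N^\l\!:(x,t) \,=\, \sum_{n \in \Z^2} e^{i n \cdot x} \sum_{\substack{n_1+\cdots+n_\l = n \\ |n_j|\le N}} \frac{:\prod_{j=1}^\l g_{0,n_j}^t\!:}{\jb{n_1}\cdots\jb{n_\l}}.
\]
Using the orthogonality of distinct Wick monomials in Gaussian variables together with $\E|g_{0,n}^t|^2=1$, a direct computation gives, for $M \le N$,
\[
\E \big\|:\!z_N^\l\!:(t) - :\!z_M^\l\!:(t)\big\|_{H^{-\eps}}^2 \,\lesssim_\l\, \sum_{n\in\Z^2} \jb{n}^{-2\eps} \sum_{\substack{n_1+\cdots+n_\l = n \\ |n_j|\le N,\ \max_j|n_j| > M}} \frac{1}{\jb{n_1}^2\cdots\jb{n_\l}^2}.
\]
A standard convolution (Young-type) estimate on $\Z^2$ shows that the $n$-sum is absolutely convergent for every $\eps > 0$; the constraint $\max_j|n_j|> M$ then forces it to vanish as $M \to \infty$ by dominated convergence, uniformly in $t \in [-T,T]$ (since the right-hand side depends on $t$ only through $\E|g_{0,n}^t|^2=1$).

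\textbf{Upgrade at fixed $t$.} Because $:\!z_N^\l\!:(x,t)$ sits in the $\l$-th Wiener chaos, Nelson's hypercontractivity $\|X\|_{L^p(\O)} \le (p-1)^{\l/2}\|X\|_{L^2(\O)}$, combined with Minkowski to exchange $L^p(\O)$ and $L^q_x$, promotes the preceding $L^2$ bound to Cauchyness in $L^p(\O; W^{-\eps', q}(\T^2))$ for any $\eps' > 0$ and any finite $p, q$. Sobolev embedding $W^{-\eps/2, q}(\T^2) \embeds W^{-\eps, \infty}(\T^2)$, valid once $q > 4/\eps$, then yields convergence in $L^p(\O; W^{-\eps, \infty}(\T^2))$ at every fixed $t \in [-T,T]$.

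\textbf{Time continuity and the main obstacle.} For joint continuity in $t$ I invoke Kolmogorov's continuity criterion. The key input is the Gaussian time increment
\[
\E |g_{0,n}^t - g_{0,n}^s|^2 = 4\sin^2\!\bigg(\frac{(t-s)\jb{n}}{2}\bigg) \le \min\!\big(4,\; |t-s|^{2\alpha}\jb{n}^{2\alpha}\big), \qquad \alpha \in (0,1],
\]
read off directly from~\eqref{lin3}. Inserting this into the chaos expansion, a telescoping argument in the $\l$ factors and another convolution estimate produce
\[
\E \big\|:\!z_N^\l\!:(t) - :\!z_N^\l\!:(s)\big\|_{H^{-\eps}}^2 \,\lesssim\, |t-s|^{2\alpha},
\]
uniformly in $N$, provided $\alpha \in (0, \eps)$ so that the extra factor $\jb{n_j}^{2\alpha}$ incurred by time differencing is still absorbed by the convergent $n$-sum. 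Applying the same hypercontractivity/Sobolev upgrade gives the analogous $L^p(\O; W^{-\eps,\infty})$ bound, and Kolmogorov's criterion then converts fixed-time Cauchyness into Cauchyness in $L^p(\O; C([-T,T]; W^{-\eps, \infty}(\T^2)))$, yielding the limit $:\!z^\l\!:$ of~\eqref{Wick4}. The main technical point is precisely this interpolation --- trading a little temporal H\"older regularity against a little of the spatial negative regularity --- and it is the only place where the assumption $\eps > 0$ is essential.
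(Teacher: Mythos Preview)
Your proposal is correct and follows essentially the same route as the paper. The paper packages the argument as follows: the Fourier-mode variance estimates (your $L^2(\O)$ Cauchy bound and your time-increment bound) are isolated as Lemma~\ref{LEM:Z1p}, proved via the white noise functional and Hermite polynomial orthogonality rather than by writing out the Wick chaos expansion directly; then the hypercontractivity\,+\,Sobolev embedding\,+\,Kolmogorov step is abstracted into Proposition~\ref{PROP:reg} / Corollary~\ref{COR:reg1} and applied black-box. The ingredients and the order in which they are used are the same as in your sketch.

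One small point worth tightening: to pass from fixed-time Cauchyness to Cauchyness in $L^p(\O; C_t W^{-\eps,\infty})$, Kolmogorov's criterion must be applied to the \emph{difference} $:\!z_N^\l\!: - :\!z_M^\l\!:$, which requires a time-increment estimate on that difference carrying a factor that vanishes as $M,N\to\infty$. The paper does this explicitly (estimate~\eqref{PZ4}), obtaining a quantitative rate $N^{-\gamma}$; in your write-up you should either prove the analogous combined bound directly, or interpolate between your fixed-time Cauchy bound and your uniform-in-$N$ increment bound to produce it.
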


In \cite{OTh2}, the convergence was shown only 
in  $L^p(\O; L^q([-T, T]; W^{-\eps, r}(\T^2)) )$
for $q, r < \infty$.
By repeating the argument in \cite[Proposition 2.1]{GKO}, 
however, we can easily upgrade this to the claimed regularity result
in Proposition \ref{PROP:Z1}.
One may also apply 
Proposition \ref{PROP:reg}
below and directly verify Proposition \ref{PROP:Z1}.
See Subsection \ref{SUBSEC:sto1}. See also \cite{GKO2, GKOT, OOcomp}.

\medskip

Given $N \in \N$,  consider the  following truncated  NLW: 
\begin{align}
\dt^2 u_N + (1 -  \Dl)  u_N   +  \P_N\big[ (\P_N u_N)^{k} \big] = 0 
\notag
\end{align}

\noi
with the random initial data $ ( u_0^\o,  u_1^\o)$ in \eqref{Gauss1}.
In view of the Duhamel formula, it is natural to decompose $ u_N$
as 
\begin{align}
 u_N = z + v_N
\label{u1}
\end{align}

\noi
with $v_N = \P_N v_N$.
Then, by the binomial theorem, 
we have
\begin{align}
 (\P_N u_N)^{k}
 = (z_N + v_N)^k 
=  \sum_{\l = 0}^{k}
\begin{pmatrix}
k \\ \l
\end{pmatrix}
 z_N^\l \cdot  \,  v_N^{k - \l}
\label{Herm0}
\end{align}

\noi
and thus we see that there is an issue in taking 
a limit as $N \to \infty$,  since the limit of $z_N^\l$ does not exist.
By  recalling 
the following  identities for the Hermite polynomials:
\begin{align}
H_k(x+y) & = \sum_{\l = 0}^k
\begin{pmatrix}
k \\ \l
\end{pmatrix}
H_\l(y)\cdot x^{k - \l} 
\qquad \text{and}\qquad 
H_k(x; \s )  = \s^\frac{k}{2} H_k(\s^{-\frac{1}{2}} x), 
\notag
\end{align}

\noi
we define the renormalized nonlinearity $:\! (\P_N u_N)^{k}\!\!:   $ by setting
\begin{align}
\begin{split}
:\! (\P_N u_N)^{k}\!\!:  \, 
& = \NN^k_{(\P_N u_0^\o, \P_N u_1^\o)}(u_N) \\
& \stackrel{\text{def}}{=} H_{k}( z_N +  v_N; \s_N) 
= \sum_{\l = 0}^{k}
\begin{pmatrix}
k \\ \l
\end{pmatrix}
 H_\l(z_N; \s_N)\cdot   v_N^{k - \l}\\
& = \sum_{\l = 0}^{k}
\begin{pmatrix}
k \\ \l
\end{pmatrix}
:\! z_N^\l\!:\cdot  \,  v_N^{k - \l}.
\end{split}
\label{Herm2}
\end{align}

\noi
Namely, we replaced 
$z_N^\l$ in \eqref{Herm0} by 
the Wick power $:\! z_N^\l\!:$.
In view of Proposition~\ref{PROP:Z1}, 
we can take a limit of \eqref{Herm2} as $N \to \infty$.
This leads to the following Wick ordered NLW:
\begin{align}
\begin{cases}
\dt^2 u + (1 -  \Dl)  u\,   +  :\! u^{k} \!:\, = 0 \\
(u, \dt u)|_{t = 0} = (u_0^\o, u_1^\o),
\end{cases}
\label{WNLW0}
\end{align}

\noi
where  the Wick ordered nonlinearity 
$:\! u^{k} \!:$ is defined by 
\begin{align}
:\!  u^{k}\!\!:  \, 
= \NN^k_{(u_0^\o, u_1^\o)}(u)
& \stackrel{\text{def}}{=} \sum_{\l = 0}^{k}
\begin{pmatrix}
k \\ \l
\end{pmatrix}
:\! z^\l\!:\cdot  \,  v^{k - \l}
\label{Herm4}
\end{align}

\noi
for functions $u$ of the form:
\begin{align}
 u = z + v
\label{u2}
\end{align}

\noi
with some sufficiently smooth $v$ such that $v^{k-\l}$ in
\eqref{Herm4} makes sense.
We stress that the Wick ordered nonlinearity 
$:\! u^{k} \!:$ is not defined for  general functions $u$
but is defined only for  functions $u$ of the form \eqref{u2}.

\medskip

In \cite{OTh2}, 
the first author and Thomann studied 
the Wick ordered NLW \eqref{WNLW0}
by considering the following  fixed point
problem for the residual term $v = u - z$:
\begin{align}
\begin{cases}
\dt^2 v +(1 -  \Dl)  v  \, +  :\! (v+z)^{k} \!:\, = 0 \\
(v, \dt v) |_{t = 0} = (0, 0).
\end{cases}
\label{WNLW1}
\end{align}

\noi
A result of interest to us reads as follows:

\begin{oldtheorem}[\cite{OTh2}]\label{THM:OTh}
The Wick ordered NLW \eqref{WNLW0} is almost surely globally well-posed with respect to the 
massive Gaussian free field $\mu$ in \eqref{G3}.
Moreover, the solution $(u, \dt u)$ to \eqref{WNLW0} almost surely lies in the class:
\begin{align}
( u , \dt u)  \in ( z, \dt z)  + C(\R; \H^{1-\eps}(\T^2))
  \subset C(\R; \H^{-\eps}(\T^2) ).
\label{class1}
 \end{align}

\noi
for any $\eps > 0$.

\end{oldtheorem}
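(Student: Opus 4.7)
The natural approach is the Da Prato--Debussche decomposition: write $u = z + v$ with $z$ the random linear evolution \eqref{lin1a}, so that the residual $v$ satisfies \eqref{WNLW1} with zero initial data. By \eqref{Herm4}, the renormalized nonlinearity expands as
\begin{align*}
:\!(v+z)^k\!:\, = \sum_{\l=0}^{k} \binom{k}{\l} :\!z^\l\!:\cdot\, v^{k-\l},
\end{align*}
and Proposition \ref{PROP:Z1} provides the Wick powers $\{:\!z^\l\!:\}_{\l=0}^{k}$ almost surely in $C([-T,T]; W^{-\eps,\infty}(\T^2))$ for every $T>0$. One then fixes such a realization and solves for $v$ by deterministic PDE arguments.

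For local well-posedness, I would run a contraction mapping for $v$ in the Banach space $X_T = C([-T,T]; H^{1-\eps}(\T^2)) \cap C^1([-T,T]; H^{-\eps}(\T^2))$. Since the wave Duhamel operator $\frac{\sin((t-t')\jb{\nb})}{\jb{\nb}}$ gains one derivative, it suffices to bound each summand $:\!z^\l\!:\cdot v^{k-\l}$ in $L^1_{[-T,T]} H^{-\eps}_x$. The key ingredients are the product estimate
\begin{align*}
\big\|:\!z^\l\!:\cdot\, w\big\|_{H^{-\eps}(\T^2)} \les \big\|:\!z^\l\!:\big\|_{W^{-\eps,\infty}(\T^2)} \|w\|_{H^{\eps'}(\T^2)}
\end{align*}
for some $\eps<\eps'<1-\eps$, together with the 2D fractional chain rule and the Sobolev embedding $H^{1-\eps}(\T^2) \embeds L^p(\T^2)$ for every $p<\infty$, which combine to give $\|v^{k-\l}\|_{H^{\eps'}} \les \|v\|_{H^{1-\eps}}^{k-\l}$. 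These bounds close a contraction on $X_T$ for $T$ small, with $T$ a random function of $\max_\l \|:\!z^\l\!:\|_{C_T W^{-\eps,\infty}}$.

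The main obstacle is the passage to global-in-time solutions, as \eqref{WNLW1} carries no a priori conservation law. Here I would invoke Bourgain's invariant measure argument. Truncating at frequency $N$, the projected equation yields a Hamiltonian ODE on $\P_N \H^{-\eps}$ whose flow preserves the truncated Gibbs measure $\Pkn$ obtained from the renormalized analog of \eqref{G2}. This renormalized Gibbs measure is absolutely continuous with respect to $\mu$ with density in every $L^p(d\mu)$, as constructed in \cite{Simon,GJ,DPT1,OTh1}. Invariance then yields uniform-in-$N$ bounds on $\|u_N(t)\|_{\H^{-\eps}}$ over arbitrarily long time intervals, valid off a set of arbitrarily small $\mu$-probability. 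Combining these a priori bounds with the local theory and with the a.s.\ convergence of $\{:\!z_N^\l\!:\}$ in the appropriate spaces, one extends the local solution to all $t\in\R$ almost surely.

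The regularity claim \eqref{class1} is then an immediate by-product: the contraction places $v = u - z$ in $C(\R; H^{1-\eps}(\T^2))$ globally in time, so $u = z + v \in C(\R; \H^{-\eps}(\T^2))$ by the regularity of $z$. The deterministic local theory is routine in 2D thanks to the favorable multiplier properties of $H^{1-\eps}(\T^2)$; it is the measure invariance together with the attendant uniform-in-$N$ control that carry the essential analytical weight.
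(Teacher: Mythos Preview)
Your proposal is correct and follows essentially the same approach as the paper's own sketch: Da Prato--Debussche decomposition $u=z+v$, local well-posedness for $v$ via a contraction in $C_T H^{1-\eps}$ using the product estimate against the Wick powers (Proposition~\ref{PROP:Z1}) and Sobolev's inequality, globalization via Bourgain's invariant measure argument for the truncated Gibbs measure $\Pkn$, and the regularity claim \eqref{class1} by iterating the local theory. The only point worth sharpening is that the invariance argument is run with respect to the Gibbs measure $\Pk$, and the transfer to the Gaussian free field $\mu$ requires the \emph{mutual} absolute continuity of $\Pk$ and $\mu$ (not just $\Pk\ll\mu$), which is how the paper states it; otherwise your outline matches.
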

\begin{remark}\rm
Consider the following truncated  Wick ordered NLW: 
\begin{align}
\begin{cases}
\dt^2 u_N + (1 -  \Dl)  u_N   +  \P_N\big[:\! (\P_N u_N)^{k} \!:\big] = 0 \\
(u_N, \dt u_N) |_{t = 0} = ( u_0^\o,  u_1^\o),
\end{cases}
\label{WNLW2}
\end{align}

\noi
where the truncated Wick ordered nonlinearity is interpreted as in \eqref{Herm2}
for $u_N$ of the form~\eqref{u1}.
Then, it  follows from iterating the local theory in \cite{OTh2}
that, for given $T > 0$,  the solution $u_N$ to~\eqref{WNLW2}
converges almost surely to the solution $u$ to~\eqref{WNLW0}
in $C([-T, T]; H^{-\eps}(\T^2))$, $\eps>  0$
(and the residual part $v_N = u_N - z$ converges
to $v = u - z$  
in $C([-T, T]; H^{1-\eps}(\T^2))$,  almost surely).

\end{remark}

The proof of almost sure local well-posedness of \eqref{WNLW0} follows
from studying the fixed point problem \eqref{WNLW1} for $v$
with  Sobolev's inequality\footnote{While the argument in \cite{OTh2}
used the Strichartz estimates, 
it is possible to prove the local well-posedness part in Theorem \ref{THM:OTh}
by Sobolev's inequality.
See \cite{GKOT}.
} and the space-time control on the stochastic terms (Proposition \ref{PROP:Z1}).
The almost sure global well-posedness follows
from (i)~almost sure global well-posedness of the Wick ordered NLW \eqref{WNLW0}
with respect to the Gibbs measure $\Pk$  (by Bourgain's invariant measure argument
\cite{BO94, BO96, BT2})
and (ii)~the mutual absolute continuity of the Gibbs measure $\Pk$
and the massive Gaussian free field~$\mu$.
Lastly, the second claim \eqref{class1}
follows 
from iterating the local-in-time argument
with Proposition~\ref{PROP:Z1}. 

\medskip

Let $u$ be 
 the random solution  to the Wick ordered NLW \eqref{WNLW0} 
with $\mathcal{L}\big((u, \dt u)|_{t = 0}\big) = \mu$ constructed in Theorem \ref{THM:OTh}.
In the following, 
we study the approximation property
of the random solution $u$ 
to the renormalized NLW \eqref{WNLW0} 
by the smooth solutions corresponding to smooth approximating (random) initial data.
We point out that the renormalized nonlinearity 
$:\!  u^{k}\!\!: $ in \eqref{WNLW0} 
is defined for the specific  random initial data
$(u_0^\o, u_1^\o)$ in \eqref{Gauss1}.
In particular, 
in considering the renormalized dynamics corresponding to smooth random 
initial data, we need to make it clear what we mean by the renormalized nonlinearity
for smooth random initial data.
This is the topic of the next subsection.
\subsection{Renormalized NLW with smooth Gaussian initial data}
In this subsection, we
consider the renormalized NLW with {\it smooth} Gaussian random initial data.
While  there is no need to consider any renormalization 
in studying \eqref{NLW0} with smooth random initial data, 
we introduce a renormalization even for smooth random initial data
so that we can study smooth approximations to the Wick ordered NLW \eqref{WNLW0}
with $\mathcal{L} (u_0^\o, u_1^\o) = \mu$. 
For this purpose,  we introduce the following definition.

\begin{definition}\rm 
\label{DEF}
Let  $(\varphi_0^\o, \varphi_1^\o)$ be an $\H^s(\T^2)$-valued random variable
for some $s \geq 0$.
Set
$$
\sigma(t)\stackrel{\text{def}}{=} \text{Var}\big(S(t)(\varphi_0^\o, \varphi_1^\o)\big)=
\E\big[(S(t)(\varphi_0^\o, \varphi_1^\o))^2\big]-\big(\E[S(t)(\varphi_0^\o, \varphi_1^\o)]\big)^2.
$$
Then,  we define the renormalized nonlinearity $\NN^k_{(\varphi_0^\o, \varphi_1^\o)} (v)$ by
$$
\NN^k_{(\varphi_0^\o, \varphi_1^\o)} (v)
\stackrel{\text{def}}{=}H_{k}\big(S(t)(\varphi_0^\o, \varphi_1^\o)+v ;\sigma(t)\big).
$$
\end{definition}

In view of the previous discussion, we aim to  study the following problem: 
\begin{align}
\begin{cases}
\dt^2 v +(1 -  \Dl)  v  + \NN^k_{(\varphi_0^\o, \varphi_1^\o)} (v) = 0\\
(v, \dt v) |_{t = 0} = (0, 0)
\end{cases}
\label{WNLW_PAK}
\end{align}

\noi
for a sequence of (smoother) random initial data 
$(\varphi_0^\o, \varphi_1^\o) \in \H^0(\T^2)$ approximating 
$(u_0^\o, u_1^\o)$ given in~\eqref{Gauss1}.
Our goal is then 
to  try to understand how much the obtained sequence 
 of (smoother) solutions converges to the solution obtained in  Theorem~\ref{THM:OTh} (modulo the free evolution), 
 i.e.~the solution $v = u - z = u - S(t) (u_0^\o,  u_1^\o)$ to \eqref{WNLW1}.  For this purpose, we will first solve~\eqref{WNLW_PAK} for a large class of $(\varphi_0^\o, \varphi_1^\o)$ in  ${\mathcal H}^s(\T^2)$, $s\geq 0$.

\medskip

Let us now describe the class of data  $(\varphi_0^\o, \varphi_1^\o)$ 
for which we study  \eqref{WNLW_PAK}.
Let $(\phi_0, \phi_1) \in \H^s(\T^2)$, $s \geq 0$,  with the Fourier series expansions
\[
\phi_j = \sum_{n \in \Z^2} \ft \phi_{j}(n) e^{in \cdot x}
\quad \text{with} \quad  \ft \phi_{j}(-n) = \cj{\ft \phi_{j}(n)},
  \quad j=0,1.
\]

\noi
We define the randomization  $(\phi_0^\o, \phi_1^\o)$  of $(\phi_0, \phi_1)$ by
setting
\begin{align}
\phi_j^\o   \stackrel{\text{def}}{=}  \sum_{n \in \Z^2}  g_{j,n} (\o) \ft \phi_j(n) e^{in \cdot x},
\label{u4}
\end{align}

\noi
where $\{g_{0, n}, g_{1, n}\}_{n \in \Z^2}$ is as in \eqref{Gauss1}.
Let $(r_0,r_1)\in {\mathcal H}^{s+1}(\T^2)$. We then study  \eqref{WNLW_PAK}  with $(\varphi_0^\o, \varphi_1^\o)$  given by
\begin{equation}\label{genton}
(\varphi_0^\o, \varphi_1^\o)=(\phi_0^\o,\phi_1^\o)+(r_0,r_1).
\end{equation}

\noi
Note that 
\begin{align}
\label{sigma_pak}
\begin{split}
\sigma(t)& =\text{Var}\big(S(t)(\varphi_0^\o, \varphi_1^\o)\big)=\text{Var}\big(S(t)(\phi_0^\o, \phi_1^\o)\big)
\\
& = \sum_{n \in \Z^2} \bigg( \cos^2 (t \jb{n}) |\ft \phi_{0}(n)|^2 
+ \frac{\sin^2 (t \jb{n})}{\jb{n}^2} |\ft \phi_{1}(n)|^2 \bigg)
\les \| (\phi_0,\phi_1) \|_{\H^0}^2 < \infty,
\end{split}
\end{align}

\noi
which shows that the renormalized nonlinearity 
$\NN^k_{(\varphi_0^\o, \varphi_1^\o)} (v)$
in \eqref{WNLW_PAK} is well defined
for the random data $(\varphi_0^\o, \varphi_1^\o)$
given in \eqref{genton}.
Compare this with the renormalized nonlinearity in~\eqref{WNLW1}
which is defined only via a limiting procedure via Proposition \ref{PROP:Z1}.
We have the following proposition on almost sure global existence of unique solutions
to  the Wick ordered NLW \eqref{WNLW_PAK} with 
the random data $(\varphi_0^\o, \varphi_1^\o)$
given by   \eqref{genton}.

\begin{proposition} \label{PROP:easy}
Let $k \geq 3$ be an odd integer
and let $s \in \R$ satisfy
\[ \text{\textup{(i)} $s > 0$ when $k = 3$ 
\qquad and \qquad \textup{(ii)} $s \geq 1$ when $k \geq 5$.}\]

\noi 
Given $(\phi_0, \phi_1) \in \H^s(\T^2)$ and $(r_0,r_1)\in \H^{s+1}(\T^2)$, 
let $(\phi_0^\o, \phi_1^\o)$ be the randomization of $(\phi_0, \phi_1)$ defined in \eqref{u4}
and 
define $(\varphi_0^\o, \varphi_1^\o)$  as in  \eqref{genton}. 
Then,  there exists almost surely  a unique global solution 
$
 (v , \dt v) \in C(\R; \H^{1}(\T^2))
$
to \eqref{WNLW_PAK}.
\end{proposition}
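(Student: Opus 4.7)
The plan is to exploit the zero initial condition $(v, \partial_t v)|_{t=0} = (0,0)$ together with the one-derivative gain of the wave-Duhamel operator to lift the fixed-point iteration into $H^1(\T^2)$, where the improved integrability of the randomized linear evolution renders the nonlinear estimates uniform in $s$. Global existence then follows from energy-type a priori bounds exploiting the defocusing sign of the nonlinearity.

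As a probabilistic preliminary, I would set $Z(t) := S(t)(\varphi_0^\o, \varphi_1^\o)$ and decompose $Z = Z^\flat + Z^\sharp$ with $Z^\flat = S(t)(\phi_0^\o, \phi_1^\o)$ and $Z^\sharp = S(t)(r_0, r_1)$. Khintchine-type (hypercontractive) bounds applied to the Gaussian Fourier series \eqref{u4} give $Z^\flat \in C(\R; H^s(\T^2) \cap L^p(\T^2))$ almost surely for every finite $p$; since $(r_0, r_1) \in \H^{s+1}(\T^2)$ with $s + 1 > 1$, the two-dimensional Sobolev embedding $H^{s+1} \hookrightarrow L^\infty$ places $Z^\sharp \in C(\R; L^\infty(\T^2))$. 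Combined with the uniform-in-$t$ bound $\sigma(t) \leq \|(\phi_0, \phi_1)\|_{\H^0}^2$ from \eqref{sigma_pak}, this places the Wick powers $H_\l(Z(t); \sigma(t))$ in $C(\R; L^p(\T^2))$ for every $0 \leq \l \leq k$ and every finite $p$, almost surely.

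For local well-posedness, I would recast \eqref{WNLW_PAK} in Duhamel form
\[
v(t) = \Gamma[v](t) := -\int_0^t \frac{\sin((t-t')\jb{\nb})}{\jb{\nb}} H_k(Z+v; \sigma)(t')\, dt',
\]
and, using that $\frac{\sin(\tau\jb{\nb})}{\jb{\nb}}$ maps $H^\rho$ boundedly into $H^{\rho+1}$, set the contraction in the smoother class $X_T := C([0,T]; H^1(\T^2))$, which embeds into the target $C([0,T]; \H^{s_0}(\T^2))$; this lift is legitimate because $v(0)=0$. Expanding $H_k(Z+v; \sigma) = \sum_{\l=0}^k \binom{k}{\l} H_\l(Z; \sigma)\, v^{k-\l}$ and estimating each summand in $L^2(\T^2)$ by H\"older, one combines the $L^p$-control of $H_\l(Z; \sigma)$ (for every finite $p$) from the previous step with the two-dimensional Sobolev embedding $H^1(\T^2) \hookrightarrow L^q(\T^2)$ (for every finite $q$) to produce $\|\Gamma[v]\|_{X_T} \les T\cdot C(\o)(1 + \|v\|_{X_T}^k)$, with $C(\o)$ almost surely finite. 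This yields a contraction on a ball of $X_T$ for $T = T(\o)$ sufficiently small. Uniqueness at the weaker $\H^{s_0}$-regularity (the only nontrivial case being $s_0 = s < 1$ in Case (i)) follows by bootstrap: any solution $\tilde v \in C_T H^{s_0}$ satisfies $\tilde v \in L^\infty_T L^q_x$ for some $q > 2$ by Sobolev embedding, and the same H\"older estimates place $H_k(Z + \tilde v; \sigma) \in L^1_T L^2_x$, whence the derivative gain of Duhamel promotes $\tilde v$ into $X_T$.

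To globalize, I would establish an a priori bound on $\|v(t)\|_{H^1}$ via a modified Wick-ordered energy. The natural candidate, valid when $Z \in H^1$ (Case (ii)), is
\[
\widetilde E(t) = \tfrac12 \|(u, \partial_t u)(t)\|_{\H^1(\T^2)}^2 + \tfrac{1}{k+1}\int_{\T^2} H_{k+1}(u; \sigma(t))\, dx, \qquad u := Z + v,
\]
whose time derivative along $\partial_t^2 u + (1-\Dl) u + H_k(u; \sigma) = 0$ reduces, via $\partial_\sigma H_{k+1} \propto H_{k-1}$, to a single lower-order term $\tfrac{k}{2}\sigma'(t)\int H_{k-1}(u; \sigma)\, dx$ controllable by Young's inequality against the sign-definite leading $\int u^{k+1} \geq 0$ coming from the defocusing structure of \eqref{NLW0}. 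A Gr\"onwall-type inequality $\widetilde E(t) \leq (1 + \widetilde E(0))e^{C(\o) t}$ then yields the global $H^1$-bound on $u$ and hence on $v$. In Case (i) with $s<1$, for which $Z \notin H^1$, one replaces $\widetilde E$ by an energy in $v$ alone plus suitable cross-terms built from $Z$, leveraging the $L^p$-integrability of $Z$ (for every finite $p$) from the first step. Iterating the local theory with this a priori bound then delivers global existence. The principal obstacle is that the direct iteration in $C_T H^s$ fails in Case (i) when $s < \tfrac{1}{2}$, because the cubic term $v^3$ cannot be placed in $H^{s-1}$ by pure Sobolev product bounds; lifting the iteration to the smoother space $H^1$ via the zero-initial-data observation is the pivotal technical point that makes the scheme uniform across both regimes.
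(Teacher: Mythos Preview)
Your local theory is essentially the paper's (Lemma~\ref{LEM:LWPreg}): both run the contraction in $C_TH^1$ using the derivative gain and the zero initial data. For globalization in Case~(ii), your Wick-ordered energy $\widetilde E$ in $u=Z+v$ is a genuinely different and cleaner route than the paper's: the paper works with $E(\vec v)=H(v,\partial_t v)$, expands the cross-terms $\int\partial_t v\,H_\l(Z;\sigma)\,v^{k-\l}$, and integrates by parts in time (the trick of~\cite{OP}) to trade $\partial_t v$ for $\partial_t Z=\jb{\nabla}\widetilde Z$, so that the hypothesis $s\ge 1$ enters through $\|\jb{\nabla}\widetilde Z\|_{L^{k+1}_{T,x}}$. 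Your approach avoids the integration by parts entirely at the cost of needing $Z\in H^1$---the same threshold.

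There is, however, a gap in Case~(i). You only claim $Z^\flat\in L^p_x$ for every \emph{finite} $p$, and the sketch ``energy in $v$ plus cross-terms, using $L^p$'' does not close the Gronwall loop: the term $\int\partial_t v\cdot Z\,v^2$ forces, after H\"older with $\|Z\|_{L^p}$ ($p<\infty$), the factor $\|v\|_{L^{2q}}^2$ with $2q>4$, and in two dimensions the best one can extract (interpolating $\|v\|_{L^4}^4\le 4E$ against $\|v\|_{H^1}^2\le 2E$) is $\|v\|_{L^{2q}}^2\lesssim E^{1-1/q}$, giving the superlinear differential inequality $\frac{d}{dt}E\lesssim E^{3/2-1/q}$, which yields no global bound. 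What the paper actually uses is the probabilistic Strichartz estimate (Lemma~\ref{LEM:PStr}\,(ii)): for $s>0$ it places $Z^\flat\in L^\infty_T L^\infty_x$ almost surely, so that $\int\partial_t v\cdot Z\,v^2\le\|Z\|_{L^\infty}\|\partial_t v\|_{L^2}\|v\|_{L^4}^2\lesssim\|Z\|_{L^\infty}E(\vec v)$ and Gronwall closes. This $L^\infty$ bound on the randomized linear flow is precisely where the hypothesis $s>0$ (rather than $s\ge 0$) enters, and it is the ingredient missing from your preliminaries.
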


We present the proof of Proposition \ref{PROP:easy} in Section \ref{SEC:GWP}.
The almost sure local well-posedness for $s \geq 0$ (with any $k$) follows from a standard fixed point argument with the probabilistic Strichartz estimate (Lemma~\ref{LEM:PStr}). 
See, for example, \cite{BT3, Poc}.
As for the almost sure global well-posedness,  we proceed with a Gronwall argument as in  \cite{BT3} when $k = 3$. 
For $k \ge 5$, we also use the integration-by-parts  trick, introduced in  \cite{OP},  to control higher order terms with respect to $v$
\begin{remark}\rm
(i)
Observe that if the data in \eqref{genton} is deterministic, i.e. $\phi_j^\omega=0$, then $\sigma(t)=0$ and the nonlinearity is of pure power type, namely 
$\NN^k_{(\varphi_0^\o, \varphi_1^\o)} (v)$ becomes $(S(t)(r_0,r_1) +v)^k$. 

\smallskip

\noi
(ii) For simplicity of the presentation, 
we chose $(r_0,r_1)\in \H^{s+1}(\T^2)$ in the statement of Proposition \ref{PROP:easy}
such that 
the Cameron-Martin theorem \cite{CM}
allows us to reduce the proof to the case  $r_0=r_1=0$
at the beginning of Section \ref{SEC:GWP}.
In fact, a slight modification of the argument in 
Section \ref{SEC:GWP} shows that 
Proposition \ref{PROP:easy}
also holds for 
$(r_0,r_1)\in \H^{1}(\T^2)$, 
whether
$(r_0,r_1)$ is deterministic or random.
Indeed, given
$(r_0,r_1)\in \H^{1}(\T^2)$, 
by setting $w = v + S(t) (r_0, r_1)$, 
where $v$ is a solution to  \eqref{WNLW_PAK}, 
we see that $w$ satisfies the following Cauchy problem:
\begin{align} \label{WNLWx}
\begin{cases}
\dt^2 w + (1- \Dl) w + H_k ( S(t)(\phi_0^\o, \phi_1^\o) +w(t); \sigma(t)) = 0\\
(w, \dt w)|_{t = 0} = (r_0, r_1) \in \H^{1}(\T^2).
\end{cases}
\end{align}

\noi
Then, by noting that 
Lemma \ref{LEM:LWPreg} on local well-posedness
holds for general $\H^1$-initial data, 
global well-posedness of \eqref{WNLWx} 
follows from proceeding as in 
the proof of Proposition \ref{PROP:easy}
presented in Section \ref{SEC:GWP}, 
which is about controlling the $\H^1$-norm of a solution;
see \eqref{enebd}.
Once we have constructed a unique global-in-time solution $w$
to \eqref{WNLWx}, 
we simply set $v = w- S(t) (r_0, r_1)$, 
which is 
a unique global-in-time solution to \eqref{WNLW_PAK}.
\end{remark}

\subsection{Unique and non-unique extensions of the solution map to the Wick ordered NLW with the Gaussian free field $\mu$ as initial data}
In this subsection, we state our main results in this paper. In the following,  we restrict our attention to the cubic case ($k = 3$).
In the previous subsection, we constructed almost surely well-defined
global-in-time dynamics for the Wick ordered NLW \eqref{WNLW_PAK}
with smooth random initial data (Proposition~\ref{PROP:easy}).
In particular, there exists a solution map, sending
smooth random initial data to smooth random solutions. 
On the other hand, 
Theorem~\ref{THM:OTh} shows 
that  the solution map ``extends'' to 
the (rough) Gaussian random initial data $(u_0^\o, u_1^\o)$ of the form~\eqref{Gauss1}, 
distributed according to the massive Gaussian free field $\mu$ in~\eqref{G3}.
In the following, we investigate in what sense the solution map
constructed in Proposition~\ref{PROP:easy}
extends to that in Theorem~\ref{THM:OTh}.

We first establish a (partial) positive answer. Namely, we show that the solution map constructed in Theorem~\ref{THM:OTh} is the unique extension of the solution map defined on a certain class of smooth random initial data. We say that a smooth function $\rho \in L^1(\R^2)$  is a mollification kernel
if  $\int_{\R^2} \rho(x) dx = 1$
and $\supp \rho \subset (-\frac 12, \frac 12]^2$. 
Given a mollification kernel~$\rho$, 
define $\rho_\dl$ by setting
\[\rho_\dl(x) = \dl^{-2} \rho(\dl^{-1} x)\]

\noi
for $0 < \dl \leq 1$.
Then, $\{\rho_\dl\}_{\dl \in (0, 1]}$ is an approximate identity on $\R^2$.
By noting that 
$\supp \rho_\dl  
\subset (-\frac 12, \frac 12]^2\cong \T^2$  for any $\dl \in (0, 1]$, 
we see that 
$\{\rho_\dl\}_{\dl \in (0, 1]}$ is also  an approximate identity on $\T^2$.

The following theorem shows that 
the solution map constructed in Theorem \ref{THM:OTh}
is the unique extension of the solution map
defined on smooth random initial data, regularized by a mollification.
Here, the uniqueness refers to the  convergence of the whole sequence 
and also to the fact that the extension is independent of  mollification kernels
$\rho$.

\begin{theorem}\label{THM:uniq}
Let  $(u_0^\o, u_1^\o) $ be the Gaussian random initial data defined in \eqref{Gauss1}.
Given 
a mollification kernel  $\rho$, 
define 
 $(u_{0,\dl}^\o, u_{1,\dl}^\o) \in C^{\infty} (\T^2) \times C^{\infty} (\T^2)$, 
 $0< \dl \leq 1$,  
 via the  regularization by mollification:
\begin{align}
u_{0,\dl}^\o =  \rho_\dl * u_0^\o \qquad \text{and}\qquad 
u_{1,\dl}^\o = \rho_\dl * u_1^\o ,
\label{uniq1}
\end{align}

\noi
where $\rho_\dl$ is as above 
\textup{(}of course, 
$
 \lim_{\delta\rightarrow 0}\|(u_{0,\dl}^\o, u_{1,\dl}^\o)-(u_0^\omega,u_1^\omega)\|_{\H^s}=0
 $, almost surely\textup{)}.
Denote by $(v_\dl, \dt v_\dl) $ the solution to the Wick ordered NLW \eqref{WNLW_PAK} with 
$$
(\varphi_0^\o, \varphi_1^\o)=(u_{0,\dl}^\o, u_{1,\dl}^\o)\,,
$$
constructed in Proposition~\ref{PROP:easy}, 
and set $u_{\dl}\stackrel{\textup{def}}{=}S(t)(u_{0,\dl}^\o, u_{1,\dl}^\o)+v_{\dl}$. 
Then, given any $T> 0$ and $s< 0$, 
$(u_\dl,  \dt u_\dl)$
converges in probability to $(u, \dt u)$ in $C([-T,T]; \H^{s}(\T^2))$, 
where  $(u , \dt u)$ is the solution to the Wick ordered NLW \eqref{WNLW0} with 
the initial data $(u_0^\o, u_1^\o)$ constructed in Theorem~\ref{THM:OTh}.
Namely, $u = z+v$, 
where $z$ and $v$ are as in \eqref{lin1a} and \eqref{WNLW1}, respectively.

\end{theorem}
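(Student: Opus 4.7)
The plan is to decompose $u_\dl = z_\dl + v_\dl$, with $z_\dl = S(t)(u_{0,\dl}^\o, u_{1,\dl}^\o) = \rho_\dl * z$, and compare with $u = z + v$ from Theorem \ref{THM:OTh}. Since $\rho_\dl$ is an approximate identity and $z \in C([-T,T];\H^{-\eps}(\T^2))$ almost surely for any $\eps > 0$, one has $(z_\dl, \dt z_\dl) \to (z, \dt z)$ in $C([-T,T];\H^{-\eps})$ almost surely. It then suffices to prove that $w_\dl := v_\dl - v$ converges to $0$ in $C([-T,T]; H^{1-\eps}(\T^2))$ in probability, with the analogous control on $\dt w_\dl$ in $L^\infty_T H^{-\eps}$; the conclusion then follows by taking $0 < \eps < |s|$.

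The first key step is to show that the Wick-regularized stochastic data converge to the canonical Wick powers. Setting $\sigma_\dl = \text{Var}(z_\dl(x,t)) = \sum_{n \in \Z^2} |\ft\rho(\dl n)|^2 \jb{n}^{-2}$, which is independent of $(x,t)$ by stationarity and the explicit computation \eqref{lin2}--\eqref{B1}, the goal is
\[
H_\l(z_\dl; \sigma_\dl) \longrightarrow\, :\! z^\l \!: \quad \text{in } C([-T,T]; W^{-\eps,\infty}(\T^2)) \quad \text{in probability}, \quad \l = 1, 2, 3.
\]
Since the Fourier coefficients of $H_\l(z_\dl;\sigma_\dl) - H_\l(z_N;\sigma_N)$ lie in the $\l$-th homogeneous Wiener chaos and involve the factors $\prod_{j=1}^\l \ft\rho(\dl n_j) - \prod_{j=1}^\l \mathbf{1}_{|n_j|\le N}$, Nelson's hypercontractivity reduces matters to second-moment estimates, which tend to $0$ as $\dl \to 0$, $N \to \infty$ by the pointwise limits $\ft\rho(\dl n) \to 1$, the uniform bound $|\ft\rho(\dl n)| \les 1$, and dominated convergence. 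Upgrading from $L^p(\O; H^{-\eps})$ to $C_t W^{-\eps,\infty}_x$ is done through a Sobolev-type embedding from $W^{-\eps/2, p}$ for large $p$ and Kolmogorov's continuity criterion in $t$, exactly as in Proposition~\ref{PROP:Z1}.

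Given Step 1, the next step is a stability argument for $w_\dl$. Expanding $\NN^3_{(u_{0,\dl}^\o, u_{1,\dl}^\o)}(v_\dl) = H_3(z_\dl + v_\dl; \sigma_\dl) = v_\dl^3 + 3 v_\dl^2 z_\dl + 3 v_\dl H_2(z_\dl;\sigma_\dl) + H_3(z_\dl;\sigma_\dl)$ via the Hermite translation formula, and subtracting the limiting equation for $v$, one finds that $w_\dl$ solves (with zero data)
\begin{align*}
\dt^2 w_\dl + (1 - \Dl) w_\dl
&= -\bigl(v_\dl^2 + v v_\dl + v^2\bigr) w_\dl - 3(v_\dl + v) z_\dl\, w_\dl - 3 H_2(z_\dl;\sigma_\dl)\, w_\dl \\
&\quad - 3 v^2 (z_\dl - z) - 3 v \bigl(H_2(z_\dl;\sigma_\dl) - :\! z^2 \!:\bigr) - \bigl(H_3(z_\dl;\sigma_\dl) - :\! z^3 \!:\bigr).
\end{align*}
Given $\eta > 0$, let $\O_R$ be the event on which $\|v\|_{L^\infty_T H^{1-\eps}}$ and the $L^\infty_T W^{-\eps,\infty}$-norms of $:\!z^\l\!:$, $z_\dl$, and $H_\l(z_\dl;\sigma_\dl)$ ($\l=1,2,3$) are all bounded by $R$, uniformly in $\dl \in (0,1]$. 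By Theorem~\ref{THM:OTh} together with Step 1, one may choose $R = R(\eta, T)$ so that $\P(\O_R) \ge 1 - \eta$. On $\O_R$, the Duhamel formula combined with the algebra property of $H^{1-\eps}(\T^2)$ and the product estimate $\|fg\|_{H^{-\eps}} \les \|f\|_{W^{-\eps,\infty}} \|g\|_{H^{1-\eps}}$ yields, for $0 < \tau \le 1$,
\[
\|w_\dl\|_{L^\infty_\tau H^{1-\eps}} \le C(R) \tau \|w_\dl\|_{L^\infty_\tau H^{1-\eps}} + C(R) \tau E_\dl,
\]
where $E_\dl := \|z_\dl - z\|_{L^\infty_T W^{-\eps,\infty}} + \sum_{\l = 2}^3 \|H_\l(z_\dl;\sigma_\dl) - :\! z^\l \!:\|_{L^\infty_T W^{-\eps,\infty}}$. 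Choosing $\tau = \tau(R) \ll 1$ to absorb the first term and iterating $\lceil T/\tau(R)\rceil$ times gives $\|w_\dl\|_{L^\infty_T H^{1-\eps}} \le C(R,T) E_\dl$ on $\O_R$; the analogous bound for $\dt w_\dl$ follows by differentiating the Duhamel representation. Since $E_\dl \to 0$ in probability and $\eta$ is arbitrary, $w_\dl \to 0$ in $C([-T,T];\H^{1-\eps})$ in probability, yielding the theorem.

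The main obstacle lies in Step 1, namely verifying that Wick powers constructed with a position-space mollification (and the associated variance $\sigma_\dl$) converge to the canonical Wick powers $:\!z^\l\!:$ defined via frequency truncation, with uniform control on $[-T,T]$ in the $W^{-\eps,\infty}$-topology. While the Wiener-chaos framework makes the second-moment computation essentially mechanical, stitching together the spatial $W^{-\eps,\infty}$-regularity with the $t$-continuity requires the Besov-embedding and Kolmogorov-continuity machinery of Proposition~\ref{PROP:Z1}, now with the Fourier multiplier $\ft\rho(\dl n)$ in place of $\mathbf{1}_{|n|\le N}$. Once Step 1 is in hand, the remaining stability argument is a fairly standard deterministic-looking Gronwall/iteration argument carried out on a good event of arbitrarily high probability.
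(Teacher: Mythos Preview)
Your proposal follows essentially the same two-step strategy as the paper: (i) convergence of the mollified Wick powers $H_\l(z_\dl;\sigma_\dl)\to\,:\!z^\l\!:$ in $C_tW^{-\eps,\infty}_x$ in probability (this is Proposition~\ref{PROP:Z1rho}, proved via second-moment estimates on Fourier coefficients and the Kolmogorov/Besov machinery of Proposition~\ref{PROP:reg}), followed by (ii) a stability argument for $w_\dl=v_\dl-v$ on a high-probability event, iterated over subintervals of length $\tau(R)$ (Subsection~\ref{SUBSEC:4.3}).

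Two technical slips should be fixed. First, $H^{1-\eps}(\T^2)$ is \emph{not} an algebra for $\eps>0$ (one needs $s>d/2=1$); the trilinear bound $\|v^3\|_{H^{-\eps}}\les\|v\|_{H^{1-\eps}}^3$ does hold, but via Sobolev's inequality and the product estimates of Lemma~\ref{LEM:bilin}, exactly as in Lemma~\ref{LEM:LWPx}. Second, your displayed inequality for $\|w_\dl\|_{L^\infty_\tau H^{1-\eps}}$ is linear in $w_\dl$, but the coefficients $v_\dl^2+vv_\dl+v^2$ and $(v_\dl+v)z_\dl$ involve $v_\dl=v+w_\dl$, and $v_\dl$ is not bounded by $R$ on your event $\O_R$ as defined. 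Expanding in $w_\dl$ produces $\|w_\dl\|^2$ and $\|w_\dl\|^3$ terms, so one cannot simply absorb and iterate; the paper closes this via a continuity argument (see \eqref{wIj}), starting from $w_\dl(0)=0$ and propagating smallness interval by interval, with the geometric growth $\|w_\dl\|_{X^{s_0}(I_j)}\le 2\cdot 8^jA$ compensated by choosing the Wick-power discrepancy $A$ exponentially small in $T/\tau$ on $\O_{3,\dl}$.
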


Next, we turn our attention to a negative direction. We prove the following  instability result for the Wick ordered NLW \eqref{WNLW0} with the Gaussian free field $\mu$ in \eqref{G3} as initial data.
\begin{theorem}\label{THM:ill0}
Let  $s < 0$ and 
$(u_0^\o, u_1^\o) $ be as in \eqref{Gauss1}.
Then, 
there exists a set $\Si \subset \O$ with $P(\Si) = 1$ such that 
given $\o \in \Si$, 
there exists  a sequence  $(u_{0,\eps}^\o, u_{1,\eps}^\o) \in C^{\infty} (\T^2) \times C^{\infty} (\T^2)$, 
 $0< \eps \leq 1$,  such that almost surely
 $$
 \lim_{\eps\rightarrow 0}\|(u_{0,\eps}^\o, u_{1,\eps}^\o)-(u_0^\omega,u_1^\omega)\|_{\H^s}=0
 $$
but for every $T>0$,  the solutions $v_{\eps}$ to  \eqref{WNLW_PAK} with 
$$
(\varphi_0^\o, \varphi_1^\o)=(u_{0,\eps}^\o, u_{1,\eps}^\o)
$$
defined in Proposition~\ref{PROP:easy} satisfy almost surely
$$
 \lim_{\eps\rightarrow 0}\|v_{\eps}\|_{L^\infty([-T,T]; H^{s})}=\infty.
 $$
As a consequence,  $u_{\eps}\stackrel{\textup{def}}{=}S(t)(u_{0,\eps}^\o, u_{1,\eps}^\o)+v_{\eps}$ diverges 
almost surely in $C([-T,T]; H^{s}(\T^2))$.
\end{theorem}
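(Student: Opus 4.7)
The plan is to reduce Theorem~\ref{THM:ill0} to Proposition~\ref{PROP:ill3}, which establishes almost sure norm inflation for the Wick ordered NLW at the Gaussian free field~$\mu$. That proposition produces, for almost every $\o$ and every $\eps > 0$, a smooth perturbation $(\phi_0^{\o,\eps}, \phi_1^{\o,\eps}) \in C^\infty(\T^2) \times C^\infty(\T^2)$ with $\|(\phi_0^{\o,\eps},\phi_1^{\o,\eps})\|_{\H^s} < \eps$, such that the residual solution $\wt v_\eps$ to \eqref{WNLW_PAK} driven by $(u_0^\o + \phi_0^{\o,\eps}, u_1^\o + \phi_1^{\o,\eps})$ inflates in $L^\infty([-t_\eps, t_\eps]; H^s)$ on a vanishing time scale $t_\eps \downarrow 0$.

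Since $(u_0^\o + \phi^{\o,\eps}, u_1^\o + \phi^{\o,\eps})$ is only distributional, I would convert it into a genuinely smooth approximating sequence by coupling the perturbation with a mollification of the Gaussian datum. Setting
\[
(u_{0,\eps}^\o, u_{1,\eps}^\o) \stackrel{\text{def}}{=} \rho_{\dl(\eps)} * (u_0^\o, u_1^\o) + (\phi_0^{\o,\eps}, \phi_1^{\o,\eps})
\]
for a sufficiently rapidly vanishing $\dl(\eps) \downarrow 0$ yields $C^\infty$ data with almost sure $\H^s$-convergence to $(u_0^\o, u_1^\o)$, as the mollification component converges almost surely in $\H^s$ and $\|\phi^{\o,\eps}\|_{\H^s} < \eps$. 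Divergence of the corresponding residual solutions $v_\eps$ to \eqref{WNLW_PAK} then follows from a perturbation argument: for $\dl(\eps)$ small enough relative to the blow-up rate of $\wt v_\eps$, a stability estimate in the spirit of the proof of Theorem~\ref{THM:uniq} --- adapted to accommodate the fixed smooth perturbation $\phi^{\o,\eps}$ --- keeps $v_\eps$ and $\wt v_\eps$ comparable on $[0, t_\eps]$, so that $\|v_\eps\|_{L^\infty([-T,T]; H^s)} \to \infty$. The divergence of $u_\eps = S(t)(u_{0,\eps}^\o, u_{1,\eps}^\o) + v_\eps$ in $C([-T,T]; H^s)$ is then immediate, since $S(t)(u_{0,\eps}^\o, u_{1,\eps}^\o) \to z$ in $C([-T,T]; H^s)$ almost surely by the unitarity of $S(t)$ on $\H^s$, while $v_\eps$ blows up.

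The hard step, and the main source of novelty, is Proposition~\ref{PROP:ill3} itself: transporting the deterministic norm inflation of Theorem~\ref{THM:illposed} --- based on Christ--Colliander--Tao-type frequency-concentrated perturbations at general initial data --- into the \emph{renormalized, probabilistic} setting. The Wick ordered nonlinearity $\NN^3$ depends on the random base point through the variance $\sigma(t)$, and the stochastic lower-order objects $:\!z^\l\!:$ from Proposition~\ref{PROP:Z1} appearing in the expansion \eqref{Herm4} must be controlled uniformly in $\o$ on a set of probability one in a manner compatible with the frequency-localized perturbation. A secondary technical point is calibrating $\dl(\eps)$ against the blow-up rate of $\wt v_\eps$ to make the stability argument above survive on the vanishing window $[0, t_\eps]$; this will likely require combining the probabilistic Strichartz estimate (Lemma~\ref{LEM:PStr}) with deterministic perturbation theory for the residual equation \eqref{WNLW1}.
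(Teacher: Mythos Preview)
Your proposal is correct and matches the paper's own route: Theorem~\ref{THM:ill0} is reduced to Proposition~\ref{PROP:ill3} (via the intermediate Proposition~\ref{PROP:ill2}) by taking mollified-plus-perturbed data $(u_{0,\dl}^\o,u_{1,\dl}^\o)+(\phi_{0,\eps},\phi_{1,\eps})$, and the stability bridge between the smooth and rough solutions is exactly the extension of Theorem~\ref{THM:uniq} to shifted data recorded in Remark~\ref{REM:uniq2}. One minor discrepancy: the stability argument in Section~\ref{SEC:uniq} runs on the Wick-power convergence of Proposition~\ref{PROP:Z1rho} together with the Sobolev-based local theory of Lemma~\ref{LEM:LWPx}, not on the probabilistic Strichartz estimate Lemma~\ref{LEM:PStr}.
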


Theorems \ref{THM:uniq} and \ref{THM:ill0} together imply that the choice of regularization of the random initial data plays an important role. 
On the one hand, there is a class of ``admissible'' regularizations
yielding the  conclusion of Theorem \ref{THM:uniq}.
On the other hand, there is also a regularization, leading
to a strong instability.
This is a sharp contrast with the smoother regime, 
where (deterministic) local well-posedness theory, in particular continuous dependence, guarantees
{\it any} regularization gives a good approximation.
See Theorems~1.33 and~2.7 in~\cite{Tz1} 
for analogous results in the context of the three-dimensional cubic NLW
(without the need of renormalization).
One main difference between our results (Theorems~\ref{THM:uniq} and \ref{THM:ill0})
and those in \cite{Tz1}
appears in the fact that, in our problem,  the effect of the random initial data
shows up in the equation through the renormalized nonlinearity,
giving further complication to the problem.

\begin{remark}\rm
In a recent work
\cite{STz}, the third author and Sun
established  a certain pathological behavior for NLW 
on the three-dimensional torus $\T^3$
with initial data of super-critical (but positive\footnote{In particular, there is no need for renormalization
in \cite{STz}.}) regularity.
They constructed 
 a dense subset  $S$ of the Sobolev space of super-critical regularity
 such that 
 for any $(u_0, u_1) \in S$, 
 the family of global smooth solutions $u_\dl$, generated by the mollified initial data $(\rho_\dl*u_0, \rho_\dl*u_1)$,   diverges. 
While it is a purely deterministic result, 
this result nicely complements  Theorem \ref{THM:uniq}, 
since it shows that  a mollification does not in general (and in fact on a dense set) lead to a good approximation
in the super-critical regularity.

\end{remark}

Before proceeding to the next subsection,  we briefly discuss a reduction of the proof of Theorem~\ref{THM:ill0} in the following.
Our main strategy is as follows.
Given $\eps > 0$, let   $(u_{0,\dl}^\o, u_{1,\dl}^\o)$
be the mollified random initial data as in \eqref{uniq1} for some small $\dl = \dl (\eps) > 0$.
We then construct the  smooth  solution $v_{\dl, \eps} = v_{\dl, \eps}^\o$ 
to \eqref{WNLW_PAK}:
\begin{align}
\begin{cases}
\dt^2 v_{\dl, \eps} +(1 -  \Dl)  v_{\dl, \eps}  + \NN^3_{(\varphi_{0, \dl,\eps}^\o, \varphi_{1, \dl,\eps}^\o)} (v_{\dl, \eps}) = 0\\
(v_{\dl, \eps}, \dt v_{\dl, \eps}) |_{t = 0} =(0,0),
\end{cases}
\label{WNLW4a}
\end{align}
where 
\begin{align*}
(\varphi_{0, \dl,\eps}^\o, \varphi_{1, \dl,\eps}^\o)=(u_{0, \dl}^\o, u_{1, \dl}^\o)+ (\phi_{0,\eps}, \phi_{1, \eps})
\end{align*}

\noi
for some suitably chosen {\it deterministic} functions $(\phi_{0, \eps}, \phi_{1, \eps}) \in C^\infty(\T^2) \times C^\infty(\T^2)$.
The first observation is that the conclusion of Theorem \ref{THM:uniq} holds true even if we replace 
 $(u_0^\o, u_1^\o) $ (and 
 $(u_{0,\dl}^\o, u_{1,\dl}^\o)$, respectively)
by 
 $(u_0^\o, u_1^\o) + (\phi_{0,\eps}, \phi_{1, \eps})$ 
 (and 
 $(u_{0,\dl}^\o, u_{1,\dl}^\o) + (\phi_{0, \eps}, \phi_{1, \eps})$, 
 respectively) 
 for any 
  $(\phi_{0, \eps}, \phi_{1, \eps}) \in C^\infty(\T^2) \times C^\infty(\T^2)$.
Namely, the smooth solution 
$v_{\dl, \eps}$ to \eqref{WNLW4a}
converges in probability to the solution $v_\eps$ to 
\begin{align}
\begin{cases}
\dt^2 v_{\eps} +(1 -  \Dl)  v_{ \eps}  + \NN^3_{(\varphi_{0,\eps}^\o, \varphi_{1,\eps}^\o)} (v_{ \eps}) = 0\\
(v_{\eps}, \dt v_{ \eps}) |_{t = 0} = (0,0)
\end{cases}
\label{WNLW4b}
\end{align}

\noi
as $\dl \to 0$, 
where 
\begin{align}
(\varphi_{0,\eps}^\o, \varphi_{1,\eps}^\o)=(u_0^\o, u_1^\o)+ (\phi_{0,\eps}, \phi_{1, \eps}).
\label{XX2}
\end{align}

\noi
See Remark \ref{REM:uniq2}.
Note that,  in \eqref{WNLW4b},   the nonlinearity $ \NN^3_{(\varphi_{0,\eps}^\o, \varphi_{1,\eps}^\o)} (v_{ \eps})$ is interpreted 
in the limiting sense  as $\delta\rightarrow 0$. 
This observation allows us to drop the smoothness assumption on data in Theorem~\ref{THM:ill0}.
More precisely, Theorem~\ref{THM:ill0} is a consequence of the following statement.
\begin{proposition}\label{PROP:ill2}
Let  $s < 0$ and  $(u_0^\o, u_1^\o) $ be as in \eqref{Gauss1}. Then,  there exists a set $\Si \subset \O$ with $P(\Si) = 1$ such that  given $\o \in \Si$ and $\eps > 0$,  there exist  a solution $v_\eps^\o$ to \eqref{WNLW4b} on $\T^2$ 
with the random data $(\varphi_{0,\eps}^\o, \varphi_{1,\eps}^\o)$ in \eqref{XX2}
and a random time $t_\eps = t_\eps(\o)  \in (0, \eps) $ such that 
\begin{align*}
 \big\|  (\phi_{0,\eps}, \phi_{1, \eps})\big\|_{\H^s} < \eps \qquad \text{but} 
\qquad \| v_\eps^\o(t_\eps)\|_{H^s} > \eps^{-1}.
\end{align*}
\end{proposition}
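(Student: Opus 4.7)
My plan is to transplant the Christ--Colliander--Tao (CCT) single-mode norm inflation mechanism, as adapted to NLW in Theorem \ref{THM:illposed}, into the renormalized stochastic setting by choosing a small deterministic high-frequency perturbation $(\phi_{0,\eps},\phi_{1,\eps})$ and absorbing the rough random background as a lower-order forcing on the short inflation time scale. I would first fix a full-measure event $\Si\subset\O$ on which Proposition \ref{PROP:Z1} provides
\[
M(\o) := \max_{\l \in \{1,2,3\}} \|{:\!z^\l\!:}\|_{C([-1,1];W^{-\kk,\infty}(\T^2))} < \infty
\]
for some small $\kk>0$, with the understanding that the construction of $(\phi_{0,\eps},\phi_{1,\eps})$ may depend on $\o$ only through $M(\o)$.

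Next, I would reformulate \eqref{WNLW4b}. Decomposing $u_\eps=z^\o+z_{\phi_\eps}+v_\eps$ with $z_{\phi_\eps}=S(t)(\phi_{0,\eps},\phi_{1,\eps})$ smooth, and setting $\wt w_\eps := z_{\phi_\eps}+v_\eps$, the Wick binomial identity (interpreted in the $\dl\to 0$ limit of the mollified equation) gives
\begin{align*}
\dt^2\wt w_\eps+(1-\Dl)\wt w_\eps+\wt w_\eps^3
= -\, :\!z^3\!: \, - \, 3:\!z^2\!:\wt w_\eps \,-\, 3\,z^\o \wt w_\eps^2,
\end{align*}
with smooth data $(\wt w_\eps,\dt\wt w_\eps)|_{t=0}=(\phi_{0,\eps},\phi_{1,\eps})$. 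Since $\|z_{\phi_\eps}\|_{L^\infty_t H^s_x}\les\|(\phi_{0,\eps},\phi_{1,\eps})\|_{\H^s}<\eps$, any inflation $\|\wt w_\eps(t_\eps)\|_{H^s}\gg\eps^{-1}$ transfers directly to $\|v_\eps(t_\eps)\|_{H^s}$.

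I would then import from Theorem \ref{THM:illposed} a CCT-type perturbation $(\phi_{0,\eps},\phi_{1,\eps})$ with $\|(\phi_{0,\eps},\phi_{1,\eps})\|_{\H^s}<\eps$, together with the reference solution $W_\eps$ of the pure cubic equation $\dt^2 W+(1-\Dl)W+W^3=0$ with the same data, satisfying $\|W_\eps(t_\eps)\|_{H^s}>2\eps^{-1}$ for some $t_\eps\in(0,\eps)$. Heuristically this corresponds to taking $\phi_{0,\eps}(x)=A\cos(n_0\cdot x)$ and $\phi_{1,\eps}=0$ with $|n_0|=N(\eps)\gg 1$ and amplitude $A=A(\eps)$ in the regime $A N^s<\eps\ll A^3 t_\eps^2$, so that the iterated self-interaction of a single high-frequency mode creates a low-frequency component of large $H^s$ norm, exactly as in the proof of Theorem \ref{THM:illposed}.

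The main obstacle, and the essentially new ingredient relative to the deterministic theorem, is controlling the difference $\wt w_\eps-W_\eps$ on $[0,t_\eps]$ against the three stochastic forcing terms. The contributions of $-:\!z^3\!:$ and $-3z^\o\wt w_\eps^2$ can be estimated by Duhamel combined with Strichartz and a fractional Leibniz estimate, yielding a bound of the form $t_\eps^\theta\cdot P(M(\o),A)$ for some $\theta>0$ and a polynomial $P$. The term $-3:\!z^2\!:\wt w_\eps$, which pairs a rough random distribution with a semi-smooth function, requires a paraproduct-type estimate exploiting that $\wt w_\eps$ enjoys better than $H^s$ regularity on short time intervals (inherited from the smoothness of $\phi_\eps$). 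A standard contraction argument in a ball of radius comparable to $\|W_\eps\|_{L^\infty_t H^s_x}$ then provides $\wt w_\eps$ with $\|\wt w_\eps-W_\eps\|_{L^\infty_t([0,t_\eps];H^s)}\ll\|W_\eps(t_\eps)\|_{H^s}$. Tuning $N$, $A$, and $t_\eps$ as functions of $\eps$ and $M(\o)$ so that the deterministic CCT lower bound dominates the $M(\o)$-dependent errors finally yields the almost-sure inflation claimed in Proposition \ref{PROP:ill2}.
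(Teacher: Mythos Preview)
Your overall strategy---passing to $\wt w_\eps = S(t)(\phi_{0,\eps},\phi_{1,\eps}) + v_\eps$, comparing with the deterministic cubic NLW solution $W_\eps$, and treating the Wick-ordered terms as perturbations---is exactly the paper's route (this is the reduction to Proposition~\ref{PROP:ill3} and the approximation argument of Section~\ref{SEC:NI2}). However, there is a genuine gap in the perturbative step, and the paper flags it explicitly.

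The difficulty is quantitative: the inflation time $t_\eps$ produced by Theorem~\ref{THM:illposed} is of order $(RA^d)^{-1}$ (see \eqref{phi4}), i.e.\ barely inside the $\F L^1$ lifespan, while the amplitude $A$ and the $\F L^1$-size of the data are blowing up. Any local theory you use to construct $\wt w_\eps$ and to run the contraction for $\wt w_\eps - W_\eps$ must therefore give a lifespan at least as long as this $t_\eps$. Strichartz estimates (your proposed tool) do not: the paper states directly that the Strichartz-based argument of \cite{OTh2} ``provides a rather short local existence time, which is not sufficient for our purpose,'' and that the $L^2$-Sobolev and pure Wiener-algebra approaches also fail. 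The fix is to work in the Fourier--Lebesgue space $\F L^{\al,\frac{1}{1-\al}}$ for small $\al>0$ (Lemma~\ref{LEM:LWP1}), which interpolates between $\F L^1$ and $H^{1/2}$; only there does one get both (a) a lifespan comparable to the $\F L^1$ lifespan up to an $N^{O(\al)}$ loss, and (b) product estimates against the rough factors $:\!z^\l\!:\in W^{-\al/2,\infty}$ via Lemma~\ref{LEM:bilin}. The approximation lemma (Lemma~\ref{LEM:approx}) then closes in $L^2$, not in $H^s$; your proposed ``contraction in a ball of radius $\|W_\eps\|_{L^\infty_t H^s}$'' is not workable since the cubic nonlinearity cannot be estimated in negative Sobolev spaces.

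A secondary point: the single-mode ansatz $\phi_{0,\eps}=A\cos(n_0\cdot x)$ does not generate the needed high-to-low energy transfer (cubing a single mode lands on $\pm n_0,\pm 3n_0$, not near the origin). The paper's data \eqref{phi1} is supported near $\pm Ne_1$ and $\pm 2Ne_1$ precisely so that triples like $N+N-2N=0$ feed the low frequencies in $\Xi_1$; this is what drives Lemma~\ref{LEM:Hs3}.
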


In our reduction of  Theorem~\ref{THM:ill0} to Proposition~\ref{PROP:ill2}, 
we 
moved from the smooth setting to the rough setting, contrary to the usual
reduction, where one approximates rough objects by smooth objects.
This reduction, however, helps us since 
the solutions $v$ to \eqref{WNLW1} 
and $v_\eps$ to \eqref{WNLW4b} satisfy the {\it same} equation,
where the renormalization on the nonlinearity is based on $(u_0^\o, u_1^\o)$
defined in \eqref{Gauss1}.

\medskip

We now express \eqref{WNLW4b} in terms of  
$w_\eps = v_\eps + S(t) (\phi_{0, \eps}, \phi_{1, \eps})$.
Then, $w_\eps$ satisfies the following perturbed NLW:
\begin{align}
\begin{cases}
\dt^2 w_\eps  + (1 -  \Dl)  w_\eps  +  w_\eps^3 + \RR(w_\eps, z) = 0\\ 
(w_\eps, \dt w_\eps) |_{t = 0} = (\phi_{0, \eps}, \phi_{1, \eps}), 
\end{cases}
\label{WNLW5}
\end{align}

\noi
where $\RR(w, z)$ is given by 
\begin{align}
\RR(w, z)
& = \, :\!  (z+w)^{3}\!\!:  \,  -\,  w^3 \notag\\
& = 3 z  w^2 + 
3 :\! z^2\!:   w
+ :\! z^3\!:.
\notag
\end{align}

\noi
Then, the proof of Proposition~\ref{PROP:ill2}
is reduced to the following proposition on  almost sure norm inflation 
for the perturbed NLW \eqref{WNLW5}.
\begin{proposition}\label{PROP:ill3}
Let  $s < 0$ and  $z = z^\o$ be as in \eqref{lin1a}. Then, there exists a set $\Si \subset \O$ with $P(\Si) = 1$ such that  given $\o \in \Si$ and $\eps > 0$, 
there exist  a solution $w_\eps^\o$ to \eqref{WNLW5} on $\T^2$ and a random time $t_\eps = t_\eps(\o)  \in (0, \eps) $ such that 
\begin{align}
 \big\| (w_\eps^\o(0), \dt w_\eps^\o(0)) \big\|_{\H^s} < \eps \qquad \text{ and } 
\qquad \| w_\eps^\o(t_\eps)\|_{H^s} > \eps^{-1}.
\notag
\end{align}
\end{proposition}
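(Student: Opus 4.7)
The plan is to transfer the deterministic norm inflation established in Theorem \ref{THM:illposed} for the unrenormalized cubic NLW $\dt^2 u+(1-\Dl)u+u^3=0$ to the perturbed equation \eqref{WNLW5}, by treating the stochastic correction $\RR(w,z)=3zw^2+3\, :\! z^2\!:\, w+\, :\! z^3\!:$ as a lower-order term on the very short time scale on which inflation occurs. First, by Proposition \ref{PROP:Z1}, fix a small $\eps'>0$ (with $\eps'<|s|/10$, say) and a full-probability event $\Si\subset\O$ on which
\begin{align*}
M(\o):=\max_{\l\in\{1,2,3\}}\|:\! z^\l\!:\|_{L^\infty([-1,1];W^{-\eps',\infty}(\T^2))}<\infty.
\end{align*}
For $\o\in\Si$ and $\eps>0$, apply Theorem \ref{THM:illposed} to obtain deterministic smooth data $(\phi_{0,\eps},\phi_{1,\eps})$ and a deterministic time $\tau_\eps\in(0,\eps)$ with $\|(\phi_{0,\eps},\phi_{1,\eps})\|_{\H^s}<\eps/2$ such that the solution $u_\eps$ to the pure cubic NLW with these data satisfies $\|u_\eps(\tau_\eps)\|_{H^s}\ge 2\eps^{-1}$. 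The Christ--Colliander--Tao profile underlying Theorem \ref{THM:illposed} realizes $u_\eps$ as a coherent high-frequency structure concentrated near a single frequency $N=N(\eps)$, and the same construction provides a quantitative bound $\|u_\eps\|_{L^\infty_{[0,\tau_\eps]}H^{\sigma_0}}\le B_\eps$ in a positive Sobolev space $H^{\sigma_0}$ with $\sigma_0>\eps'+1$.

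Next, set $R_\eps=w_\eps-u_\eps$, so that $R_\eps$ solves
\begin{align*}
\dt^2 R_\eps+(1-\Dl)R_\eps+(w_\eps^3-u_\eps^3)+\RR(u_\eps+R_\eps,z)=0,\qquad (R_\eps,\dt R_\eps)|_{t=0}=(0,0).
\end{align*}
I would solve this by a Duhamel contraction on $[0,\tau_\eps]$ in $X=C([0,\tau_\eps];H^{\sigma_0}(\T^2))$. The difference $w_\eps^3-u_\eps^3$ is controlled by $\|R_\eps\|_X(\|u_\eps\|_X+\|R_\eps\|_X)^2$ via Sobolev's inequality in two dimensions, and each of the three pieces of $\RR(u_\eps+R_\eps,z)$ is controlled by fractional Leibniz / paraproduct estimates using $M(\o)$ together with $\sigma_0>\eps'+1$; the Duhamel kernel supplies an additional prefactor of $\tau_\eps$. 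This yields $\|R_\eps\|_X=O(\tau_\eps M(\o)(1+B_\eps)^2)$. Choosing the frequency $N=N(\eps,\o)$ slightly larger than what Theorem \ref{THM:illposed} demands forces $\|R_\eps(\tau_\eps)\|_{H^s}<\eps^{-1}$ while preserving $\|u_\eps(\tau_\eps)\|_{H^s}\ge 2\eps^{-1}$. Setting $t_\eps=\tau_\eps$ then gives
\begin{align*}
\|w_\eps(t_\eps)\|_{H^s}\ge\|u_\eps(t_\eps)\|_{H^s}-\|R_\eps(t_\eps)\|_{H^s}>\eps^{-1},
\end{align*}
together with $\|(w_\eps(0),\dt w_\eps(0))\|_{\H^s}=\|(\phi_{0,\eps},\phi_{1,\eps})\|_{\H^s}<\eps$.

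The main obstacle is the joint tuning of the working Sobolev exponent $\sigma_0$, the inflation frequency $N$, and the inflation time $\tau_\eps$, because the blow-up profile $u_\eps$ is highly oscillatory so that $B_\eps\to\infty$ as $\eps\to 0$, while the random distributions $:\! z^\l\!:$ live only in $W^{-\eps',\infty}$. The saving grace is the polynomial scaling $\tau_\eps\sim N^{-\al}$ inherent to the Christ--Colliander--Tao scheme, which allows one to absorb any polynomial loss in $N$ from the stochastic product estimates by further shrinking $\tau_\eps$; simultaneously, enlarging $N$ only decreases $\|(\phi_{0,\eps},\phi_{1,\eps})\|_{\H^s}$ because $s<0$. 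Verifying that these rates are mutually compatible is the technical heart of the argument, but the balancing is dictated by the same scaling relations that govern the proof of Theorem \ref{THM:illposed}.
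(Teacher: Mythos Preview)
Your overall strategy---view \eqref{WNLW5} as a perturbation of the pure cubic NLW and transfer the norm inflation of Theorem~\ref{THM:illposed} via an approximation argument---is exactly the paper's.  The gap is in the choice of working space: running the difference estimate in $H^{\sigma_0}$ with $\sigma_0>1$ is quantitatively incompatible with the inflation time scale, and this cannot be fixed by ``choosing $N$ slightly larger.''

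Concretely, with $d=2$ and the data $\vec\phi_n$ of \eqref{phi1} in Case~3 of Subsection~\ref{SUBSEC:NI}, one has $\|\vec\phi_n\|_{\H^{\sigma_0}}\sim RAN^{\sigma_0}\sim N^{\sigma_0-s-\theta}$, while the inflation time is $T\sim N^{-1+s+\dl+\theta/2}$.  Any contraction for $u_\eps$ or for your difference $R_\eps$ in $C_TH^{\sigma_0}$ (using the one-derivative smoothing of $\I$ and the algebra property of $H^{\sigma_0}$) requires $T\|\vec\phi_n\|_{\H^{\sigma_0}}^{2}\ll 1$, i.e.\ $\sigma_0<\tfrac{1+s}{2}+O(\dl,\theta)<\tfrac12$, which contradicts $\sigma_0>1$.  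Equivalently, your claimed bound $\|R_\eps\|_X=O(\tau_\eps M(\o)(1+B_\eps)^2)$ cannot close: since $B_\eps\ges N^{\sigma_0-s}$, we get $\tau_\eps B_\eps^{2}\ges N^{2\sigma_0-1-s}\to\infty$, so enlarging $N$ makes the error \emph{worse}.  The paper flags precisely this obstruction in the introduction, noting that ``the local well-posedness argument based on Sobolev's inequality and the product estimates within the framework of the $L^2$-based Sobolev spaces \ldots does not seem to suffice for our purpose.''

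The paper's fix (Section~\ref{SEC:NI2}) is to solve the perturbed equation in the Fourier--Lebesgue space $\F L^{\al,\frac{1}{1-\al}}$ for $\al>0$ small.  This space interpolates between the Wiener algebra $\F L^1$---whose LWP time \eqref{phi4} is exactly the inflation time scale---and $H^{1/2}$, so it retains a local existence time compatible with $T$ (the verification is \eqref{etime0}--\eqref{etime1}) while carrying just enough positive regularity ($\al$ derivatives) to absorb the $W^{-\eps',\infty}$ singularity of $:\! z^\l\!:$ via Lemma~\ref{LEM:bilin}\,(ii).  The comparison $u-v$ is then performed in $L^2$ (Lemma~\ref{LEM:approx}), using the $\F L^1$ control on $u$ and the $\F L^{\al,\frac{1}{1-\al}}$ control on $v$.
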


With $\RR(w_\eps, z) = 0$,  such a norm inflation phenomenon has been studied for the (unrenormalized) NLW \eqref{NLW0}; see \cite{CCT2b, BT1, Xia, Tz1}.
In Proposition~\ref{PROP:ill3},  we establish norm inflation  almost surely  in the presence of  the random perturbation  $\RR(w_\eps, z)$.
We point out that the known result on norm inflation for NLW \eqref{NLW0} on $\T^d$ or $\R^d$ in negative Sobolev spaces only covers a partial range  $s \leq - \frac  d2$
in the general setting; see \cite{CCT2b}.
While there is a norm inflation result
for $s < \frac 16$ by reducing the analysis to the one-dimensional case via the finite speed of propagation
(see \cite[Corollary 7]{CCT2b}), 
this result is not useful
to our problem due to the genuine two-dimensional nature of the random perturbation. Therefore, we first need to extend the deterministic  norm inflation result to cover this missing range  $(-\frac d2, 0)$
{\it without} reducing the analysis to the one-dimensional setting. In fact, this is the goal of the next subsection. More precisely,  we consider the (unrenormalized) NLW and  prove norm inflation (at general initial data)  in negative Sobolev spaces, including the missing range $(-\frac d2, 0)$. This will be a basic building block for the proof of Proposition \ref{PROP:ill3}.

Even with norm inflation for the (unrenormalized) NLW (see Theorem \ref{THM:illposed} below), 
the actual proof of Proposition \ref{PROP:ill3} requires a careful analysis.
The main strategy for proving Proposition \ref{PROP:ill3}
is to establish a good approximation argument
for the perturbed NLW~\eqref{WNLW5}
and the cubic NLW \eqref{NLW0}
and then to invoke the norm inflation for the latter equation.
For this purpose, we need to have local well-posedness of the perturbed NLW~\eqref{WNLW5}
for a sufficiently long time.
In \cite{OTh2}, Thomann and the first author proved almost sure local well-posedness
of \eqref{WNLW5} via the Strichartz estimates and  Lemma \ref{LEM:Z4}.
Due to the use of the space-time estimates, 
such an argument provides a rather short local existence time, which is not sufficient for our purpose.
In order to observe the desired growth for norm inflation, 
we need to {\it maximize} the local existence time by avoiding any use of space-times estimates
such as the Strichartz estimates.
Unfortunately, the local well-posedness argument based on Sobolev's inequality
and the product estimates (Lemma~\ref{LEM:bilin}) within the framework of the $L^2$-based Sobolev spaces
(see \cite{GKOT}) or the Wiener algebra (see Section~\ref{SEC:NI})
does not seem to suffice for our purpose.
We instead establish local well-posedness of the perturbed NLW~\eqref{WNLW5}
in a carefully chosen Fourier-Lebesgue space, 
which provides a sufficiently large time of local existence
{\it and} allows us to implement an approximation argument.
See Section \ref{SEC:NI2} for details.

\subsection{Norm inflation 
for the (unrenormalized) NLW in negative Sobolev spaces}
\label{SUBSEC:illposed}
In this subsection, we change gears and
consider the following (deterministic)  NLW:
\begin{align}
\begin{cases}
\dt^2 u  + (m -  \Dl)  u    +  u^k  = 0\\
(u, \dt u) |_{t = 0} = (u_0, u_1), 
\end{cases}
\qquad (x, t) \in \M \times \R,
\label{NLW3}
\end{align}

\noi
where $m \geq 0$ and  $\M = \T^d$ or $\R^d$. 
When $m= 0$, 
the equation \eqref{NLW3} on $\R^d$
enjoys  the  scaling symmetry,
which induces the so-called scaling critical Sobolev index:
$s_\text{scaling} = \frac{d}{2} - \frac{2}{k-1}$.
On the other hand, NLW also enjoys
the Lorentzian invariance (conformal symmetry), 
which yields its own critical regularity 
$s_\text{conf} = \frac{d+1}{4} - \frac{1}{k-1}$
(at least in the focusing case);
see \cite{LS} and \cite[Exercise 3.67]{TAO}.
We then define the critical regularity $s_\text{crit}$ for a given integer $k \ge 2$ by 
\begin{align} \label{scrit}
s_\text{crit} \stackrel{\text{def}}{=} \max(s_\text{scaling}, s_\text{conf}, 0) 
= \max\bigg(\frac{d}{2} - \frac{2}{k-1}, 
\frac{d+1}{4} - \frac1{k-1}, 0\bigg).
\end{align}


\noi
The Cauchy problem  \eqref{NLW3} has been studied extensively
and it is known that  \eqref{NLW3}
is locally well-posed in $\H^s(\M)$ for $s \geq s_\text{crit}$
in many cases
(possibly under an extra condition); 
see \cite{Kap, LS, KT, Tao1}.

On the other hand, ill-posedness of \eqref{NLW3} below the critical regularity $s_\text{crit}$
has been studied in various papers \cite{LS, CCT2b, BT1, Xia, Tz1}.
In particular, Christ, Colliander, and Tao  \cite{CCT2b} 
proved the following norm inflation phenomenon
for NLW~\eqref{NLW3} on $\R^d$;
given any $\eps > 0$, 
there exist a solution $u_\eps$ to \eqref{NLW3} on $\R^d$
and $t_\eps  \in (0, \eps) $ such that 
\begin{align}
 \| (u_\eps(0), \dt u_\eps(0)) \|_{\H^s(\R^d)} < \eps \qquad \text{but} \qquad \| u_\eps(t_\eps)\|_{H^s(\R^d)} > \eps^{-1}, 
\label{NI1}
 \end{align}

\noi
provided that one of the following conditions holds:
\begin{align}
\text{(a) } 
0 < s < s_\text{scaling} \ \text{ or } \ s <  - \frac 12, 
\quad \text{or}\quad 
\text{(b) } 
 -\frac 12 < s < s_\text{sob} \stackrel{\text{def}}{=} \frac 12 - \frac 1k.
\label{NI1a}
\end{align}

\noi
In particular, when $k = 3$, 
the norm inflation holds
except for $s = -\frac 12$.\footnote{While Theorem 4 in \cite{CCT2b} claims
a norm inflation for 
$s = -\frac 12$ when $ d= 1$, their argument uses 
a scaling and hence seems to break down 
when $s = s_\text{crit} = -\frac 12$, contrary to their claim.}
We point out that, in \cite[Corollary 7]{CCT2b},
 the conditions
(a) and  (b) are  obtained first for $d = 1$ (\cite[Theorem~6]{CCT2b})
and then extended for $d \geq 2$ by reducing the analysis
to the one-dimensional case via the finite speed of propagation.

The norm inflation \eqref{NI1} is a stronger form of instability than 
  discontinuity of 
 the solution map (at the trivial function).
In \cite{Xia},  Xia  proved norm inflation based at general initial data
(see~\eqref{NI2} below)
for NLW on $\T^3$
when $0 < s < s_\text{scaling}$.
See also the lecture note \cite{Tz1} by the third author.
We point out that 
 norm inflation at general initial data can not be reduced
to the one-dimensional setting
and thus the conditions in \eqref{NI1a} should be disregarded in the 
following discussion.
In fact, without reducing the analysis to the one-dimensional setting, 
the argument in \cite{CCT2b} yields norm inflation for
\begin{align*}
\text{(c) } 
d \geq 2: 
0 < s < s_\text{scaling}  \text{ or }  s \leq - \frac d2, 
\quad \text{or}\quad 
\text{(d) }d = 1: 
 s < \frac 16 \text{ and } s \ne - \frac 12, 
\end{align*}

\noi
leaving a gap $-\frac d2 < s \leq 0$ for $d \geq 2$.
See 
\cite[Theorems 4 and 6]{CCT2b}.

In what follows, 
we only consider the cubic case ($k = 3$).
See \cite{F2} for the general case, 
where Forlano and the second author extended our result (Theorem \ref{THM:illposed})
to general $k \geq 2$.
The next theorem establishes norm inflation
at general initial data in negative Sobolev spaces.

\begin{theorem}\label{THM:illposed}
Given $d \in \N$, 
let  $\M = \R^d$ or $\T^d$.
Let  $k = 3$ and $m \geq 0$.
Suppose that  $s \in \R$ satisfies
either 
\textup{(i)} $s \leq  - \frac 12$ 
when $d = 1$
or 
\textup{(ii)} $s <  0$ 
when $d \geq 2$.
%
Fix $(u_0, u_1) \in \H^s(\M)$.
Then, 
given any $\eps > 0$, 
there exist a solution $u_\eps$ to \eqref{NLW3} on $\M$
and $t_\eps  \in (0, \eps) $ such that 
\begin{align}
 \big\| (u_\eps(0), \dt u_\eps(0))  - (u_0, u_1) \big\|_{\H^s(\M)} < \eps \qquad \text{but} 
\qquad \| u_\eps(t_\eps)\|_{H^s(\M)} > \eps^{-1}.
\label{NI2}
\end{align}

\end{theorem}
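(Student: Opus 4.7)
The plan is, for each $\eps > 0$, to construct a smooth perturbation of $(u_0, u_1)$ whose corresponding classical solution exhibits norm inflation in $H^s$ within time $\eps$. The first step is a reduction to a pure cubic problem with zero background. Approximate $(u_0, u_1)$ by a smooth pair $(U_0, U_1) \in \H^\infty(\M)$ with $\|(U_0, U_1) - (u_0, u_1)\|_{\H^s} < \eps/2$ (for instance by frequency truncation), and let $U$ denote the classical solution to \eqref{NLW3} with data $(U_0, U_1)$, which exists on some interval $[0, T_U]$ with $T_U \ges 1$ independently of $\eps$ and with $U$ uniformly smooth there. Writing the desired solution as $u_\eps = U + v$, the unknown $v$ satisfies
\[
\dt^2 v + (m - \Dl) v + v^3 + 3 U v^2 + 3 U^2 v = 0, \qquad (v, \dt v)|_{t = 0} = (\phi_0, \phi_1).
\]
It then suffices to find a smooth pair $(\phi_0, \phi_1)$ with $\|(\phi_0, \phi_1)\|_{\H^s} < \eps/2$ for which $\|v(t_\eps)\|_{H^s} > 2\eps^{-1}$ at some $t_\eps \in (0, \eps)$; the target bound $\|u_\eps(t_\eps)\|_{H^s} > \eps^{-1}$ then follows after subtracting the bounded contribution from $U(t_\eps)$. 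Since $U$ is smooth, the terms $3 U v^2$ and $3 U^2 v$ should be perturbative on short times, and the inflation will be driven by the pure cubic equation $\dt^2 w + (m - \Dl) w + w^3 = 0$.

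For the pure cubic problem I would use a Picard expansion $w = \sum_{j \ge 0} w^{(2j+1)}$, where $w^{(1)}(t) = S(t)(\phi_0, 0)$ and $w^{(2k+1)}$ collects the Duhamel iterates of total degree $2k+1$ in $w^{(1)}$. The aim is to engineer $\phi_0$ so that $\|w^{(3)}(t_\eps)\|_{H^s}$ is very large while $\|\phi_0\|_{H^s}$ remains of size $\eps$. For $d \ge 2$ with $s \in (-d/2, 0)$ I would take an ansatz of the form
\[
\phi_0(x) = A \sum_{n \in \Si} \cos(n \cdot x)
\]
on $\T^d$ (replacing $\cos(n\cdot x)$ by $\chi(x)\cos(n\cdot x)$ for a Schwartz cutoff $\chi$ on $\R^d$), where $\Si \subset \Z^d$ is a finite frequency set with $|n| \sim N$ for $n \in \Si$, and with the crucial structural property that a large number $R$ of triples $(n_1, n_2, n_3) \in (\Si \cup (-\Si))^3$ satisfy $n_1 + n_2 + n_3 = m_*$ for some fixed low-frequency vector $m_*$. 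Such $\Si$ can be constructed in $\Z^d$ for $d \ge 2$, for instance by exploiting the abundance of near-equilateral lattice triples $n^{(1)} + n^{(2)} + n^{(3)} = 0$ with $|n^{(i)}| \sim N$ available on spherical shells. This multi-dimensional combinatorial input is exactly what fails on $\Z$, and what forces \cite{CCT2b} to reduce to $\R$ via finite speed of propagation; avoiding that reduction is the whole point of the present extension.

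Computing $\ft{w^{(3)}}(m_*, t)$ then amounts to summing over these resonant triples the oscillatory integral
\[
-\int_0^t \frac{\sin\bigl((t-\tau) \jb{m_*}\bigr)}{\jb{m_*}} A^3 \prod_{i=1}^3 \cos\bigl(\tau \jb{n_i}\bigr) \, d\tau.
\]
After expanding the cosines into exponentials and retaining the coherent contribution surviving the time integration on a short window, each resonant triple contributes a term of size $\sim A^3 t^2$, producing a Fourier coefficient at $m_*$ of size $\sim R A^3 t^2$. The three competing constraints
\[
\|\phi_0\|_{H^s} \sim A |\Si|^{1/2} N^s < \eps, \qquad t_\eps < \eps, \qquad R A^3 t_\eps^2 \jb{m_*}^s > 2\eps^{-1}
\]
can then be arranged simultaneously by a judicious choice of the free parameters $|\Si|$, $R$, $N$, $A$, $t_\eps$; the surplus in $R$ provided by $d$-dimensional sphere-shell geometry compensates the deficit forced by the $H^s$ weight precisely throughout the range $s \in (-d/2, 0)$. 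For $d = 1$ and $s \le -1/2$, the ansatz reduces to a single plane wave and the resulting construction is the one in \cite{CCT2b}.

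The main obstacle, and the step requiring the most care, is to show that the size estimate on $w^{(3)}$ transfers to a genuine lower bound on $\|v(t_\eps)\|_{H^s}$. One must control both the tail $\sum_{k \ge 2} w^{(2k+1)}$ of higher Picard iterates and the background cross-terms $U v^2, U^2 v$. As indicated in the paragraph preceding \eqref{WNLW5}, the $L^2$-based Sobolev theory does not supply an existence time reaching $t_\eps$, so I would instead iterate in a Fourier--Lebesgue space (of the type that will later be used in Section \ref{SEC:NI2}), in which the cubic nonlinearity admits favorable trilinear estimates and the local existence time can be pushed well past $t_\eps$. Summing the resulting geometric series for the higher iterates and absorbing the $U$-dependent terms by the uniform smoothness of $U$ then closes the argument and yields \eqref{NI2}.
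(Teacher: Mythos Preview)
Your overall strategy --- reduce to smooth background data, expand the solution as a sum of multilinear Picard iterates, force high-to-low energy transfer in the second iterate, and control the tail in a Fourier-type algebra --- is the right one and matches the paper's. The gap is in the core construction: you never specify the frequency set $\Si$ or quantify the number $R$ of resonant triples, so the three displayed constraints cannot actually be balanced. The suggested mechanism, ``abundance of near-equilateral lattice triples on spherical shells,'' is asserted rather than established, and a thin shell $\{|n|\sim N\}$ carries far too few lattice points (and too little additive structure) to supply the needed surplus across the whole range $-\tfrac d2<s<0$. Targeting a single output mode $m_*$ also makes the lower bound on $w^{(3)}$ incommensurable with the upper bounds on the tail, which must be taken in the full $H^s$ norm.

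The paper's ansatz avoids all of this and uses no sphere geometry: one takes $\ft\phi_{0,n} = R\sum_{j\in\{\pm1,\pm2\}}\ind_{jNe_1+Q_A}$ with $Q_A$ a $d$-dimensional cube of side $A\ll N$. The resonant interaction $Ne_1+Ne_1-2Ne_1=0$ is one-dimensional; what distinguishes $d\ge 2$ from $d=1$ is purely the cube \emph{volume} $A^d$, which enters the triple convolution as $A^{2d}$ and the $H^s$ lower bound through the factor $f(A)=\|\jb{\xi}^s\|_{L^2(Q_A)}$ (Lemma~\ref{LEM:Hs3}). Crucially, the \emph{same} factor $f(A)$ governs the upper bound on every $\Xi_j$ via a Fourier-support count (Lemma~\ref{LEM:Hs1}, \eqref{Hs22}), so the lower and upper bounds share a common multiplicative structure and the comparison reduces to the six scalar conditions (i)--(vi) of Subsection~\ref{SUBSEC:NI}, verified by the explicit choices \eqref{AR1} and \eqref{AR3}. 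The background $\vec u_0$ is handled directly by the multilinearity of the tree operators $\Psi_{\vec u_0}(\TT)$ rather than by writing a separate equation for $v=u_\eps-U$; and for this deterministic theorem the iteration is carried out in the Wiener algebra $\F L^1$ (Lemmas~\ref{LEM:LWP} and~\ref{LEM:nonlin1}) --- the finer Fourier--Lebesgue spaces of Section~\ref{SEC:NI2} that you allude to are needed only for the probabilistic Proposition~\ref{PROP:ill3}.
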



When $(u_0, u_1)  = 0$, 
Theorem~\ref{THM:illposed} reduces to 
 the usual norm inflation
(based at the zero function) 
stated in \eqref{NI1}.
It follows from   Theorem \ref{THM:illposed}
 that the solution map 
 $\Phi: (u_0, u_1) \in  \H^s(\M) \mapsto (u, \dt u)  \in C([-T, T]; \H^s(\M))$ 
to the cubic NLW is discontinuous
everywhere in $\H^s(\M)$.
Theorem~\ref{THM:illposed}
fills the regularity gap $s \ne - \frac 12$ left open in \cite{CCT2b}
for the usual norm inflation
 in  the case of the cubic nonlinearity ($k = 3$). 
Furthermore, 
our argument exploits a
 more robust 
high-to-low energy transfer mechanism
than that in \cite{CCT2b}
and yields
a norm inflation {\it without}
 reducing the analysis to the one-dimensional setting, 
 which is crucial for proving 
norm inflation at general initial data.

The proof of Theorem~\ref{THM:illposed}
is a basic building block for proving
Proposition~\ref{PROP:ill3} on almost sure norm inflation
for the perturbed NLW \eqref{WNLW5}.
While  the argument in \cite{CCT2b, BT1, Xia}
is based on the (dispersionless)
ODE approach
and an approximation argument, 
%
%
 we adapt the Fourier analytic approach employed in~\cite{Oh1},
where the first author proved an analogous 
norm inflation at general initial data for the cubic nonlinear Schr\"odinger equation
on $\R^d$ and $\T^d$
in negative Sobolev spaces.
The main idea is to 
exploit high-to-low energy transfer
in the Picard second iterate.
We refer readers to the previous works
\cite{BT, IO, Kishimoto, CP, Ok}, 
where a similar approach has been taken.
We also mention 
the work  \cite{BonaT, F1} which  exploits high-to-low energy transfer.

\medskip

Let us briefly describe the idea of the proof of Theorem~\ref{THM:illposed}.
By a density argument, we may assume that $(u_0, u_1) \in \S(\M)\times \S(\M)$,
where  $\S(\M)$ denotes the class of Schwartz functions if $\M = \R^d$
 and the class of $C^\infty$-functions if $\M = \T^d$.
See Proposition \ref{PROP:NI3} below.
Then, 
the main goal is to  construct
a pair 
$(\phi_{0, \eps}, \phi_{1, \eps})\in C^\infty(\M)\times C^\infty(\M)$,
$\eps > 0$, 
such that a solution $u_\eps$ to \eqref{NLW3}
with initial data
$(u_{0, \eps},  u_{1, \eps}) = (u_0, u_1) + (\phi_{0, \eps}, \phi_{1, \eps})$
satisfies the conclusion of Theorem \ref{THM:illposed}.

By expressing $u_\eps$   in the Duhamel formulation (with $m = 1$), we have
\begin{align}
u_\eps(t)
& = S(t) (u_{0,\eps}, u_{1, \eps})
- \int_{0}^t  \frac{\sin ((t-t')\jb{\nb})}{\jb{\nb}}  
u_\eps^3(t') dt'.
\notag 
\end{align}

\noi
As in \cite{Oh1}, 
the main ingredient is to express a  smooth solution $u_\eps$
in the  following power series expansion:
\begin{align*}
 u_\eps & 
 = \sum_{j = 0}^\infty \Xi_j (u_{0, \eps}, u_{1, \eps}),
 \end{align*}

\noi
where $\Xi_j(u_{0, \eps}, u_{1, \eps})$ denotes 
homogeneous multilinear terms  of 
degree $2j+1$ (in the linear solution $S(t) (u_{0, \eps}, u_{1, \eps})$).
We then construct $(\phi_{0, \eps}, \phi_{1, \eps})$ such that, as $\eps \to 0$,  

\smallskip

\begin{itemize}
\item[(i)] $(\phi_{0, \eps}, \phi_{1, \eps})$ tends to 0 in $\H^s(\M)$, 

\smallskip

\item[(ii)] the second order term $\Xi_1(u_{0, \eps}, u_{1, \eps})(t_\eps)$
tends to $\infty$ for some $t_\eps \to 0$,

\smallskip

\item[(iii)]  the sum of the higher ordered terms  $\Xi_j(u_{0, \eps}, u_{1, \eps})(t_\eps)$,
$j \geq 2$, is of smaller order than the second order term
 $\Xi_1(u_{0, \eps}, u_{1, \eps})(t_\eps)$.  

\end{itemize}

\smallskip

\noi
This yields the conclusion of Theorem \ref{THM:illposed}.
We remark that, in \cite{IO, Kishimoto, CP, Ok}, 
 $\Xi_j$ was defined in a recursive manner
and the (scaled) modulation space $M_{2, 1} (\M)$
and its algebra property played an important role.
In the following, however, we follow a  simplified approach presented in \cite{Oh1}
and directly define $\Xi_j$ 
via the power series expansion indexed by trees
and use the Wiener algebra $\F L^1(\M)$ instead of the modulation space. 
This latter approach is  more suitable for proving norm inflation at general initial data.

%

\subsection{Remarks and comments}

We conclude this introduction by several remarks.

\smallskip

\noi
(i) 
In the main results (Theorems \ref{THM:uniq} and \ref{THM:ill0}), 
we only considered the cubic case.
It is easy to see that 
Theorem \ref{THM:uniq} is readily extendable to the case $k \geq 5$. Our method for proving
Theorem~\ref{THM:illposed}
on the norm inflation at general initial data
is elementary
and can be applied to other power-type
nonlinearities.
Following this paper, 
the second author and Forlano
\cite{F2} recently established an analogous norm inflation result
for \eqref{NLW3}
with a general power-type nonlinearity $u^k$. 
It is likely that 
 Theorem \ref{THM:ill0} can also be extended for  $k \geq 5$.
We point out, however, 
that a careful analysis 
(beyond establishing
norm inflation at general initial data)
 is needed in proving an analogue of Proposition \ref{PROP:ill3}.
See Section \ref{SEC:NI2}.

\smallskip

\noi
(ii) 
The defocusing nature of the equation is needed only 
in obtaining global solutions (Theorems \ref{THM:OTh}
and Proposition~\ref{PROP:easy}).

\smallskip

\noi
(iii) 
Consider the cubic nonlinear Schr\"odinger equation (NLS) on $\T^d$:
\begin{align}
i \dt u - \Dl u + |u|^2u = 0.
\label{NLS1}
\end{align}

\noi
In this case, we can introduce a renormalization
 in a deterministic manner:
\begin{align}\textstyle
i \dt u - \Dl u + \big( |u|^2 - 2 \int |u|^2 dx \big) u = 0
\label{NLS2}
\end{align}

\noi
to study the dynamics with either random or deterministic initial data of low regularity. 
See \cite{BO96, CO, OS, GO, OW2}. 
Thanks to the $L^2$-conservation, 
the equations \eqref{NLS1} and \eqref{NLS2} are equivalent, at least for smooth solutions, 
via the invertible gauge transform: $u \mapsto e^{2i t \int |u|^2 dx } u$. 
Furthermore, in the case of Gaussian random initial data
(under some regularity restriction), 
the renormalized equation \eqref{NLS2} is equivalent to the
renormalized equation via the Wick renormalization
(as in \eqref{Herm4} but in the complex-valued setting);
see \cite{BO96, OS, OTh1}.
We point out that a deterministic renormalization as in \eqref{NLS2}
has also been used to study  the fractional NLS; see \cite{OW, OTzW}.

In case of the cubic NLW, 
it is tempting to consider a deterministic renormalization
analogous to \eqref{NLS2}:
\begin{align}
\textstyle 
\dt^2 u + (1- \Dl) u +  \big( u^2- 3 \int u^2dx \big)u = 0.
\label{GNLW1}
\end{align}

\noi
Denoting the nonlinearity in \eqref{GNLW1} by 
 $f(u)$, 
 its spatial Fourier transform is written as 
\begin{align}
 \ft{f(u)} (n)
= \sum_{\substack{n = n_1 + n_2 + n_3\\(n_1+ n_2) (n_2 + n_3)(n_3 + n_1)\ne 0}}
\prod_{j = 1}^3 \ft u(n_j)
- 3 |\ft u(n)|^2 \ft u(n)  + \ind_{n = 0} (\ft u(0))^3.
\label{GNLW2}
\end{align}

\noi
This renormalization 
cancels certain resonant interactions ($n_j + n_k = 0$ for $j \ne k$), 
which allows us to make sense of $f(u)$
for 
$u$ of the form \eqref{u2} with  
 $z$  as in \eqref{lin1a} and smoother $v$.
Indeed,  the problematic terms $z^3$ and $3z^2v$ in $(z+v)^3$
are now modified into 
$z^3 - 3 \int z^2dx \cdot z$ and $3 \big( z^2 - \int z^2) v$, 
each of which has a well-defined meaning.

There are, however, two issues in using the renormalized model \eqref{GNLW2}.
Unlike the cubic NLS, the renormalized model \eqref{GNLW2}
is not naturally associated with 
the unrenormalized model \eqref{NLW0} with $k = 3$
in the sense that it is not equivalent to  
the unrenormalized model
even for smooth solutions, 
in particular, due to the lack of the $L^2$-conservation for NLW. 
The second point is that the renormalized model \eqref{GNLW2}
possesses finite-time blowup solutions,\footnote{For a function $u$ independent
of the spatial variable, the defocusing
``renormalized''
nonlinearity in~\eqref{GNLW1}
becomes the focusing (unrenormalized) nonlinearity:
$  \big( u^2- 3 \int u^2dx \big)u = - 2 u^3$, 
showing that there exists a finite time blowup solution
$u(t) \sim \sqrt{2}(T_*- t)^{-1}$ in the sense of asymptotic equality as $t - T_*-$.
}
whereas the Wick ordered NLW \eqref{WNLW0} is almost surely globally well-posed;
see Theorem~\ref{THM:OTh} and Proposition~\ref{PROP:easy}.
See also \cite{GKOT}.

\smallskip

\noi
(iv)  
The main results of this paper
are readily applicable to the two-dimensional stochastic NLW with space-time white noise forcing studied
in \cite{GKO, GKOT, OOcomp}.
Moreover,  
our work provides   a natural framework for obtaining similar non-uniqueness results for singular stochastic PDEs. 
For instance, it would be interesting to establish an analogue of  Theorem~\ref{THM:ill0} 
in the context of the stochastic wave equations in higher dimensions
\cite{OOT1, OOT2, Bring, BDNY} and the stochastic heat equations
\cite{DPD, Hairer14, CC, MW1}. 
We mention a recent work \cite{HZZ}
on the stochastic Navier-Stokes equations.

\medskip

This remaining part of the paper is organized as follows.
In Section~\ref{SEC:2}, we collect some deterministic and stochastic lemmas.
In Section~\ref{SEC:GWP}, we prove Proposition \ref{PROP:easy}.
In Section~\ref{SEC:uniq}, we show the convergence and uniqueness of Wick powers and then present the proof of Theorem~\ref{THM:uniq}.
In Section~\ref{SEC:NI}, we prove norm inflation at general initial data
for the deterministic cubic NLW \eqref{NLW3} with $k = 3$
(Theorem \ref{THM:illposed}).
In Section~\ref{SEC:NI2}, we
first establish local well-posedness of the perturbed NLW \eqref{WNLW5} in 
a carefully chosen  Fourier-Lebesgue space 
(Lemma \ref{LEM:LWP1}) and an approximation lemma (Lemma \ref{LEM:approx}), which implies Proposition~\ref{PROP:ill3}.
In Appendix~\ref{SEC:A}, we present the proof of the almost sure convergence of stochastic objects (Proposition~\ref{PROP:reg}).

\section{Deterministic and stochastic lemmas}
\label{SEC:2}
\subsection{Hermite polynomials
and white noise functional}
First, we recall the Hermite polynomials $H_k(x; \s)$
defined via the generating function:
\begin{align}
F(t, x; \s) \stackrel{\text{def}}{=}  e^{tx - \frac{1}{2}\s t^2} = \sum_{k = 0}^\infty \frac{t^k}{k!} H_k(x;\s).
\label{H1}
 \end{align}
	
\noi
For simplicity, we set 
$F(t, x) \stackrel{\text{def}}{=} F(t, x; 1)$ and
$H_k(x) \stackrel{\text{def}}{=} H_k(x; 1)$.
Note that $H_k (x;\s) = \s^{\frac k2} H_k (\s^{-\frac 12} x)$ holds.
In the following, we list the first few Hermite polynomials
for readers' convenience:
\begin{align}
 H_0(x;\s) = 1, 
\quad 
H_1(x;\s) = x, 
\quad
H_2(x;\s) = x^2 - \s, 
\quad  H_3(x;\s) = x^3 - 3 \s x.
\label{H1a}
\end{align}
	
\noi
For the derivative, the following properties hold:
\begin{align}
\dx H_k(x) = k H_{k-1} (x) \quad \text{and}\quad 
H_k (x) = x H_{k-1}(x) - \dx H_{k-1}(x).
\label{H2}
\end{align}

Next, we define the white noise functional.
Let  $\xi(x;\o)$ be the (real-valued) mean-zero Gaussian white noise on $\T^2$
defined by
\[ \xi(x;\o) = \sum_{n\in \Z^2} g_n(\o) e^{in\cdot x},\]

\noi
where 
$\{g_{ n} \}_{n \in \Z^2}$
is a sequence of independent standard complex-valued Gaussian
random variables
conditioned that $g_{ -n} = \cj{g_{n}}$, $n \in \Z^2$.
It is easy to see that $\xi \in \H^s(\T^2) \setminus \H^{-1}(\T^2)$, $s < -1$, almost surely.
In particular, $\xi$ is a distribution, acting  on smooth functions.
In fact, the action of $\xi$ can be defined on $L^2(\T^2)$.
We define the white noise functional  $W_{(\cdot)}: L^2(\T^2) \to L^2(\O)$
by 
\begin{align}
 W_f (\o) = \jb{f, \xi(\o)}_{L^2} = \sum_{n \in \Z^2} \ft f(n) \cj{g_n}(\o)
\label{W0}
 \end{align}

\noi
for a real-valued function $f \in L^2(\T^2)$.
Note that $W_f=\xi(f)$ is basically the Wiener integral of $f$.
In particular, 
$W_f$ is a real-valued Gaussian random variable
with mean 0 and variance $\|f\|_{L^2}^2$.
Moreover, 
$W_{(\cdot)}$ is unitary:
\begin{align}
\E\big[ W_f W_h ] = \jb{f, h}_{L^2}
 \label{W0a}
\end{align}

\noi
for $f, h \in L^2(\T^2)$.
In general, we have the following lemma.
See \cite[Lemma 1.1.1]{Nu}.

\begin{lemma}\label{LEM:W1}
\textup{(i)}
Let $g_1$ and $g_2$ be mean-zero real-valued jointly Gaussian random variables with variances $\s_1$
and $\s_2$.
Then, we have 
\begin{align*}
\E\big[ H_k(g_1; \s_1) H_m(g_2; \s_2)\big] = \dl_{km} k! \big\{\E[ g_1 g_2] \big\}^k.
\end{align*}

\smallskip

\noi
\textup{(ii)}
Let $f, h \in L^2(\T^2)$ such that $\|f\|_{L^2} = \|h\|_{L^2} = 1$.
Then, for $k, m \in \N \cup \{ 0 \}$, we have 
\begin{align*}
\E\big[ H_k(W_f)H_m(W_h)\big]
=  \dl_{km} k! [\jb{f, h}_{L^2}]^k.
\end{align*}

\noi
Here, $\dl_{km}$ denotes the Kronecker's delta function.
\end{lemma}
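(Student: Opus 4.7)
The plan is to deduce both parts from a single generating-function identity. I would start with \eqref{H1} and multiply the two expansions to get, formally,
\[
F(s, g_1; \s_1)\, F(t, g_2; \s_2) = \sum_{k, m \geq 0} \frac{s^k t^m}{k!\, m!}\, H_k(g_1; \s_1)\, H_m(g_2; \s_2),
\]
and then take expectations of both sides in two different ways.

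For the right-hand side, I would compute the expectation directly from the definition of $F$ in \eqref{H1}: the product equals $\exp\bigl(s g_1 + t g_2 - \tfrac{1}{2}\s_1 s^2 - \tfrac{1}{2}\s_2 t^2\bigr)$, and since $(g_1, g_2)$ is jointly mean-zero Gaussian, $s g_1 + t g_2$ is mean-zero Gaussian with variance $s^2 \s_1 + t^2 \s_2 + 2 s t\, \E[g_1 g_2]$. The standard Gaussian exponential moment then gives
\[
\E\big[F(s, g_1; \s_1)\, F(t, g_2; \s_2)\big] = e^{s t\, \E[g_1 g_2]} = \sum_{k \geq 0} \frac{(st)^k}{k!}\,\{\E[g_1 g_2]\}^k.
\]
Comparing this with the term-by-term expectation of the double series (where the interchange of $\E$ with the summation is justified by Fubini once one observes that the bivariate series is absolutely convergent in $L^1(\O)$, using Cauchy--Schwarz together with the Gaussian exponential moment bound) and matching the coefficient of $s^k t^m$ yields
\[
\frac{1}{k!\, m!}\,\E\big[H_k(g_1; \s_1)\, H_m(g_2; \s_2)\big] = \dl_{km}\,\frac{1}{k!}\,\{\E[g_1 g_2]\}^k,
\]
which is part (i).

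For part (ii), I would simply specialize part (i) to $(g_1, g_2) = (W_f, W_h)$. By \eqref{W0a}, $W_f$ and $W_h$ are jointly Gaussian with $\mathrm{Var}(W_f) = \|f\|_{L^2}^2 = 1$, $\mathrm{Var}(W_h) = \|h\|_{L^2}^2 = 1$, and $\E[W_f W_h] = \jb{f, h}_{L^2}$. Since $\s_1 = \s_2 = 1$ we have $H_k(\,\cdot\,; 1) = H_k(\,\cdot\,)$, and plugging these into (i) yields the claim.

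The only real subtlety is the coefficient-extraction step, i.e.\ justifying the interchange of $\E$ with the bivariate power series and then reading off coefficients. This is completely standard: the Gaussian exponential moments imply that the bivariate series has infinite radius of convergence in $L^1(\O)$, so Fubini applies and the resulting analytic power series in $(s, t)$ has unique coefficients. No other obstacle is expected; everything else is an algebraic consequence of the generating-function identity in \eqref{H1}.
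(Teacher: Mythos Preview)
Your proposal is correct and is essentially the same approach as the paper's: compute $\E\big[F(s,g_1;\s_1)F(t,g_2;\s_2)\big]$ via the Gaussian exponential moment to obtain $e^{st\,\E[g_1g_2]}$, then match coefficients with the double Hermite expansion to get part~(i), and specialize to $(W_f,W_h)$ using \eqref{W0a} for part~(ii). The paper gives only a one-line sketch of this argument, so your write-up is simply a more detailed version of the same proof.
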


Part (i) of Lemma \ref{LEM:W1} easily follows from the definition \eqref{H1}
of the generating function: 
\[\E [ F(t, g_1; \s_1) F(s, g_2; \s_2)]
=  \sum_{k, m  = 0}^\infty \frac{t^k}{k!}
 \frac{s^m}{m!}
\E\big[ H_k(g_1; \s_1)H_m(g_2;\s_2)\big], \]

\noi
while Part (ii) is an immediate corollary of Part (i) and \eqref{W0a}.

As in \cite{OTh2}, we also employ the white noise functional adapted to $z(t)$.
In view of \eqref{lin2}, 
we define  the white noise functional 
$W^t_{(\cdot)}: L^2(\T^2) \to L^2(\O)$
with a parameter $t \in \R$ by 
\begin{align}
 W^t_f (\o) = \jb{f, \xi^t(\o)}_{L^2} = \sum_{n \in \Z^2} \ft f(n) \cj{g^t_{0,n}}(\o).
\label{PStr2}
 \end{align}

\noi
Here,  $\xi^t$ denotes (a specific realization of) the white noise on $\T^2$ given by 
\[\xi^t (x; \o) = \sum_{n\in \Z^2} g_{0,n}^t (\o) e^{in \cdot x},\]

\noi
where $g_{0,n}^t$ is defined in \eqref{lin3}.
\noi
Since $\{g_{0,n}^t\}_{n\in \Z^2}$ is a sequence of independent standard Gaussian random variables with
$
g^t_{0,-n} = \cj{g^t_{0,n}}$,
 the white noise functional
$W^t_{(\cdot)}$ defined in~\eqref{PStr2} satisfies
the same properties as the standard white noise functional 
$W_{(\cdot)}$ defined in~\eqref{W0}. 
Moreover, we have the following lemma.

\begin{lemma}\label{LEM:W2}
Let $f, h \in L^2(\T^2)$ such that $\|f\|_{L^2} = \|h\|_{L^2} = 1$.
Then, for $k, m \in \N \cup \{ 0\}$ and $t_1, t_2 \in \R$, we have 
\begin{align}
\E\big[ H_k(W_f^{t_1})H_m(W_h^{t_2})\big]
=  \dl_{km} k! \, ( \IP (f,h)[t_1-t_2] )^k,
\label{W2}
\end{align}

\noi
where
\[
\IP (f,h)[t] = \sum_{n \in \Z^2} \ft f(n) \cj{\ft h(n)} \cos (t \jb{n}).
\]
\end{lemma}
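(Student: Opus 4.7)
The plan is to reduce directly to Lemma \ref{LEM:W1}(i) by identifying the pair $(W_f^{t_1}, W_h^{t_2})$ as mean-zero real-valued jointly Gaussian random variables with unit variances and cross-covariance equal to $\IP(f,h)[t_1-t_2]$; once this is done, the stated identity \eqref{W2} follows by taking $\sigma_1 = \sigma_2 = 1$ in Lemma \ref{LEM:W1}(i).

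First, I would verify joint Gaussianity, reality, and normalization of $W_f^{t_1}$ and $W_h^{t_2}$. From \eqref{lin3}, each $g_{0,n}^{t_j}$ is a real linear combination of the jointly Gaussian family $\{g_{0,n}, g_{1,n}\}_{n \in \Z^2}$, so $W_f^{t_1}$ and $W_h^{t_2}$, being linear functionals of this family via \eqref{PStr2}, are jointly Gaussian. Reality follows from $f$, $h$ being real-valued together with the conjugacy constraint $g_{0,-n}^{t} = \cj{g_{0,n}^{t}}$ from \eqref{B1}. Since $\{g_{0,n}^t\}_{n\in\Z^2}$ satisfies the same covariance structure as $\{g_{0,n}\}_{n\in\Z^2}$ (as observed just after \eqref{B1}), the functional $W^t_{(\cdot)}$ inherits the unitarity property \eqref{W0a}, and in particular $\mathrm{Var}(W_f^{t_j}) = \|f\|_{L^2}^2 = 1$ and similarly for $h$.

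Second, I would compute the cross-covariance $\E[W_f^{t_1} W_h^{t_2}]$. Plugging $g_{0,n}^t = \cos(t\jb{n}) g_{0,n} + \sin(t\jb{n}) g_{1,n}$ from \eqref{lin3} into
\[
\E\big[g_{0,n}^{t_1} \cj{g_{0,m}^{t_2}}\big],
\]
and using that $\{g_{0,n}, g_{1,n}\}_{n\in\Z^2}$ is an independent family (modulo $g_{j,-n} = \cj{g_{j,n}}$) with $\E[g_{j,n} \cj{g_{k,m}}] = \dl_{jk}\dl_{nm}$, the cross terms vanish and the product-to-sum identity $\cos a \cos b + \sin a \sin b = \cos(a-b)$ yields
\[
\E\big[g_{0,n}^{t_1} \cj{g_{0,m}^{t_2}}\big] = \dl_{nm} \cos((t_1 - t_2)\jb{n}).
\]
Inserting this into the double sum coming from \eqref{PStr2} (and using $\cj{g_{0,m}^{t_2}} = g_{0,-m}^{t_2}$ together with $\ft h(-n) = \cj{\ft h(n)}$ since $h$ is real) collapses the sum to
\[
\E\big[W_f^{t_1} W_h^{t_2}\big] = \sum_{n \in \Z^2} \ft f(n) \cj{\ft h(n)} \cos((t_1 - t_2)\jb{n}) = \IP(f,h)[t_1 - t_2].
\]

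Finally, applying Lemma \ref{LEM:W1}(i) with $g_1 = W_f^{t_1}$, $g_2 = W_h^{t_2}$, $\s_1 = \s_2 = 1$, and $\E[g_1 g_2] = \IP(f,h)[t_1 - t_2]$ yields \eqref{W2}. No substantive obstacle arises; the only point requiring care is the bookkeeping of complex conjugates induced by the reality constraint \eqref{B1}, which must be handled correctly so that the cosine covariance emerges cleanly from the orthogonality of $\{g_{0,n}, g_{1,n}\}$.
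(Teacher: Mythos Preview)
Your proposal is correct and takes a genuinely different route from the paper. The paper does not invoke Lemma~\ref{LEM:W1}(i); instead it works directly with the generating function $F(t,x)=e^{tx-\frac12 t^2}$: it expands $W_f^{t_1}+W_h^{t_2}$ explicitly in terms of $\Re g_{j,n}$, $\Im g_{j,n}$ using \eqref{lin3}, computes $\E\big[e^{tW_f^{t_1}+sW_h^{t_2}}\big]$ via the Gaussian moment generating function, obtains $\E[F(t,W_f^{t_1})F(s,W_h^{t_2})]=e^{ts\,\IP(f,h)[t_1-t_2]}$, and then compares Taylor coefficients in $t^ks^m$. Your argument is shorter because you recognize that once the cross-covariance $\E[W_f^{t_1}W_h^{t_2}]$ is computed, Lemma~\ref{LEM:W1}(i) applies verbatim with $\s_1=\s_2=1$; the paper effectively reproves that lemma in this special case. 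What your approach buys is economy and a clean reduction to a stated result; what the paper's approach buys is self-containedness and an explicit record of the moment generating function identity \eqref{W2a}, which some readers may find reassuring given the bookkeeping with the conjugacy constraint \eqref{B1}.
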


While Lemma~\ref{LEM:W2} follows from a similar argument as in the proof of Lemma 3.4 in \cite{ORT}, for readers' convenience, we provide a proof here.

\begin{proof}
From  \eqref{PStr2} with \eqref{lin3}, we have 
\begin{align*}
W_{f}^{t_1}(\o) +W_{h}^{t_2}(\o)
=& \sum_{n\in\Z^2} \Big\{ \Big( \ft f(n) \cos( t_1\jb{n}) + \ft h(n) \cos (t_2 \jb{n}) \Big) \cj{g_{0,n} (\o)} \\
& \quad  + \Big( \ft f(n) \sin (t_1 \jb{n}) + \ft h(n) \sin (t_2 \jb{n}) \Big) \cj{g_{1,n} (\o)} \Big\} \\
=& \sum_{n\in\Z^2} \Big\{ \Re \Big( \ft f(n) \cos( t_1\jb{n}) + \ft h(n) \cos (t_2 \jb{n}) \Big) \Re g_{0,n} (\o) \\
& \quad  + \Im \Big( \ft f(n) \cos( t_1\jb{n}) + \ft h(n) \cos (t_2 \jb{n}) \Big) \Im g_{0,n} (\o) \\
& \quad  + \Re \Big( \ft f(n) \sin (t_1 \jb{n}) + \ft h(n) \sin (t_2 \jb{n}) \Big) \Re g_{1,n} (\o) \\
& \quad  + \Im \Big( \ft f(n) \sin (t_1 \jb{n}) + \ft h(n) \sin (t_2 \jb{n}) \Big) \Im g_{1,n} (\o) \Big\},
\end{align*}

\noi
where the second equality follows from \eqref{B1}
and the fact that $f$ and $h$ are real-valued.
Since $\Re g_{j,n}$ and $\Im g_{j,n}$ are independent Gaussian random variables with mean $0$ and variance $\frac{1}{2}$ for $n\neq 0$ ($1$ if $n=0$) and $g_{j,-n} = \cj{g_{j,n}}$, we have
\begin{align*}
\int_\O e^{t W_{f}^{t_1}(\o)} e^{sW_{h}^{t_2}(\o)} dP(\o)
= e^{\frac 12 (t^2 \|f \|_{L^2}^2+s^2 \|h \|_{L^2}^2+2 \IP (f,h)[t_1-t_2])}
\end{align*}

\noi
for any $t, s \in \R$, 
where once again we used
the fact that $f$ and $h$ are real-valued.

Let $F$ be as in\;\eqref{H1}.
Then, for any $t, s \in \R$ and $f, h \in L^2(\T^2)$ with $\|f \|_{L^2} = \|h\|_{L^2} = 1$, we have 
\begin{align}
\int_{\O} F(t, W_f^{t_1}(\o)) F(s, W_h^{t_2}(\o)) dP(\o)
& = e^{-\frac{t^2 + s^2}{2}} 
\int_\O e^{tW_{f}^{t_1}(\o) +sW_{h}^{t_2}(\o)} dP(\o) \notag \\
&= e^{ts \IP (f,h)[t_1-t_2]}.
\label{W2a}
\end{align}

\noi
Thus, it follows from\;\eqref{H1} and\;\eqref{W2a} that 
\begin{align*}
e^{ts \IP (f,h)[t_1-t_2]}
 = \sum_{k, m  = 0}^\infty 
 \frac{t^ks^m}{k!m!} 
\int_\O H_k(W_f^{t_1}(\o))H_m(W_h^{t_2}(\o))dP(\o).
\end{align*}
By comparing the coefficients of $t^ks^m$, we obtain \eqref{W2}.
\end{proof}

\subsection{Product estimates}

We recall the following product estimates.
See \cite{GKO} for the proof.

\begin{lemma}\label{LEM:bilin}
 Let $0\le \al \le 1$.

\smallskip

\noi
\textup{(i)} Suppose that 
 $1<p_j,q_j,r < \infty$, $\frac1{p_j} + \frac1{q_j}= \frac1r$, $j = 1, 2$. 
 Then, we have  
\begin{equation*}  
\| \jb{\nb}^\al (fg) \|_{L^r(\T^d)} 
\les \Big( \| f \|_{L^{p_1}(\T^d)} 
\| \jb{\nb}^\al g \|_{L^{q_1}(\T^d)} + \| \jb{\nb}^\al f \|_{L^{p_2}(\T^d)} 
\|  g \|_{L^{q_2}(\T^d)}\Big).
\end{equation*}

\smallskip

\noi
\textup{(ii)} 
Suppose that  
 $1<p,q,r < \infty$ satisfy the scaling condition:
$\frac1p+\frac1q\le \frac1r + \frac{\al}d $.
Then, we have
\begin{align*}
\| \jb{\nb}^{-\al} (fg) \|_{L^r(\T^d)} \les \| \jb{\nb}^{-\al} f \|_{L^p(\T^d) } 
\| \jb{\nb}^\al g \|_{L^q(\T^d)}.  
\end{align*}

\end{lemma}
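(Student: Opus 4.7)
The plan is to prove both bilinear estimates via Littlewood-Paley decomposition and paraproduct calculus, which is the standard route to such fractional Leibniz and Sobolev-type product inequalities on $\T^d$.

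For Part (i), I would introduce dyadic Littlewood-Paley projections $\{P_N\}_{N \in 2^{\N_0}}$ on $\T^d$, with $P_1$ absorbing the zero mode together with the low frequencies $|n| \lesssim 1$, and decompose
\[
fg = \pi_{\mathrm{HL}}(f,g) + \pi_{\mathrm{LH}}(f,g) + \pi_{\mathrm{HH}}(f,g),
\]
corresponding to the high-low, low-high, and high-high frequency interactions. Each term $P_N f \cdot P_{\ll N} g$ in $\pi_{\mathrm{HL}}$ has Fourier support in $|n| \sim N$, so applying $\jb{\nb}^\al$ essentially multiplies it by $N^\al$; summing in $N$ via the Littlewood-Paley square-function characterization of $L^r$ (valid for $1 < r < \infty$) and Hölder with exponents $(p_2, q_2)$ yields a $\|\jb{\nb}^\al f\|_{L^{p_2}} \|g\|_{L^{q_2}}$ bound. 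The symmetric $\pi_{\mathrm{LH}}$ piece contributes the other term. The resonant $\pi_{\mathrm{HH}}$ piece, whose output lies at frequencies $\lesssim N$ rather than $\sim N$, is absorbed into one of the two previous types after applying a Fefferman-Stein vector-valued maximal inequality.

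For Part (ii), I would proceed by duality: writing
\[
\|\jb{\nb}^{-\al}(fg)\|_{L^r} = \sup_{\|h\|_{L^{r'}} \le 1}\bigg|\int_{\T^d} fg\, \jb{\nb}^{-\al} h\, dx\bigg|,
\]
and then decomposing $fg\,\jb{\nb}^{-\al}h$ dyadically. In the high-low and low-high paraproducts one transfers $\jb{\nb}^{-\al}$ onto $f$ via a symbol commutator argument, after which Hölder's inequality with the conjugate exponents to $(p,q)$ and Part (i) close the estimate. The essential new input is the high-high resonant piece $\sum_{N \sim M} P_N f \cdot P_M g$, whose output is spread over all scales $\le N$, and here one uses Bernstein's inequality on each block to trade $L^{r}$ summability for the $L^p \cdot L^q$ pairing; the scaling hypothesis $\frac 1p + \frac 1q \le \frac 1r + \frac \al d$ is precisely what makes the resulting geometric sum in $N$ converge.

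The main obstacle should be the high-high paraproduct estimate for Part (ii), since at the endpoint $\frac 1p + \frac 1q = \frac 1r + \frac \al d$ one cannot sum the dyadic scales by a naive geometric series and must instead square-function the output via the Littlewood-Paley characterization and then re-sum via Cauchy-Schwarz, using $1 < p, q, r < \infty$ crucially. Once this is handled and combined with the $L^p$-boundedness of the Littlewood-Paley projectors, both estimates follow. A routine final check is that the low-frequency block $P_1$ on $\T^d$, which absorbs the zero mode where $\jb{\nb}^{\pm\al}$ acts as a bounded multiplier, does not require separate treatment.
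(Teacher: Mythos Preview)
The paper does not give a proof of this lemma: it simply records the statement and refers to \cite{GKO} for the proof, the only additional observation being that the case $\frac1p+\frac1q<\frac1r+\frac\al d$ of (ii) follows from the equality case by the inclusion $L^{r_1}(\T^d)\subset L^{r_2}(\T^d)$ for $r_1\ge r_2$ on the torus. Your paraproduct/Littlewood--Paley approach is the standard route to both estimates (and is essentially what is done in \cite{GKO}), so there is nothing substantive to compare.

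One word of caution on your sketch for (ii): you write that in both the high--low and low--high paraproducts ``one transfers $\jb{\nb}^{-\al}$ onto $f$ via a symbol commutator argument.'' This is indeed straightforward for the high--low piece ($f$ high, $g$ low), since the output frequency coincides with that of $f$. For the low--high piece ($f$ low, $g$ high), however, the output sits at the frequency of $g$, so $\jb{\nb}^{-\al}$ naturally lands on $g$ rather than $f$; extracting $\|\jb{\nb}^{-\al}f\|_{L^p}$ together with $\|\jb{\nb}^{\al}g\|_{L^q}$ here requires an additional Sobolev/Bernstein step, and this is also a place where the scaling hypothesis $\frac1p+\frac1q\le\frac1r+\frac\al d$ is used, not only in the high--high piece. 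Your duality reformulation does not by itself remove this asymmetry. The result is of course true and the argument can be completed along the lines you indicate, but this step deserves more care than your sketch gives it.
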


Note that
while  Lemma \ref{LEM:bilin} (ii) 
was shown only for 
$\frac1p+\frac1q= \frac1r + \frac{\al}d $
in \cite{GKO}, 
the general case
$\frac1p+\frac1q\leq \frac1r + \frac{\al}d $
follows from the inclusion $L^{r_1}(\T^d)\subset L^{r_2}(\T^d)$
for $r_1 \geq r_2$.

\subsection{Tools from stochastic analysis}

We use the short-hand notation $L_T^q L_x^r = L^q([-T,T];L^r(\T^2))$ for $T>0$ and $1 \leq q,r \le \infty$, etc.
Thanks to the randomization of the initial data, the following probabilistic Strichartz estimates hold.

\begin{lemma}\label{LEM:PStr}
Given $(\phi_0,\phi_1) \in \H^0(\T^2)$, let $(\phi_0^0,\phi_1^\o)$ be its randomization defined in \eqref{u4}.
{\rm (i)}
Given $2\leq q<\infty$ and $2 \leq r< \infty$, there exist $C, c>0$ such that
\begin{align*}
P \Big( \| S(t) (\phi_0^\o,\phi_1^\o) \|_{L^q_T L^r_x}> \ld \Big)
&\leq C\exp\bigg(-c\frac{\ld^2}{T^{\frac{2}{q}} \| (\phi_0,\phi_1) \|_{\H^0}^2}\bigg)
\end{align*}
for any $T > 0$ and $\ld > 0$.

\noi
{\rm (ii)}
Let  $s>0$ and $(\phi_0,\phi_1) \in \H^s(\T^2)$.
Then, given $2 \leq r \leq \infty$, there exist $C, c>0$ such that
\begin{align*}
P \Big( \| S(t) (\phi_0^\o,\phi_1^\o) \|_{L^\infty_T L^r_x}> \ld \Big)
&\leq C (1+T) \exp\bigg(-c\frac{\ld^2}{\max (1,T^2) \| (\phi_0,\phi_1) \|_{\H^s}^2}\bigg)
\end{align*}
for any $T > 0$ and $\ld > 0$.
\end{lemma}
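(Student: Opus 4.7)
The plan is to establish both parts of Lemma \ref{LEM:PStr} by the familiar randomization technique: derive $L^p(\O)$ moment bounds from the Gaussian structure of the Fourier coefficients and then convert to the stated tail bounds via Chebyshev's inequality, optimizing in $p$.

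First, fix $(t,x) \in [-T,T] \times \T^2$ and observe that $S(t)(\phi_0^\o, \phi_1^\o)(x)$ is a mean-zero real-valued Gaussian random variable whose variance is
\[
\sum_{n\in \Z^2} \bigg(\cos^2(t\jb{n}) |\ft \phi_0(n)|^2 + \frac{\sin^2(t\jb{n})}{\jb{n}^2}|\ft \phi_1(n)|^2\bigg) \le \|(\phi_0,\phi_1)\|_{\H^0}^2,
\]
independently of $(t,x)$. The standard Gaussian $L^p$-$L^2$ moment comparison then yields
\[
\|S(t)(\phi_0^\o, \phi_1^\o)(x)\|_{L^p_\o} \les \sqrt{p}\,\|(\phi_0,\phi_1)\|_{\H^0}
\]
for every $p\ge 2$.

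For part (i), taking any $p\ge \max(q,r)$ and applying Minkowski's integral inequality to exchange $L^p_\o$ past $L^q_T L^r_x$ gives
\[
\big\|\|S(t)(\phi_0^\o, \phi_1^\o)\|_{L^q_T L^r_x}\big\|_{L^p_\o} \le \|S(t)(\phi_0^\o, \phi_1^\o)\|_{L^q_T L^r_x L^p_\o} \les \sqrt{p}\, T^{1/q}\,\|(\phi_0,\phi_1)\|_{\H^0}.
\]
Chebyshev's inequality then yields $P(\|S(t)(\phi_0^\o, \phi_1^\o)\|_{L^q_T L^r_x} > \lambda) \le (C\sqrt{p}\,T^{1/q}\|(\phi_0,\phi_1)\|_{\H^0}/\lambda)^p$, and optimizing with $p \sim \lambda^2/(T^{2/q}\|(\phi_0,\phi_1)\|_{\H^0}^2)$ delivers the claimed exponential tail (with a constant adjustment in the trivial regime where the optimal $p$ falls below $\max(q,r)$).

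For part (ii), the $L^\infty_T$ norm (and, when $r=\infty$, the $L^\infty_x$ norm) must be upgraded, which is where the hypothesis $s>0$ enters. My plan is to decompose $S(t)(\phi_0^\o,\phi_1^\o) = \sum_{N \textup{ dyadic}} u_N$ with $u_N = \pP_N S(t)(\phi_0^\o,\phi_1^\o)$ and exploit the fact that each block $u_N$ is essentially band-limited at frequency $N$ in both space (by construction) and time (since $t$ enters only through $\cos(t\jb{n})$ and $\sin(t\jb{n})$ with $|n|\sim N$). A Bernstein--Shannon sampling argument then reduces the supremum over $[-T,T]\times\T^2$ to the maximum over a discrete grid of $O(N^3(1+T))$ spacetime points, at each of which the moment bound from the first step applies (with an additional Bernstein step to pass from $L^\infty_x$ to an $L^{r_1}_x$ norm with large $r_1<\infty$ when $r=\infty$). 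A standard union-bound estimate for Gaussian maxima yields
\[
\big\|\|u_N\|_{L^\infty_T L^r_x}\big\|_{L^p_\o} \les \big(\sqrt{p}+\sqrt{\log(2+NT)}\,\big)\,\|\pP_N(\phi_0,\phi_1)\|_{\H^0},
\]
and dyadic summation in $N$ absorbs the logarithm into $N^{-s}$ by Cauchy--Schwarz, producing an overall factor comparable to $\max(1,T)$ in the exponent. The same Chebyshev and $p$-optimization as in part (i) then produces the stated tail bound.

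The main technical obstacle is the passage from $L^p_\o$ control at a fixed spacetime point to control in the endpoint $L^\infty$ topologies, since Minkowski fails at $L^\infty$ and naive Sobolev embedding would cost too much regularity. The sampling/union-bound mechanism above is what trades this cost for a harmless logarithm, and the hypothesis $s>0$ is precisely what allows this logarithm to be absorbed upon dyadic summation.
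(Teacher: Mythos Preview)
The paper does not supply a proof of this lemma; it only records that part~(i) is proved in \cite{CO, BT3, BOP1} and part~(ii) in \cite{BTT, OP}.  Your argument for part~(i) is correct and is exactly the standard one in those references: pointwise Gaussian moment bound, Minkowski to pull $L^p_\o$ outside $L^q_T L^r_x$, then Chebyshev and optimization in $p$.

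Your outline for part~(ii) is also correct in substance, and is in the same spirit as the arguments in \cite{BTT, OP}.  One terminological quibble: the phrase ``Bernstein--Shannon sampling'' is not quite the right description for the time variable, since $t \mapsto u_N(t,x)$ is an almost-periodic sum of $\cos(t\jb{n})$, $\sin(t\jb{n})$ with irrational frequencies $\jb{n}$, not a trigonometric polynomial.  The rigorous implementation of your idea is to partition $[-T,T]$ into intervals $I_j$ of length $\sim N^{-1}$ and apply, on each $I_j$, either the fundamental theorem of calculus or the local embedding $W^{1,q}(I_j) \hookrightarrow L^\infty(I_j)$; the factor of $N$ coming from $\partial_t u_N$ is then exactly cancelled by the interval length, giving a uniform sub-Gaussian moment bound on $\|u_N\|_{L^\infty(I_j;L^r_x)}$.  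The union bound over the $\sim N(1+T)$ intervals produces the factor $(N(1+T))^{1/p} \lesssim 1 + p^{-1/2}\sqrt{\log(N(1+T))}$ you quote, and the $\sqrt{\log N}$ is indeed absorbed by $N^{-s}$ upon dyadic summation using $s>0$.  The references \cite{BTT, OP} phrase this $L^\infty_T$ step via Sobolev or Gagliardo--Nirenberg interpolation in time rather than an explicit union bound, but the mechanisms are equivalent and your version is arguably the more transparent one.
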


The probabilistic Strichartz estimate in (i) of Lemma \ref{LEM:PStr} is proved in \cite{CO, BT3, BOP1}.
See \cite{BTT, OP} for (ii) of Lemma \ref{LEM:PStr}.
While the (deterministic) Strichartz estimate holds only for admissible pairs (see \cite{GV, LS, KT, TAO}), Lemma \ref{LEM:PStr} states that the randomization allows us to take a wide range of exponents.

Next, we recall the Wiener chaos estimates.
Let $\{ g_n \}_{n \in \N}$ be a sequence of independent standard Gaussian random variables defined on a probability space $(\O, \F, P)$, where $\mathcal{F}$ is the $\s$-algebra generated by this sequence. 
Given $k \in \N \cup \{ 0 \}$, 
we define the homogeneous Wiener chaoses $\mathcal{H}_k$ 
to be the closure (under $L^2(\O)$) of the span of  Fourier-Hermite polynomials $\prod_{n = 1}^\infty H_{k_n} (g_n)$, 
where
$H_j$ is the Hermite polynomial of degree $j$ and $k = \sum_{n = 1}^\infty k_n$.\footnote{This implies
that $k_n = 0$ except for finitely many $n$'s.}
Then, we have the following Ito-Wiener decomposition:
\begin{equation*}
L^2(\Omega, \F, P) = \bigoplus_{k = 0}^\infty \mathcal{H}_k.
\end{equation*}

\noi
See Theorem 1.1.1 in \cite{Nu}.
We also set
\begin{align*}
 \H_{\leq k} = \bigoplus_{j = 0}^k \H_j
 \end{align*}

\noi
 for $k \in \N$.

Then, as a consequence
of the  hypercontractivity of the Ornstein-Uhlenbeck
semigroup $U(t) = e^{tL}$ due to Nelson \cite{Nelson2}, 
we have the following Wiener chaos estimate
\cite[Theorem~I.22]{Simon}.
See also \cite[Proposition~2.4]{TTz}.

\begin{lemma}\label{LEM:hyp}
Let $k \in \N$.
Then, we have
\begin{equation*}
\|X \|_{L^p(\O)} \leq (p-1)^\frac{k}{2} \|X\|_{L^2(\O)}
 \end{equation*}
 
 \noi
 for any $p \geq 2$
 and any $X \in \H_{\leq k}$.

\end{lemma}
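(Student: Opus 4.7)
The plan is to deduce the Wiener chaos estimate from Nelson's hypercontractivity for the Ornstein--Uhlenbeck semigroup $U(t) = e^{tL}$ cited in the statement. Recall that $U(t)$ acts diagonally on the Ito--Wiener decomposition: for each $j \geq 0$, $U(t)$ maps $\H_j$ into itself and restricts to multiplication by $e^{-jt}$. Nelson's theorem (see \cite{Simon}) states that $U(t)$ is a contraction from $L^2(\O)$ to $L^p(\O)$ whenever $e^{2t} \geq p-1$.

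The first step is to handle the homogeneous case $X \in \H_k$. Since $U(t)X = e^{-kt}X$, choosing $t \geq 0$ with $e^{2t} = p-1$ and applying Nelson's inequality gives
\begin{equation*}
e^{-kt} \|X\|_{L^p(\O)} = \|U(t) X\|_{L^p(\O)} \leq \|X\|_{L^2(\O)},
\end{equation*}
so that $\|X\|_{L^p(\O)} \leq e^{kt} \|X\|_{L^2(\O)} = (p-1)^{k/2} \|X\|_{L^2(\O)}$, as desired.

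For general $X \in \H_{\leq k}$, I would use the following preimage trick. Decompose $X = \sum_{j=0}^k X_j$ with $X_j \in \H_j$, and define
\begin{equation*}
Y \stackrel{\text{def}}{=} \sum_{j=0}^{k} e^{jt} X_j,
\end{equation*}
with $t \geq 0$ again chosen so that $e^{2t} = p-1$. Then $U(t)Y = \sum_{j=0}^k e^{jt} e^{-jt} X_j = X$, and Nelson's bound yields
\begin{equation*}
\|X\|_{L^p(\O)} = \|U(t) Y\|_{L^p(\O)} \leq \|Y\|_{L^2(\O)}.
\end{equation*}
Using orthogonality of distinct Wiener chaoses in $L^2(\O)$,
\begin{equation*}
\|Y\|_{L^2(\O)}^2 = \sum_{j=0}^k e^{2jt} \|X_j\|_{L^2(\O)}^2 \leq e^{2kt} \sum_{j=0}^k \|X_j\|_{L^2(\O)}^2 = (p-1)^k \|X\|_{L^2(\O)}^2,
\end{equation*}
which, combined with the previous display, gives the claimed bound.

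There is no real obstacle here, since Nelson's hypercontractivity is being quoted as a black box; the only mildly delicate point is passing from $\H_k$ to $\H_{\leq k}$, and the preimage construction above handles it cleanly by exploiting the fact that the multipliers $e^{-jt}$ on each chaos are no smaller than $e^{-kt}$ for $j \leq k$.
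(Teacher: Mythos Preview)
Your proof is correct. The paper does not actually write out a proof of this lemma; it simply cites it as a consequence of Nelson's hypercontractivity for the Ornstein--Uhlenbeck semigroup, pointing to \cite[Theorem~I.22]{Simon} and \cite[Proposition~2.4]{TTz}. Your argument is exactly the standard derivation these references contain: use that $U(t)$ acts as $e^{-jt}$ on $\H_j$, choose $t$ so that $e^{2t}=p-1$, and invoke Nelson's $L^2\to L^p$ contraction. The preimage construction $Y=\sum_j e^{jt}X_j$ to pass from $\H_k$ to $\H_{\leq k}$ is clean and correct, so there is nothing to add.
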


Note that $:\! z^\l (t)\!:$ defined in \eqref{Wick4} belongs to $\H_{\le \l}$ for $\l \in \N$.
By using the white noise functional defined in \eqref{PStr2} and Lemma \ref{LEM:hyp}, Thomann and the first author \cite{OTh2} proved the following estimate on Wick powers.

\begin{lemma} \label{LEM:Z4}
Let $\l \in \N\cup \{0\}$.
Then, 
given $2 \le q,r < \infty$ and $\eps>  0$, 
there exists $C,c>0$ such that
\begin{align*}
P \Big( \| \jb{\nb}^{-\eps} :\! z^\l \!: \|_{L^q_T L^r_x}> \ld \Big)
&\leq C\exp\bigg(-c\frac{\ld^{\frac 2 \l}}{T^{\frac{2}{q \l}}}\bigg)
\end{align*}
for any $T > 0$ and $\ld > 0$.
\end{lemma}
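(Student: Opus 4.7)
The plan is to derive a high moment bound
\[
\big\| \jb{\nb}^{-\eps}:\!z_N^\l\!: \big\|_{L^p(\Omega;\, L^q_T L^r_x)} \le C_{\l,\eps}\, p^{\l/2}\, T^{1/q}
\]
for every $p \ge \max(q,r)$ (uniformly in the frequency cutoff $N$), and then extract the claimed sub-Gaussian tail from this moment bound by Chebyshev's inequality with optimization in $p$. The final step of passing from the truncated Wick powers $:\!z_N^\l\!:$ to the limit $:\!z^\l\!:$ will be handled by Fatou's lemma together with the convergence in Proposition~\ref{PROP:Z1}.

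First, Minkowski's integral inequality (applicable since $p \ge \max(q,r)$) allows me to bring $L^p(\Omega)$ inside $L^q_T L^r_x$, reducing matters to a pointwise-in-$(t,x)$ bound on $\|\jb{\nb}^{-\eps}:\!z_N^\l(t,x)\!:\|_{L^p(\Omega)}$. Since the Fourier multiplier $\jb{\nb}^{-\eps}$ preserves the $\l$-th Wiener chaos $\H_{\le \l}$, the hypercontractivity estimate of Lemma~\ref{LEM:hyp} reduces this further to the $L^2(\Omega)$ norm, at the cost of a factor $(p-1)^{\l/2}$. I then expand $:\!z_N^\l\!:$ through the white noise functional $W^t_{(\cdot)}$ from \eqref{PStr2} and apply Lemma~\ref{LEM:W2} to compute the second moment; by translation invariance it is $x$-independent, and after unpacking the covariance structure it equals
\[
\E\big[\big|\jb{\nb}^{-\eps}:\!z_N^\l(t,x)\!:\big|^2\big] = \l!\sum_{\substack{n_1, \ldots, n_\l \in \Z^2\\ |n_j|\le N}} \frac{\jb{n_1+\cdots+n_\l}^{-2\eps}}{\jb{n_1}^2 \cdots \jb{n_\l}^2}.
\]

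The key technical point is to bound this multi-sum uniformly in $N$. Collecting the $n_j$'s of fixed total sum $n$ rewrites it as $\l!\sum_{n\in\Z^2}\jb{n}^{-2\eps}\widehat{G_N^\l}(n)$, where $G_N = \sum_{|n|\le N}\jb{n}^{-2}e^{in\cdot x}$ is the truncated Green's function for $1-\Dl$ on $\T^2$. An induction on $\l$, based on the elementary two-dimensional convolution bound $\sum_{m\in\Z^2}\jb{m}^{-2}\jb{n-m}^{-2} \les \jb{n}^{-2}\log\jb{n}$, yields $\widehat{G_N^\l}(n) \les \jb{n}^{-2}(\log\jb{n})^{\l-1}$ uniformly in $N$, whence the multi-sum is controlled by $\sum_n \jb{n}^{-2-2\eps}(\log\jb{n})^{\l-1} < \infty$ for any $\eps > 0$.

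Combining these reductions and using $\|1\|_{L^q([-T,T]; L^r(\T^2))} \sim T^{1/q}$ gives the desired moment bound. Chebyshev's inequality then produces $P(\|\jb{\nb}^{-\eps}:\!z_N^\l\!:\|_{L^q_T L^r_x} > \ld) \le (Cp^{\l/2}T^{1/q}/\ld)^p$, and optimizing by choosing $p \sim (\ld\, T^{-1/q})^{2/\l}$ yields the claimed sub-Gaussian tail $\exp(-c\ld^{2/\l}/T^{2/(q\l)})$ once $\ld$ is large enough that this choice of $p$ exceeds $\max(q,r)$; smaller $\ld$ is absorbed by enlarging $C$. The principal obstacle in this program is the uniform multi-sum estimate in the second moment calculation, which is precisely the place where the Wick renormalization is essential: without it, diagonal contractions would reintroduce the logarithmic divergence of $\s_N$ characteristic of two dimensions, and no choice of $\eps > 0$ would suffice to tame the sum.
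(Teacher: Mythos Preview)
Your proposal is correct and follows essentially the same approach as the one the paper indicates (and attributes to \cite{OTh2}): the paper states explicitly that the lemma is proved ``by using the white noise functional defined in \eqref{PStr2} and Lemma~\ref{LEM:hyp}'', which is precisely your combination of Minkowski, hypercontractivity, and the second-moment computation via $W^t_{(\cdot)}$; your multi-sum bound $\widehat{G_N^\l}(n) \lesssim \jb{n}^{-2}(\log\jb{n})^{\l-1}$ is a slightly sharper variant of the estimate \eqref{PZ1} the paper establishes later in Lemma~\ref{LEM:Z1p}, and either suffices here.
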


Note that an analogous estimate also holds even when $q = r = \infty$;
see \cite{GKOT}.

\smallskip

We conclude this section by stating a proposition
useful for studying regularities of stochastic objects.
We say that a stochastic process $X:\R_+ \to \mathcal{D}'(\T^d)$
is spatially homogeneous  if  $\{X(\cdot, t)\}_{t\in \R_+}$
and $\{X(x_0 +\cdot\,, t)\}_{t\in \R_+}$ have the same law for any $x_0 \in \T^d$.
Given $h \in \R$, we define the difference operator $\dl_h$ by setting
\begin{align}
\dl_h X(t) = X(t+h) - X(t).
\label{diff1}
\end{align}

\begin{proposition}\label{PROP:reg}
Let $\{ X_N \}_{N \in \N}$ and $X$ be spatially homogeneous stochastic processes\,$:\R_+ \to \mathcal{D}'(\T^d)$.
Suppose that there exists $k \in \N$ such that 
  $X_N(t)$ and $X(t)$ belong to $\H_{\leq k}$ for each $t \in \R_+$.

\smallskip
\noi\textup{(i)}
Let $t \in \R_+$.
If there exists $s_0 \in \R$ such that 
\begin{align}
\E\big[ |\ft X(n, t)|^2\big]\les \jb{n}^{ - d - 2s_0}
\label{reg1}
\end{align}

\noi
for any $n \in \Z^d$, then  
we have
$X(t) \in W^{s, \infty}(\T^d)$, $s < s_0$, 
almost surely.
Furthermore, if there exists $\g > 0$ such that 
\begin{align}
\E\big[ |\ft X_N(n, t) - \ft X(n, t)|^2\big]\les N^{-\g} \jb{n}^{ - d - 2s_0}
\label{reg2}
\end{align}

\noi
for any $n \in \Z^d$ and $N \geq 1$, 
then 
$X_N(t)$ converges to $X(t)$ in $W^{s, \infty}(\T^d)$, $s < s_0$, 
almost surely.

\noi

\smallskip
\noi\textup{(ii)}
Let $T > 0$ and suppose that \textup{(i)} holds on $[0, T]$.
If there exists $\theta \in (0, 1)$ such that 
\begin{align}
 \E\big[ |\dl_h \ft X(n, t)|^2\big]
 \les \jb{n}^{ - d - 2s_0+ \theta}
|h|^\theta, 
\label{reg3}
\end{align}

\noi
for any  $n \in \Z^d$, $t \in [0, T]$, and $h \in [-1, 1]$,\footnote{We impose $h \geq - t$ such that $t + h \geq 0$.}
then we have 
$X \in C([0, T]; W^{s, \infty}(\T^d))$, 
$s < s_0 - \frac \theta2$,  almost surely.
Furthermore, 
if there exists $\g > 0$ such that 
\begin{align}
 \E\big[ |\dl_h \ft X_N(n, t) - \dl_h \ft X(n, t)|^2\big]
 \les N^{-\g}\jb{n}^{ - d - 2s_0+ \theta}
|h|^\theta, 
\label{reg4}
\end{align}

\noi
for any  $n \in \Z^d$, $t \in [0, T]$,  $h \in [-1, 1]$, and $N \geq 1$, 
then 
$X_N$ converges to $X$ in $C([0, T]; W^{s, \infty}(\T^d))$, $s < s_0 - \frac{\theta}{2}$,
almost surely.

\end{proposition}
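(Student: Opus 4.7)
The plan is to deduce the almost-sure regularity from the Fourier-side second-moment bounds \eqref{reg1}--\eqref{reg4} by combining the Wiener chaos estimate (Lemma~\ref{LEM:hyp}), Bernstein's inequality on frequency-localized functions, and (for Part~(ii)) Kolmogorov's continuity criterion. Throughout, let $\{\dl_j\}_{j\geq -1}$ denote a standard inhomogeneous Littlewood-Paley decomposition on $\T^d$, so that $\dl_j$ localizes to frequencies $|n|\sim 2^j$.

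For Part~(i), spatial homogeneity of $X(\cdot,t)$ forces the Fourier coefficients $\{\ft X(n,t)\}_{n\in\Z^d}$ to be pairwise uncorrelated, so that \eqref{reg1} gives
\begin{align*}
\E\bigl[|\dl_j X(t,x)|^2\bigr] \les 2^{-2js_0},
\end{align*}
uniformly in $x\in\T^d$. Since $\dl_j X(t,x) \in \H_{\leq k}$, Lemma~\ref{LEM:hyp} upgrades this to $\E[|\dl_j X(t,x)|^p]^{1/p} \les p^{k/2}\, 2^{-js_0}$ for every $p\geq 2$. Fubini in $(\o,x)$ followed by Bernstein's inequality then yields
\begin{align*}
\bigl\| \dl_j X(t) \bigr\|_{L^p(\O;\, L^\infty_x)} \les p^{k/2}\, 2^{jd/p}\, 2^{-js_0}.
\end{align*}
Since $\|f\|_{W^{s,\infty}} \les \sum_j 2^{js}\|\dl_j f\|_{L^\infty}$, summing over $j$ with the choice $p > d/(s_0-s)$ gives $\|X(t)\|_{W^{s,\infty}}\in L^p(\O)$, and in particular $X(t)\in W^{s,\infty}(\T^d)$ almost surely. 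The convergence statement under \eqref{reg2} follows by applying the identical computation to the difference $X_N-X$, which carries an extra factor of $N^{-\g/2}$, and then invoking Borel-Cantelli after choosing $p$ so that $\g p/2>1$.

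For Part~(ii), repeating the same scheme with \eqref{reg3} applied to the time increment $\dl_h X(t,x)$ in place of $X(t,x)$ gives
\begin{align*}
\bigl\| \dl_h X(t) \bigr\|_{L^p(\O;\, W^{s,\infty})} \les p^{k/2}\, |h|^{\theta/2},
\end{align*}
valid whenever $s < s_0 - \theta/2 - d/p$. Taking $p$ so large that simultaneously $s < s_0-\theta/2-d/p$ and $p\theta/2 > 1$, Kolmogorov's continuity criterion upgrades this to almost-sure $\beta$-H\"older continuity of $t\mapsto X(t)\in W^{s,\infty}(\T^d)$ for some $\beta>0$, covering---after letting $p\to\infty$---the full range $s < s_0 - \theta/2$. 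The convergence of $X_N$ to $X$ in $C([0,T]; W^{s,\infty})$ under \eqref{reg4} follows by applying this Kolmogorov-type estimate to the difference $X_N-X$, which picks up an additional factor $N^{-\g/2}$, and then running a Borel-Cantelli argument along a sufficiently sparse subsequence $N_\ell\to\infty$.

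The only technical point to watch is the simultaneous balancing of exponents: $p$ must be large enough to defeat the Bernstein loss $d/p$, the temporal loss $1/p$ in Kolmogorov, and to provide summability in $N$ via Borel-Cantelli. All three constraints are compatible by letting $p\to\infty$ after fixing $s$ strictly below the claimed threshold, and the $p^{k/2}$ prefactor from Wiener chaos is harmless since it grows only polynomially in $p$. No further obstruction is anticipated, as this is the standard recipe for verifying regularity of stochastic objects living in a fixed-order Wiener chaos.
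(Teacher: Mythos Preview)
Your approach is essentially correct and closely parallels the paper's. The paper packages the fixed-time moment bound slightly differently---using the Sobolev embedding $W^{(s_0-s)/2,\,r}(\T^d)\hookrightarrow L^\infty(\T^d)$ for large finite $r$ in place of your Littlewood--Paley/Bernstein decomposition---but the two routes are interchangeable, and both rest on the same three ingredients: spatial homogeneity to decorrelate Fourier modes, the Wiener chaos estimate (Lemma~\ref{LEM:hyp}) to pass from second to $p$th moments, and Chebyshev plus Borel--Cantelli for the almost-sure conclusions.

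There is, however, one genuine gap in your Part~(ii). You propose to ``run a Borel--Cantelli argument along a sufficiently sparse subsequence $N_\ell\to\infty$,'' but the proposition asserts almost-sure convergence of the \emph{full} sequence $\{X_N\}_{N\in\N}$, and there is no monotonicity or interpolation structure here that would let you upgrade subsequential convergence to full convergence. The fix is exactly the one you already used in Part~(i): once Kolmogorov's criterion applied to $Y_N = X_N - X$ yields $\E\bigl[\|Y_N\|_{C([0,T];W^{s,\infty})}^p\bigr]\les N^{-\g p/2}$ (combining the increment bound from \eqref{reg4} with the fixed-time bound from \eqref{reg2} at $t=0$), simply choose $p$ so large that $\g p/2>1$ and sum Chebyshev's inequality over \emph{all} $N\in\N$. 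The paper actually takes a slightly different route for this step---it performs a single Kolmogorov-type dyadic-in-time argument with a $\sup_{N\in\N}$ built into the probability, so that one Borel--Cantelli over the dyadic time scale handles every $N$ at once---but your $N$-by-$N$ scheme is equally valid once the subsequence restriction is dropped.
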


Proposition~\ref{PROP:reg} follows
from a straightforward application of the Wiener chaos estimate
(Lemma \ref{LEM:hyp}).
For the proof, see Proposition 3.6 in \cite{MWX}
and  Appendix \ref{SEC:A}.
In particular, for the almost sure convergence claimed
in Proposition \ref{PROP:reg}, 
we need to proceed as in a standard proof of
Kolmogorov'a continuity criterion; see Appendix \ref{SEC:A}
for details.
See also Section~3 in  \cite{OPTz}.

As a corollary, we also have the following (See Remark \ref{REM:cor}).

\begin{corollary} \label{COR:reg1}
Let $\{ X_N \}_{N \in \N}$ be a spatially homogeneous stochastic process\,$:\R_+ \to \mathcal{D}'(\T^d)$.
Suppose that there exists $k \in \N$ such that 
  $X_N(t)$ belongs to $\H_{\leq k}$ for each $t \in \R_+$.

\smallskip
\noi\textup{(i)}
Let $t \in \R_+$.
If there exist $s_0 \in \R$ and $\g > 0$ such that 
\begin{align*}
\E\big[ |\ft X_N(n, t)|^2\big]
&\les \jb{n}^{ - d - 2s_0}, \\
\E\big[ |\ft X_N(n, t) - \ft X_M(n, t)|^2\big]
&\les N^{-\g} \jb{n}^{ - d - 2s_0}
\end{align*}
for any $n \in \Z^d$ and $M \ge N \geq 1$, 
then 
$X_N(t)$ converges in $W^{s, \infty}(\T^d)$, $s < s_0$, 
almost surely.

\noi

\smallskip
\noi\textup{(ii)}
Let $T > 0$ and suppose that \textup{(i)} holds on $[0, T]$.
If there exist $\g > 0$ and $\theta \in (0, 1)$ such that 
\begin{align*}
\E\big[ |\dl_h \ft X_N(n, t)|^2\big]
&\les \jb{n}^{ - d - 2s_0+ \theta}
|h|^\theta, \\
\E\big[ |\dl_h \ft X_N(n, t) - \dl_h \ft X_M(n, t)|^2\big]
&\les N^{-\g}\jb{n}^{ - d - 2s_0+ \theta}
|h|^\theta, 
\end{align*}

\noi
for any  $n \in \Z^d$, $t \in [0, T]$,  $h \in [-1, 1]$, and $M \ge N \geq 1$, 
then 
$X_N$ converges in $C([0, T]; W^{s, \infty}(\T^d))$, $s < s_0 - \frac{\theta}{2}$,
almost surely.

\end{corollary}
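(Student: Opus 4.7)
The plan is to deduce Corollary~\ref{COR:reg1} directly from Proposition~\ref{PROP:reg} by first constructing a limiting spatially homogeneous process $X$ out of the Cauchy-type hypotheses on $\{X_N\}$, and then verifying that $(\{X_N\}, X)$ satisfies the hypotheses of Proposition~\ref{PROP:reg}.

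For part (i), I fix $t \in \R_+$ and $n \in \Z^d$ and observe that the hypothesis
\[
\E\big[|\ft X_N(n, t) - \ft X_M(n, t)|^2\big] \les N^{-\g}\jb{n}^{-d-2s_0}
\]
shows that $\{\ft X_N(n,t)\}_N$ is Cauchy in $L^2(\O)$. Let $\ft X(n,t)$ denote its $L^2(\O)$-limit. Since each $\ft X_N(n,t)$ lies in the closed subspace $\H_{\leq k} \subset L^2(\O)$, so does $\ft X(n,t)$. I then define $X(t)$ to be the random Fourier series with coefficients $\ft X(n,t)$; sending $N \to \infty$ in the first bound of (i) via continuity of the $L^2(\O)$-norm yields
\[
\E\big[|\ft X(n, t)|^2\big] \les \jb{n}^{-d-2s_0},
\]
which is exactly \eqref{reg1} and also guarantees that $X(t)$ is a well-defined random distribution. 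Spatial homogeneity of the $X_N$ is inherited by $X$ since joint laws of finite collections $\{\ft X_N(n_j,t_j)\}$ pass to $L^2(\O)$-limits. Next, sending $M \to \infty$ in the Cauchy bound (again by continuity of the $L^2(\O)$-norm) produces
\[
\E\big[|\ft X_N(n, t) - \ft X(n, t)|^2\big] \les N^{-\g}\jb{n}^{-d-2s_0},
\]
which is precisely \eqref{reg2}. Proposition~\ref{PROP:reg}(i) then delivers the claimed almost sure convergence of $X_N(t)$ to $X(t)$ in $W^{s,\infty}(\T^d)$ for $s<s_0$.

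Part (ii) is handled analogously. The difference operator $\dl_h$ defined in \eqref{diff1} acts linearly on Fourier coefficients and therefore commutes with the $L^2(\O)$-limit $\ft X_M \to \ft X$. Hence, taking $M \to \infty$ in the Cauchy hypothesis for $\dl_h \ft X_N - \dl_h \ft X_M$ gives \eqref{reg4}, and taking $N \to \infty$ in the first bound of (ii) (using the triangle inequality and the $L^2(\O)$-convergence already established) yields \eqref{reg3}. Proposition~\ref{PROP:reg}(ii) then produces almost sure convergence of $X_N$ to $X$ in $C([0,T]; W^{s,\infty}(\T^d))$ for $s < s_0 - \theta/2$, as desired.

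The one subtlety, which I view as the main point to verify carefully rather than a genuine obstacle, is that the pointwise-in-$(n,t)$ $L^2(\O)$-limits $\ft X(n,t)$ can be assembled into a bona fide spatially homogeneous $\mathcal{D}'(\T^d)$-valued process to which Proposition~\ref{PROP:reg} literally applies; this requires checking measurable dependence on $t$ (inherited from the $X_N$'s via the $L^2(\O)$-limit of jointly measurable fields) and that the Fourier-coefficient decay in \eqref{reg1} for $X$ is uniform enough to define $X(t)$ as a distribution. Both are standard consequences of the Wiener chaos estimate (Lemma~\ref{LEM:hyp}) combined with hypercontractivity, mirroring the argument underlying Proposition~\ref{PROP:reg} itself.
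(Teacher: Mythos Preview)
Your proposal is correct and follows essentially the same strategy as the paper (see Remark~\ref{REM:cor}): construct the limiting process $X$ from the Cauchy-type hypotheses, verify that $(\{X_N\},X)$ satisfies \eqref{reg1}--\eqref{reg4}, and invoke Proposition~\ref{PROP:reg}. The only minor difference is that the paper first passes through the $L^p(\O;W^{s,\infty})$ bounds of Lemma~\ref{LEM:A1} to build $X$ at the level of $W^{s,\infty}$ and then recovers \eqref{reg3}--\eqref{reg4} via Fatou's lemma, whereas you construct $X$ Fourier coefficient by Fourier coefficient in $L^2(\O)$ and pass to the limit directly by $L^2(\O)$-continuity; both executions are equally valid.
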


Proposition \ref{PROP:reg} and Corollary \ref{COR:reg1}
have been  useful widely in 
the recent study of singular stochastic PDEs;
see for example, \cite{GKO2, OOcomp, GKOT, OOT1, OOT2}.

\section{Global existence of smooth solutions for the renormalized NLW}
\label{SEC:GWP}
In this section, we present the proof of Proposition~\ref{PROP:easy}.
We point out that, thanks to the Cameron-Martin theorem \cite{CM}, we can assume that $r_0=r_1=0$.
See also \cite{OQ}. 
Hence, it suffices to study 
\begin{align} \label{WNLW3}
\dt^2 v + (1- \Dl) v + H_k ( S(t)(\phi_0^\o, \phi_1^\o) +v(t); \sigma(t)) = 0
\end{align}
with the zero initial data, where $\sigma(t)$ is defined by \eqref{sigma_pak}. In particular, it satisfies 
\begin{equation}\label{u5}
\sigma(t) \les \| (\phi_0,\phi_1) \|_{\H^0}^2 \qquad
\text{and}\qquad
|\dt \sigma(t)| \les \| (\phi_0,\phi_1) \|_{\H^\frac 12}^2.
\end{equation}

We first go over local well-posedness of \eqref{WNLW3}.
For this purpose, we consider the following
deterministic perturbed cubic NLW:
\begin{align} \label{smoothv2}
\dt^2 v + (1- \Dl) v + H_k ( f(t) +v(t); \s(t)) = 0,
\end{align}
where $f$ is a given deterministic  function and $\s(t)$ satisfies \eqref{u5}.

\begin{lemma} \label{LEM:LWPreg}
Let $k \ge 3$ be an odd integer,  $(\phi_0,\phi_1) \in \H^0(\T^2)$, 
 $(v_0,v_1) \in \H^1(\T^2)$,  and $f \in L^k([t_0, t_0 + 1]; L^{\infty}(\T^2))$ 
 for some  $t_0\in \R$.
Suppose that there exist $R, \theta> 0$ such that 
\begin{align} \label{LWPcon}
\| (v_0,v_1) \|_{\H^1} \le R \qquad
\text{and} \qquad 
\| f \|_{L^k(I; L^{\infty}(\T^2))} \le |I|^\theta
\end{align}
for any interval $I \subset [t_0, t_0 + 1]$.
Then, there exist $\tau =\tau (R, \theta, \| (\phi_0,\phi_1)\|_{\H^0} )>0$ and a unique solution 
$(v, \dt v)  \in C([t_0,t_0+\tau]; \H^1(\T^2))$ to \eqref{smoothv2} with $(v, \dt v) |_{t = t_0} = (v_0,v_1)$.
\end{lemma}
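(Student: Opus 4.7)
The plan is to run a standard Banach fixed point argument in the energy space $\H^1(\T^2)$ via the Duhamel formulation
\begin{align*}
\Gamma v(t) := S(t - t_0)(v_0, v_1) - \int_{t_0}^t \frac{\sin((t - t')\jb{\nb})}{\jb{\nb}} H_k(f(t') + v(t'); \sigma(t'))\, dt',
\end{align*}
seeking a fixed point in the ball $B_{2R} = \{ v : \|(v, \dt v)\|_{L^\infty_\tau \H^1} \le 2R\}$ of the space $C([t_0, t_0 + \tau]; H^1) \cap C^1([t_0, t_0 + \tau]; L^2)$. The linear part is bounded by $R$ via energy conservation, so the task reduces to controlling $\int_{t_0}^{t_0 + \tau} \|H_k(f + v; \sigma)(t')\|_{L^2_x}\, dt'$ in order to invoke the standard Duhamel mapping $L^1_t L^2_x \to C_t H^1_x \cap C^1_t L^2_x$.

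To estimate the nonlinearity, I would apply the Hermite binomial identity recalled above \eqref{Herm2},
\begin{align*}
H_k(f + v; \sigma) = \sum_{\ell = 0}^{k} \binom{k}{\ell} H_\ell(f; \sigma) \, v^{k - \ell},
\end{align*}
together with the pointwise bound $\|H_\ell(f;\sigma)\|_{L^\infty_x} \les_k \sum_{j = 0}^\ell \sigma^{(\ell - j)/2} \|f\|_{L^\infty_x}^j$ and the Sobolev embedding $H^1(\T^2) \embeds L^p(\T^2)$ for every finite $p$, to obtain
\begin{align*}
\|H_k(f + v; \sigma)\|_{L^2_x} \les_k \sum_{\ell = 0}^k \sum_{j = 0}^\ell \sigma^{(\ell - j)/2} \|f\|_{L^\infty_x}^j \|v\|_{H^1_x}^{k - \ell}.
\end{align*}
Integrating in time and applying H\"older with the hypothesis $\|f\|_{L^k(I; L^\infty_x)} \le |I|^\theta$ yields
$\int_{t_0}^{t_0 + \tau} \|f\|_{L^\infty_x}^j dt' \le \tau^{1 - j/k} \cdot \tau^{\theta j}$,
whose exponent $1 - j/k + \theta j$ is strictly positive for every $0 \le j \le k$ since $\theta > 0$. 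Combining with the bound $\sigma(t) \les \|(\phi_0, \phi_1)\|_{\H^0}^2$ from \eqref{u5}, this produces
\begin{align*}
\int_{t_0}^{t_0 + \tau} \|H_k(f + v; \sigma)\|_{L^2_x}\, dt' \le C\big(\|(\phi_0, \phi_1)\|_{\H^0}\big)\, \tau^{\alpha(k, \theta)} \big( 1 + \|v\|_{L^\infty_\tau H^1}^k \big)
\end{align*}
for some $\alpha(k, \theta) > 0$; hence, for $v \in B_{2R}$ and $\tau$ small enough depending on $R$, $\theta$, and $\|(\phi_0, \phi_1)\|_{\H^0}$, $\Gamma$ maps $B_{2R}$ into itself.

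For the difference estimate, I would use the factorization
\begin{align*}
H_k(f + v_1; \sigma) - H_k(f + v_2; \sigma) = (v_1 - v_2) \sum_{\ell = 0}^{k - 1} \binom{k}{\ell} H_\ell(f; \sigma) \sum_{j = 0}^{k - \ell - 1} v_1^j\, v_2^{k - \ell - 1 - j}
\end{align*}
and repeat the same H\"older/Sobolev argument, producing a contraction factor $\les \tau^\alpha R^{k - 1}$, which yields contraction after possibly shrinking $\tau$. The Banach fixed point theorem then gives the unique local solution, and continuity in time follows in the standard way. The main technical point is the accounting between the $L^k_t L^\infty_x$ regularity of $f$ and the Hermite structure: the assumption $\theta > 0$ in \eqref{LWPcon} is precisely what supplies a positive power of $\tau$ from each term in the Hermite expansion, compensating the scaling-critical integrability of $f$. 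No further ingredient beyond the Hermite identity, Sobolev embedding, and H\"older in time is needed.
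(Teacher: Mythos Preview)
Your proposal is correct and follows essentially the same argument as the paper: Duhamel formulation, Hermite binomial expansion $H_k(f+v;\sigma)=\sum_\ell \binom{k}{\ell} H_\ell(f;\sigma)v^{k-\ell}$, Sobolev embedding $H^1(\T^2)\hookrightarrow L^p$, and H\"older in time using the hypothesis $\|f\|_{L^k(I;L^\infty_x)}\le |I|^\theta$ to extract a positive power of $\tau$. The only cosmetic difference is that the paper bounds $|H_\ell(f;\sigma)|\les |f|^\ell+\sigma^{\ell/2}$ directly rather than expanding into all monomials, arriving at the exponent $\theta'=\min(\theta,1)$; your finer expansion gives the same conclusion.
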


\begin{remark}\label{REM:LWPreg}\rm
We point out that the second condition in \eqref{LWPcon} can be weakened as follows.
Let $\tau =\tau ( R, \theta, \| (\phi_0,\phi_1)\|_{\H^0} )>0$ be as in Lemma \ref{LEM:LWPreg}.
If we assume
\[
\| f \|_{L^k([t_0,t_0+\tau_\ast]; L^{\infty}(\T^2))} \le \tau_\ast^\theta
\]
for some $0<\tau_\ast \le \tau$ instead of the second condition in \eqref{LWPcon}, 
then the conclusion of Lemma~\ref{LEM:LWPreg} still holds on $[t_0,t_0+\tau_{\ast}]$.
\end{remark}

\begin{proof}[Proof of Lemma \ref{LEM:LWPreg}]
Without loss of generality, we may assume $t_0=0$
and restrict our attention only to positive times.
By writing \eqref{smoothv2} in the Duhamel formulation, we have
\begin{align*}
v(t) & = \Phi(v)(t) \\ &
\hspace{-0.4mm} \stackrel{\text{def}}{=}  S(t) (v_0,v_1) - \int_0^t \frac{\sin ((t-t') \jb{\nabla})}{\jb{\nabla}} H_k ( f(t') +v(t'); \s(t')) dt'.
\end{align*}

\noi
Let $\vec \Phi(v) = (\Phi(v), \dt \Phi(v))$ and $\vec v = ( v, \dt v)$.
Our goal is to show that $\vec \Phi$ is a contraction mapping in a suitable functional framework. 

Let  $0<T\le 1$.
Then, it follows from \eqref{u5}, \eqref{LWPcon}, and Sobolev's inequality that
\begin{align*}
\| H_k ( f+v; \s) \|_{L_T^1 L_x^2}
&\le \sum_{\l=0}^k \begin{pmatrix} k \\ \l \end{pmatrix} \| H_\l (f;\s) v^{k-\l} \|_{L_T^1 L_x^2} \\
&\le \| v^k \|_{L_T^1 L_x^2} + \sum_{\l=1}^k \begin{pmatrix} k \\ \l \end{pmatrix} \| H_\l (f;\s) \|_{L_T^1 L_x^{\infty}} \| v^{k-\l} \|_{L_T^{\infty} L_x^2} \\
&\les T \| v \|_{L_T^{\infty} L_x^{2k}}^k + \sum_{\l=1}^k \big( \| f \|_{L_T^\l L_x^{\infty}}^\l + T \| \s \|_{L_T^{\infty}}^{\frac{\l}{2}} \big) \| v \|_{L_T^{\infty} L_x^{2(k-\l)}}^{k-\l} \\
&\les T \| v \|_{L_T^{\infty} L_x^{2k}}^k + \sum_{\l=1}^k \big( T^{\theta \l} + T \| (\phi_0,\phi_1) \|_{\H^0}^\l \big) \| v \|_{L_T^{\infty} L_x^{2(k-\l)}}^{k-\l} \\
&\les T^{\theta'} \big( 1+\| (\phi_0,\phi_1) \|_{\H^0}^k + \| v \|_{L_T^{\infty} H_x^1}^k \big),
\end{align*}
where $\theta' = \min (\theta,1) >  0$.
Hence, we have
\begin{align*}
\| \vec \Phi (v) \|_{L_T^{\infty} \H_x^1}
& \le \| (v_0,v_1) \|_{\H^1} + \| H_k ( f+v; \s) \|_{L_T^1 L_x^2} \\
&\le R +C T^{\theta'} \big( 1+ \| (\phi_0,\phi_1) \|_{\H^0}^k + \| v \| _{L_T^{\infty} H_x^1}^k \big).
\end{align*}

\noi
A similar computation yields the difference estimate: 
\begin{align*}
\| \vec \Phi&  (v_1) - \vec \Phi(v_2) \|_{L_T^{\infty} \H_x^1} \\
&\le \| H_k ( f+v_1; \s) - H_k ( f+v_2; \s) \|_{L_T^1 L_x^2} \\
& \le C T^{\theta'} \big( 1 + \| (\phi_0,\phi_1) \|_{\H^0}^{k-1} + \| v_1 \| _{L_T^{\infty} H_x^1}^{k-1}+\| v_2 \|_{L_T^{\infty} H_x^1}^{k-1} \big) \| v_1-v_2 \|_{L_T^{\infty} H_x^1}.
\end{align*}
By taking $\tau$ as
\[
\tau \sim \bigg( \frac{\min (1,R)}{1+ \| (\phi_0,\phi_1) \|_{\H^0}^k + R^k}\bigg)^\frac{1}{\ta'},
\]

\noi
we see that 
$\vec \Phi$ is a contraction mapping on the ball 
$B_{2R} = \{ \vec v\in C([0,\tau]; H^1(\T^2)) : \|\vec v\|_{L_\tau^{\infty} \H_x^1} \le 2R \}$.
Therefore, we obtain  a unique\footnote{At this point, 
the uniqueness holds only in $B_{2R}$ but by a standard continuity 
argument, we can extend the uniqueness to the entire
$C([0,\tau]; \H^1(\T^2))$.}
local solution $\vec v  = (v, \dt v) \in C([0,\tau]; \H^1(\T^2))$.
\end{proof}

We now present the proof of Proposition~\ref{PROP:easy}.

\begin{proof}[Proof of Proposition  \ref{PROP:easy}]
As in \cite{CO, BOP2}, it suffices to show the following ``almost'' almost global existence;
given  any $T, \eps>0$, there exists a set $\O_{T,\eps} \subset \O$ such that $P(\O_{T,\eps}^c)<\eps$ and for each $\o \in \O_{T,\eps}$, there exists a solution $\vec v = (v, \dt v)$ to \eqref{WNLW3} on $[-T,T]$.

Let $z(t) = S(t) (\phi_0^\o,\phi_1^\o)$.
Given $T, \eps >0$,  we set 
\begin{align*}
\O_{T,\eps} &=\big \{ \o \in \O : \| z \|_{L_{T, x}^{\infty} } + \| \jb{\nb}^s \wt z \|_{L_{T, x}^{k+1}} \le M \big\}, 
\end{align*}

\noi
where $M$ is given by 
\begin{align}
M &=M(T,\eps,\| (\phi_0,\phi_1) \|_{\H^s}) \sim \jb{T} \| (\phi_0,\phi_1) \|_{\H^s} 
\bigg( \log \frac{\jb{T}}{\eps} \bigg)^{\frac 12} 
\label{C3}
\end{align}

\noi
and $\wt z$ is defined by 
\[ \wt z(t) = - \sin (t\jb{\nb}) \phi_0^\o+ \frac{\cos  (t\jb{\nb}) }{\jb{\nb}}\phi_1^\o.\]

\noi
Note that $\wt z$ also satisfies Lemma \ref{LEM:PStr} and that 
\begin{align}
\dt z = \jb{\nb} \wt z.
\label{C2}
\end{align}

\noi
Then, it follows from Lemma~\ref{LEM:PStr} that
\[
P(\O_{T,\eps}^c) < \eps.
\]

\noi
We point out that the condition $s > 0$ is needed to apply Lemma \ref{LEM:PStr}\,(ii).

As in \cite{BT3, OP}, we use the energy $E(\vec v) = H(v, \dt v)$, 
where $H$ is as in \eqref{Hamil}.
Using the energy $E(\vec v)$,  we show that there exists $R=R(T,\eps, \| (\phi_0,\phi_1) \|_{\H^s})>0$ such that
\begin{align} \label{enebd}
\| (v,\dt v) \|_{L_T^{\infty} \H_x^1} \le R
\end{align}
for any  $\o \in \O_{T,\eps}$.

For now, let us assume \eqref{enebd} and conclude ``almost'' almost sure global existence.
Given $\tau>0$, we write
\[
[-T,T] = \bigcup_{j=-[T/\tau]-1}^{[T\tau]} [ j \tau, (j+1)\tau] \cap [-T,T].
\]
By making $\tau = \tau (M) 
= \tau (T,\eps,\| (\phi_0,\phi_1) \|_{\H^s}) >0$ small, we have
\[
\| z\|_{L^k([ j \tau, (j+1)\tau]; L^{\infty}(\T^2))}
\le \tau^{\frac{1}{k}} M
\le \tau^{\frac{1}{2k}}
\]
for $\o \in \O_{T,\eps}$.
By iteratively applying Lemma \ref{LEM:LWPreg} and Remark \ref{REM:LWPreg}, 
 we can construct a solution $\vec v$ to \eqref{WNLW_PAK} (with $r_0 = r_1 = 1$)
  on $[ j \tau, (j+1)\tau]$, $j=- \big[ \frac{T}{\tau} \big]-1, \dots, \big[ \frac{T}{\tau} \big]$.
This proves  the ``almost'' almost sure global existence.

\medskip

It remains to prove \eqref{enebd}.
We first consider the $k = 3$ case.
In this case, 
it follows from \eqref{WNLW3}, \eqref{H1a}, \eqref{u5}, and H\"older's and Young's inequalities
that 
\begin{align}
E(\vec v(t)) 
&= \int_0^t \int_{\T^2} \dt v \cdot (\dt^2 v +(1- \Dl) v + v^3) dx dt'\notag \\
& 
= \int_0^t \int_{\T^2} \dt v \cdot (-H_3(z+v; \s) + v^3) dx dt' \notag\\
&= \int_0^t\int_{\T^2} \dt v \cdot ( - 3 zv^2 -3 (z^2-\s)v-z^3+3\s z ) dx dt'\label{dEv}\\
&\les \int_0^t \| \dt v (t') \|_{L_x^2} \Big\{ \| z(t') \|_{L_x^{\infty}} \| v(t') \|_{L_x^4}^2 \notag\\
& \quad + (\| z(t') \|_{L_x^8}^2 + \| (\phi_0, \phi_1) \|_{\H^0}^2) \| v(t') \|_{L_x^4} \notag\\ 
& \quad + \| z(t') \|_{L_x^6}^3 + \| (\phi_0, \phi_1) \|_{\H^0}^2 \| z(t') \|_{L_x^2} \Big\} dt'\notag\\
&\les (1 + \| z\|_{L^\infty_T L^\infty_x}) \int_0^t E(\vec v(t')) dt'
+ \|z\|_{L^8_{T, x}}^8 + \| (\phi_0, \phi_1) \|_{\H^0}^8)  \notag\\
&\quad +  \|z\|_{L^6_{T, x}}^6 +  \| (\phi_0, \phi_1) \|_{\H^0}^4\|z\|_{L^2_{T, x}} \notag\\
&\les (1 + M) \int_0^t E(\vec v(t')) dt'
+ C(T, M,  \| (\phi_0, \phi_1) \|_{\H^0})\notag 
\end{align}

\noi
for $\o \in \O_{T,\eps}$.
Hence, from  Gronwall's inequality, we obtain \eqref{enebd} for $k=3$ and $s>0$.

Next, we consider the case $k \ge 5$.
From  \eqref{H2}, we have
\begin{align} 
\begin{split}
\dt H_\l (z(x,t); \s(t))
& = \l H_{\l-1} (z(x,t); \s(t)) \dt z(x,t) \\
& \quad -\ind_{\l \geq 2} \cdot  \frac{\l (\l-1)}{2} H_{\l-2}  (z(x,t);\s(t)) \dt \s(t).
\end{split}
\label{C1}
\end{align}

\noi
Then, from \eqref{WNLW3} and 
integration by parts with \eqref{C1}, we have 
\begin{align}
 E(\vec v(t))
&= \int_0^t \int_{\T^2} \dt v \cdot (\dt^2 v +(1- \Dl) v + v^k) dx dt'\notag \\
& = \int_0^t  \int_{\T^2} \dt v \cdot (-H_k (z+v; \s) + v^k) dx dt'\notag\\
&= - \sum_{\l=1}^k \begin{pmatrix} k \\ \l \end{pmatrix} 
\int_0^t \int_{\T^2} \dt v \cdot H_\l(z;\s) v^{k-\l} dx dt'\notag\\
&= - \sum_{\l=1}^k \begin{pmatrix} k \\ \l \end{pmatrix} \frac{1}{k-\l+1} 
\bigg\{ \int_{\T^2} H_\l(z;\s) v^{k-\l+1} dx\bigg|_0^t  \notag\\
&\quad - \l \int_0^t \int_{\T^2} H_{\l-1}(z;\s) \dt z \cdot v^{k-\l+1} dx dt' \notag\\
&\quad + \ind_{\l \geq 2}\cdot  \frac{\l (\l-1)}{2} \int_0^t \int_{\T^2} H_{\l-2}(z;\s) \dt \s \cdot v^{k-\l+1} dx dt'
\bigg\}. \label{derEv}
\end{align}

From Young's inequality and \eqref{u5}, we have 
\begin{align}
\begin{split}
\bigg| \int_{\T^2} H_\l & (z;\s) v^{k-\l+1} (t) dx \bigg|
\le C(\dl) \| H_\l(z(t);\s (t)) \|_{L_x^{\frac{k+1}{\l}}}^{\frac{k+1}{\l}} + \dl \| v(t) \|_{L_x^{k+1}}^{k+1} \\
&\le C(\dl) \big( \| z(t) \|_{L_x^{k+1}}^\l + \| (\phi_0,\phi_1) \|_{\H^0}^\frac \l2  \big)^{\frac{k+1}{\l}} + \dl E(\vec v(t)) \\
&\le C(\dl) \big( M^{k+1} + \| (\phi_0,\phi_1) \|_{\H^0}^{\frac{k+1}{2}} \big) + \dl E(\vec v(t)) 
\end{split}
 \label{derEv1}
\end{align}
for $\o \in \O_{T,\eps}$ and $1\le \l \le k$,
where $\dl>0$ is a small constant to be chosen later.
From \eqref{C2} and  Young's and H\"older's inequalities with \eqref{C3}, 
we have
\begin{align}
\begin{split}
\bigg| & \int_0^t \int_{\T^2} H_{\l-1}(z;\s) \dt z \cdot v^{k-\l+1} dx dt'\bigg|
= \bigg| \int_0^t \int_{\T^2} H_{\l-1}(z;\s) \jb{\nb} \wt z \cdot v^{k-\l+1} dx dt'\bigg| \\
&\les \int_0^t  \| H_{\l-1}(z(t');\s (t')) \jb{\nb} \wt z(t')\|_{L_x^{\frac{k+1}{\l}}}^{\frac{k+1}{\l}} + \| v(t') \|_{L_x^{k+1}}^{k+1} dt' \\
&\les \Big( \big( \| z(t) \|_{L_x^{k+1}}^{\l-1} + \| (\phi_0,\phi_1) \|_{\H^0}^\frac{\l-1}2  \big) \| \jb{\nb}\wt z(t) \|_{L_x^{k+1}} \Big)^{\frac{k+1}{\l}} + \int_0^t E(\vec v(t')) dt' \\
&\les C(T, M,  \| (\phi_0,\phi_1) \|_{\H^0})   + \int_0^t E(\vec v(t'))dt' 
\end{split}
\label{derEv2}
\end{align}
for $\o \in \O_{T,\eps}$ and $1\le \l \le k$.
Lastly, from  Young's inequality and \eqref{u5}, we have
\begin{align}
\begin{split}
&\bigg| \int_0^t \int_{\T^2} H_{\l-2}(z;\s) \dt \s \cdot v^{k-\l+1} dx dt' \bigg| \\
&\les \| (\phi_0,\phi_1) \|_{\H^{\frac 12}}^2 
\int_0^t \| H_{\l-2}(z(t');\s (t')) \|_{L_x^{\frac{k+1}{\l}}}^{\frac{k+1}{\l}} + \| v(t') \|_{L_x^{k+1}}^{k+1}dt' \\
&\les \| (\phi_0,\phi_1) \|_{\H^{\frac 12}}^2
\int_0^t  \big( \| z(t') \|_{L_x^{k+1}}^{\l-2} + \| (\phi_0,\phi_1)\|_{\H^0}^\frac{\l-2}2 \big)^{\frac{k+1}{\l}} + E(\vec v(t')) dt' \\
&\les C(T, M,  \| (\phi_0,\phi_1) \|_{\H^\frac 12} )+ \| (\phi_0,\phi_1) \|_{\H^{\frac 12}}^2\int_0^t  E(\vec v(t')) dt'
\end{split}
\label{derEv3}
\end{align}
for $\o \in \O_{T,\eps}$ and $2\le \l \le k$.
Hence,  by taking $\dl >0$ small, it follows from \eqref{derEv}, \eqref{derEv1}, \eqref{derEv2}, and \eqref{derEv3} that
\begin{align*}
E(\vec v(t)) &= \int_0^t \frac{d}{dt} E(\vec v(t')) dt' \\
&\le \frac 12 E(\vec v(t)) + C(T, M,  \| (\phi_0,\phi_1) \|_{\H^{\frac 12}}) 
 + \|(\phi_0,\phi_1) \|_{\H^{\frac 12}}^2  \int_0^t E(v(t')) dt',
\end{align*}
which implies that
\[
E(\vec v(t))
\le  C(T, M,  \| (\phi_0,\phi_1) \|_{\H^{\frac 12}}) 
 + \|(\phi_0,\phi_1) \|_{\H^{\frac 12}}^2  \int_0^t E(v(t')) dt',
\]
for $\o \in \O_{T,\eps}$.
Therefore, from Gronwall's inequality, we obtain \eqref{enebd} for $k \ge 5$ and $s \ge 1$.
This concludes the proof of Proposition \ref{PROP:easy}.
\end{proof}

\begin{remark} \label{REM:lwp}\rm
Noting that Lemma \ref{LEM:PStr}\.(i) holds
for $s \geq 0$, 
we see that we can handle all the terms in \eqref{dEv}
for $s = 0$,  except for $\int_0^t \int_{\T^2} \dt v \cdot z v^2 dxdt'$.
As for this term, we can use Yudovich's argument as in  \cite{BT3}
and hence Proposition~\ref{PROP:easy} with $k=3$ indeed holds for $s=0$.

For $k \ge 5$, we used the assumption $s \ge 1$ to control $\| \jb{\nb}\wt  z \|_{L_{T, x}^{k+1}}$ in \eqref{derEv2}.
By proceeding as in \cite{OP} via the Littlewood-Paley decomposition, 
we may extend the result to some $s < 1$.
However, since the main purpose of Proposition \ref{PROP:easy} is to give a remark on the almost sure global existence with smooth random initial data, we do not pursue this issue further.

\end{remark}
\section{Unique limit of smooth solutions with mollified data}
\label{SEC:uniq}

In this section, we present the proof of Theorem \ref{THM:uniq}.
We first prove the almost sure convergence of the Wick powers for the Gaussian initial data
\eqref{Gauss1}  in Subsection \ref{SUBSEC:sto1}.
We then show convergence in probability of the Wick powers for smooth Gaussian initial data in Subsection \ref{SUBSEC:sto2}.
Moreover, we prove that the limit is independent of mollification kernels.
In Subsection \ref{SUBSEC:LWPdpNLW}, we go over local well-posedness of the perturbed NLW
with deterministic perturbations (Lemma \ref{LEM:LWPx}).
Finally, in Subsection \ref{SUBSEC:4.3}, we iteratively apply Lemma~\ref{LEM:LWPx} for short time intervals to prove Theorem~\ref{THM:uniq}.

\subsection{Convergence of  the Wick powers}
\label{SUBSEC:sto1}

In this subsection, we present a proof of Proposition~\ref{PROP:Z1}.
We first estimate the variance of the Fourier coefficients of 
the truncated Wick powers $:\! z_{N}^\l (t) \!:$ defined in \eqref{Wick1}.

\begin{lemma} \label{LEM:Z1p}
Let $\l \in \N\cup \{0\}$.
For any $\eps>0$, $\g >0$, $n \in \Z^2$, $t \in \R$, and $M \ge N \ge 1$,
we have
\begin{align}
\E \big[ |\jb{\ \! :\! z_{N}^\l (t) \!: , e_n}_{L^2} |^2 \big]
&\les
\jb{n}^{-2+\eps}, \label{PZ1} \\
\E \big[ |\jb{\ \! :\! z_{N}^\l (t) \!: - \, :\! z_{M}^\l(t)\!: , e_n}_{L^2} |^2 \big]
&\les
N^{-\g} \jb{n}^{-2+\eps+\g}, \label{PZ2}
\end{align}
where $e_n(x) = e^{in \cdot x}$.
In addition,
for any $\eps>0$, $\g>0$, $\theta \in (0,1)$, $n \in \Z^2$, $t \in \R$, $h \in [-1,1]$, and $M \ge N \ge 1$,
we have
\begin{align}
\E \big[ |\jb{\dl_h \! :\! z_{N}^\l (t) \!: , e_n}_{L^2} |^2 \big]
&\les
\jb{n}^{-2+\eps+\theta} |h|^{\theta}, \label{PZ3} \\
\E \big[ |\jb{\dl_h \! :\! z_N^\l (t) \!: - \, \dl_h :\! z_M^\l(t)\!: , e_n}_{L^2} |^2 \big]
&\les
N^{-\g} \jb{n}^{-2+\eps+\g+\theta} |h|^{\theta}, \label{PZ4}
\end{align}
where $\dl_h$ is as in \eqref{diff1}.
\end{lemma}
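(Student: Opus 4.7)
The plan is to exploit the Gaussian/Wick-isometry structure to rewrite each second moment as an explicit convolution sum on $\Z^2$, then estimate. First I observe that $z_N(x,t) = W^t_{f_{N,x}}$ where $f_{N,x}$ has Fourier coefficients $\ft f_{N,x}(m) = \jb{m}^{-1} e^{-im\cdot x}\ind_{|m|\le N}$, so $\|f_{N,x}\|_{L^2}^2 = \s_N$. Writing $\wt f_{N,x} = f_{N,x}/\s_N^{1/2}$ and using the homogeneity $H_\l(\cdot;\s) = \s^{\l/2} H_\l(\s^{-1/2}\cdot)$ gives $:\!z_N^\l(x,t)\!: \, = \s_N^{\l/2} H_\l(W^t_{\wt f_{N,x}})$. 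Lemma~\ref{LEM:W2} then yields the key identity
\begin{align*}
\E\big[:\!z_N^\l(x,t_1)\!::\!z_N^\l(y,t_2)\!:\big] \, = \, \l!\,\Psi_N(t_1-t_2,x-y)^\l,
\end{align*}
where $\Psi_N(\tau,z) = \sum_{|m|\le N}\cos(\tau\jb{m})\jb{m}^{-2} e^{im\cdot z}$ is the cross-covariance of $z_N$. All four bounds reduce to Fourier analysis of $\Psi_N^\l$.

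For \eqref{PZ1}, setting $t_1=t_2=t$ and integrating against $e^{-in\cdot(x-y)}$ in $(x,y)$ gives
\begin{align*}
\E\big[|\jb{\,:\!z_N^\l(t)\!:,e_n}_{L^2}|^2\big] \, = \, c_\l\!\!\!\sum_{\substack{n_1+\cdots+n_\l=n\\|n_j|\le N}}\prod_{j=1}^\l\jb{n_j}^{-2}.
\end{align*}
A standard iterated $\l$-fold convolution estimate on $\Z^2$ — the base case $\sum_m\jb{m}^{-2+\eta}\jb{n-m}^{-2} \les \jb{n}^{-2+\eta}\log\jb{n}$ for $\eta\ge 0$, applied $\l-1$ times — bounds this by $\jb{n}^{-2}(\log\jb{n})^{\l-1} \les \jb{n}^{-2+\eps}$.

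For \eqref{PZ2}, bilinearity of covariance gives (for $N\le M$) $\E[(:\!z_N^\l\!:-\!:z_M^\l\!:)(x,t)(:\!z_N^\l\!:-\!:z_M^\l\!:)(y,t)] = \l!(K_M^\l - K_N^\l)(x-y)$ with $K_N = \Psi_N(0,\cdot)$; on the Fourier side this is the same convolution sum, restricted to $(n_1,\ldots,n_\l)$ with at least one index satisfying $|n_j| > N$. Placing the restriction on $n_1$ by symmetry and trading $\jb{n_1}^{-2} \le N^{-\g}\jb{n_1}^{-2+\g}$, the convolution estimate delivers $N^{-\g}\jb{n}^{-2+\g+\eps}$. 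For \eqref{PZ3}, the key identity with $t_1 = t+h$, $t_2 = t$ produces
\begin{align*}
\E\big[|\jb{\dl_h\!:\!z_N^\l(t)\!:,e_n}_{L^2}|^2\big] \les \!\!\!\sum_{\substack{n_1+\cdots+n_\l=n\\|n_j|\le N}}\prod_j\jb{n_j}^{-2}\Big|1-\prod_j\cos(h\jb{n_j})\Big|,
\end{align*}
and the telescoping inequality $|1-\prod_j\cos(h\jb{n_j})| \le \sum_j |1-\cos(h\jb{n_j})| \les |h|^\theta\sum_j\jb{n_j}^\theta$ (valid for $\theta\in[0,2]$), combined with the convolution estimate applied to one enhanced factor $\jb{n_j}^{-2+\theta}$, gives $|h|^\theta\jb{n}^{-2+\theta+\eps}$. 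Estimate \eqref{PZ4} is then obtained by combining the two gain mechanisms simultaneously: placing one index $|n_j|>N$ supplies the factor $N^{-\g}$, and the cosine-difference telescoping supplies $|h|^\theta$.

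The only real care-point is that the $\l$-fold convolution of $\jb{\cdot}^{-2}$ is at the borderline of summability on $\Z^2$ and each iteration costs a logarithm; these are precisely absorbed by the $\eps$-loss stated in the lemma. Beyond that, the $N^{-\g}$ and $|h|^\theta$ gains arise from two genuinely independent mechanisms (frequency truncation versus cosine difference), so their combination in \eqref{PZ4} presents no additional difficulty.
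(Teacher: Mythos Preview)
Your proposal is correct and follows essentially the same approach as the paper: both reduce the second moments to explicit $\l$-fold convolution sums via the Wick isometry (the paper's Lemmas~\ref{LEM:W1} and~\ref{LEM:W2} applied to the white noise functional representation of $z_N$), then extract the $N^{-\g}$ gain from the frequency restriction $\max_j|n_j|>N$ and the $|h|^\theta$ gain from the telescoping bound on $1-\prod_j\cos(h\jb{n_j})$. The paper's computation \eqref{PStr6a}--\eqref{PStr6b} is exactly your $K_M^\l-K_N^\l$ and $\Psi_N(0)^\l-\Psi_N(h)^\l$ identities written out, and your convolution estimate is equivalent to the paper's use of $\max_j|n_j|\gtrsim|n|$.
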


Once we prove Lemma~\ref{LEM:Z1p}, by choosing  $\g$ and $\theta$ 
sufficiently small such that $\g+\theta<\eps$, Proposition~\ref{PROP:Z1} follows from Corollary~\ref{COR:reg1}.
\\

For the proof of Lemma \ref{LEM:Z1p},
we employ the argument used in the proofs of \cite[Lemma 2.5]{OTh1}
and  \cite[Proposition 2.3]{OTh2}.
Let us first introduce some notations.
For {\it fixed} $x \in \T^2$, we define
\begin{align} \label{etaN}
\eta_{N} (x) (\cdot) \stackrel{\text{def}}{=} \frac{1}{\s_{N}^\frac{1}{2}}
\sum_{|n| \le N} \frac{\cj{e_n(x)}}{\jb{n}}e_n(\cdot),
\end{align}
	
\noi
where $\s_N$ is as in \eqref{G6}.
Note that $\eta_{N}(x)(\cdot)$ is real-valued
with 
$ \| \eta_{N}(x)\|_{L^2(\T^2)} = 1$
for any  $x \in \T^2$ and $N \in \N$.
Moreover, we have 
\begin{align}
\jb{\eta_{N}(x), \eta_{M}(y)}_{L^2}
= \frac{1}{\s_{N}^\frac{1}{2}\s_{M}^\frac{1}{2}} \sum_{|n| \le N} \frac{1}{\jb{n}^2} e_n(y-x)
= \frac{1}{\s_{N}^\frac{1}{2}\s_{M}^\frac{1}{2}} \sum_{|n| \le N} \frac{1}{\jb{n}^2} e_n(x-y)
\label{W4}
\end{align}

\noi
for any  $x, y\in \T^2$
and  $M \ge N \ge 1$.

\begin{proof}[Proof of Lemma \ref{LEM:Z1p}]
We only consider \eqref{PZ2} and \eqref{PZ4}, since \eqref{PZ1} and \eqref{PZ3} follow 
from an analogous (but simpler) argument.

By \eqref{PStr2} and \eqref{etaN} (see also \eqref{lin2}), we note that
\[
z_{N} (x,t) = \s_{N}^{\frac{1}{2}} \frac{z_{N}(x,t)}{\s_{N}^{\frac{1}{2}}}
 = \s_{N}^{\frac{1}{2}} W_{\eta_{N}(x)}^t.
\]
Then, from \eqref{Wick1}, we have
\begin{align} \label{W4z}
\! :\! z_{N}^\l (t) \!: \, = H_\l (z_{N}(x,t); \s_{N}) = \s_N^{\frac{\l}{2}} H_\l (W_{\eta_N(x)}^t).
\end{align}
Given $n \in \Z^2$, define $\G_\l (n)$ by
\begin{align} \notag
\G_\l(n) \stackrel{\text{def}}{=} \{ (n_1, \dots, n_\l) \in (\Z^2)^\l : n_1+ \dots + n_\l =n \}.
\end{align}

\noi
For $(n_1, \dots, n_\l) \in \G_\l(n)$, we have $\max_j |n_j| \ges |n|$.
It follows from \eqref{W4z}, Lemma \ref{LEM:W1}, and~\eqref{W4} that 
\begin{align}
& \E \big[ |\jb{\ \! :\! z_{N}^\l (t) \!: - \, :\! z_{M}^\l(t)\!: , e_n}_{L^2} |^2 \big]
\notag\\
&= \int_{\T^2_x \times \T^2_y}\cj{e_n (x)} e_n(y)\notag  \\
& \quad 
 \int_\O \Big[
\s_N^\l H_\l\big(W^t_{\eta_N(x)} \big) \cj{H_\l\big(W^t_{\eta_{N}(y)} \big)}
+\s_{M}^{\l} H_\l\big(W^t_{\eta_{M}(x)} \big) \cj{H_\l\big(W^t_{\eta_{M}(y)} \big)} \notag \\
& \quad - \s_N^{\frac{\l}{2}} \s_{M}^{\frac{\l}{2}} \Big\{ H_\l\big(W^t_{\eta_N(x)} \big) \cj{H_\l\big(W^t_{\eta_{M}(y)} \big)} + H_\l\big(W^t_{\eta_{M}(x)} \big) \cj{H_\l\big(W^t_{\eta_{N}(y)} \big)} \Big\} \Big] dP dxdy \notag\\
& = \l ! 
\Bigg\{ \sum_{\substack{\G_\l(n) \\ |n_j|\le M} }\prod_{j= 1}^\l \frac{1}{\jb{n_j}^2}
- \sum_{\substack{\G_\l(n) \\ |n_j|\le N}} \prod_{j= 1}^\l \frac{1}{\jb{n_j}^2} \Bigg\} \notag\\
&\les N^{-\g} \jb{n}^{-2+\eps+\g}.\label{PStr6a} 
\end{align}

\noi
for any $M \ge N \ge 1$.  This prove \eqref{PZ2}.

Next, we consider \eqref{PZ4}.
From \eqref{W4z},  Lemmas \ref{LEM:W1},  and \ref{LEM:W2} with \eqref{W4}, we have
\begin{align}
&\E \big[ |\jb{\dl_h \! :\! z_N^\l (t) \!: - \, \dl_h :\! z_M^\l(t)\!: , e_n}_{L^2} |^2 \big] \notag\\
&= \int_{\T^2_x \times \T^2_y}\cj{e_n (x)} e_n(y) \notag\\
& \quad 
 \int_\O \Big[
\s_{N}^{\l} \Big\{ H_\l\big(W^{t+h}_{\eta_{N}(x)} \big) \cj{H_\l\big(W^{t+h}_{\eta_{N}(y)} \big)} - H_\l\big(W^{t+h}_{\eta_{N}(x)} \big) \cj{H_\l\big(W^t_{\eta_{N}(y)} \big)} \notag\\
& \hspace*{50pt} - H_\l\big(W^t_{\eta_{N}(x)} \big) \cj{H_\l\big(W^{t+h}_{\eta_{N}(y)} \big)} + H_\l\big(W^t_{\eta_{N}(x)} \big) \cj{H_\l\big(W^t_{\eta_{N}(y)} \big)} \Big\} \notag\\
& \quad +\s_{M}^{\l} \Big\{ H_\l\big(W^{t+h}_{\eta_{M}(x)} \big) \cj{H_\l\big(W^{t+h}_{\eta_{M}(y)} \big)} - H_\l\big(W^{t+h}_{\eta_{M}(x)} \big) \cj{H_\l\big(W^t_{\eta_{M}(y)} \big)} \notag\\
& \hspace*{50pt} - H_\l\big(W^t_{\eta_{M}(x)} \big) \cj{H_\l\big(W^{t+h}_{\eta_{M}(y)} \big)} + H_\l\big(W^t_{\eta_{M}(x)} \big) \cj{H_\l\big(W^t_{\eta_{M}(y)} \big)} \Big\} \notag\\
& \quad - \s_{N}^{\frac{\l}{2}} \s_{M}^{\frac{\l}{2}} \Big\{ H_\l\big(W^{t+h}_{\eta_{N}(x)} \big) \cj{H_\l\big(W^{t+h}_{\eta_{M}(y)} \big)} - H_\l\big(W^{t+h}_{\eta_{N}(x)} \big) \cj{H_\l\big(W^t_{\eta_{M}(y)} \big)} \notag\\
& \hspace*{50pt} - H_\l\big(W^{t}_{\eta_{N}(x)} \big) \cj{H_\l\big(W^{t+h}_{\eta_{M}(y)} \big)} + H_\l\big(W^{t}_{\eta_{N}(x)} \big) \cj{H_\l\big(W^t_{\eta_{M}(y)} \big)} \notag \\
& \hspace*{50pt} + H_\l\big(W^{t+h}_{\eta_{M}(x)} \big) \cj{H_\l\big(W^{t+h}_{\eta_{N}(y)} \big)} - H_\l\big(W^{t+h}_{\eta_{M}(x)} \big) \cj{H_\l\big(W^t_{\eta_{N}(y)} \big)} \notag \\
& \hspace*{50pt} - H_\l\big(W^{t}_{\eta_{M}(x)} \big) \cj{H_\l\big(W^{t+h}_{\eta_{N}(y)} \big)} + H_\l\big(W^{t}_{\eta_{M}(x)} \big) \cj{H_\l\big(W^t_{\eta_{N}(y)} \big)} \Big\} \Big] dP dxdy \notag\\
& = 2\l ! \sum_{\substack{\G_\l(n) \\ N<  \max_j |n_j| \le M}}
\bigg\{ \prod_{j= 1}^\l \frac{1}{\jb{n_j}^2}
-  \prod_{j= 1}^\l \frac{ \cos (h \jb{n_j})}{\jb{n_j}^2}
\bigg\}.\label{PStr6b}
\end{align}

\noi
By writing the last expression in a telescoping sum
and applying the mean-value theorem, we have
\begin{align*}
\text{RHS of } \eqref{PStr6b}
& \les
\sum_{\substack{\G_\l(n) \\ N< \max_j |n_j|\le M}}
\sum_{k = 1}^\l  |h|^\ta \jb{n_k}^\ta  \prod_{j= 1}^\l \frac 1{\jb{n_j}^2} 
\notag\\
&\les N^{-\g} \jb{n}^{-2+\eps+\g+\theta} |h|^{\theta}. 
\end{align*}

\noi
This proves \eqref{PZ4}.
\end{proof}

\subsection{Uniqueness of the Wick powers}
\label{SUBSEC:sto2}

In this subsection, we study  the Wick powers for smooth Gaussian initial data
$(u_{0,\dl}^\o, u_{1,\dl}^\o)$  in \eqref{uniq1}
and show that 
 they converge in probability
to the Wick powers
$:\! z^\l \!:$ constructed in the previous subsection, 
which in particular implies that limit is  independent of mollification kernels.
In order to signify  the dependence on a mollification kernel $\rho,$ we write
\begin{align*}
z_{\rho,\dl} &= S(t) (u_{0,\dl}^\o, u_{1,\dl}^\o), \\ 
\s_{\rho,\dl} &=
\text{Var}(z_{\rho,\dl}(x, t))
= \E [  z_{\rho,\dl}^2(x, t)]
= \sum_{n \in \Z^2}\frac{|\ft \rho (\dl n)|^2}{\jb{n}^2}, \notag\\
:\! z_{\rho,\dl}^\l(x, t) \!:  \,
&= H_\l(z_{\rho,\dl}(x, t); \s_{\rho,\dl}), \notag
\end{align*}
where $(u_{0,\dl}^\o, u_{1,\dl}^\o)$ is defined in \eqref{uniq1}.
Our main goal in this subsection is to prove
the following proposition.

\begin{proposition}\label{PROP:Z1rho}
Let $\l \in \N\cup \{0\}$.
Then, 
for any  $T > 0$  and $\eps>  0$, 
the mollified Wick powers $:\! z_{\rho,\dl}^\l \!:$ converges in probability
to $:\! z^\l \!:$
in $C([-T, T]; W^{-\eps, \infty}(\T^2))$ as $\dl \to 0$,
where $:\! z^\l \!:$ is defined in \eqref{Wick4}.
\end{proposition}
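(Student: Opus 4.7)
The plan is to repeat the computations behind Lemma~\ref{LEM:Z1p} with the mollification weight $\ft\rho(\dl n)$ in place of the sharp cutoff $\ind_{|n|\le N}$, and then invoke (a continuous-parameter variant of) Corollary~\ref{COR:reg1} to promote the resulting Fourier-coefficient moment bounds to convergence in probability in $C([-T,T]; W^{-\eps,\infty}(\T^2))$.

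First, I would set up the analogue of \eqref{etaN}--\eqref{W4z}: defining
\[
\eta_{\rho,\dl}(x)(\,\cdot\,) \stackrel{\textup{def}}{=} \frac{1}{\s_{\rho,\dl}^{1/2}}\sum_{n\in\Z^2}\frac{\ft\rho(\dl n)\,\cj{e_n(x)}}{\jb{n}}\,e_n(\,\cdot\,),
\]
one checks $\|\eta_{\rho,\dl}(x)\|_{L^2}=1$ for every $x \in \T^2$, so that $z_{\rho,\dl}(x,t) = \s_{\rho,\dl}^{1/2}\,W^t_{\eta_{\rho,\dl}(x)}$ and $:\!z_{\rho,\dl}^\l(x,t)\!:\, = \s_{\rho,\dl}^{\l/2}\,H_\l\big(W^t_{\eta_{\rho,\dl}(x)}\big)$. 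Applying Lemmas~\ref{LEM:W1} and~\ref{LEM:W2} as in~\eqref{PStr6a}--\eqref{PStr6b} (with the mollifier $\ft\rho(\dl\,\cdot\,)$ appearing in the cross terms), I would obtain
\begin{align*}
\E\big[|\jb{:\!z_{\rho,\dl}^\l(t)\!:\,-\,:\!z^\l(t)\!:,\,e_n}_{L^2}|^2\big]
&= \l!\sum_{\G_\l(n)}\prod_{j=1}^\l\frac{1}{\jb{n_j}^2}\bigg|1-\prod_{j=1}^\l\ft\rho(\dl n_j)\bigg|^2,\\
\E\big[|\jb{\dl_h(:\!z_{\rho,\dl}^\l\!:\,-\,:\!z^\l\!:)(t),\,e_n}_{L^2}|^2\big]
&\les \sum_{\G_\l(n)}\prod_{j=1}^\l\frac{1-\cos(h\jb{n_j})}{\jb{n_j}^2}\bigg|1-\prod_{j=1}^\l\ft\rho(\dl n_j)\bigg|^2.
\end{align*}

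Next, since $\rho$ is smooth with compact support, $\ft\rho$ is entire with $\ft\rho(0) = 1$ and rapidly decaying at infinity. Hence $|1-\ft\rho(\dl n)| \les \min(\dl\jb{n},1) \les (\dl\jb{n})^\theta$ for any $\theta \in (0,1]$, and a telescoping estimate together with $\|\ft\rho\|_{L^\infty} \les 1$ gives
\[
\Big|1-\prod_{j=1}^\l\ft\rho(\dl n_j)\Big| \les \sum_{j=1}^\l (\dl\jb{n_j})^\theta.
\]
Inserting this into the displays above (and using $1-\cos(h\jb{n_j}) \les (|h|\jb{n_j})^{\theta}$ for the time-increment estimate), I redistribute the small power $\theta$ across the factors $\jb{n_j}^{-2}$, so that the combinatorial sums over $\G_\l(n)$ are controlled, exactly as in the proof of Lemma~\ref{LEM:Z1p}, by $\dl^{2\theta}\jb{n}^{-2+\eps}$ and $\dl^{2\theta}|h|^\theta\jb{n}^{-2+\eps}$ respectively, for any $\eps>0$ and $\theta>0$ small enough.

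Finally, these moment bounds, combined with the Wiener chaos estimate (Lemma~\ref{LEM:hyp}) and the Kolmogorov-type argument underlying Proposition~\ref{PROP:reg}/Corollary~\ref{COR:reg1}, now applied with the continuous parameter $\dl \to 0$ in place of $N \to \infty$, yield, for each $p < \infty$,
\[
\E\Big[\big\|\,:\!z_{\rho,\dl}^\l\!:\,-\,:\!z^\l\!:\big\|_{C([-T,T];W^{-\eps,\infty}(\T^2))}^p\Big] \les \dl^{\,p\theta'}
\]
for some $\theta' = \theta'(\eps,\l) > 0$. Chebyshev's inequality then gives convergence in probability in $C([-T,T]; W^{-\eps,\infty}(\T^2))$ and, as a byproduct, shows that the limit is independent of the mollification kernel $\rho$. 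The main obstacle is the third step: quantifying the pointwise convergence $\ft\rho(\dl n) \to 1$ in a way compatible with the summability of the combinatorial sum over $\G_\l(n)$. The estimate $|1-\ft\rho(\dl n)| \les (\dl\jb{n})^\theta$ for small $\theta > 0$ does the job, at the cost of an arbitrarily small loss $\eps > 0$ in the Sobolev exponent, which is harmless for Proposition~\ref{PROP:Z1rho}.
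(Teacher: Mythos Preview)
Your approach is correct in spirit and is in fact more direct than the paper's. The paper proceeds in two separate steps: it first proves a Cauchy estimate comparing $:\!z_{\rho,\dl}^\l\!:$ with $:\!z_{\rho,\dl'}^\l\!:$ (Lemma~\ref{LEM:Z1pr}), deduces convergence to \emph{some} limit $:\!z_\rho^\l\!:$ via a subsequence argument and Corollary~\ref{COR:reg1}, and only then identifies $:\!z_\rho^\l\!:$ with $:\!z^\l\!:$ by a separate comparison of $:\!z_N^\l\!:$ with $:\!z_{\rho,1/N}^\l\!:$. You bypass all of this by comparing $:\!z_{\rho,\dl}^\l\!:$ directly with the already-constructed limit $:\!z^\l\!:$; the identification of the limit and its $\rho$-independence then come for free. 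Your route also yields $L^p(\O)$ convergence in $C_tW^{-\eps,\infty}_x$ with an explicit rate $\dl^{\theta'}$, which is stronger than the convergence in probability the paper obtains. The paper's two-step argument, on the other hand, is slightly more self-contained in that it does not rely on knowing $:\!z^\l\!:$ exists beforehand.

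One correction: your second displayed formula for the time increment is not right as written. The exact identity (obtained as you describe, via Lemmas~\ref{LEM:W1}--\ref{LEM:W2}) is
\[
\E\big[|\jb{\dl_h(:\!z_{\rho,\dl}^\l\!:-:\!z^\l\!:)(t),e_n}|^2\big]
= 2\l!\sum_{\G_\l(n)}\frac{1}{\prod_j\jb{n_j}^2}\,
\Big|1-\prod_j\ft\rho(\dl n_j)\Big|^2\Big(1-\prod_j\cos(h\jb{n_j})\Big),
\]
with the factor $1-\prod_j\cos(h\jb{n_j})$ rather than $\prod_j(1-\cos(h\jb{n_j}))$; the latter can be strictly smaller (take $\l=2$, $\cos(h\jb{n_1})=1$, $\cos(h\jb{n_2})=0$), so your stated inequality fails. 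This is harmless for the argument: telescoping $1-\prod_j c_j=\sum_k(1-c_k)\prod_{j<k}c_j$ with $|c_j|\le1$ gives $|1-\prod_j c_j|\le\sum_k|1-c_k|\les\sum_k(|h|\jb{n_k})^\theta$, and the rest of your estimate goes through unchanged.
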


We point out that Proposition \ref{PROP:Z1rho}
establishes convergence in probability, {\it not} almost sure convergence.
This is due to the fact that we take a limit  along a continuous
parameter $\dl \to 0$.
Indeed, in the second part of the proof of 
Proposition \ref{PROP:Z1rho}, 
by restricting our attention to a discrete sequence tending to $0$ (i.e.~$\dl = \frac 1N$, $N \in \N$), 
we show that 
the sequence $ \{ \,:\! z_{\rho,\frac 1N}^\l\!:\,\}_{N \in \N}$ converges almost surely.\footnote{It seems possible to adapt the argument in the proof of Proposition 2.3 in \cite{TzV}
 to prove almost sure convergence of $:\! z_{\rho,\dl}^\l \!:$
along a continuous parameter $\dl \to 0$. We, however, do not pursue this issue here.}

As in the proof of Proposition \ref{PROP:Z1}, we first estimate the variance of the Fourier coefficients of 
the mollified Wick powers $:\! z_{\rho,\dl}^\l (t) \!:$.

\begin{lemma} \label{LEM:Z1pr}
Let $\l \in \N\cup \{0\}$.
For any $\eps>0$, $\g \in (0,1)$, $n \in \Z^2$, $t \in \R$, and $\dl, \dl' \in (0,1]$
we have
\begin{align}
\E \big[ |\jb{\ \! :\! z_{\rho,\dl}^\l (t) \!: , e_n}_{L^2} |^2 \big]
&\les
\jb{n}^{-2+\eps}, \label{PZ2r0}\\
\E \big[ |\jb{\ \! :\! z_{\rho,\dl}^\l (t) \!: - \, :\! z_{\rho,\dl'}^\l(t)\!: , e_n}_{L^2} |^2 \big]
&\les
|\dl-\dl'|^{\g} \jb{n}^{-2+\eps+\g}. \label{PZ2r}
\end{align}

\noi
In addition, 
for any $\eps>0$, $\g,\theta \in (0,1)$, $n \in \Z^2$, $t \in \R$, $h \in [-1,1]$, and $\dl,\dl' \in (0,1]$,
we have
\begin{align}
\E \big[ |\jb{\dl_h \! :\! z_{\rho,\dl}^\l (t) \!: , e_n}_{L^2} |^2 \big]
&\les
\jb{n}^{-2+\eps+\theta} |h|^{\theta}, \notag \\
\E \big[ |\jb{\dl_h \! :\! z_{\rho,\dl}^\l (t) \!: - \, \dl_h :\! z_{\rho,\dl'}^\l(t)\!: , e_n}_{L^2} |^2 \big]
&\les
|\dl-\dl'|^{\g} \jb{n}^{-2+\eps+\g+\theta} |h|^{\theta}. \label{PZ4r}
\end{align}
\end{lemma}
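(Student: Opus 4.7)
The plan is to parallel the proof of Lemma~\ref{LEM:Z1p}, with the mollification scale $\dl$ playing the role previously played by the sharp frequency cutoff $N$. Mimicking \eqref{etaN}, I set
\begin{align*}
\eta_{\rho,\dl}(x)(\cdot) \stackrel{\text{def}}{=} \frac{1}{\s_{\rho,\dl}^{\frac{1}{2}}} \sum_{n \in \Z^2} \frac{\ft\rho(\dl n)\cj{e_n(x)}}{\jb{n}} e_n(\cdot),
\end{align*}
which is a real-valued element of $L^2(\T^2)$ of unit norm, by the definition of $\s_{\rho,\dl}$ together with the identity $\ft\rho(-\xi) = \cj{\ft\rho(\xi)}$ (valid since $\rho$ is real-valued). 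With this choice, $z_{\rho,\dl}(x,t) = \s_{\rho,\dl}^{\frac{1}{2}} W^t_{\eta_{\rho,\dl}(x)}$ and, as in \eqref{W4z}, $:\! z_{\rho,\dl}^\l(x,t)\!: \, = \s_{\rho,\dl}^{\frac{\l}{2}} H_\l\big(W^t_{\eta_{\rho,\dl}(x)}\big)$.

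The computational heart of the argument is the bilinear identity
\begin{align*}
\E\big[\jb{\!:\!z_{\rho,\dl}^\l(t_1)\!:\!, e_n}_{L^2}\cj{\jb{\!:\!z_{\rho,\dl'}^\l(t_2)\!:\!, e_n}_{L^2}}\big]
= \l!\sum_{\G_\l(n)} \prod_{j=1}^\l \frac{\ft\rho(\dl n_j)\cj{\ft\rho(\dl' n_j)}\cos((t_1-t_2)\jb{n_j})}{\jb{n_j}^2},
\end{align*}
obtained by invoking Lemmas~\ref{LEM:W1} and~\ref{LEM:W2} exactly as in \eqref{PStr6a}--\eqref{PStr6b}. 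The bound \eqref{PZ2r0} then follows immediately from $|\ft\rho| \les 1$ and the standard two-dimensional convolution estimate $\sum_{\G_\l(n)} \prod_j \jb{n_j}^{-2} \les \jb{n}^{-2+\eps}$. For the remaining three bounds, I expand the appropriate square and apply the algebraic identity $|A-B|^2 = A\cj A - A\cj B - \cj A B + B\cj B$---first in the mollification variable $(\dl,\dl')$ and then in the time variable $(t+h,t)$---to collapse the resulting sixteen cross terms into the closed form
\begin{align*}
\E\big[\big|\jb{\dl_h \!:\! z_{\rho,\dl}^\l(t)\!: - \,\dl_h\! :\! z_{\rho,\dl'}^\l(t)\!:, e_n}_{L^2}\big|^2\big]
= 2\,\l!\sum_{\G_\l(n)} \frac{\big|\prod_j \ft\rho(\dl n_j) - \prod_j \ft\rho(\dl' n_j)\big|^2 \big(1 - \prod_j\cos(h\jb{n_j})\big)}{\prod_j \jb{n_j}^2},
\end{align*}
with the other two estimates arising as obvious variants in which either the mollifier-difference factor or the $1 - \prod\cos$ factor is absent.

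Each of the two smallness factors is handled by interpolation. Since $\rho$ is smooth with compact support, $\ft\rho \in \S(\R^2)$, so $|\ft\rho(\dl n) - \ft\rho(\dl' n)| \les \min(1,|\dl-\dl'||n|) \les |\dl-\dl'|^{\g/2} \jb{n}^{\g/2}$, and a telescoping sum gives $\big|\prod_j \ft\rho(\dl n_j) - \prod_j \ft\rho(\dl' n_j)\big|^2 \les |\dl-\dl'|^\g \sum_{k=1}^\l \jb{n_k}^\g$. Similarly, $|1-\cos x| \les |x|^\theta$ for $\theta \in [0,2]$ together with telescoping yields $1 - \prod_j \cos(h\jb{n_j}) \les |h|^\theta \sum_{k=1}^\l \jb{n_k}^\theta$. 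Inserting these into the display, each residual sum $\sum_{\G_\l(n)} \jb{n_k}^\g \jb{n_{k'}}^\theta \prod_j \jb{n_j}^{-2}$ is controlled by $\jb{n}^{-2+\eps+\g+\theta}$ via the standard iterated convolution estimate on $\Z^2$, with $\eps>0$ absorbing the logarithmic losses intrinsic to two-dimensional convolutions of $\jb{n}^{-2}$. The only nontrivial point is the product-difference factorization that isolates the $|\dl-\dl'|^\g$ and $|h|^\theta$ smallness factors simultaneously; once this structural simplification is in place, the remaining estimates proceed routinely as in Lemma~\ref{LEM:Z1p}.
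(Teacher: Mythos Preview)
Your proof is correct and follows essentially the same approach as the paper: adapt the white-noise-functional computation of Lemma~\ref{LEM:Z1p} with the sharp cutoff replaced by the smooth multiplier $\ft\rho(\dl\,\cdot)$, obtain a closed-form second-moment identity, and extract the $|\dl-\dl'|^{\g}$ and $|h|^{\theta}$ factors from the mean-value bounds on $\ft\rho$ and $\cos$ before feeding everything into the standard two-dimensional convolution sum. Your closed form $\big|\prod_j \ft\rho(\dl n_j)-\prod_j \ft\rho(\dl' n_j)\big|^2(1-\prod_j\cos(h\jb{n_j}))$, handled via telescoping, is precisely the correct identity; the paper records the mollifier part in the product-of-differences form $\prod_j|\ft\rho(\dl n_j)-\ft\rho(\dl' n_j)|^2$, but either expression leads to the same final bound after the interpolation step.
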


\begin{proof}
Since these estimates follow from a slight modification of the proof of Lemma \ref{LEM:Z1p}, we give a brief explanation of the proof of \eqref{PZ2r} and \eqref{PZ4r}.
Proceeding as in  \eqref{PStr6a}, we have
\begin{align}
&\E \big[ |\jb{\ \! :\! z_{\rho,\dl}^\l (t) \!: - \, :\! z_{\rho,\dl'}^\l(t)\!: , e_n}_{L^2} |^2 \big] \notag\\
& = \l ! \sum_{\G_\l(n)}
\Bigg\{ \prod_{j= 1}^\l \frac{|\ft \rho (\dl n_j)|^2}{\jb{n_j}^2}
+ \prod_{j= 1}^\l \frac{|\ft \rho (\dl' n_j)|^2}{\jb{n_j}^2}\notag \\
& \hphantom{XXX}- \prod_{j= 1}^\l \frac{\ft \rho (\dl n_j) \cj{\ft \rho (\dl' n_j)}}{\jb{n_j}^2}
- \prod_{j= 1}^\l \frac{\ft \rho (\dl' n_j) \cj{\ft \rho (\dl n_j)}}{\jb{n_j}^2} \Bigg\} \notag\\
& = \l ! \sum_{\G_\l(n)}
 \prod_{j= 1}^\l \frac{|\ft \rho (\dl n_j)- \ft \rho (\dl' n_j)|^2}{\jb{n_j}^2}.
\label{PZ2ra}
\end{align}
Since $\rho \in L^1(\T^2)$,  it follows from the mean value theorem that 
\begin{align}
|\ft \rho (\dl n) -\ft \rho (\dl' n)|
\le \int_{\T^2} \big| 1-e^{i (\dl-\dl') n \cdot x} \big| |\rho(x)| dx
\les \min(1,|\dl-\dl'| |n|).
\label{rhomv}
\end{align}
Hence, \eqref{PZ2r} follows from \eqref{PZ2ra} and \eqref{rhomv}.

Similarly, proceeding as in  \eqref{PStr6b} with \eqref{rhomv}
and the mean value theorem,  we have
\begin{align}
&\E \big[ |\jb{\dl_h \! :\! z_{\rho,\dl}^\l (t) \!: - \, \dl_h :\! z_{\rho,\dl'}^\l(t)\!: , e_n}_{L^2} |^2 \big] \notag\\
& = 2\l ! \sum_{\G_\l(n)}
\Bigg\{ \prod_{j= 1}^\l \frac{|\ft \rho (\dl n_j)|^2}{\jb{n_j}^2} - \prod_{j=1}^\l \frac{|\ft \rho (\dl n_j)|^2 \cos (h \jb{n_j})}{\jb{n_j}^2} \notag \\
& \hspace*{50pt} + \prod_{j= 1}^\l \frac{|\ft \rho (\dl' n_j)|^2}{\jb{n_j}^2} - \prod_{j=1}^\l \frac{|\ft \rho (\dl' n_j)|^2 \cos (h \jb{n_j})}{\jb{n_j}^2} \notag\\
& \hspace*{50pt} - \bigg( \prod_{j= 1}^\l \frac{\ft \rho (\dl n_j) \cj{\ft \rho (\dl' n_j)}}{\jb{n_j}^2} - \prod_{j= 1}^\l \frac{\ft \rho (\dl n_j) \cj{\ft \rho (\dl' n_j)} \cos (h \jb{n_j})}{\jb{n_j}^2} \notag\\
& \hspace*{70pt} + \prod_{j= 1}^\l \frac{\ft \rho (\dl' n_j) \cj{\ft \rho (\dl n_j)}}{\jb{n_j}^2} - \prod_{j= 1}^\l \frac{\ft \rho (\dl' n_j) \cj{\ft \rho (\dl n_j)} \cos (h \jb{n_j})}{\jb{n_j}^2}  \bigg) \Bigg\}\notag\\
& = \l ! \sum_{\G_\l(n)}\Bigg\{
 \prod_{j= 1}^\l \frac{|\ft \rho (\dl n_j)- \ft \rho (\dl' n_j)|^2}{\jb{n_j}^2}
 -  \prod_{j= 1}^\l \frac{|\ft \rho (\dl n_j)- \ft \rho (\dl' n_j)|^2 \cos( h \jb{n_j})}{\jb{n_j}^2}
\Bigg\}\notag\\
&
\les |\dl - \dl'|^{\g} \jb{n}^{-2+\eps+\g+\theta} |h|^{\theta}, 
 \label{PStre6br}
\end{align}

\noi
yielding 
 \eqref{PZ4r}.
\end{proof}

We are now ready to prove Proposition \ref{PROP:Z1rho}.

\begin{proof}[Proof of Proposition \ref{PROP:Z1rho}]
Fix small  $\g, \theta > 0$ such that $\g+\theta<\eps$.
Fix $t \in \R$.
Then, it follows from  \eqref{PZ2r0}, \eqref{PZ2r},  and Lemma \ref{LEM:hyp} (see also Remark \ref{REM:cor})
that, as $\dl \to 0$, 
$ :\! z_{\rho,\dl}^\l (t) \!:$ converges to some limit
$ :\! z_\rho^\l (t) \!: $
 in $L^p(\O; W^{-\eps,\infty}(\T^2))$ for any finite $p \ge 1$.

Let $\{ \dl_j \}_{j \in \N}$ be a sequence satisfying $\dl_j \to 0$ as $j \to \infty$.
There exists a subsequence $\{ \dl_{j(m)} \}_{m \in \N} \subset \{ \dl_j \}_{j \in \N}$ such that $\dl_{j(m)} < m^{-1}$ for $m \in \N$.
It follows from Corollary~\ref{COR:reg1} with Lemma \ref{LEM:Z1pr} that
the subsequence $:\! z_{\rho,\dl_{j(m)}}^\l \!:$ converges almost surely 
(and hence in measure) to $:\! z_\rho^\l \!:$ in $C([-T,T]; W^{-\eps,\infty}(\T^2))$,  as $m \to \infty$.
Since the limit $\! :\! z_\rho^\l \!:$ is independent of the choice of a sequence
$\{ \dl_j \}_{j \in \N}$,  we  deduce that $:\! z_{\rho,\dl}^\l \!:$ converges in probability to $:\! z_\rho^\l \!:$ in $C([-T,T]; W^{-\eps,\infty}(\T^2))$.

Next, we prove that the limit is independent of mollification kernels.
Since $\rho \in L^1(\T^2)$ and $\ft \rho (0)=1$, 
it follows from the mean value theorem that 
\begin{align*}
\Big| 1-\ft \rho \Big( \frac nN \Big) \Big|
\le \int_{\T^2} \big| 1-e^{- i \frac nN \cdot x} \big| |\rho(x)| dx
\les \min \Big( 1, \frac{|n|}{N} \Big).
\end{align*}

\noi
Given $h \in [-1,1]$, proceeding as in 
 \eqref{PStre6br}, we have 
\begin{align}
& \E \big[ | \jb{ \dl_h \! :\! z_N^\l (t) \!: - \, \dl_h :\! z_{\rho,\frac 1N}^\l(t)\!: , e_n}_{L^2} |^2 \big] \notag\\
& = 2\l ! \sum_{\G_\l(n) }
\Bigg\{ \prod_{j= 1}^\l \frac{\ind_{|n_j|\le N}}{\jb{n_j}^2}
 - \prod_{j=1}^\l \frac{\ind_{|n_j|\le N}\cos (h \jb{n_j})}{\jb{n_j}^2} \notag \\
 & \hphantom{XXXXX}
 + \prod_{j= 1}^\l \frac{|\ft \rho (\frac{n_j}{N})|^2}{\jb{n_j}^2} - \prod_{j=1}^\l \frac{|\ft \rho (\frac{n_j}{N})|^2 \cos (h \jb{n_j})}{\jb{n_j}^2}\notag \\
 & \hphantom{XXXXX}
- \bigg(\prod_{j= 1}^\l \frac{\ind_{|n_j|\le N}\cj{\ft \rho (\frac{n_j}{N})}}{\jb{n_j}^2} - \prod_{j= 1}^\l \frac{\ind_{|n_j|\le N}\cj{\ft \rho (\frac{n_j}{N})} \cos (h \jb{n_j})}{\jb{n_j}^2}\notag\\
 & \hphantom{XXXXXXX}
+ \prod_{j= 1}^\l \frac{\ind_{|n_j|\le N}\ft \rho (\frac{n_j}{N})}{\jb{n_j}^2} - \prod_{j= 1}^\l
 \frac{\ind_{|n_j|\le N} \ft \rho (\frac{n_j}{N}) \cos (h \jb{n_j})}{\jb{n_j}^2}  \Bigg) \Bigg\} \notag\\
&=  2\l ! \sum_{\substack{\G_\l(n)}}
\Bigg\{ 
\prod_{j= 1}^\l \frac{|\ind_{|n_j|\le N}- \ft \rho (\frac{n_j}{N})|^2}{\jb{n_j}^2} 
- \prod_{j= 1}^\l
 \frac{|\ind_{|n_j|\le N} - \ft \rho (\frac{n_j}{N})|^2 \cos (h \jb{n_j})}{\jb{n_j}^2}  
 \Bigg\}\label{PStr7}.
%
\end{align}

\noi
By writing the summand in a telescoping sum and applying the mean value theorem 
(to $1 - \cos (h \jb{n_j})$) and \eqref{PStr7}, we have
\begin{align*}
\text{RHS of } \eqref{PStr7}
& \les
 \sum_{\substack{\G_\l(n)}}
\sum_{k = 1}^\l  |h|^\ta \jb{n_k}^\ta
\prod_{j= 1}^\l \frac{|\ind_{|n_j|\le N}- \ft \rho (\frac{n_j}{N})|^2}{\jb{n_j}^2} \notag\\
& \les
 \sum_{\substack{\G_\l(n)}}
\sum_{k = 1}^\l  |h|^\ta \jb{n_k}^\ta
\prod_{j= 1}^\l \frac{\ind_{|n_j|>  N}}{\jb{n_j}^2} \notag\\
& \quad + 
 \sum_{\substack{\G_\l(n)}}
\sum_{k = 1}^\l  |h|^\ta \jb{n_k}^\ta
\prod_{j= 1}^\l \frac{|\ind_{|n_j|\le N}- \ft \rho (\frac{n_j}{N})|^2}{\jb{n_j}^2} \notag\\
& \les N^{-\g} \jb{n}^{-2+\eps+\g+\theta} |h|^{\theta}.
\end{align*}

\noi
A similar estimate holds for  the difference:
\[ \E \big[ | \jb{   :\! z_N^\l (t) \!: - \, :\! z_{\rho,\frac 1N}^\l(t)\!: , e_n}_{L^2} |^2 \big].
\]

\noi
Therefore, 
from the above computation with Lemma \ref{LEM:Z1p}
and  Proposition \ref{PROP:reg}, we see that, as $N \to \infty$, 
$ :\! z_{\rho,\frac 1N}^\l\!:$ converges almost surely
to  $:\! z^\l\!: $ (constructed in Proposition \ref{PROP:Z1})
in  $C([-T, T]; W^{-\eps, \infty}(\T^2)) $.
Together with the convergence in probability of $\{ :\! z_{\rho,\dl}^\l \!: \}_{\dl \in (0,1]}$ to 
${ :\! z_{\rho}^\l\!:}$, 
we conclude 
 that $:\! z^\l\!: \, =\, :\! z_{\rho}^\l\!:\
$ almost surely.
This completes the proof of Proposition~\ref{PROP:Z1rho}.
\end{proof}

\subsection{Local well-posedness of the  perturbed NLW
with deterministic perturbation}
\label{SUBSEC:LWPdpNLW}

In this subsection, we consider the local well-posedness of the following Cauchy problem:
\begin{align}
\begin{cases}
\dt^2 v  + (1 -  \Dl)  v  +  v^3 + 3f_1 v^2 + 3f_2 v + f_3 = 0\\ 
(v, \dt v) |_{t = 0} = (v_0, v_1), 
\end{cases}
\label{fNLW1}
\end{align}
where $f_1,f_2,f_3$ are given (deterministic) functions.
We define the function space
\begin{align*} 
X^s(I) = C(I;H^s(\T^2)) \cap C^1(I;H^{s-1}(\T^2))
\end{align*}
for $s \in \R$ and an interval $I \subset \R$.
If $I=[-T,T]$, we write $X^s_T = X^s([-T,T])$.

\begin{lemma}\label{LEM:LWPx}
Let $\frac 12 <s<1$.
There exists $\eps = \eps (s)>0$ such that if $f_1,f_2,f_3 \in L_{\rm loc}^{\frac 2\eps} (\R; W^{-\eps, \frac 2\eps}(\T^2))$, then the Cauchy problem \eqref{fNLW1} is locally well-posed in $\H^s(\T^2)$.
More precisely, given $(v_0,v_1) \in \H^s(\T^2)$, there exist $T>0$ 
and  a unique solution $v \in X^s_T$ to~\eqref{fNLW1}, depending continuously on the enhanced data set
\begin{align}
\Xi = (v_0,v_1, f_1, f_2, f_3)
\label{enh1}
\end{align}
in the class:
\[
\mathcal{X}_T^{s,\eps} = \H^s (\T^2) \times L^{\frac 2\eps} ([-T,T]; W^{-\eps, \frac 2\eps}(\T^2))^3.
\]
\end{lemma}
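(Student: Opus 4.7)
The plan is to set up a standard contraction mapping for the Duhamel map
\[
\vec\Phi(v)(t) = S(t)(v_0,v_1) - \int_0^t \frac{\sin((t-t')\jb{\nb})}{\jb{\nb}}\big[v^3 + 3f_1 v^2 + 3f_2 v + f_3\big](t')\,dt'
\]
on a ball in $X^s_T$. Since $\frac{\sin((t-t')\jb{\nb})}{\jb{\nb}}$ maps $H^{s-1}\to H^s$ with an operator norm bounded uniformly in $t,t'$, the main task is to control $\|\NN(v)\|_{L^1_T H^{s-1}}$, where $\NN(v) = v^3+3f_1 v^2+3f_2 v+f_3$, in terms of $\|v\|_{L^\infty_T H^s_x}$ and the $L^{2/\eps}_T W^{-\eps,2/\eps}_x$-norms of $f_1,f_2,f_3$, with a gain of a positive power of $T$ so that a small time interval produces a contraction.

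The cubic term $v^3$ is harmless: since $s>\tfrac12$, the space $H^s(\T^2)$ is an algebra (a consequence of Lemma \ref{LEM:bilin} (i)), so $\|v^3\|_{L^1_T H^{s-1}}\les T\|v\|_{L^\infty_T H^s}^3$. For the $f$-terms I will pick $\eps=\eps(s)>0$ small enough that $s-1\le -\eps$ and $s>\tfrac12+\eps$, and estimate pointwise in time using Lemma \ref{LEM:bilin} (ii). For $f_1 v^2$, writing $\|\cdot\|_{H^{s-1}}\le \|\cdot\|_{H^{-\eps}}$, we have
\[
\|\jb{\nb}^{-\eps}(f_1 v^2)\|_{L^2} \les \|\jb{\nb}^{-\eps} f_1\|_{L^{2/\eps}}\,\|\jb{\nb}^{\eps}(v^2)\|_{L^{q}},
\]
with $q$ chosen so that $\tfrac{\eps}{2}+\tfrac1q\le \tfrac12+\tfrac{\eps}{2}$; then Lemma \ref{LEM:bilin} (i) gives $\|\jb{\nb}^{\eps}(v^2)\|_{L^q}\les \|v\|_{L^{2q}}\|\jb{\nb}^{\eps}v\|_{L^{2q}}$, and for $q$ close to $2$ the Sobolev embedding $H^s(\T^2)\hookrightarrow W^{\eps,2q}(\T^2)$ (valid as soon as $s-\eps\ge 1-\tfrac1q$, i.e. $s>\tfrac12+\eps$) controls this by $\|v\|_{H^s}^2$. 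The $f_2 v$ and $f_3$ terms are handled by the same scheme with fewer factors of $v$, using $W^{-\eps,2/\eps}\hookrightarrow H^{-2\eps}\hookrightarrow H^{s-1}$. A H\"older application in time against $L^{2/\eps}_T$ on each $f_i$-factor supplies a factor $T^{1-\eps/2}$, yielding the schematic bound
\[
\|\NN(v)\|_{L^1_T H^{s-1}} \les T\|v\|_{L^\infty_T H^s}^3 + T^{1-\frac{\eps}{2}}\big(1+\|\Xi\|_{\mathcal{X}_T^{s,\eps}}\big)\big(1+\|v\|_{L^\infty_T H^s}^2\big).
\]

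With this estimate in hand, standard arguments close the contraction on the ball of radius $2\|(v_0,v_1)\|_{\H^s}$ in $X^s_T$ for $T=T(\|\Xi\|_{\mathcal{X}_T^{s,\eps}},\|(v_0,v_1)\|_{\H^s})>0$ sufficiently small; the analogous multilinear difference estimates (obtained by splitting $v_1^3-v_2^3$, $f_1(v_1^2-v_2^2)$, $f_2(v_1-v_2)$ and applying the same bilinear estimates) give both uniqueness in $X^s_T$ and Lipschitz continuous dependence on the enhanced data set $\Xi=(v_0,v_1,f_1,f_2,f_3)\in\mathcal{X}_T^{s,\eps}$. The main obstacle (and the reason for the restriction $s>\tfrac12$ and small $\eps$) is the simultaneous balancing of three constraints in the bilinear estimate: the scaling condition in Lemma \ref{LEM:bilin} (ii), the algebra/product estimate for $v^2$, and the Sobolev embedding that converts $\jb{\nb}^{\eps}v$ into an $L^{2q}$-function---these all pinch at exactly $s=\tfrac12$, and the choice $\eps=\eps(s)\ll s-\tfrac12$ is what makes the argument go through.
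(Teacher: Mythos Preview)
Your approach is essentially the same as the paper's: Duhamel formulation plus Sobolev embedding and the bilinear product estimates (Lemma~\ref{LEM:bilin}), with H\"older in time against the $L^{2/\eps}_T$-norm of the $f_j$ to produce the factor $T^{1-\eps/2}$. The treatment of the $f_j v^{3-j}$ terms is effectively identical to the paper's (the paper simply takes $q=2$ where you allow $q$ close to $2$).

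There is one genuine slip. You claim that for $s>\tfrac12$ the space $H^s(\T^2)$ is an algebra and deduce $\|v^3\|_{L^1_T H^{s-1}}\les T\|v\|_{L^\infty_T H^s}^3$ from this. On $\T^2$, $H^s$ is an algebra only for $s>1$, so this justification fails in the range $\tfrac12<s<1$ under consideration, and it does not follow from Lemma~\ref{LEM:bilin}\,(i) alone. The estimate itself is nonetheless true, but for a different reason: by the dual Sobolev embedding $L^{2/(2-s)}(\T^2)\hookrightarrow H^{s-1}(\T^2)$ and then H\"older plus $H^{(1+s)/3}(\T^2)\hookrightarrow L^{6/(2-s)}(\T^2)$,
\[
\|v^3\|_{H^{s-1}} \les \|v^3\|_{L^{2/(2-s)}} = \|v\|_{L^{6/(2-s)}}^3 \les \|v\|_{H^{(1+s)/3}}^3 \les \|v\|_{H^s}^3,
\]
valid precisely for $s\ge\tfrac12$. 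This is the route the paper takes. (A minor aside: your embedding $W^{-\eps,2/\eps}\hookrightarrow H^{-2\eps}$ is stated a bit oddly; on $\T^2$ one has directly $W^{-\eps,2/\eps}\hookrightarrow W^{-\eps,2}=H^{-\eps}\hookrightarrow H^{s-1}$, which is what you need.) With this correction, the rest of your sketch closes exactly as in the paper.
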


By using the Strichartz estimates as in \cite{GKO, OTh2}, 
we can indeed prove
local well-posedness of \eqref{fNLW1}  for $\frac 14<s<1$.
Note that $s = \frac 14$ is the critical regularity as in \eqref{scrit}.
For simplicity, however, we only consider the case $\frac 12 <s<1$, 
 where the local well-posedness follows from a fixed point argument 
 with Sobolev's inequality and the product estimates (Lemma \ref{LEM:bilin}).

\begin{proof} The proof is essentially contained in Proposition 4.1 in \cite{GKOT}
and thus we will be brief here.
By writing  \eqref{fNLW1} in the Duhamel formulation, we have 
\begin{align*}
v(t)
& = \Phi_{\Xi} (v) (t) \\
& \hspace{-0.5mm}\stackrel{\text{def}}{=} S(t) (v_0,v_1) - \int_0^t \frac{\sin ((t-t') \jb{\nabla})}{\jb{\nabla}} \big( v^3 + 3f_1 v^2 + 3f_2 v + f_3 \big) (t') dt'.
\end{align*}

\noi
We will show that $\Phi_\Ld$ is a contraction mapping on a ball in $X_T^s$.

By  Sobolev's inequality, we have
\begin{align}
\begin{split}
\bigg\| \int_0^t \frac{\sin ((t-t') \jb{\nabla})}{\jb{\nabla}} v^3(t') dt' \bigg\|_{X^s_T}
&\les T \| v^3 \|_{L_T^\infty H_x^{s-1}}
\les T \| v^3 \|_{L_T^\infty L_x^{\frac{2}{2-s}}} \\
&\les T \| v \|_{L_T^\infty L_x^{\frac{6}{2-s}}}^3
\les T \| v \|_{L_T^\infty H_x^{\frac{1+s}{3}}}^3 \\
&\les T \| v \|_{X_T^s}^3
\end{split}
\label{dpLWP1}
\end{align}
for $\frac 12 \le s \le 1$.
From Lemma \ref{LEM:bilin} and Sobolev's inequality, we have
\begin{align}
\begin{split}
\bigg\| \int_0^t & \frac{\sin ((t-t') \jb{\nabla})}{\jb{\nabla}} (f_1v^2)(t') dt' \bigg\|_{X_T^s}\\
&\les \| \jb{\nb}^{-\eps} (f_1 v^2) \|_{L_T^1 L_x^2} \\
&\les T^{1-\frac \eps2} \| \jb{\nb}^{-\eps} f_1 \|_{L_T^{\frac 2\eps} L_x^{\frac 2\eps}} \| \jb{\nb}^\eps (v^2) \|_{L_T^\infty L_x^2} \\
&\les T^{1-\frac \eps2}  \| \jb{\nb}^{-\eps} f_1 \|_{L_T^{\frac 2\eps} L_x^{\frac 2\eps}} \| \jb{\nb}^\eps v \|_{L_T^\infty L_x^4} \| v \|_{L_T^\infty L_x^4} \\
&\les T^{1-\frac \eps2}  \| \jb{\nb}^{-\eps} f_1 \|_{L_T^{\frac 2\eps} L_x^{\frac 2\eps}} \| v \|_{X_T^s}^2, 
\end{split}
\label{dpLWP2}
\end{align}
provided that $\frac 12< s <1$ and $\eps =\eps(s)>0$ is sufficiently small.
Similarly, we have
\begin{align}
\bigg\| \int_0^t \frac{\sin ((t-t') \jb{\nabla})}{\jb{\nabla}} (f_2v) dt' \bigg\|_{X_T^s}
&\les T^{1-\frac \eps2}  \| \jb{\nb}^{-\eps} f_2 \|_{L_T^{\frac 2\eps} L_x^{\frac 2\eps}} \| \jb{\nb}^\eps v \|_{X_T^s}, \label{dpLWP3} \\
\bigg\| \int_0^t \frac{\sin ((t-t') \jb{\nabla})}{\jb{\nabla}} f_3(t') dt' \bigg\|_{X^s_T}
&\les T^{1-\frac \eps2}  \| \jb{\nb}^{-\eps} f_3 \|_{L_T^{\frac 2\eps} L_x^{\frac 2\eps}}.
\label{dpLWP4}
\end{align}

\noi
A standard argument with  \eqref{dpLWP1} - \eqref{dpLWP4} 
then shows that 
$\Phi_\Xi$ is a contraction on a small ball in $X^s_T$
by choosing $T = T(\|\Xi\|_{\mathcal{X}^{s, \eps}_1})> 0$
sufficiently small.
Moreover, a slight modification of the argument allows us to show continuous dependence 
of the solution on the enhanced data set $\Xi$ in \eqref{enh1}.
Since the argument is standard,
we omit details.
\end{proof}

\subsection{Proof of Theorem \ref{THM:uniq}}
\label{SUBSEC:4.3}

We conclude this section by presenting the proof of Theorem~\ref{THM:uniq}.
Set $v=u-z$, where $u$ is the solution constructed in Theorem~\ref{THM:OTh} and $z
= S(t) (u_0^\o,  u_1^\o)$ is as in   \eqref{lin1a}.
Let $v_{\rho,\dl}$ be the solution of \eqref{WNLW_PAK} with 
the mollified initial data  $(u_{0,\dl}^\o, u_{1,\dl}^\o)$  defined in \eqref{uniq1}
with a mollification kernel $\rho$.
Let $T>0$ and $\frac 12<s_0<1$.
In view of  Proposition~\ref{PROP:Z1rho}, it suffices to show that $v_{\rho,\dl}$ converges in probability to $v$ in $C([-T,T]; \H^{s_0} (\T^2))$.

\medskip

By Theorem~\ref{THM:OTh}, the global solution $u \in C(\R; H^s(\T^2))$ to \eqref{WNLW0} satisfies $v=u-z \in C(\R; H^{s_0}(\T^2))$,  almost surely.
In particular, from the construction of the global solution, for any $\eta>0$, there exists $R=R(T,\eta)\ge 1$ such that $\O_1 = \{ \o \in \O : \| (v,\dt v) \|_{L_T^{\infty} \H_x^{s_0}} \le R \}$ satisfies
\begin{align} \label{pO1}
P(\O_1^c)< \frac \eta4.
\end{align}

We divide the interval $[-T,T]$ into finitely many subintervals:
\[
[-T,T] = \bigcup_{j=-[ T/\tau]-1}^{[ T/\tau]} I_j,
\quad I_j = [j\tau, (j+1)\tau] \cap [-T,T],
\]
where $\tau>0$ is to be chosen later.
Let $\eps =\eps (s_0) >0$ be as in Lemma \ref{LEM:LWPx}.
We set
\[
\O_2 = \big\{ \o \in \O : \| :\! z^\l\!: \|_{L^{\frac 2\eps} (I_j;W^{-\eps, \frac 2\eps}(\T^2))} \le 1, \ \l =1,2,3, \ j= - \big[ \tfrac T\tau \big]-1, \dots, \big[ \tfrac T\tau \big] \big\}.
\]
By Lemma \ref{LEM:Z4} and taking $\tau = \tau (T,\eta)> 0$ small, we have
\begin{align}
P(\O_2^c)
&\le \sum_{\l=1}^3 \sum_{j=-[\frac{T}{\tau}]-1}^{[\frac{T}{\tau}]} P \Big( \| :\! z^\l\!: \|_{L^{\frac 2\eps} (I_j; W^{-\eps, \frac 2\eps}(\T^2))} > 1 \Big) \notag\\
&\les \sum_{\l=1}^3 \frac{T}{\tau} \exp \big( -c \tau^{-\frac{\eps}{\l}} \big) \notag\\
&\les \frac{T}{\tau} \tau \exp \Big( - \frac c2 \tau^{-\frac{\eps}{3}} \Big) \notag\\
&= T \exp \Big( - \frac c2 \tau^{-\frac{\eps}{3}} \Big)
<\frac \eta4. \label{pO2}
\end{align}

Moreover, we set
\[
\O_{3,\dl} = \Big\{ \o \in \O : \| :\! z^\l\!: - :\! z_{\rho,\dl}^\l\!: \|_{L_T^{\frac 2\eps} W_x^{-\eps, \frac 2\eps}} \le 8^{-\frac{T}{\tau}-5}, \ \l=1,2,3 \Big\}.
\]
From Proposition \ref{PROP:Z1rho}, there exists $\dl_0>0$ such that for any $0<\dl<\dl_0$, we have
\begin{align} \label{pO3}
P(\O_{3,\dl}^c) < \frac \eta4.
\end{align}
Then, we define $\O_{T,\eta,\dl} = \O_1 \cap \O_2 \cap \O_{3,\dl}$.
It follows from \eqref{pO1}, \eqref{pO2}, and \eqref{pO3} that
\begin{align} \label{pO4}
P(\O_{T,\eta,\dl}^c) <\frac 34 \eta.
\end{align}

Let  $w_{\rho,\dl} = v-v_{\rho,\dl}$. Then, $w_{\rho, \dl}$ satisfies
\begin{align*}
\begin{cases}
\dt^2 w_{\rho,\dl} + (1-  \Dl) w_{\rho,\dl} + \NN^3_{(u_0^\o, u_1^\o)} (v) - \NN^3_{(u_{0,\dl}^\o, u_{1,\dl}^\o)} (v_{\rho,\dl}) = 0 \\
(w_{\rho,\dl}, \dt w_{\rho,\dl}) |_{t = 0} = (0, 0),
\end{cases}
\end{align*}

\noi
where $\NN^3_{(u_0^\o, u_1^\o)} (v)$ is well defined thanks to Theorem~\ref{THM:OTh}. 
From \eqref{Herm4}, we have
\begin{align*}
\NN^3_{(u_0^\o, u_1^\o)} & (v) - \NN^3_{(u_{0,\dl}^\o, u_{1,\dl}^\o)} (v_{\rho,\dl})
\\
&= v^3-v_{\rho,\dl}^3 + 3 (v^2-v_{\rho,\dl}^2) z + 3 v_{\rho,\dl}^2 (z-z_{\rho,\dl}) + 3 w_{\rho,\dl} :\! z^2 \!:\, \\
&\quad + 3 v_{\rho,\dl} (:\! z^2 \!:\, - :\! z_{\rho,\dl}^2 \!:\,) + :\! z^3 \!:\, - :\! z_{\rho,\dl}^3 \!:\\
&= - 3v^2 w_{\rho,\dl} + 3v (2v - w_{\rho, \dl} ) w_{\rho,\dl} + w_{\rho,\dl}^3 +  3 (2v-w_{\rho,\dl})w_{\rho,\dl} z \\
&\quad + 3 (v^2-2vw_{\rho,\dl}+w_{\rho,\dl}^2) (z-z_{\rho,\dl}) + 3 w_{\rho,\dl} :\! z^2 \!:\, \\
&\quad + 3 (v-w_{\rho,\dl}) (:\! z^2 \!:\, - :\! z_{\rho,\dl}^2 \!:\,) + :\! z^3 \!:\, - :\! z_{\rho,\dl}^3 \!:\,.
\end{align*}

\noi
By taking $\tau = \tau (R) > 0$ sufficiently small, 
the local well-posedness argument in 
 the proof of Lemma~\ref{LEM:LWPx} yields 
\begin{align} \label{Xvj}
\| v \|_{X^{s_0} (I_j)} \le 2R
\end{align}
for $\o \in \O_{T,\eta,\dl}$ and $j=- \big[ \frac{T}{\tau} \big]-1, \dots, \big[ \frac{T}{\tau} \big]$.

In the following, we restrict our attention to positive times, 
i.e.~we work on $I_j$ for $j =0, \dots, \big[ \frac T\tau \big]$.
By applying the estimates \eqref{dpLWP1}, \eqref{dpLWP2}, \eqref{dpLWP3}, and \eqref{dpLWP4}
with \eqref{Xvj}
and taking $\tau = \tau (R) > 0$ sufficiently small, we have 
\begin{align}
\| w_{\rho,\dl} \|_{X^{s_0}(I_j)}
&\le  \| (w_{\rho,\dl}(j\tau), \dt w_{\rho,\dl}(j\tau)) \|_{\H^{s_0}} \notag\\
&\quad + C \tau^{1-\frac \eps2} 
\Big( ( R^2 +  \| w_{\rho,\dl} \|_{X^{s_0}(I_j)}^2)  \| w_{\rho,\dl} \|_{X^{s_0}(I_j)}\notag \\
& \qquad + ( R +  \| w_{\rho,\dl} \|_{X^{s_0}(I_j)})  \| w_{\rho,\dl} \|_{X^{s_0}(I_j)}
 \notag\\
&\qquad + (R^2 + \| w_{\rho,\dl} \|_{X^{s_0}(I_j)}^2) \| z - z_{\rho,\dl} \|_{L_T^{\frac 2\eps} W_x^{-\eps,\frac 2\eps}} \notag\\
&\qquad +  (R+\| w_{\rho,\dl} \|_{X^{s_0}(I_j)}) \| :\! z^2 \!:\, - :\! z_{\rho,\dl}^2 \!: \|_{L_T^{\frac 2\eps} W_x^{-\eps,\frac 2\eps}} \notag\\
&\qquad+ \| :\! z^3 \!:\, - :\! z_{\rho,\dl}^3 \!: \|_{L_T^{\frac 2\eps} W_x^{-\eps,\frac 2\eps}}
\Big) \notag\\
&\le  \| (w_{\rho,\dl}(j\tau), \dt w_{\rho,\dl}(j\tau)) \|_{\H^{s_0}} \notag\\
&\quad+ \frac 12 \sum_{\l=1}^3 \Big( \| w_{\rho,\dl} \|_{X^{s_0}(I_j)}^\l + \| :\! z^\l\!: - :\! z_{\rho,\dl}^\l\!: \|_{L_T^{\frac 2\eps} W_x^{-\eps,\frac 2\eps}} \Big)
\notag
\end{align}
for any $\o \in \O_{T,\eta,\dl}$ and $j=0, \dots, \big[ \frac{T}{\tau} \big]$.
By setting
\[
A =\sum_{\l=1}^3\| :\! z^\l\!: - :\! z_{\rho,\dl}^\l\!: \|_{L_T^{\frac 2\eps} W_x^{-\eps,\frac 2\eps}},
\]
we have
\begin{align}
\begin{split}
\| w_{\rho,\dl} \|_{X^{s_0}(I_j)}
& \le 2 \| (w_{\rho,\dl}(j\tau), \dt w_{\rho,\dl}(j\tau)) \|_{\H^{s_0}}\\
& \quad + \| w_{\rho,\dl} \|_{X^{s_0}(I_j)}^2 + \| w_{\rho,\dl} \|_{X^{s_0}(I_j)}^3 + A.
\end{split}
\label{wIj}
\end{align}
When $j=0$, since $(w_{\rho,\dl}(0), \dt w_{\rho,\dl}(0)) =(0,0)$ and $A< 8^{-3}$, 
a continuity argument yields
\[
\| w_{\rho,\dl} \|_{X^{s_0}(I_0)}
\le 2 A.
\]
In particular, we have $\| (w_{\rho,\dl}(\tau), \dt w_{\rho,\dl}(\tau)) \|_{\H^{s_0}} \le 2A$.
For $j=1,\dots, \big[ \frac T\tau \big]$,
since $A< 8^{-j-3}$ for $\o \in \O_{T,\eta,\dl}$,
we can repeatedly apply \eqref{wIj} and the continuity argument to obtain
\[
\| w_{\rho,\dl} \|_{X^{s_0}(I_j)}
\le 2 \cdot 8^j A.
\]
Hence, we have
\begin{align}
\| w_{\rho,\dl} \|_{L_T^{\infty} H_x^{s_0}}
\le 2 \cdot 8^{[\frac T \tau]+1} \sum_{\l=1}^3\| :\! z^\l\!: - :\! z_{\rho,\dl}^\l\!: \|_{L_T^{\frac 2\eps} W_x^{-\eps,\frac 2\eps}}.
\label{wLT}
\end{align}

Finally,  from \eqref{wLT} and Proposition \ref{PROP:Z1rho}, 
we see that for any $\ld>0$, there exists $\dl_1 \in (0,\dl_0)$ such that
\[
P( \{ \o \in \O_{T,\eta,\dl} : \| w_{\rho,\dl} \|_{L_T^{\infty} H_x^{s_0}}>\ld \})
< \frac \eta4
\]
for $0<\dl<\dl_1$.
Together with  \eqref{pO4}, 
we conclude that $w_{\rho,\dl}$ converges in probability to $0$ in $C([-T,T]; H^{s_0}(\T^2))$.
Recalling that  $w_{\rho,\dl} = v-v_{\rho,\dl}$, this concludes the proof
of Theorem~\ref{THM:uniq}.

\begin{remark}\label{REM:uniq2} \rm
Since $(\phi_{0,\eps}, \phi_{1,\eps})$ is smooth, Theorem \ref{THM:OTh} with 
the Cameron-Martin theorem~\cite{CM} implies
almost sure global existence of the solution $v_{\eps}$ to \eqref{WNLW4b};
see \cite{OQ}.
Moreover, for any $T>0$ and $\eta>0$, there exists $\wt{R}=\wt{R}(T,\eta,\phi_{0,\eps}, \phi_{1,\eps})$ such that
\[
P \big( \| (v_\eps, \dt v_\eps) \|_{L_T^{\infty} \H_x^{s_0}} > \wt{R} \big) < \frac \eta4\,.
\]
Then, we can use this bound instead of \eqref{pO1}
and repeat the argument presented above to conclude that 
 the solution $v_{\dl,\eps}$ to \eqref{WNLW4a} converges in probability to the solution $v_{\eps}$ to~\eqref{WNLW4b}.
\end{remark}

\section{Norm inflation 
for  the (unrenormalized) NLW in negative Sobolev spaces}
\label{SEC:NI}


In this section, we present the proof of Theorem~\ref{THM:illposed}, 
norm inflation for the cubic NLW~\eqref{NLW3} with $k = 3$.
In the remaining part of the paper, when we refer to \eqref{NLW3} (and \eqref{NLW0}), it is understood that $k = 3$.
Furthermore, 
for simplicity of the presentation, 
we set $m = 1$, 
where $m$ denotes 
the mass $m \geq 0$ in \eqref{NLW3}.
 Namely, we consider \eqref{NLW0} with $k = 3$.

We first state the following norm inflation result for smooth initial data.

\begin{proposition}\label{PROP:NI3}
Let $d\in \N$.
Suppose that  $s \in \R$ satisfies
either 
\textup{(i)} $s \leq  - \frac 12$ 
when $d = 1$
or 
\textup{(ii)} $s <  0$ 
when $d \geq 2$.
Fix $\vec u_0 = (u_0, u_1) \in \S(\M)\times \S(\M)$.
Then, 
given any $n \in \N$, 
there exist a  solution $u_n$ to the cubic NLW \eqref{NLW0} with $k = 3$
and $t_n  \in \big(0, \frac 1n\big) $ such that 
\begin{align}
 \big\| (u_n(0), \dt u_n(0)) - (u_0, u_1) \big\|_{\H^s(\M)} < \tfrac 1n \qquad \text{ and } 
\qquad \| u_n(t_n)\|_{H^s(\M)} > n.
\label{main1}
\end{align}

\end{proposition}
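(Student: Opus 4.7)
The plan is to adapt the Fourier-analytic, tree-indexed Picard expansion approach of \cite{Oh1} (developed there for the cubic NLS) to the cubic NLW.  By a standard density argument, it suffices to treat $(u_0, u_1) \in \S(\M) \times \S(\M)$, so I will seek a perturbation $(\phi_{0,\eps}, \phi_{1,\eps}) \in C^\infty(\M) \times C^\infty(\M)$ tending to $0$ in $\H^s$ such that the solution $u_\eps$ to \eqref{NLW3} with initial data $(u_{0,\eps}, u_{1,\eps}) = (u_0, u_1) + (\phi_{0,\eps}, \phi_{1,\eps})$ exhibits the inflation \eqref{main1} at some short time $t_\eps > 0$.

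The first step is to iterate the Duhamel formulation to obtain
\[
u_\eps = \sum_{j = 0}^\infty \Xi_j(u_{0,\eps}, u_{1,\eps}), \qquad \Xi_0 = S(t) (u_{0, \eps}, u_{1, \eps}),
\]
where, for $j \geq 1$,
\[
\Xi_j \stackrel{\text{def}}{=} - \int_0^t \frac{\sin((t - t')\jb{\nb})}{\jb{\nb}} \sum_{j_1 + j_2 + j_3 = j - 1} \Xi_{j_1}\Xi_{j_2}\Xi_{j_3}(t')\, dt',
\]
so that $\Xi_j$ is a homogeneous multilinear object of degree $2j+1$ in the data, naturally indexed by ternary trees as in \cite{Oh1}.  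Next, I construct the perturbation concentrated near a single large frequency scale $N = N(\eps) \to \infty$: take, schematically,
\[
\phi_{0,\eps}(x) = A(\eps) \sum_{n \in \Sigma_N} \cos(n \cdot x)\, \chi(x), \qquad \phi_{1,\eps} = 0,
\]
where $\chi$ is a smooth spatial cutoff on $\R^d$ (take $\chi \equiv 1$ on $\T^d$) and $\Sigma_N \subset \Z^d$ is a finite set of frequencies with $|n| \sim N$ chosen to maximize the number of near-resonant triples.  The $\H^s$-smallness of the perturbation reduces to the requirement $A |\Sigma_N|^{1/2} N^s \to 0$.

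The heart of the argument is the lower bound on $\Xi_1$.  Its Fourier coefficient at a low frequency (say $n_0 = 0$) is schematically
\[
\widehat{\Xi_1}(0, t) \sim A^3 \sum_{\substack{(n_1,n_2,n_3) \in (\pm\Sigma_N)^3 \\ n_1+n_2+n_3=0}} \int_0^t \sin(t - t') \prod_{j=1}^3 \cos(t'\jb{n_j})\, dt'.
\]
The key observation is that for \emph{colinear} triples such as $(Ne, Ne, -2Ne)$ with a direction vector $e$, the cubic time phase $\pm \jb{n_1} \pm \jb{n_2} \pm \jb{n_3}$ is only of order $N^{-1}$, so the time integral contributes of order $t^2$ rather than the $O(N^{-1})$ gain obtainable in the non-resonant case.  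By placing $\Sigma_N$ along many directions in $d \geq 2$, one can arrange for $|\Sigma_N|^\beta$ such coherent contributions for some $\beta > 0$, yielding $|\widehat{\Xi_1}(0, t_\eps)| \gtrsim A^3 |\Sigma_N|^\beta t_\eps^2$, and hence a lower bound on $\|\Xi_1(t_\eps)\|_{H^s}$ of the same order (since $\Xi_1(t_\eps)$ is supported at low frequency).

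Finally, to control the higher-order iterates, I use that the Wiener algebra $\F L^1(\M)$ is a Banach algebra under multiplication and invariant under the propagators $\cos(t\jb{\nb})$ and $\sin(t\jb{\nb})/\jb{\nb}$ uniformly in $t$.  The recursion then yields $\|\Xi_j\|_{\F L^1} \lesssim (Ct)^j \|\Xi_0\|_{\F L^1}^{2j+1}$, and the embedding $\F L^1 \hookrightarrow H^s$ (for $s \leq 0$) suffices to bound $\sum_{j \geq 2}\Xi_j$ in $H^s$ below the level of $\Xi_1$, provided $t_\eps \|\Xi_0\|_{\F L^1}^2 \ll 1$.  The proof is completed by choosing $A$, $N$, $|\Sigma_N|$, and $t_\eps$ so that the three conditions
\[
A |\Sigma_N|^{1/2} N^s \to 0, \qquad A^3 |\Sigma_N|^\beta t_\eps^2 \to \infty, \qquad t_\eps A^2 |\Sigma_N|^2 \to 0
\]
hold simultaneously, yielding \eqref{main1}.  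I expect the main obstacle to lie precisely in this balancing:  in $d = 1$, $\Sigma_N$ is essentially confined to a single ray, severely limiting the achievable multiplicity $\beta$ and forcing the restriction $s \leq -\tfrac{1}{2}$, whereas in $d \geq 2$ the added directional freedom allows the construction to cover all $s < 0$, provided one carefully exploits the higher-dimensional geometry of the near-resonant set rather than reducing to the one-dimensional problem via finite speed of propagation.
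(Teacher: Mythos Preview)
Your overall architecture --- tree-indexed Picard expansion in the Wiener algebra, perturbation at a high frequency scale $N$, lower bound on $\Xi_1$ via high-to-low transfer, upper bound on the tail --- matches the paper's.  However, the mechanism you propose for the lower bound, and correspondingly the shape of your perturbation, does not close for the full range $-\tfrac d2 < s < 0$ in $d \ge 2$, which is precisely the range the proposition is designed to cover.

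The issue is that you build $\phi_{0,\eps}$ from a \emph{discrete} frequency set $\Sigma_N$ (smeared only by a fixed unit-scale cutoff $\chi$), and you try to extract the lower bound from counting near-resonant colinear triples $(Ne,Ne,-2Ne)$ over many directions $e$.  Two things go wrong.  First, near-resonance plays no role here: the paper simply takes $t \ll N^{-1}$, so $\cos(t'\jb{n_j}) \approx 1$ and $\tfrac{\sin((t-t')\jb{\xi})}{\jb{\xi}} \approx t-t'$ for \emph{every} frequency configuration with $|n_j| \lesssim N$; the Duhamel integral then contributes $\sim t^2$ uniformly, with no need for phase alignment.  Second, your lower bound lives at a single output frequency, so its $H^s$ size is just $A^3 |\Sigma_N|^\beta t^2$ with $\beta \le 2$, while your tail bound via $\F L^1 \hookrightarrow H^s$ gives $\|\Xi_2\|_{H^s} \lesssim t^4 (A|\Sigma_N|)^5$.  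Balancing these together with $A|\Sigma_N|^{1/2}N^s \to 0$ and $t \ll N^{-1}$ only closes for $s$ well below $0$ (roughly $s < -\tfrac12$ or so), not for all $s<0$.

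What the paper actually does is introduce a \emph{second} large parameter: the perturbation has Fourier transform equal to $R$ times indicator functions of four cubes $\{\pm Ne_1 + Q_A,\ \pm 2Ne_1 + Q_A\}$ of side $A = A(N) \to \infty$, all along a \emph{single} axis.  The three-fold convolution then produces a bump supported on the low-frequency cube $Q_A$, and the $H^s$ norm of that bump picks up the factor
\[
f(A) \;=\; \|\jb{\xi}^s\|_{L^2(Q_A)} \;\sim\;
\begin{cases}
A^{\frac d2 + s}, & -\tfrac d2 < s < 0,\\
(\log A)^{1/2}, & s = -\tfrac d2,\\
1, & s < -\tfrac d2.
\end{cases}
\]
This $f(A)$ is the engine for the range $-\tfrac d2 < s < 0$.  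Crucially, the upper bound on each $\Xi_j(\vec\phi_n)$ is obtained not via the crude embedding $\F L^1 \hookrightarrow H^s$ but by tracking the Fourier support of $\Psi_{\vec\phi_n}(\mathcal T)$ (a union of $O(C^j)$ cubes of side $A$) and writing $\|\cdot\|_{H^s} \le \|\jb{\xi}^s\|_{L^2(\text{supp})}\,\|\cdot\|_{\F L^\infty} \lesssim f(A)\,\|\cdot\|_{\F L^\infty}$, then estimating the $\F L^\infty$ norm by iterated Young inequalities.  Because the \emph{same} factor $f(A)$ appears on both sides, the comparison $\Xi_1 \gg \sum_{j\ge 2}\Xi_j$ reduces to $T^2 R^2 A^{2d} \ll 1$, which is exactly the convergence condition.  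The ``directional freedom in $d\ge 2$'' you invoke is not used at all; what matters is that $A^{d/2+s}$ can grow when $d \ge 2$ and $s$ is only mildly negative.

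In short: replace the discrete $\Sigma_N$ and the fixed cutoff by fat Fourier cubes of side $A$, drop the resonance analysis in favor of the short-time Taylor expansion, and track Fourier supports in the upper bound so that $f(A)$ cancels.  Then the three balancing conditions become the paper's conditions (i)--(vi), and explicit choices of $(A,R,T)$ as powers of $N$ close the argument.
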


Once we prove
Proposition \ref{PROP:NI3}, 
Theorem \ref{THM:illposed} follows 
from the density 
of $\S(\M)\times \S(\M)$ in $\H^s(\M)$
and a diagonal argument.
See   \cite{Xia, Oh1, Ok}.
While the basic structure of the argument is the same as that  presented in \cite{Oh1}, 
we establish different multilinear estimates by exploiting 
one degree of smoothing in the Duhamel integral operator $\I$ in \eqref{Duhamel1} below.
In the following, we fix $\vec u_0 \in \S(\M)\times \S(\M)$
and may suppress 
the dependence of various constants on $\vec u_0$.

Before proceeding further, we introduce some notations.
Given $\M = \R^d$ or $\T^d$, 
let   $\ft \M$ denote the Pontryagin dual of $\M$, 
i.e.~
\begin{align}
\ft \M = \begin{cases}
 \R^d & \text{if }\M = \R^d, \\
 \Z^d & \text{if } \M = \T^d.
\end{cases}
\label{dual}
\end{align}

\noi
When $\ft \M = \Z^d$, 
we endow it with the counting measure.
We then define
the Fourier-Lebesgue space $\F L^{s,p}(\M)$  by the norm:
\begin{align*}
\|f \|_{\F L^{s,p}(\M)} = \big\| \jb{\xi}^s \ft f \big\|_{L^{p}(\ft \M)}.
\end{align*}

\noi
In particular, 
$\F L^1(\M) \stackrel{\text{def}}{=} \F L^{0,1}(\M)$ corresponds to 
the Wiener algebra.
We also define
\begin{align}
\overrightarrow{\F L}^{s, p}(\M) \stackrel{\text{\rm def}}{=} \F L^{s,p}(\M) \times \F L^{s-1,p}(\M).
\label{FL}
\end{align}

In Subsection \ref{SUBSEC:5.1}, we first go over local well-posedness 
of \eqref{NLW0} in the Wiener algebra $\overrightarrow{\F L}^{0,1}(\M)$.
Then, we express solutions in a  power series expansion in terms of initial data, 
where the summation ranges over all finite ternary trees.
We then establish basic nonlinear estimates
on the multilinear terms arising in the power series expansion
in Subsection \ref{SUBSEC:5.2}.
In Subsection \ref{SUBSEC:NI}, we present the proof of Proposition \ref{PROP:NI3}.

\subsection{Power series expansion indexed by trees}
\label{SUBSEC:5.1}

We define the Duhamel integral  operator
$\I$ by 
\begin{align}
\I[u_1, u_2, u_3](t)
\stackrel{\text{def}}{=}  
-\int_0^t \frac{\sin ((t - t') \jb{\nb})}{\jb{\nb}} [u_1 u_2 u_3](t')dt'.
\label{Duhamel1}
\end{align}

\noi
When all the three arguments $u_1, u_2$, and $u_3$ are identical, 
we use the following shorthand notation:
\begin{align}
\I^3[u] \stackrel{\text{def}}{=} \I[u, u, u].
\label{Duhamel2}
\end{align}

\noi
We say that $u$ is a solution to \eqref{NLW0}
with $(u, \dt u)|_{t = 0} = (u_0, u_1)$
if $u$ satisfies the following Duhamel formulation:
\begin{align}
u(t) = S(t) (u_0, u_1) + \I^3[u](t).
\label{Duhamel3}
\end{align}

\noi
We first state the local well-posedness of \eqref{NLW0} 
in $\overrightarrow{\F L}^{0, 1}(\M)$.

\begin{lemma}\label{LEM:LWP}
The cubic NLW \eqref{NLW0} with $k = 3$ is locally well-posed
in  $\overrightarrow{\F L}^{0, 1}(\M)$.
More precisely, given $\vec u_0 = (u_0, u_1) \in \overrightarrow{\F L}^{0, 1}(\M)$, 
there exist $T \sim \| \vec u_0\|_{\overrightarrow{\F L}^{0, 1}}^{-1}>0 $ 
and a unconditionally unique solution $ u \in C([-T, T]; \F L^1(\M))$, 
satisfying \eqref{Duhamel3}.

\end{lemma}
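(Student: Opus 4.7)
The plan is to run a standard Banach contraction argument for the Duhamel map
\[
\Phi(u)(t) \stackrel{\text{def}}{=} S(t)(u_0,u_1) + \I^3[u](t)
\]
on a closed ball in $C([-T,T]; \F L^1(\M))$, exploiting the Banach algebra structure of the Wiener algebra under pointwise multiplication.

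First, I would collect three elementary ingredients. The algebra estimate $\|fg\|_{\F L^1} \leq \|f\|_{\F L^1} \|g\|_{\F L^1}$ follows at once from $\widehat{fg} = \ft f * \ft g$ and Young's inequality on $L^1(\ft \M)$, with $\ft \M$ as in \eqref{dual}. The linear bound $\|S(t)(u_0,u_1)\|_{\F L^1} \leq \|\vec u_0\|_{\overrightarrow{\F L}^{0,1}}$ is read off from the Fourier representation $\widehat{S(t)(u_0,u_1)}(\xi) = \cos(t\jb{\xi})\,\ft u_0(\xi) + \frac{\sin(t\jb{\xi})}{\jb{\xi}}\,\ft u_1(\xi)$ using $|\cos(t\jb{\xi})| \leq 1$ and $|\sin(t\jb{\xi})|/\jb{\xi} \leq \jb{\xi}^{-1}$. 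Finally, the trilinear Duhamel estimate
\[
\|\I[u_1,u_2,u_3]\|_{L^\infty_T \F L^1} \leq C\, T \prod_{j=1}^3 \|u_j\|_{L^\infty_T \F L^1}
\]
follows by pulling the $\F L^1$ norm under the $t'$-integral via Minkowski's inequality, estimating the multiplier $\sin((t-t')\jb{\nb})/\jb{\nb}$ pointwise on the Fourier side, and then invoking the algebra property for the triple product.

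Combining these ingredients, with $R = \|\vec u_0\|_{\overrightarrow{\F L}^{0,1}}$ and the closed ball $B_{2R} = \{u \in C([-T,T]; \F L^1(\M)) : \|u\|_{L^\infty_T \F L^1} \leq 2R\}$, a direct calculation shows that $\Phi$ maps $B_{2R}$ into itself and is a strict contraction there provided $T$ is chosen sufficiently small relative to $R$, matching the lifespan asserted in the lemma. The corresponding difference estimate uses the same trilinear bound applied to the telescoping identity $u_1 u_2 u_3 - v_1 v_2 v_3 = (u_1-v_1) u_2 u_3 + v_1 (u_2-v_2) u_3 + v_1 v_2 (u_3-v_3)$. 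The Banach fixed point theorem then produces a unique $u \in B_{2R}$ satisfying \eqref{Duhamel3}, and continuity in time into $\F L^1(\M)$ is automatic from the mapping properties just used.

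To upgrade uniqueness inside $B_{2R}$ to unconditional uniqueness in the whole space $C([-T,T]; \F L^1(\M))$, I would run the standard continuity-in-time argument: given two solutions $u, v \in C([-T,T]; \F L^1(\M))$ with $u(0) = v(0)$, the set on which they coincide is closed and nonempty, and the contraction estimate applied on a sufficiently short subinterval starting from any such time shows it is also open, hence equal to $[-T,T]$. No genuine analytical obstacle is anticipated; the proof is a clean instance of the Picard--Banach scheme, and the only delicate point is bookkeeping the constants in the trilinear estimate carefully enough to recover the precise lifespan claimed in the lemma.
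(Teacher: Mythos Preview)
Your approach is exactly the one the paper takes --- a Banach fixed-point argument using the boundedness of $S(t)$ on $\overrightarrow{\F L}^{0,1}(\M)$ and the algebra property of $\F L^1(\M)$ --- but there is one quantitative slip that prevents you from recovering the lifespan $T \sim \|\vec u_0\|_{\overrightarrow{\F L}^{0,1}}^{-1}$ claimed in the lemma.

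Your trilinear Duhamel estimate carries only a single power of $T$:
\[
\|\I[u_1,u_2,u_3]\|_{L^\infty_T \F L^1} \leq C\, T \prod_{j=1}^3 \|u_j\|_{L^\infty_T \F L^1}.
\]
Running the contraction on the ball $B_{2R}$ with $R = \|\vec u_0\|_{\overrightarrow{\F L}^{0,1}}$ then forces $T R^2 \ll 1$, i.e.\ $T \sim R^{-2}$, which is strictly worse than what the lemma asserts. The paper instead uses the sharper kernel bound \eqref{Hs21b},
\[
\int_0^t \frac{|\sin((t-t')\jb{\xi})|}{\jb{\xi}}\, dt' \le C t^2,
\]
obtained from $|\sin x| \le |x|$ (so $|\sin((t-t')\jb{\xi})|/\jb{\xi} \le |t-t'|$) rather than the cruder $|\sin((t-t')\jb{\xi})|/\jb{\xi} \le 1$ that your argument implicitly uses. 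This yields
\[
\|\I[u_1,u_2,u_3]\|_{L^\infty_T \F L^1} \leq C\, T^2 \prod_{j=1}^3 \|u_j\|_{L^\infty_T \F L^1},
\]
and then the contraction condition becomes $T^2 R^2 \ll 1$, giving the correct $T \sim R^{-1}$. The paper relies on this precise scaling later (see Lemma~\ref{LEM:nonlin1} and the norm-inflation argument in Subsection~\ref{SUBSEC:NI}), so the factor of $T^2$ is not cosmetic. With this single correction your proof goes through; the rest --- including the unconditional-uniqueness argument via the open-closed decomposition --- is fine.
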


The unconditional uniqueness refers to the uniqueness of solutions
in the entire $ C([-T, T]; \F L^1(\M))$.
Unconditional uniqueness is a concept of uniqueness which does not depend on how solutions are constructed.

In view of the boundedness of $S(t)$ in $\overrightarrow{\F L}^{0, 1}(\M)$ and 
the algebra property of $\F L^1(\M)$
together with the bound:
\begin{align}
\int_0^t \frac{|\sin ((t - t') \jb{\xi})|}{\jb{\xi}} dt'
\le C t^2
\label{Hs21b}
\end{align}

\noi
uniformly in $\xi \in \ft \M$ (also see \eqref{X1} below)
Lemma \ref{LEM:LWP} follows
from a standard fixed point argument.
We omit details.

Let $\vec u_0\in \overrightarrow{\F L}^{0, 1}(\M)$.
Then, (the proof of) Lemma \ref{LEM:LWP}   guarantees the convergence of the following Picard iteration
scheme: 
\begin{align}
P_0(\vec \phi) = S(t) \vec u_0
\qquad \text{and}\qquad
P_{j}(\vec u_0) = S(t) \vec u_0 + \I^3[P_{j-1}(\vec u_0)], \ \  j \in \N, 
\label{power2}
\end{align}

\noi
at least for short times. 
It follows from  \eqref{Duhamel2} and \eqref{power2}  that $P_j$ consists of multilinear terms
of degrees at most $3^j$ (in $\vec u_0$).
In the following, we discuss a more general recursive scheme
and express a solution in a power
series indexed by trees
as in \cite{CH2, Oh1}.
We introduce the following notion of (ternary) trees.
Our trees 
refer to a particular subclass of usual trees with the following properties:

\begin{definition} \label{DEF:tree} \rm
(i) Given a partially ordered set $\TT$ with partial order $\leq$, 
we say that $b \in \TT$ 
with $b \leq a$ and $b \ne a$
is a child of $a \in \TT$,
if  $b\leq c \leq a$ implies
either $c = a$ or $c = b$.
If the latter condition holds, we also say that $a$ is the parent of $b$.

\smallskip 

\noi
(ii) 
A tree $\TT$ is a finite partially ordered set,  satisfying
the following properties:
\begin{itemize}
\item Let $a_1, a_2, a_3, a_4 \in \TT$.
If $a_4 \leq a_2 \leq a_1$ and  
$a_4 \leq a_3 \leq a_1$, then we have $a_2\leq a_3$ or $a_3 \leq a_2$.

\item
A node $a\in \TT$ is called terminal, if it has no child.
A non-terminal node $a\in \TT$ is a node 
with  exactly three children.

\item There exists a maximal element $r \in \TT$ (called the root node) such that $a \leq r$ for all $a \in \TT$.

\item $\TT$ consists of the disjoint union of $\TT^0$ and $\TT^\infty$,
where $\TT^0$ and $\TT^\infty$
denote  the collections of non-terminal nodes and terminal nodes, respectively.
\end{itemize}

%
%
%
%

\end{definition}

Note that the number $|\TT|$ of nodes in a tree $\TT$ is $3j+1$ for some $j \in \mathbb{N}\cup\{0\}$,
where $|\TT^0| = j$ and $|\TT^\infty| = 2j + 1$.
Let us denote  the collection of trees of  $j$ generations (i.e.~with $j$ parental nodes) by $\BT(j)$, i.e.
\begin{equation*}
\BT(j) \stackrel{\text{def}}{=} \big\{ \TT : \TT \text{ is a tree with } |\TT| = 3j+1 \big\}.
\end{equation*}

Recall 
the following exponential bound
on the number 
$\# \BT(j)$ of 
 trees of  $j$ generations.
See \cite{Oh1} for a proof.

\begin{lemma}\label{LEM:tree}
Let $\BT(j)$ be as above.
Then, there exists $C >0$ such that 
\begin{align}
\# \BT(j) \leq C^j
\notag
\end{align}

\noi
for all $j \in \mathbb{N}\cup\{0\}$.
\end{lemma}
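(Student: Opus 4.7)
The plan is to exploit the obvious recursive structure of ternary trees and reduce the estimate to a standard fact about analytic generating functions. First, I would observe that any $\TT \in \BT(j)$ with $j \geq 1$ decomposes uniquely by excising the root: what remains are three subtrees $\TT^{(1)}, \TT^{(2)}, \TT^{(3)}$ rooted at the children of the root, and since the root itself accounts for one parental node, we have $\TT^{(i)} \in \BT(j_i)$ with $j_1 + j_2 + j_3 = j - 1$. This yields the convolution-type bound
\begin{equation*}
\#\BT(j) \;\leq\; \sum_{\substack{j_1, j_2, j_3 \geq 0 \\ j_1 + j_2 + j_3 = j - 1}} \#\BT(j_1)\, \#\BT(j_2)\, \#\BT(j_3), \qquad \#\BT(0) = 1,
\end{equation*}
with $\leq$ rather than $=$ accounting for the possibility that distinct ordered triples of subtrees may produce isomorphic abstract posets in the sense of Definition~\ref{DEF:tree}.

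Next, I would encode this in the generating function $A(x) = \sum_{j \geq 0} \#\BT(j)\, x^j$ with nonnegative coefficients, for which the above inequality reads $A(x) \leq 1 + x A(x)^3$ coefficientwise. I would then compare termwise with the unique formal solution $B$ of the functional equation $B = 1 + x B^3$, $B(0) = 1$, and invoke the analytic implicit function theorem applied to $F(x, y) = y - 1 - x y^3$ at $(x, y) = (0, 1)$ (where $\partial_y F(0, 1) = 1 \neq 0$) to conclude that $B$ is analytic in a disk $\{|x| < \rho\}$ for some $\rho > 0$. Its Taylor coefficients therefore satisfy $[x^j] B(x) \leq C_0\, \rho^{-j}$, and a quick induction on $j$ gives the termwise domination $\#\BT(j) \leq [x^j] B(x)$, yielding the claim with any $C > \rho^{-1}$.

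The only technical point requiring a little care is verifying the termwise comparison $A \leq B$, which I expect to be the main (and only mildly painful) obstacle. An alternative route that sidesteps generating-function machinery altogether is to dominate $\#\BT(j)$ by the number of planar (ordered) ternary trees with $j$ internal nodes, given in closed form by the Fuss--Catalan number $\frac{1}{2j+1}\binom{3j}{j}$, and then apply the crude estimate $\binom{3j}{j} \leq 2^{3j} = 8^j$ to obtain $\#\BT(j) \leq 8^j$ directly, giving the lemma with the explicit constant $C = 8$. The exponential rate itself is a well-understood combinatorial fact, so no genuine difficulty is hidden here.
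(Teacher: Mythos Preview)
Your proof is correct. The paper itself does not supply an argument for this lemma but simply refers to \cite{Oh1}; the recursive decomposition at the root followed by either the generating-function comparison or the Fuss--Catalan bound that you outline is precisely the standard argument used there, so your approach matches.
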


Next, we express the solution $u$ constructed in Lemma \ref{LEM:LWP}
in a power series indexed by trees.
Fix $\vec u_0 \in \overrightarrow{\F L}^{0, 1}(\M)$.
Given a tree $\TT \in \BT(j)$,
$j \in \N \cup\{0\}$, 
we associate a multilinear
operator (in $\vec u_0$) by the following rules:
\begin{itemize}
\item Replace a non-terminal node ``\,$\<1'>$\,'' 
by the Duhamel integral operator $\I$ defined in \eqref{Duhamel1}
with its three children as arguments $u_1, u_2$, and $u_3$, 

\item Replace a terminal node ``\,$\<1>$\,'' 
by the linear solution $S(t) \vec u_0$. 
\end{itemize}

\noi
In the following, we denote this mapping
from $ \bigcup_{j = 0}^\infty \BT(j)$ to  $\mathcal{D}'(\M\times [-T, T])$
by $\Psi_{\vec u_0}$.

For example, 
 $\Psi_{\vec u_0}$ maps
the trivial  tree ``\,$\<1>$\,'',  consisting only of the root node
to the linear solution $S(t) \vec u_0$.
Namely, we have $\Psi_{\vec u_0}(\,\<1>\,) = S(t) \vec u_0$.
Similarly, we have 
\begin{align}
\Psi_{\vec u_0}( \<3>) & =
\I^3[S(t) \vec u_0],
\notag
\\
\Psi_{\vec u_0}\big( \<31>\big) & =
\I[\I^3[S(t) \vec u_0], S(t) \vec u_0, S(t) \vec u_0], 
\notag
\end{align}

\noi
where $\I^3$ is as in \eqref{Duhamel2}.
In view of the algebra property of $\F L^1(\M)$ 
along with the continuity and boundedness of $S(t)$, 
we have
$\Psi_{\vec u_0}(\TT) \in C([-T, T]; \F L^1(\M))$
for any tree $\TT$,  provided $\vec u_0 \in \overrightarrow{\F L}^{0, 1}(\M)$.
Note that, if  $\TT \in \BT(j)$, 
then $\Psi_{\vec u_0}(\TT) $ is  $(2j+1)$-linear in $\vec u_0$.

Lastly, we define $\Xi_j$ by 
\begin{align}
\Xi_j (\vec u_0)
\stackrel{\text{def}}{=}  \sum_{\TT \in \BT(j)} \Psi_{\vec u_0} (\TT).
\label{tree1}
 \end{align}

\noi
When $j = 0$ and $1$, we have 
\begin{align}
\Xi_0 (\vec u_0) = S(t) \vec u_0
\qquad \text{and}
\qquad
\Xi_1 (\vec u_0) = \I^3\big[S(t) \vec u_0\big].
\label{tree2}
\end{align}

\noi
Then, from Lemma \ref{LEM:tree}, \eqref{tree1}, 
the definition of $\Psi_{\vec u_0} (\TT)$, 
and Young's inequality
together with \eqref{Hs21b}, we obtain the following lemma.
See \cite{Oh1}.

\begin{lemma}\label{LEM:nonlin1}
There exists $C >0$ such that 
\begin{align*}
\|  \Xi_j (\vec u_0)(t) \|_{\F L^1}
& \leq C^j t^{2j} 
\| \vec u_0\|_{\overrightarrow{\F L}^{0, 1}}^{2j+1}.
\end{align*}
	
\noi
for 
all $\vec u_0 \in \overrightarrow{\F L}^{0, 1}(\M)$
and 
all $j \in \N$.
In particular, 
there exist $T \sim \| \vec u_0  \|_{\overrightarrow{\F L}^{0, 1}}^{-1}>0 $ 
such that the power series expansion:
\begin{align}
u
 = \sum_{j = 0}^\infty \Xi_j (\vec u_0)
 = \sum_{j = 0}^\infty \sum_{\TT \in \BT(j)} \Psi_{\vec u_0}(\TT)
\label{R3}
 \end{align}

\noi
converges in 
$C([-T, T]; \F L^1(\M))$.

\end{lemma}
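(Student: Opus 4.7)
The plan is to establish the tree-by-tree bound $\|\Psi_{\vec u_0}(\TT)(t)\|_{\F L^1} \leq C^j t^{2j}\|\vec u_0\|_{\overrightarrow{\F L}^{0,1}}^{2j+1}$ for each $\TT \in \BT(j)$ by induction on the number of generations $j$, and then sum over $\TT \in \BT(j)$ using the exponential bound on $\#\BT(j)$ from Lemma~\ref{LEM:tree}. The two workhorse facts I will rely on are (a) the algebra property $\|fgh\|_{\F L^1}\leq \|f\|_{\F L^1}\|g\|_{\F L^1}\|h\|_{\F L^1}$, which is just Young's inequality for convolutions on the Fourier side, and (b) the pointwise multiplier bound $\bigl|\tfrac{\sin((t-t')\jb{\xi})}{\jb{\xi}}\bigr|\leq |t-t'|$, which lets me estimate $\I$ in $\F L^1$ via
\begin{equation*}
\bigl\|\I[u_1,u_2,u_3](t)\bigr\|_{\F L^1}
\leq \int_0^{|t|}(|t|-t')\prod_{i=1}^{3}\|u_i(t')\|_{\F L^1}\,dt'.
\end{equation*}

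For the base case $j=0$, only the trivial tree $\<1>$ appears, and $\|S(t)\vec u_0\|_{\F L^1}\leq \|\vec u_0\|_{\overrightarrow{\F L}^{0,1}}$ is immediate from the definition of $\overrightarrow{\F L}^{0,1}$ in \eqref{FL} together with $|\cos(t\jb{\xi})|\leq 1$ and $|\sin(t\jb{\xi})/\jb{\xi}|\leq \jb{\xi}^{-1}$. For the inductive step, given $\TT \in \BT(j)$ with $j\geq 1$, write $\TT$ as a root with three subtrees $\TT_i \in \BT(j_i)$, $j_1+j_2+j_3=j-1$, so that $\Psi_{\vec u_0}(\TT)=\I[\Psi_{\vec u_0}(\TT_1),\Psi_{\vec u_0}(\TT_2),\Psi_{\vec u_0}(\TT_3)]$. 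Applying the two facts above and the inductive hypothesis gives
\begin{equation*}
\|\Psi_{\vec u_0}(\TT)(t)\|_{\F L^1}
\leq C^{j-1}\|\vec u_0\|_{\overrightarrow{\F L}^{0,1}}^{2j+1}
\int_0^{|t|}(|t|-t')(t')^{2(j-1)}\,dt',
\end{equation*}
and the elementary integral evaluates to $\frac{|t|^{2j}}{2j(2j-1)}\leq |t|^{2j}$, closing the induction (after enlarging $C$ once to absorb the universal constant from the base case).

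Summing over trees via Lemma~\ref{LEM:tree} yields $\|\Xi_j(\vec u_0)(t)\|_{\F L^1}\leq (C')^{j} t^{2j}\|\vec u_0\|_{\overrightarrow{\F L}^{0,1}}^{2j+1}$, which is the first claim. The series in \eqref{R3} is then dominated termwise by a geometric series with ratio $C' t^2\|\vec u_0\|_{\overrightarrow{\F L}^{0,1}}^2$, so convergence holds for $|t|\leq T$ as soon as $T^2 \ll \|\vec u_0\|_{\overrightarrow{\F L}^{0,1}}^{-2}$, i.e.\ for $T\sim \|\vec u_0\|_{\overrightarrow{\F L}^{0,1}}^{-1}$; uniform-in-$t$ convergence and continuity of each $\Psi_{\vec u_0}(\TT)$ (clear from the Duhamel definition) then upgrade the convergence to $C([-T,T];\F L^1(\M))$.

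The argument is essentially bookkeeping once the two facts (a) and (b) are in hand, so I do not expect a substantive obstacle; the only mild subtlety is to verify that the same constant $C$ works at every induction step, which is why I isolate a single, universal algebra/multiplier constant at the outset and then choose $C$ large enough (in particular $\geq 1$) so that the inductive estimate $C^{j-1}\cdot 1 \leq C^j$ goes through. Note also that since the bound is verified tree-by-tree, it is consistent with the fact that any uniform bound of the form $C^j t^{2j}\|\vec u_0\|^{2j+1}$ is optimal up to the value of $C$ in view of the exponential tree count, so no combinatorial savings from the nested integrations (such as the $1/(2j)!$ one might naively hope for) are needed or attempted.
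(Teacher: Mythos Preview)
Your proof is correct and follows essentially the same approach as the paper. The paper merely sketches the argument in one sentence, citing Lemma~\ref{LEM:tree}, Young's inequality, and the bound~\eqref{Hs21b}, and refers to \cite{Oh1} for details; your induction on $j$ together with the tree-counting lemma is exactly how these ingredients are assembled.
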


It is easy to check that $u$ defined by the power series \eqref{R3}
is indeed a solution to the cubic NLW \eqref{NLW0}.
Then, thanks to the unconditional uniqueness of the solution 
constructed in Lemma \ref{LEM:LWP},
we conclude that the power series expansion \eqref{R3}
must agree with the solution 
constructed in Lemma \ref{LEM:LWP}.
Note that the time of local existence in 
Lemma \ref{LEM:LWP} and the time of convergence in Lemma \ref{LEM:nonlin1}
are of the same order
 $ \sim \| \vec u_0  \|_{\overrightarrow{\F L}^{0, 1}}^{-1}>0 $.

\subsection{Multilinear estimates}
\label{SUBSEC:5.2}

We first go over  our choice of initial data for proving Proposition \ref{PROP:NI3}.
Given $n \in \N$, 
fix $N = N(n) \gg 1$ (to be chosen later).
We define $\vec \phi_n = (\phi_{0, n}, \phi_{1, n})$ by setting
\begin{align}
\ft \phi_{0, n} (\xi)= R \sum_{j \in \{-2, -1, 1, 2\}}\ind_{j Ne_1+ Q_A} (\xi)
\qquad \text{and}\qquad 
 \phi_{1, n}  = N  \phi_{0, n},  
\label{phi1}
\end{align}
	
\noi
where $Q_A = \big[-\frac A2, \frac A2\big)^d$, 
$e_1 = (1, 0, \dots, 0)$, 
$R = R(N) \geq 1 $,  and $A = A(N)\gg 1$, satisfying
\begin{align}
 RA^{\frac{d}{2}}   \gg \|\vec u_0\|_{\overrightarrow{\F L}^{0, 1}}, 
\qquad \text{and}
\qquad A\ll N, 
\label{phi1a}
\end{align}

\noi
 are to be chosen later.
 Note that we have
\begin{align}
\| \vec \phi_n\|_{\H^s} \sim R A^\frac{d}{2} N^s
\qquad \text{and} \qquad 
\| \vec \phi_n\|_{\overrightarrow{\F L}^{0, 1}} \sim R A^d, 
\label{phi2}
\end{align}

\noi
for any $s \in \R$.
Lastly, given $\vec u_0 \in \S(\M) \times \S(\M)$, 
set $\vec{u}_{0, n} = (u_{0, n}, u_{1, n})$ by 
\begin{align}
\vec{u}_{0, n} = (u_{0, n}, u_{1, n}) 
& = (u_0, u_1) + (\phi_{0, n}, \phi_{1, n})\notag \\
& = \vec u_0 + \vec \phi_{n}.
\label{phi3}
\end{align}

Let $u_n$ be 
 the corresponding solution to \eqref{NLW0} with $(u_n, \dt u_n)|_{t = 0} = \vec u_{0,n}$.
Lemmas \ref{LEM:tree} and \ref{LEM:nonlin1} with \eqref{phi2} guarantee the convergence
of the following power series expansion:
\begin{align}
 u_n & 
 = \sum_{j = 0}^\infty \Xi_j (\vec u_{0, n}) 
  = \sum_{j = 0}^\infty \Xi_j (\vec u_{0} + \vec \phi_n), 
\label{tree3}
 \end{align}

\noi
on $[-T, T]$, 
as long as 
\begin{align}
T \les \big(\| \vec u_0\|_{\overrightarrow{\F L}^{0,1}} + R A^d\big)^{-1}
\sim (R A^d)^{-1},
\label{phi4}
\end{align}

\noi
where the last equivalence follows from \eqref{phi1a}.
Our main goal is to show that $u_n$ 
satisfies~\eqref{main1}
by estimating each of $\Xi_j (\vec u_{0, n})$ in 
the power series expansion \eqref{tree3}.

We now state the basic multilinear estimates.
Keep in mind that implicit constants in  Lemma~\ref{LEM:Hs1}
depend on (various norms of) $\vec u_0$.

\begin{lemma}\label{LEM:Hs1}

Let  $\vec u_{0, n}=  (u_{0, n}, u_{1, n})$  and 
$\vec \phi_n= (\phi_{0, n}, \phi_{1, n})$
be as in \eqref{phi3} and \eqref{phi1}.
Let $s < 0$.
Then, 
there exists $C>0$ such that 
\begin{align}
\| \vec u_{0, n} - \vec u_0\|_{\H^s} & \le C R A^{\frac {d}{2}}N^s, 
\label{Hs11}\\
\| \Xi_0(\vec u_{0,n})(t)\|_{H^s} & \le C( 1 + R A^{\frac {d}{2}}N^s),  \label{Hs12}\\
\| \Xi_1 (\vec u_{0, n})(t) - \Xi_1 ( \vec \phi_n)(t)\|_{H^s}
& \le C  t^2 \| \vec u_0\|_{\H^0} R^2A^{2d},
\label{Hs13}\\
\|  \Xi_1 ( \vec \phi_n)(t)\|_{H^s}
& \le C  t^2 R^3A^{2d}\cdot f(A) , 
\label{Hs14}\\
\| \Xi_j (\vec u_{0, n})(t)\|_{H^s}
& \leq 
C^j t^{2j} R^{2j+1} 
A^{2dj}
\cdot f(A) , 
\label{Hs21}
\end{align}
	
\noi
for any integer $j \geq 2$, 
where $f(A)$ is given by 
\begin{align}
f(A) = 
\begin{cases}
1, & \text{if } s <  -\frac d 2, \\
(\log A)^\frac 12, & \text{if } s =  -\frac d 2, \\
A^{\frac{d}{2}  +s},  & \text{if } s > -\frac d 2.
\end{cases}
\label{Hs21a}
\end{align}

\end{lemma}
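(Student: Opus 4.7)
The plan is to prove the five estimates in turn, all by Fourier-side analysis. The basic tool is the pointwise Duhamel bound
\begin{equation*}
|\ft{\I[F]}(\xi,t)| \leq C t^2 \sup_{t'\in[0,t]}|\ft F(\xi,t')|,
\end{equation*}
which is immediate from the definition \eqref{Duhamel1} of $\I$ together with \eqref{Hs21b}; it costs $t^2$ per Duhamel while preserving Fourier support. Write $y = S(t)\vec u_0$ and $w_n = S(t)\vec\phi_n$, so that $\Xi_0(\vec u_{0,n}) = y+w_n$. Two properties of $w_n$ are used repeatedly: $|\ft{w_n}(\xi,t)| \les R$ on the support $\bigcup_{j\in\{\pm 1,\pm 2\}}(jNe_1+Q_A)$ (from \eqref{phi1}), and $\|w_n\|_{L^\infty_x} \le \|\ft{w_n}\|_{L^1(\ft\M)} \les R A^d$. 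Meanwhile $\hat y(\cdot,t)$ is Schwartz uniformly in $t\in[-T,T]$, and $\|y\|_{L^2_x} \les \|\vec u_0\|_{\H^0}$ by the standard energy estimate.

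The bound \eqref{Hs11} is just \eqref{phi2} applied to $\vec u_{0,n}-\vec u_0=\vec\phi_n$, and \eqref{Hs12} follows from the boundedness of $S(t)$ on $\H^s$, the triangle inequality, and $\|\vec u_0\|_{\H^s}\les 1$. For \eqref{Hs13}, expand
\begin{equation*}
\Xi_1(\vec u_{0,n})-\Xi_1(\vec\phi_n) = \I[(y+w_n)^3-w_n^3] = \I[y^3 + 3y^2w_n + 3yw_n^2].
\end{equation*}
The dominant piece is $\I[3yw_n^2]$: since $s<0$, $\|\cdot\|_{H^s}\le\|\cdot\|_{L^2}$, and the Duhamel bound with H\"older gives
\begin{equation*}
\|\I[yw_n^2]\|_{H^s} \les t^2\|yw_n^2\|_{L^2} \le t^2 \|y\|_{L^2}\|w_n\|_{L^\infty}^2 \les t^2\|\vec u_0\|_{\H^0}R^2 A^{2d}.
\end{equation*}
The contributions $\I[y^3]$ and $\I[y^2w_n]$ are absorbed into the same bound thanks to $RA^d\gg 1$ (see \eqref{phi1a}).

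For \eqref{Hs14}, note that $\ft{w_n^3}=\ft{w_n}^{\ast 3}$ is supported in $\bigcup_{j\in\Lambda}(jNe_1+Q_{3A})$ with $\Lambda\subset\{-6,\ldots,6\}$, and $|\ft{w_n^3}(\xi,t)|\les R^3 A^{2d}$ uniformly in $\xi$, where the $A^{2d}$ is the volume of the two free internal variables in the triple convolution. The Duhamel bound yields $|\ft{\I^3[w_n]}(\xi,t)|\les t^2 R^3 A^{2d}$ on the same support, so
\begin{equation*}
\|\I^3[w_n](t)\|_{H^s}^2 \les t^4 R^6 A^{4d}\bigg(\int_{|\xi|\les A}\jb\xi^{2s}d\xi + \sum_{j\in\Lambda\setminus\{0\}}\int_{|\xi-jNe_1|\les A}\jb\xi^{2s}d\xi\bigg).
\end{equation*}
The first integral is $\sim f(A)^2$ in each of the three regimes of \eqref{Hs21a}, while the high-frequency piece contributes $\les N^{2s}A^d$ and is dominated by $f(A)^2$ under $A\ll N$ from \eqref{phi1a}, yielding \eqref{Hs14}.

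For \eqref{Hs21} with $j\geq 2$, apply the same philosophy to each tree $\TT\in\BT(j)$. The Fourier transform $\ft{\Psi_{\vec u_{0,n}}(\TT)}(\xi,t)$ is obtained by integrating $2j$ internal Fourier variables, two per non-terminal node, against a product of $2j+1$ copies of $\ft{y+w_n}$, with $j$ Duhamel factors of size $t^2$. The purely $w_n$ contribution has amplitude $R^{2j+1}$, convolution volume $A^{2jd}$, and output supported in a bounded number of $O(A)$-boxes centered at integer multiples of $Ne_1$; its $H^s$ norm is $\les t^{2j} R^{2j+1}A^{2jd} f(A)$ by the same low-versus-high-frequency splitting as above. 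Summing over the $\le C^j$ trees (Lemma~\ref{LEM:tree}) gives \eqref{Hs21}. The main obstacle, and the only nontrivial bookkeeping in the proof, is to verify that every mixed term in the expansion of $(y+w_n)^{2j+1}$ in which at least one factor is replaced by $y$ is dominated by the pure $w_n$ term; this follows from the Schwartz decay of $\ft y$ (which replaces a convolution over a set of volume $A^d$ by one over an essentially bounded set) together with $RA^d\gg\|\vec u_0\|_{\overrightarrow{\F L}^{0,1}}$, so each such replacement costs at least a factor of $(RA^d)^{-1}\ll 1$.
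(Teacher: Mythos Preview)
Your argument for \eqref{Hs11}--\eqref{Hs14} matches the paper's, modulo one cosmetic difference: for the $H^s$ norm in \eqref{Hs14} (and the pure-$w_n$ part of \eqref{Hs21}) the paper replaces your low/high-frequency splitting by the observation that $\jb{\xi}^s$ is radially decreasing for $s<0$, so $\|\jb{\xi}^s\|_{L^2_\xi(E)} \le \|\jb{\xi}^s\|_{L^2_\xi(C_0^j Q_A)} \les C^j f(A)$ for \emph{any} set $E$ of measure $\les C^j A^d$, regardless of where the constituent cubes sit. This bypasses your explicit check that the high-frequency cubes are subordinate to the low-frequency one, and yields the same bound.

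The substantive difference is in how the mixed terms in \eqref{Hs21} (those with at least one $y$-leaf) are controlled. The paper does not attempt to track their Fourier support: it simply uses $\|\cdot\|_{H^s} \le \|\cdot\|_{L^2}$ and then Young's inequality \eqref{Hs22a}, placing one $\vec u_0$ factor in $\overrightarrow{\F L}^{0,1}$ and the remaining $2j$ factors in $\overrightarrow{\F L}^{0,r}$ with $\tfrac{2j}{r}=2j-\tfrac12$, to obtain
\[
\|\Xi_j(\vec u_{0,n})-\Xi_j(\vec\phi_n)\|_{L^2} \le C^j t^{2j}\|\vec u_0\|_{\overrightarrow{\F L}^{0,1}}\, R^{2j} A^{2dj-\frac d2},
\]
which is absorbed using \eqref{phi1a}. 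Your route instead keeps the $\F L^\infty$-plus-support framework and asserts that each $y$-replacement costs a factor $(RA^d)^{-1}$ in the $H^s$ norm. But once a leaf carries $\ft y$, the output is no longer literally supported in a bounded union of $O(A)$-cubes, so the $f(A)$ factor from the support estimate is not automatic. To make your claim rigorous you would need to decompose $\ft y = \ft y\,\ind_{|\xi|\le B} + \ft y\,\ind_{|\xi|>B}$ and control the Schwartz tail separately --- a routine but unstated step. The paper's $L^2$ route avoids this bookkeeping entirely.
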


This lemma in particular shows that the power series \eqref{tree3}
is convergent in $C([-T, T]; H^s(\M))$, provided
that $T^2 R^2 A^{2d} \ll 1$, 
which is consistent with \eqref{phi4}.

\begin{proof}
Recalling that $\vec \phi_n = \vec u_{0, n} - \vec u_0$, 
the first two estimates \eqref{Hs11} and \eqref{Hs12}
follow from~\eqref{phi2}
and the boundedness of $S(t)$ on $\H^s (\M)$.

Next, we prove \eqref{Hs13}.
In this case, we use the multilinearity of $\Xi_1$.
See \eqref{Duhamel1} and \eqref{tree2}.
By the Cauchy-Schwarz inequality, \eqref{Hs21b}, and Young's inequality
with~\eqref{phi1a}, we have
\begin{align*}
\| \Xi_1 (\vec u_{0, n})(t)  - \Xi_1 ( \vec \phi_n)(t)\|_{H^s}
&  \le \| \Xi_1 (\vec u_{0, n})(t) - \Xi_1 ( \vec \phi_n)(t)\|_{L^2}\notag\\
& \les  t^2 \|\vec u_0\|_{\H^0}\big(\|\vec u_0\|_{\overrightarrow{\F L}^{0, 1}}^2+ \| \vec \phi_n\|_{\overrightarrow{\F L}^{0, 1}}^2\big)\notag\\
& \les t^2 \|\vec u_0\|_{\H^0}(1 + R^2A^{2d})\notag\\
& \les t^2 \|\vec u_0\|_{\H^0} R^2A^{2d}.
\end{align*}

Lastly, we consider \eqref{Hs14} and \eqref{Hs21}.
It follows from the definition \eqref{phi1}
that 
 $\supp \F [S(t) \vec \phi_n]$
consists of four disjoint cubes of volume $\sim A^d$.
Given $\TT \in \BT(j)$, 
$\Psi_{\vec \phi_n}(\TT)$ is basically 
a $(2j+1)$-fold product of 
$S(t) \vec \phi_n$ under
iterated time integrations and spatial smoothing.
Hence, the
spatial support of $\F[ \Psi_{\vec \phi_n}(\TT)]$
consists of (at most) $4^{2j+1}$
cubes of volume $\sim A^d$.
Namely, we have 
\begin{align*}
\big|\supp \F[ \Psi_{\vec \phi_n}(\TT)]\big| \leq  C^j A^d = |  C_0^j Q_A|
\end{align*}

\noi
for some $C, C_0>0$.
Noting that, for $s < 0$, 
$\jb{\xi}^s$ is a decreasing function in  $|\xi|$, 
 we obtain
\begin{align}
\| \jb{\xi}^{s}\|_{L^2_\xi(\supp \F[\Psi_{\vec \phi_n} (\TT)])}
& \leq \| \jb{\xi}^{s}\|_{L^2_\xi( C_0^j Q_A)}
\les C^j f(A).
\label{Hs22}
\end{align}

\noi
 By \eqref{Hs21b} and Young's inequality, we have 
\begin{align}
\big\|   \I[u_1, u_2, u_3] (t) \big\|_{\F L^p} 
& \leq C  t^2\prod_{j = 1}^3 \|u_j\|_{\F L^{p_j}} , 
 \label{Hs22a}
\end{align}

\noi
for $1 \le p, p_1, p_2, p_3 \le \infty$, satisfying 
\begin{align*}
\frac 1p + 2 =  \frac{1}{p_1}+ \frac{1}{p_2}+ \frac{1}{p_3}.
\end{align*}

\noi
Then, by first applying \eqref{Hs22} and then
iteratively applying
\eqref{Hs22a}, 
we have 
\begin{align}
\big\| \Psi_{\vec \phi_n}(\TT)(t) \big\|_{H^s}
& \leq \| \jb{\xi}^{s}\|_{L^2_\xi(\supp \F[\Psi_{\vec \phi_n } (\TT)])}
\big\| \Psi_{\vec \phi_n }(\TT)(t) \big\|_{\F L^\infty} \notag\\
& \leq C^j t^{2j} f(A) 
\|\vec \phi_n \|_{\overrightarrow{\F L}^{0, 1}}
 \|\vec \phi_n\|_{\overrightarrow{\F L}^{0, q}}^{2j}\notag \\
& \leq C^j t^{2j}  R A^d\cdot 
R^{2j}  A^{\frac{2j}{q}d}f(A) \notag \\
&   =  
C^j t^{2j} 
R^{2j+1} A^{2dj}f(A), 
\label{Hs22c}
\end{align}

\noi
where $q$ satisfies
$
 2 j  =    \frac{2j}{q} + 1.
$
Hence,  it follows from \eqref{Hs22c} with \eqref{tree1} and Lemma \ref{LEM:tree}
that 
\begin{align}
\| \Xi_j (\vec  \phi_n)(t)\|_{H^s}
& \leq C^j t^{2j} 
R^{2j+1} A^{2d j}f(A) .
\label{Hs22d}
\end{align}

\noi
In particular, this proves \eqref{Hs14}.

Next, we estimate the difference 
$\Xi_j (\vec u_0 + \vec \phi_n) - \Xi_j ( \vec \phi_n)$.
Since  we do not know anything about 
the Fourier support of $\vec u_0$, 
we simply proceed with  a loss in the first step:
\begin{align}
\| \Xi_j (\vec u_0 + \vec \phi_n)(t)
 - \Xi_j (\vec  \phi_n)(t)\|_{H^s}
& \leq \| \Xi_j (\vec u_0 + \vec \phi_n)(t)
- \Xi_j ( \vec \phi_n)(t)\|_{L^2}.
\label{Hs24}
\end{align}

\noi
Then, with \eqref{tree1}, Lemma \ref{LEM:tree},
the multilinearity of $ \Psi_{\vec \phi} (\TT)$, 
and \eqref{Hs22a}, 
we have 
\begin{align}
\| \Xi_j (\vec u_0 + \vec \phi_n)(t)
 - \Xi_j ( \vec \phi_n)(t)\|_{L^2}
& \leq C^j t^{2j} 
\|\vec u_0 \|_{\overrightarrow{\F L}^{0, 1}}
\big(
\| \vec u_0\|_{\overrightarrow{\F L}^{0, r}}^{2j}
+ \| \vec \phi_n\|_{\overrightarrow{\F L}^{0, r}}^{2j}\big), 
\label{Hs24a}
\end{align}

\noi
where $r$ satisfies $2j=\frac{2j}{r}+\frac 12$.
Hence, from \eqref{Hs24} and \eqref{Hs24a}, 
we obtain 
\begin{align}
\| \Xi_j (\vec u_0 + \vec \phi_n)(t)
 - \Xi_j ( \vec \phi_n)(t)\|_{H^s}
& \leq C^j t^{2j} 
\|\vec u_0 \|_{\overrightarrow{\F L}^{0, 1}}
\big(
\| \vec u_0\|_{\overrightarrow{\F L}^{0, \wt{q}}}^{2j}
+ R^{2j} A^{2d  j -\frac{d}{2}}\big) \notag\\
& \leq C^j t^{2j} 
\|\vec u_0 \|_{\overrightarrow{\F L}^{0, 1}}
 R^{2j} A^{2d  j -\frac{d}{2}}, 
\label{Hs24c}
\end{align}

\noi
where the last step follows from  \eqref{phi1a}.
Therefore, 
the desired estimate \eqref{Hs21}
follows from~\eqref{Hs22d} and \eqref{Hs24c} with \eqref{phi1a}.
\end{proof}

Next, we state a crucial lemma,
establishing a lower bound on $\Xi_1(\vec \phi_n)$.
As in \cite{Kishimoto, Oh1, CP}, the argument exploits 
the high-to-low energy transfer mechanism in $\Xi_1(\vec \phi_n)$.

\begin{lemma}\label{LEM:Hs3}
Let
$\vec \phi_n = (\phi_{0, n}, \phi_{1, n})$  be as in \eqref{phi1} and $s < 0$.
Then, for $0 < t \ll N^{-1}$, we have
\begin{align}
\|  \Xi_1 ( \phi_n)(t)\|_{H^s}
\ges t^2  R^3 A^{2d} \cdot f(A),
\label{Hs31}
\end{align}

\noi
where $f(A)$ is as in  \eqref{Hs21a}.

\end{lemma}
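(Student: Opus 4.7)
The plan is a direct Fourier-side computation, in the spirit of \cite{Oh1} but exploiting the extra degree of time-smoothing in the wave Duhamel operator $\I$. Writing $z(t) = S(t)\vec\phi_n$ and using $\phi_{1,n} = N\phi_{0,n}$ from \eqref{phi1}, I first compute
\begin{align*}
\ft z(t',\xi) = R\bigg[\cos(t'\jb{\xi}) + \frac{N\sin(t'\jb{\xi})}{\jb{\xi}}\bigg]
\end{align*}
on $\supp \ft \phi_{0,n} = \bigcup_{j\in\{\pm 1,\pm 2\}}(jNe_1 + Q_A)$. For such $\xi$ one has $\jb{\xi}\le 3N$, while the hypothesis $0<t\ll N^{-1}$ forces $t'\jb{\xi}\ll 1$ uniformly for $0\le t'\le t$; a Taylor expansion then delivers $\ft z(t',\xi) \ge cR >0$. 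This positivity is the key feature that will preclude cancellation downstream.

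Next, I would estimate $\ft{z^3}(t',\xi) = (\ft z(t',\cdot)\ast \ft z(t',\cdot)\ast \ft z(t',\cdot))(\xi)$ for $\xi$ in a small cube $Q_{CA}$ around the origin, with, say, $C = \tfrac{1}{10}$. Every triple $(\xi_1,\xi_2,\xi_3)$ having each $\xi_j \in \supp\ft\phi_{0,n}$ and $\xi_1+\xi_2+\xi_3\in Q_{CA}$ is, up to a perturbation in $Q_A$, a permutation of either $(Ne_1,Ne_1,-2Ne_1)$ or $(-Ne_1,-Ne_1,2Ne_1)$. For each fixed $\xi\in Q_{CA}$, a Fubini-type calculation shows that the admissible region for $(\xi_1,\xi_2)$ in each of these six configurations has measure (or lattice cardinality on the torus) $\gtrsim A^{2d}$. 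Since every factor $\ft z(t',\xi_j)$ is of size $\sim R$ with a fixed sign and there is no cancellation between configurations, I obtain the uniform lower bound
\begin{align*}
\ft{z^3}(t',\xi) \gtrsim R^3 A^{2d}, \qquad \xi \in Q_{CA},\ 0\le t' \le t.
\end{align*}

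Substituting into
\begin{align*}
\ft{\Xi_1(\vec\phi_n)(t)}(\xi) = -\int_0^t \frac{\sin((t-t')\jb{\xi})}{\jb{\xi}} \ft{z^3}(t',\xi)\, dt',
\end{align*}
and noting that for $\xi\in Q_{CA}$ with $A\ll N$ and $t\ll N^{-1}$ we have $(t-t')\jb{\xi}\le t(1+CA)\ll 1$, so $\sin((t-t')\jb{\xi})/\jb{\xi}\ge (t-t')/2$, integration in $t'$ produces $|\ft{\Xi_1(\vec\phi_n)(t)}(\xi)|\gtrsim t^2 R^3 A^{2d}$ uniformly on $Q_{CA}$. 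Squaring, multiplying by $\jb{\xi}^{2s}$, and integrating (or summing) over $Q_{CA}$ contributes the factor $f(A)^2$ via the elementary estimate $\int_{Q_{CA}} \jb{\xi}^{2s}\,d\xi \sim f(A)^2$, and taking square roots gives \eqref{Hs31}.

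The only delicate point is confirming the absence of cancellation among the six resonant configurations. This reduces to the positivity of $\ft z(t',\cdot)$ on its support for $t'N\ll 1$, which in turn follows from the fact that $\ft\phi_{0,n}$ is a nonnegative real indicator with origin-symmetric support (so that $\phi_{0,n}$ is real-valued) together with the Taylor expansion above. On $\T^d$ no modification is needed beyond replacing integrals by sums: each cube $jNe_1 + Q_A$ contains $\sim A^d$ lattice points once $A\ge 1$.
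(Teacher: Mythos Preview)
Your proof is correct and follows essentially the same route as the paper's: both arguments establish positivity of $\ft z(t',\cdot)$ on its support for $t'\ll N^{-1}$, use it to preclude cancellation in the triple convolution, and then extract the lower bound on a cube near the origin together with $\|\jb{\xi}^s\|_{L^2(Q_A)}\sim f(A)$. The only cosmetic differences are that the paper packages the convolution step via the abstract inequality $\ind_{a+Q_A}\ast\ind_{b+Q_A}\gtrsim A^d\ind_{a+b+Q_A}$ (so a single configuration already suffices) rather than enumerating the six resonant triples, and it works on $Q_A$ rather than your safer $Q_{A/10}$.
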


\begin{proof}
From \eqref{Duhamel1}, we have 
\begin{align}
\begin{split}
\F & \big[  \Xi_1(\vec \phi_n)(t)\big](\xi)\\
& = - 
\intt_{\xi = \xi_1 + \xi_2 + \xi_3}
\int_0^t \frac{\sin ((t - t') \jb{\xi})}{\jb{\xi}} 
 \bigg(\prod_{j = 1}^3 \F\big[ S(t')\vec \phi_n\big](\xi_j)\bigg)
d\xi_1 d\xi_2 d\xi_3dt'.
\end{split}
\label{Hs32}
\end{align}

\noi
From the definition \eqref{phi1}, 
we have $|\xi_j| \les N$ for $\xi_j \in \supp \ft \phi_{k, n}$, $k = 0, 1$.
Then, for $0 < t \ll N^{-1} \ll1$, we have
\begin{align}
\begin{split}
\cos (t \jb{\xi_j}) & =  1 + O(t^2\jb{\xi_j}^2) > \frac{1}{2},\\
\frac t 2 <  \frac{\sin (t \jb{\xi_j})}{\jb{\xi_j}}  
&  = t + O (t^3 \jb{\xi_j}^2) \ll N^{-1}. 
\end{split}
\label{nonlin1}
\end{align}

\noi
Moreover, in view of 
$\xi = \xi_1 + \xi_2 + \xi_3$, we have 
\begin{align}
 \frac{\sin ((t-t') \jb{\xi})}{\jb{\xi}}  
&  = t - t' + O ((t-t')^3 \jb{\xi}^2) >\frac 12 (t- t')
\label{nonlin2}
\end{align}

\noi
for  $0  < t'< t \ll N^{-1} \ll1$.

Recalling that 
\begin{align*}
\ind_{a + Q_A}* \ind_{b + Q_A} (\xi)\ges A^d \ind_{a+b+Q_A}(\xi)
\end{align*}
		
\noi
for all $a, b, \xi \in \ft \M$ and $A \geq 1 $,  
where $\ft \M$ is as in \eqref{dual}, 
 it follows from \eqref{Hs32}, \eqref{nonlin1}, and \eqref{nonlin2}
 with \eqref{phi1}
that 
\begin{align*}
\big|\F\big[\Xi_1(\vec \phi_n)(t)\big](\xi)\big|
\ges t^2 R^3 A^{2d} \cdot \ind_{Q_A}(\xi).
\end{align*}

\noi
Lastly, 
 noting that
$\| \jb{\xi}^s\|_{L^2_\xi( Q_A)}
\sim f(A)$,
we obtain 
\eqref{Hs31}.
\end{proof}

\subsection{Proof of Proposition~\ref{PROP:NI3}}
\label{SUBSEC:NI}

We conclude this section by 
briefly discussing the proof of  Proposition \ref{PROP:NI3}.
As in \cite{Oh1}, it suffices to show that, 
given $n \in \N$, the following properties hold:
\begin{align*}
 \textup{(i)} & \quad R A^\frac{d}{2} N^s \ll \tfrac{1}{n}, \\ 
 \textup{(ii)} & \quad T^2 R^2 A^{2d} \ll 1,  \\ 
 \textup{(iii)} & \quad T^2 R^3 A^{2d} \cdot f(A) \gg n,\\ 
 \textup{(iv)} & \quad T^2 R^3 A^{2d} \cdot f(A)\gg T^4 R^5 A^{4d} \cdot f(A), \\
 \textup{(v)} & \quad T \ll N^{-1}, \\
 \textup{(vi)} & \quad R A^\frac{d}{2} \gg 1 
 \end{align*}

\noi
for some $A, R, T$, and $N$, depending on $n$.
Here, $f(A)$ is as in \eqref{Hs21a}.
As mentioned before, implicit constants depend on
(fixed) $\vec u_0 \in \S(\M)\times \S(\M)$.

We first show how the conditions (i) - (vi) 
imply Proposition \ref{PROP:NI3}.
This argument is essentially contained in \cite{Oh1}\footnote{Simply replace $T \ll N^{-2}$ in \cite{Oh1} by $T^2 \ll N^{-2}$ in our setting.}
but we include it for readers'  convenience.
The first condition (i) 
together with 
\eqref{Hs11} in Lemma \ref{LEM:Hs1} 
verifies the first estimate in \eqref{main1}.
The second condition~(ii) with \eqref{phi4}
guarantees local existence of the solution $u_n$
on $[-T, T]$ with $(u_n, \dt u_n)|_{t = 0} = \vec u_{0, n}$
and the convergence of the power series expansion \eqref{tree3}
in $C([-T, T]; \F L^1(\M))$. 
Moreover, assuming the conditions (ii) - (vi), 
it follows from Lemmas \ref{LEM:Hs1} and \ref{LEM:Hs3}
with the power series expansion \eqref{tree3}
that 
\begin{align*}
\| u_n(T)\|_{H^s} & \geq 
\|  \Xi_1 ( \vec \phi_n)(T)\|_{H^s}
- \| \Xi_0(\vec u_{0,n})\|_{H^s} \notag \\
& \hphantom{X} - \| \Xi_1 (\vec u_{0, n})(T) - \Xi_1 (\vec  \phi_n)(T)\|_{H^s}
-\bigg\|  \sum_{j = 2}^\infty \Xi_j (\vec u_{0, n}) (T)\bigg\|_{H^s} \notag\\
& \ges
T^2 R^3 A^{2d}\cdot f(A)
- (1 + R A^\frac{d}{2} N^s) \notag\\
&\hphantom{XXXX}- 
T^2R^2 A^{2d} \| \vec u_0\|_{\H^0}- T^4 R^5 A^{4d} \cdot f(A)\notag\\
& \sim 
T^2 R^3 A^{2d}\cdot f(A)
\gg n.
\end{align*}

\noi
This verifies the second estimate in \eqref{main1}
at time $t_n = T$.
Lastly, by choosing $N = N(n)$ sufficiently large, 
the  condition (v) guarantees that $t_n \in (0, \frac 1n)$.
This completes the proof of Proposition \ref{PROP:NI3}.	
	
\medskip

Therefore, it remains to 
verify the conditions (i) - (vi).
Note that the conditions (i) - (iv) are identical to those in
the Schr\"odinger case studied in  \cite{Oh1}
with $T^2 (\ll N^{-2})$ replaced $T\ll N^{-2}$.
Namely, we simply  use the same choices for $A$ and $R$ 
and the square root for the choice of $T$ from \cite{Oh1}.

\medskip

\noi
$\bullet$
{\bf Case 1:} $s < - \frac  d 2$.
\quad 
In this case, we set
\begin{align}
A = N^{\frac 1 d (1 - \dl)}, 
\quad 
R = N^{2 \dl}, 
\quad \text{and} 
\quad
T = N^{ - 1 - \frac 32 \dl}, 
\label{AR1}
\end{align}

\noi
where $\dl>0 $ is sufficiently small such that $s < - \frac 12 - \frac 32 \dl$.

\medskip

\noi
$\bullet$
{\bf Case 2:} $s  =  - \frac  d 2$.
\quad In this case, we set
\begin{align}
A = \frac{N^{\frac 1 d}}{(\log N )^{\frac 1{16d}}}, 
\quad R = 1, 
\quad \text{and} 
\quad
T = \frac 1{N (\log N)^{\frac 1{16}}} .
\notag
\end{align}

\medskip

\noi
$\bullet$
{\bf Case 3:} $ - \frac  d 2 < s < 0$.
\quad 
Recall that this case is relevant only for $d \geq 2$.
We set
\begin{align}
A = N^{\frac 2 d -\dl}, 
\quad 
R = N^{-1-s + \frac d 2\dl - \theta}, 
\quad \text{and} 
\quad
T= N^{ - 1 +s + \frac{1}{2}d \dl + \frac{1}{2} \theta}, 
\label{AR3}
\end{align}

\noi
where $\dl \gg  \theta  > 0$ are sufficiently small
such that 
\begin{align*} 
-2 s >d \dl + \theta
\qquad 
\text{and}
\qquad   -s \dl > 2 \theta.
\end{align*}

\noi
Then, by repeating the argument in \cite{Oh1}, 
we see that the conditions (i) - (vi) are satisfied in each case.

\begin{remark} \label{REM:NI0} \rm
It is easy to check that the choices in \eqref{AR1} 
of Case 1 is also valid for $s<-\frac 12$.
Namely,  Cases 1 and 3 are sufficient to prove Proposition \ref{PROP:NI3} for $d \ge 2$.
In particular, a logarithmic divergence as in Case 2 appears only when $d=1$, since $s=-\frac 12$ is the scaling critical regularity.
\end{remark}

\section{Almost sure norm inflation 
for the  Wick ordered  cubic NLW} 
\label{SEC:NI2}

In this section, we present the proof of Proposition \ref{PROP:ill3}
on almost sure norm inflation for the  Wick ordered  cubic NLW
on $\T^2$.
While the discussion in Section \ref{SEC:NI}
was for a general dimension $d \ge 1$, 
we restrict our attention to the two-dimensional case in this section.

\subsection{Local well-posedness of the Wick ordered NLW}
In this subsection, we briefly go over local well-posedness  of 
the perturbed NLW \eqref{WNLW5} on $\T^2$. More precisely, we consider 
\begin{align}
\begin{cases}
\dt^2 v  + (1 -  \Dl)  v  +  v^3 + \RR(v, z) = 0\\ 
(v, \dt v) |_{t = 0} = (\phi_{0}, \phi_{1}), 
\end{cases}
\label{WNLW5_pak}
\end{align}
\noi
where $\RR(v, z)$ is given by 
$$
\RR(v, z) = \, :\!  (z+v)^{3}\!\!:  \,  -\,  v^3  = 3 z  v^2 + 3 :\! z^2\!:   v+ :\! z^3\!:.
$$

\noi
In \cite{OTh2}, Thomann and the first author
proved almost sure local well-posedness
of \eqref{WNLW5_pak} via the Strichartz estimates and Lemma \ref{LEM:Z4}.
Note that, while only the zero initial data\footnote{This corresponds to the Wick ordered NLW \eqref{WNLW0}
with the random initial data \eqref{Gauss1}.} 
 for~\eqref{WNLW5_pak}
is considered in \cite{OTh2},
the same proof applies to any $(\phi_0, \phi_1) \in \H^s(\T^2)$, $s > s_\text{crit} = \frac 14$.
See also \cite{GKO}. On the other hand,  in proving Proposition~\ref{PROP:ill3}, 
we need to {\it maximize} the local existence time. In this respect, the Strichartz estimates are not very efficient.
In order to simultaneously handle the Wick powers and
 make the local existence time longer, we prove local well-posedness of \eqref{WNLW5_pak} 
in the Fourier-Lebesgue space $\overrightarrow{\F L}^{\al,\frac{1}{1-\al}}(\T^2)$ for sufficiently small $\al > 0$, where $\overrightarrow{\F L}^{\al,\frac{1}{1-\al}}(\T^2)$ is as in \eqref{FL}.
\begin{lemma}\label{LEM:LWP1}
Let $\al > 0$ be sufficiently small.
Then, the perturbed NLW \eqref{WNLW5_pak} is almost surely locally well-posed
in $\overrightarrow{\F L}^{\al,\frac{1}{1-\al}}(\T^2)$ on a time interval $[-T, T]$,
where
\begin{align}
T \ges \bigg\{\max\Big(  \|(\phi_0, \phi_1)\|_{\overrightarrow{\F L}^{\al,\frac{1}{1-\al}}}^{\frac{1}{1-\al}},  
\, K_\o \big(1 + \|  (\phi_0, \phi_1) \|_{\overrightarrow{\F L}^{\al,\frac{1}{1-\al}}} \big) \Big)\bigg\}^{-1}
\label{LWP1}
\end{align}

\noi
for some almost surely finite constant $K_\o >0$.
Moreover, we have
\begin{align*}
\sup_{t \in [-T, T]} 
  \|v(t) \|_{{\F L}^{\al,\frac{1}{1-\al}}}
  \les 
  \|(\phi_0, \phi_1)\|_{\overrightarrow{\F L}^{\al,\frac{1}{1-\al}}}.
\end{align*}

\end{lemma}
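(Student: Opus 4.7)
The approach is a standard contraction mapping argument applied to the Duhamel reformulation of \eqref{WNLW5_pak}: we seek a fixed point of
\begin{align*}
\Phi(v)(t) := S(t)(\phi_0, \phi_1) - \int_0^t \frac{\sin((t-t')\jb{\nb})}{\jb{\nb}} \big[ v^3 + 3 z v^2 + 3 :\!z^2\!: v + :\!z^3\!: \big](t') \, dt'
\end{align*}
in the space $X_T := C([-T, T]; \F L^{\al, 1/(1-\al)}(\T^2))$ for small $\al > 0$. The special role of this scale is that, combined with the $\jb{\nb}^{-1}$ smoothing from the wave kernel, the Fourier--Lebesgue space $\F L^{\al, 1/(1-\al)}$ on $\T^2$ behaves as a quasi-algebra with a favorable balance between the weight $\jb{n}^\al$ and the borderline two-dimensional summability $\sum_n \jb{n}^{-\al r} < \infty$. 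This is what ultimately gives us a longer lifespan than the Sobolev-based or Wiener-algebra formulations, which is precisely what the approximation argument of Section \ref{SEC:NI2} requires.

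The main analytic ingredients are, firstly, a deterministic trilinear estimate
\begin{align*}
\|\I^3[v]\|_{X_T} \lesssim T^{2(1-\al)}\, \|v\|_{X_T}^3,
\end{align*}
and, secondly, stochastic estimates of the form
\begin{align*}
\|\I[:\!z^\l\!:\, \cdot\, v^{3-\l}]\|_{X_T} \lesssim K_\o\, T \, (1 + \|v\|_{X_T})^{3-\l}, \qquad \l = 1, 2, 3,
\end{align*}
where $K_\o := 1 + \sum_{\l=1}^3 \|:\!z^\l\!:\|_{C([-1,1]; W^{-\eps,\infty}(\T^2))}$ is almost surely finite by Proposition \ref{PROP:Z1}. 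To prove the deterministic estimate, the plan is to start from the interpolated multiplier bound $|\sin((t-t')\jb{n})/\jb{n}| \le (t-t')^{1-2\al}\jb{n}^{-2\al}$ (valid for $\al \in [0, 1/2]$), apply Minkowski in time to extract the $T^{2(1-\al)}$ factor, and then bound the resulting trilinear expression in $\F L^{-\al, 1/(1-\al)}$ by $\|v\|_{\F L^{\al, 1/(1-\al)}}^3$ using a combination of Young's inequality for three-fold convolutions on $\Z^2$ and H\"older with the two-dimensional summability $\al r > 2$; the choice $p = 1/(1-\al)$ is exactly what makes all the Young/H\"older exponents consistent. The stochastic estimate is proved by the same scheme with one factor of $v$ replaced by a Wick power whose $W^{-\eps,\infty}$ norm is absorbed into $K_\o$, together with the Fourier-side pointwise-type bounds on Wick powers extracted from the proof of Proposition~\ref{PROP:Z1} (cf.\ Lemma~\ref{LEM:Z1p}).

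Combining these estimates with the linear bound $\|S(t)(\phi_0, \phi_1)\|_{X_T} \le C\|(\phi_0, \phi_1)\|_{\overrightarrow{\F L}^{\al,1/(1-\al)}}$, together with the corresponding difference estimates for $\Phi(v) - \Phi(w)$, shows that on the ball $B_R \subset X_T$ of radius $R := 2C\|(\phi_0, \phi_1)\|_{\overrightarrow{\F L}^{\al, 1/(1-\al)}}$ the map $\Phi$ is a contraction provided $T^{2(1-\al)} R^2 + K_\o T (1 + R) \lesssim 1$. Solving this constraint in $T$ produces precisely the two competing scales in \eqref{LWP1}: $T \lesssim \|(\phi_0, \phi_1)\|^{-1/(1-\al)}$ from the deterministic cubic and $T \lesssim K_\o^{-1}(1+\|(\phi_0, \phi_1)\|)^{-1}$ from the stochastic perturbation. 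The final supremum bound $\sup_{|t| \le T} \|v(t)\|_{\F L^{\al, 1/(1-\al)}} \lesssim \|(\phi_0, \phi_1)\|_{\overrightarrow{\F L}^{\al, 1/(1-\al)}}$ is then inherited from membership in $B_R$. The main obstacle is the trilinear estimate itself: $\F L^{\al, 1/(1-\al)}(\T^2)$ is not literally an algebra, so one must carefully distribute the $\jb{n}^\al$ weight against the smoothing $\jb{n}^{-1}$ and against the borderline summability threshold $\al r > 2$, and it is precisely this three-way balancing act that forces the exponent $p = 1/(1-\al)$ and, in turn, the exponent $1/(1-\al)$ appearing in the lifespan bound \eqref{LWP1}.
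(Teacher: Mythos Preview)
Your overall architecture (Duhamel formulation, contraction on $X_T := C([-T,T];\F L^{\al,\frac{1}{1-\al}})$, deterministic cubic versus stochastic perturbation split) matches the paper, and the way you read off the two competing time scales in \eqref{LWP1} is correct. The gap is in your proof of the deterministic trilinear estimate
\[
\big\|\I[v_1,v_2,v_3]\big\|_{X_T}\ \lesssim\ T^{2(1-\al)}\prod_{j=1}^3\|v_j\|_{X_T}.
\]
After your interpolated multiplier bound you are reduced to showing, with $p=\frac{1}{1-\al}$,
\[
\big\|\jb{n}^{-\al}(\hat v_1*\hat v_2*\hat v_3)\big\|_{\ell^p(\Z^2)}\ \lesssim\ \prod_{j=1}^3\|\jb{n}^\al\hat v_j\|_{\ell^p(\Z^2)}.
\]
This estimate is \emph{scale critical} in $d=2$, and the Young/H\"older route you outline does not close. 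Writing $\hat v_j=\jb{\cdot}^{-\al}g_j$ with $g_j\in\ell^p$, the symmetric Young choice forces $\|\jb{\cdot}^{-\al}g_j\|_{\ell^q}$ with $\tfrac3q=2+\tfrac1p=3-\al$; the subsequent H\"older step has exponent $\tfrac1r=\tfrac1q-\tfrac1p=\tfrac{2\al}{3}$, i.e.\ $r=\tfrac{3}{2\al}$, and then $\al r=\tfrac32<2$, so $\sum_{n\in\Z^2}\jb{n}^{-\al r}=\infty$. Asymmetric splittings fare no better: putting two factors in $\ell^1$ forces H\"older against $\ell^{p'}=\ell^{1/\al}$, and again $\al p'=1<2$. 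So the sentence ``the choice $p=\tfrac{1}{1-\al}$ is exactly what makes all the Young/H\"older exponents consistent'' is precisely where the argument breaks; the summability condition $\al r>2$ you invoke is never met.

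The paper sidesteps this critical obstruction by \emph{interpolating} the trilinear Duhamel operator between two elementary endpoints: the Wiener algebra $\F L^{0,1}$, where $\|\I[\cdots]\|_{\F L^1}\lesssim T^2\prod\|v_j\|_{\F L^1}$ is just the algebra property of $\ell^1$ under convolution, and $H^{1/2}=\F L^{1/2,2}$, where Sobolev $H^{1/2}(\T^2)\hookrightarrow L^4$ gives $\|\I[\cdots]\|_{H^{1/2}}\lesssim T\|v_1v_2v_3\|_{L^{4/3}}\lesssim T\prod\|v_j\|_{L^4}\lesssim T\prod\|v_j\|_{H^{1/2}}$. Interpolation of weighted $\ell^p$-spaces with parameter $\theta=2\al$ lands exactly at $\F L^{\al,\frac{1}{1-\al}}$ with the time power $T^{2(1-\theta)+\theta}=T^{2(1-\al)}$. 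For the stochastic terms $z v^2$, $:\! z^2\!:\, v$, $:\! z^3\!:$, the paper does \emph{not} interpolate: it uses Hausdorff--Young $\F L^{0,\frac{1}{1-\al}}\hookrightarrow L^{1/\al}$ (hence into $L^4$ for $\al\le\tfrac14$), the product estimates of Lemma~\ref{LEM:bilin}, and the $W^{-\al/2,\infty}$ control on $:\! z^\l\!:$ from Proposition~\ref{PROP:Z1}. Your description of the stochastic part is in the right direction but points to the wrong ingredient; no Fourier-side pointwise bounds from Lemma~\ref{LEM:Z1p} are used there.
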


\begin{proof} 
Let $\I$ be the Duhamel integral operator defined in \eqref{Duhamel1}.
Then, by the algebra property of $\F L^1(\T^2)$
with \eqref{Hs21b}, 
we have
\begin{align}
\big\|\I [v_1, v_2, v_3] \big\|_{L_T^{\infty} \F L_x^1}
\les T^2 \prod_{j = 1}^3 \|v_j\|_{L_T^{\infty} \F L_x^1}.
\label{LWP2}
\end{align}

\noi
On the other hand, by Sobolev's inequality, we have
\begin{align}
\big\|\I [v_1, v_2, v_3] \big\|_{L_T^{\infty} H_x^{\frac 12}}
& \leq T \big\|\jb{\nb}^{-\frac{1}{2}} (u_1 u_2 u_3)\big\|_{L_T^{\infty}L_x^2}
\les T \big\| u_1 u_2 u_3 \big\|_{L_T^{\infty}L_x^{\frac{4}{3}}} \notag\\
& \le T \prod_{j = 1}^3 \|v_j\|_{L_T^{\infty} L_x^4}
\les T \prod_{j = 1}^3 \|v_j\|_{L_T^{\infty} H_x^\frac{1}{2}}.
\label{LWP3}
\end{align}

\noi
By the  interpolation of weighted $\l^p$-spaces applied to  \eqref{LWP2} and \eqref{LWP3}
with $\al = \theta \cdot \frac 12 + (1-\theta) \cdot 0 = \frac{\theta}{2}$, we obtain
\begin{align}
\big\|\I [v_1, v_2, v_3] \big\|_{L_T^{\infty} \F L_x^{\al,\frac{1}{1-\al}}}
\les T^{2(1-\al)} \prod_{j = 1}^3 \|v_j\|_{L_T^{\infty} \F L_x^{\al,\frac{1}{1-\al}}}
\label{LWP4}
\end{align}
for $0<\al<\frac 12$.

Next, we consider the terms
\begin{align}
\begin{split}
\1 & =  \int_0^t \frac{\sin ((t - t') \jb{\nb})}{\jb{\nb}}[ v_1v_2 z](t')dt', 
\\
\II & =  \int_0^t \frac{\sin ((t - t') \jb{\nb})}{\jb{\nb}}[ v :\! z^2\!: \,](t')dt',
\\
\III & =  \int_0^t \frac{\sin ((t - t') \jb{\nb})}{\jb{\nb}} :\! z^3 (t')\!:  dt'.
\end{split}
\label{LWP4a}
\end{align}

\noi
By Proposition \ref{PROP:Z1}, 
there exists an almost surely finite constant $K_\o >0$ such that 
\begin{align}
\|  :\! z^\l\!:  \|_{L^{\infty}([-1, 1]; W_x^{- \frac \al2, \infty})} \leq K_\o
\label{LWP5}
\end{align}

\noi
for $\l = 1, 2, 3$.
Let $0 < T \le 1$ in the following.
Note that Hausdorff-Young's inequality yields that 
$\F L^{\frac{1}{1-\al}}(\T^2) \embeds L^{\frac 1\al}(\T^2)$,
in particular, $\F L^{\frac{1}{1-\al}}(\T^2) \embeds L^4(\T^2)$ holds if $0<\al\le \frac 14$.
Then, it follows from
H\"older's inequality, Lemma \ref{LEM:bilin}, and \eqref{LWP5} with $\al < \frac 12$ that
\begin{align}
\| \1 \|_{L_T^{\infty} \F L_x^{\al,\frac{1}{1-\al}}}
&\leq T \big\| \jb{\nb}^{-1+\al}[ v_1v_2  z]\big\|_{L_T^{\infty} \F L_x^{0,\frac{1}{1-\al}}}
\les T \big\| \jb{\nb}^{-\frac \al2} [v_1v_2 z ] \big\|_{L_T^\infty L_x^2} \notag \\
&\les T \big\| \jb{\nb}^{\frac \al2} [v_1v_2] \|_{L_T^\infty L_x^2} \big\| \jb{\nb}^{-\frac \al2} z \big\|_{L_T^\infty L_x^{\frac 4\al}} \notag\\
&\les T K_\o \| v_1 \|_{L_T^{\infty} \F L_x^{\al,\frac{1}{1-\al}}} \| v_2 \|_{L_T^{\infty} \F L_x^{\al,\frac{1}{1-\al}}}.
\label{LWP6}
\end{align}
\noi
Similarly, 
by H\"older's inequality, Lemma \ref{LEM:bilin}, and \eqref{LWP5}, we have
\begin{align}
\| \II\|_{L_T^{\infty} \F L_x^{\al,\frac{1}{1-\al}}}
& \les T \big\| \jb{\nb}^{-\frac \al2} [v :\! z^2\!: \,]\big\|_{L_T^{\infty} L_x^2} \notag\\
&\les T \big\| \jb{\nb}^{\frac \al2} v \big\|_{L_T^\infty L_x^2} \big\| \jb{\nb}^{- \frac \al2} :\! z^2\!: \big\|_{L_T^{\infty} L_x^{\frac 4\al}} \notag \\
& \les T K_\o \|  v\|_{L_T^{\infty} \F L_x^{\al,\frac{1}{1-\al}}}
\label{LWP7}
\end{align}

\noi
and 
\begin{align}
\| \III\|_{L_T^{\infty} \F L_x^{\al,\frac{1}{1-\al}}}
&\les T \| \jb{\nb}^{-\frac \al2}  :\! z^3\!:\|_{L_T^{\infty} L_x^2}
\le T K_\o .
\label{LWP8}
\end{align}

By putting \eqref{LWP4}, \eqref{LWP6}, \eqref{LWP7}, and \eqref{LWP8}
together, 
a standard fixed point argument establishes almost sure local well-posedness of \eqref{WNLW5_pak},
provided that $T = T(\o)$ such sufficiently small such that 
\begin{align*}
 T^{2(1-\al)} \|(\phi_0, \phi_1)\|_{\overrightarrow{\F L}^{\al,\frac{1}{1-\al}}}^2 & \les 1, \\
 T K_\o \big(1 + \|  (\phi_0, \phi_1) \|_{\overrightarrow{\F L}^{\al,\frac{1}{1-\al}}} \big)& \les 1,
\end{align*}

\noi
yielding the condition \eqref{LWP1}.
\end{proof}
\subsection{Proof of Proposition \ref{PROP:ill3}}
In this subsection, we present the proof of 
Proposition~\ref{PROP:ill3}.
We prove this almost sure norm inflation result
by viewing \eqref{WNLW5}
as the (unrenormalized) NLW \eqref{NLW0}
with a random perturbation
and invoking the norm inflation result (Proposition \ref{PROP:NI3}) for 
the cubic NLW \eqref{NLW0}.

Let $s < 0$.
Given $n \in \N$,  fix $N = N(n) \gg1 $ 
to be chosen later.
Let $\vec \phi_n = (\phi_{0, n}, \phi_{1, n})$ be as in \eqref{phi1}
with  $A = A(N)$, $R = R(N)$, and $T = T(N) >0$
as in Subsection~\ref{SUBSEC:NI}.
Then, by taking some small $\al > 0$, we have
\begin{align}
T & \les\| \vec \phi_n\|_{\overrightarrow{\F L}^{\al,\frac{1}{1-\al}}}^{-\frac{1}{1-\al}}
= (RA^{2(1-\al)} N^{\al})^{-\frac{1}{1-\al}}
= (RN^\al)^{-\frac{1}{1-\al}} A^{-2}.
\label{Y1}
\end{align}

In fact, when $s < - \frac 12$, it follows from \eqref{AR1} and Remark \ref{REM:NI0} that
\begin{align} \label{etime0}
T(RN^\al)^{\frac{1}{1-\al}} A^2
= N^{-1-\frac 32\dl} N^{\frac{2\dl+\al}{1-\al}} N^{1-\dl}
= N^{-\frac{\dl-(2+5\dl)\al}{2(1-\al)}}.
\end{align}
Hence,
\eqref{Y1} holds, provided that $\al<\frac{\dl}{2+5\dl}$.
When $-\frac 12 \le s<0$,  \eqref{AR3} yields
\begin{align} \label{etime1}
T(RN^\al)^{\frac{1}{1-\al}} A^2
= N^{-1+s+\dl+\frac{\theta}{2}} N^{\frac{-1-s+\dl-\theta+\al}{1-\al}} N^{2-2\dl}
= N^{-\frac{\theta+(2s-2\dl+\theta)\al}{2(1-\al)}},
\end{align}

\noi
and hence 
\eqref{Y1} holds, provided that $\al<\frac{\theta}{-2s+2\dl-\theta}$.

Let $u = u(n)$ and $v = v(n)$ be the solutions
to 
the unrenormalized NLW \eqref{NLW0} and 
 the perturbed NLW \eqref{WNLW5} with the  initial data $\vec \phi_n$, 
 respectively.
Then, the above observation guarantees that
$u$ and $v$ exist on $[-T, T]$.
Moreover, in view of \eqref{phi4}, the power series expansion \eqref{tree3}
for $u_n$ (with $\vec u_0 = 0$)
converges uniformly on $[-T, T]$.
Then, it follows from Proposition \ref{PROP:NI3} that
\begin{align}
\| u(T) \|_{H^s} \gg  n
\label{Y4}
\end{align}

\noi
for suitably chosen $N = N(n, \o) \gg1 $.
Therefore, Proposition \ref{PROP:ill3} 
 follows from \eqref{Y4} 
 once we prove  the following approximation lemma.

\begin{lemma}\label{LEM:approx}
Given $n \in \N$,
let $u = u(n)$, $v = v(n)$, and $T=T(n)$ be as above.
Namely, they are
the solutions
to the unrenormalized NLW \eqref{NLW0} and 
 the perturbed NLW \eqref{WNLW5} with the  initial data $\vec \phi_n$, 
 respectively.
Then, there exists $C_\o > 0$, almost surely  tending to $0$ as $n \to 0$
 \textup{(}and hence $N \to \infty$\textup{)}, 
such that 
\begin{align}
\sup_{t \in [-T, T]} \| u(t) - v(t) \|_{L^2_x} \leq C_\o.
\label{X-1}
\end{align}

\end{lemma}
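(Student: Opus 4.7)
The plan is to analyze the difference $w = u - v$, which satisfies the wave equation with zero initial data and forcing $-w\,Q(u,v) + \RR(v,z)$, where $Q(u,v) = u^2 + uv + v^2$ and $\RR(v,z) = 3zv^2 + 3\!:\!z^2\!:\!v\, +\, :\!z^3\!:$. I would close the estimate in the Fourier-Lebesgue space $\F L^{\alpha, 1/(1-\alpha)}(\T^2)$ used in Lemma~\ref{LEM:LWP1} for a small $\alpha > 0$, and then pass to the $L^2$ bound via the elementary embedding $\F L^{\alpha, 1/(1-\alpha)}(\T^2) \embeds L^2(\T^2)$ (valid for $\alpha \in (0, 1/2)$ since $\ell^{1/(1-\alpha)} \embeds \ell^2$ on $\Z^2$).

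Both $u$ and $v$ are controlled on $[-T,T]$ in $\F L^{\alpha, 1/(1-\alpha)}(\T^2)$ with norm $\le C\|\vec\phi_n\|_{\F L^{\alpha, 1/(1-\alpha)}}$, using Lemma~\ref{LEM:nonlin1} together with the trivial inclusion $\F L^1 \embeds \F L^{\alpha, 1/(1-\alpha)}$ for $u$, and Lemma~\ref{LEM:LWP1} for $v$. Invoking the multilinear bound \eqref{LWP4} for the trilinear terms coming from $wQ(u,v)$ together with the Wick-source bounds \eqref{LWP6}--\eqref{LWP8}, the Duhamel formula yields
\begin{align*}
\|w\|_{L^\infty_T \F L^{\alpha, \frac{1}{1-\alpha}}}
& \le C T^{2(1-\alpha)} \|w\|_{L^\infty_T \F L^{\alpha, \frac{1}{1-\alpha}}} \|\vec\phi_n\|_{\F L^{\alpha, \frac{1}{1-\alpha}}}^{2} \\
& \quad + C T K_\omega \big(1 + \|\vec\phi_n\|_{\F L^{\alpha, \frac{1}{1-\alpha}}}^{2}\big).
\end{align*}
The local well-posedness condition $T^{2(1-\alpha)} \|\vec\phi_n\|_{\F L^{\alpha, 1/(1-\alpha)}}^{2} \ll 1$ — which holds with a strict margin thanks to $T(RN^\alpha)^{1/(1-\alpha)}A^2 \to 0$ established in \eqref{etime0} and \eqref{etime1} — absorbs the self-interaction, yielding $\|u - v\|_{L^\infty_T L^2} \les TK_\omega\big(1 + \|\vec\phi_n\|_{\F L^{\alpha, 1/(1-\alpha)}}^{2}\big)$.

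The hard part is to verify that this bound tends to $0$ almost surely as $n \to \infty$. Since $K_\omega$ is almost surely finite and independent of $n$, this reduces to showing $T \|\vec\phi_n\|_{\F L^{\alpha, 1/(1-\alpha)}}^{2} \to 0$. Writing $T\|\vec\phi_n\|^{2} = \big(T\|\vec\phi_n\|^{1/(1-\alpha)}\big)\cdot\|\vec\phi_n\|^{(1-2\alpha)/(1-\alpha)}$ and invoking $T\|\vec\phi_n\|^{1/(1-\alpha)} \sim N^{-\gamma}$ with $\gamma = \gamma(\alpha, \delta, \theta) > 0$ from \eqref{etime0} and \eqref{etime1}, the issue is that the residual factor $\|\vec\phi_n\|^{(1-2\alpha)/(1-\alpha)}$ is a positive power of $N$, and a purely generic $\F L^{\alpha, 1/(1-\alpha)}$-level bookkeeping may not close. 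The refinement I would use exploits the fact that $\vec\phi_n$ — and hence $u$ and the leading part of $v$ — is Fourier-supported near the frequency scale $|\xi| \sim N$, so that the products appearing in $\RR(v,z)$ are dominantly concentrated at frequency $\sim N$ (modulo the Wick-power tails controlled by Proposition~\ref{PROP:Z1}). On such high-frequency forcing the $\jb{\nb}^{-1}$ smoothing in the Duhamel operator supplies an extra $N^{-1}$ factor; this, together with the strict $N^{-\gamma}$ margin from Subsection~\ref{SUBSEC:NI}, suffices to produce the desired almost sure decay $\sup_{t\in[-T,T]}\|u(t)-v(t)\|_{L^2}\to 0$.
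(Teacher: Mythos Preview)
Your argument has a genuine gap at the point you yourself flag. Working entirely in $\F L^{\alpha,\frac{1}{1-\alpha}}$ and invoking \eqref{LWP6}--\eqref{LWP8} gives only a factor $T$ (not $T^{2}$) in front of the Wick source terms, because in those estimates the $\jb{\nb}^{-1}$ smoothing is spent on regularity (the weight $\jb{\xi}^{\alpha}$ and the negative regularity of $:\!z^\l\!:$) rather than on time. The resulting bound $T K_\omega \|\vec\phi_n\|_{\overrightarrow{\F L}^{\alpha,\frac{1}{1-\alpha}}}^{2}$ does \emph{not} tend to $0$: a direct check with the parameters of Subsection~\ref{SUBSEC:NI} (in $d=2$) gives, e.g.\ in Case~1, $T\|\vec\phi_n\|^{2}\sim N^{\,1+\frac{\dl}{2}+O(\alpha)}$, and in Case~3, $T\|\vec\phi_n\|^{2}\sim N^{\,1-s+O(\dl,\theta,\alpha)}$, both of which diverge.

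Your proposed refinement---gaining $N^{-1}$ from the $\jb{\nb}^{-1}$ by frequency localization of $\RR(v,z)$---does not hold. First, $v$ is \emph{not} Fourier-supported near $|\xi|\sim N$: the Duhamel contribution of $\RR(v,z)$ involves $z$ and the Wick powers, which are spread over all frequencies, so $v$ immediately loses any frequency localization inherited from $\vec\phi_n$. Second, and more decisively, even if you approximate $v\approx S(t)\vec\phi_n$, the square $v^{2}$ has a substantial component near frequency $0$ (since $\supp\ft\phi_{0,n}$ is symmetric about the origin), so $z v^{2}$ contains low-frequency output on which $\jb{\nb}^{-1}$ yields no gain. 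The $N^{-1}$ saving simply is not there.

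The paper closes this gap by estimating the difference $u-v$ directly in $L^{2}$ rather than in $\F L^{\alpha,\frac{1}{1-\alpha}}$. In $L^{2}$ there is no $\jb{\xi}^{\alpha}$ weight to absorb, so almost all of the $\jb{\nb}^{-1}$ smoothing can be converted into time decay: one obtains $\|\1\|_{L^\infty_T L^2_x}+\|\II\|_{L^\infty_T L^2_x}+\|\III\|_{L^\infty_T L^2_x}\les T^{\,2-\frac{\alpha}{2}}K_\omega\big(1+\|v\|_{L^\infty_T\F L^{\alpha,\frac{1}{1-\alpha}}_x}^{2}\big)$. Combined with the strict margin $T^{2(1-\alpha)}\|v\|^{2}\ll T^{2\eps}$ recorded in~\eqref{X0}, this yields $T^{\,\frac{3\alpha}{2}+2\eps}\to 0$. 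For the cubic self-interaction $\I^{3}[u]-\I^{3}[v]$, the paper also mixes norms---$u$ in $\F L^{1}$ and $v$ in $\F L^{\alpha,\frac{1}{1-\alpha}}$---via three variants of Young's inequality (see \eqref{X1}--\eqref{X3}) to absorb the difference back into $\|u-v\|_{L^\infty_T L^2_x}$ with a small prefactor. The upshot is that the extra power of $T$ you need comes from choosing the \emph{right} target norm for the difference, not from any frequency localization of the source.
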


\begin{proof}
By our choice of $T$ in \eqref{AR1} and \eqref{AR3}, \eqref{etime0}, \eqref{etime1},
and the local well-posedness
of \eqref{NLW0} and \eqref{WNLW5}
in $\overrightarrow{\F L}^{0, 1}(\T^2)$ and $\overrightarrow{\F L}^{\al, \frac{1}{1-\al}}(\T^2)$, respectively, 
there exists $\eps>0$ such that
\begin{align}
T^{2}\|u\|^2_{L_T^{\infty} \F L^1_x} & +  T^{2(1-\al)} \| v \|^2_{L_T^{\infty} \F L_x^{\al,\frac{1}{1-\al}}}\notag\\
& \les T^{2}\|\vec\phi_n\|^2_{\overrightarrow{\F L}^{0, 1}}+  T^{2(1-\al)} \| \vec \phi_n \|^2_{\overrightarrow{\F L}^{\al, \frac{1}{1-\al}}}
\ll T^{2\eps}.
\label{X0}
\end{align}

By  Young's inequality with  \eqref{Hs21b}, we have
\begin{align}
\big\|\I[u_1, u_2, u_3]\big\|_{L_T^{\infty} L^2_x}
& \le T^{2} \|u_1\|_{L_T^{\infty} L^2_x} \prod_{j = 2}^3 \| u_j \|_{L_T^{\infty}\F L^1_x}.
\label{X1}
\end{align}

\noi
By H\"older's and Young's inequalities with a variant of \eqref{Hs21b}, we have
\begin{align}
\big\|\I[u_1, u_2, u_3]\big\|_{L_T^{\infty} L^2_x}
& \les \bigg\|\int_0^t \frac{\sin ((t - t') \jb{\nb})}{\jb{\nb}^{1-2\al-2\eps}} [u_1u_2u_3](t')dt'
\bigg\|_{L_T^{\infty} \F L^{\frac{2}{1-2\al-\eps}}_x}\notag \\
& \les T^{2- 2\al-2\eps} \|u_1\|_{L_T^{\infty} L^2_x}\prod_{j = 2}^3  \|u_j\|_{L_T^{\infty} \F L^\frac{4}{4-2\al-\eps}_x} \notag\\
& \les T^{2- 2\al-2\eps} \|u_1\|_{L_T^{\infty} L^2_x}\prod_{j = 2}^3  \|u_j\|_{L_T^{\infty} \F L^{\al,\frac{1}{1-\al}}_x}.
\label{X2}
\end{align}

\noi
Similarly, we have 
\begin{align}
\big\|\I[u_1, u_2, u_3]\big\|_{L_T^{\infty} L^2_x}
& \les \bigg\|\int_0^t \frac{\sin ((t - t') \jb{\nb})}{\jb{\nb}^{1-\al-\eps}} [u_1u_2u_3](t')dt'
\bigg\|_{L_T^{\infty} \F L^{\frac{4}{2-2\al-\eps}}_x}\notag \\
& \les T^{2- \al-\eps} \|u_1\|_{L_T^{\infty} L^2_x} 
\| u_2 \|_{L_T^{\infty}\F L^1_x} \| u_3 \|_{L_T^{\infty}\F L^\frac{4}{4-2\al-\eps}_x} \notag \\
& \les T^{2- \al-\eps} \|u_1\|_{L_T^{\infty} L^2_x} 
\| u_2 \|_{L_T^{\infty}\F L^1_x} \| u_3 \|_{L_T^{\infty}\F L^{\al, \frac{1}{1-\al}}_x}.
\label{X3}
\end{align}

\noi
Hence, it follows from \eqref{X1}, \eqref{X2},  and \eqref{X3} that 
\begin{align}
\big\|\I^3[u] - \I^3 [v]\big\|_{L_T^{\infty} L^2_x}
& \les
T^{-2\eps} \Big\{T^{2}\|u\|^2_{L_T^{\infty} \F L^1_x}+  T^{2(1-\al)} \| v \|^2_{L_T^{\infty} \F L^{\al, \frac{1}{1-\al}}_x}\Big\}
\|u - v\|_{L_T^{\infty} L^2_x} \notag\\
& \ll \|u - v\|_{L_T^{\infty} L^2_x} ,
\label{X4}
\end{align}
	
\noi
where the last inequality follows from \eqref{X0}.

Let $\1$, $\II$, and $\III$ be as in \eqref{LWP4a}.
Then, proceeding as in the proof of Lemma \ref{LEM:LWP1} with a variant of \eqref{Hs21b}, we obtain
\begin{align*}
\| \1\|_{L_T^{\infty}L^2_x}
& \les T^{2-\frac \al2} \big\| \jb{\nb}^{-\frac \al2} [v^2 z ] \big\|_{L_T^\infty L_x^2}
\les T^{2-\frac \al2} K_\o  \|  v\|^2_{L_T^{\infty} \F L^{\al, \frac{1}{1-\al}}_x}, \\ \| \II\|_{L_T^{\infty}L^2_x}
& \leq T^{2-\frac \al2} \big\| \jb{\nb}^{-\frac \al2} [ v :\! z^2\!:] \big\|_{L_T^\infty L_x^2}
\les T^{2- \frac \al2} K_\o \|  v\|_{L_T^{\infty}\F L^{\al, \frac{1}{1-\al}}_x}, \\%
\| \III\|_{L_T^{\infty}L^2_x}
& \leq T^{2-\frac \al 2} \big\| \jb{\nb}^{-\frac \al 2}  :\! z^3\!: \big\|_{L_T^{\infty}L^2_x} 
\les T^{2-\frac \al2 }  K_\o .\notag
\end{align*}

\noi
Hence,  in view of  \eqref{X0}, 
we can choose $n \gg1 $ (and hence $N \gg 1$ and $T \ll 1$) depending on $\o$
such that 
\begin{align}
\| \1\|_{L_T^{\infty}L^2_x} + \| \II\|_{L_T^{\infty}L^2_x}
+ \| \III\|_{L_T^{\infty}L^2_x}
& \ll 1.
\label{X5}
\end{align}

\noi
Finally,  noting that $u = \I^3[u]$ and $v = \I^3[v] + \1 + \II + \III$, 
the desired bound \eqref{X-1} follows
from \eqref{X4} and \eqref{X5}.
\end{proof}

\appendix

\section{On almost sure convergence of stochastic objects}
\label{SEC:A}

We present the proof of Proposition \ref{PROP:reg}.
First, we show the following lemma
which relates the decay in the hypothesis of Proposition \ref{PROP:reg}
to the boundedness of the relevant norms.

\begin{lemma} \label{LEM:A1}
Let $\{X_N\}$ and $X$ satisfy the assumption in Proposition \ref{PROP:reg}.

\smallskip
\noi\textup{(i)}
For $p\ge 1$, $s<s_0$, $t \in [0,T]$, and $N \ge 1$, we have
\begin{align}
\E \big[ \| X(t) \|_{W^{s,\infty}}^p \big]
&\les p^{\frac{kp}{2}},
\label{reg21} \\
\E \big[ \| X_N(t) - X(t) \|_{W^{s,\infty}}^p \big]
&\les p^{\frac{kp}{2}} N^{-\g p}. \label{reg22}
\end{align}

\noi

\smallskip
\noi\textup{(ii)}
For $p \ge 1$, $s<s_0-\frac \theta2$, $t \in [0, T]$,  $h \in [-1, 1]$, and $N \geq 1$, we have
\begin{align}
\E \big[ \| \dl_h X(t) \|_{W^{s,\infty}}^p \big]
&\les |h|^{\theta p}, \label{reg23} \\
\E \big[ \| \dl_h X_N(t) - \dl_h X(t) \|_{W^{s,\infty}}^p \big]
&\les N^{-\g p} |h|^{\theta p}.
\label{reg24}
\end{align}

\end{lemma}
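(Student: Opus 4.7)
\smallskip

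\noi
\textbf{Proof proposal.} The plan is to reduce the $L^p(\Omega; W^{s,\infty})$ bound to an $L^2(\Omega)$ estimate on a single Fourier mode by combining Sobolev embedding, Minkowski's inequality, the Wiener chaos estimate (Lemma~\ref{LEM:hyp}), and spatial homogeneity, and then to invoke the Fourier-side decay assumptions \eqref{reg1}--\eqref{reg4}. Fix $s < s_0$ and choose $s_1 \in (s, s_0)$ together with $r \in [2,\infty)$ large and $\eps > 0$ small such that $s_1 - \frac{d}{r} - \eps > s$. By Sobolev embedding,
\[
\|X(t)\|_{W^{s,\infty}(\T^d)} \les \|\jb{\nb}^{s_1} X(t)\|_{L^r(\T^d)}.
\]
For $p \geq r$, I would apply Minkowski's inequality to exchange $L^p(\Omega)$ with $L^r_x$, then apply Lemma~\ref{LEM:hyp} pointwise in $x$ (noting that $\jb{\nb}^{s_1} X(t, x) \in \H_{\leq k}$ since $X(t,x) \in \H_{\leq k}$ by hypothesis and the Wiener chaos decomposition is preserved under linear operators). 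This yields
\[
\big\| X(t) \big\|_{L^p(\Omega; W^{s,\infty}_x)}
\les p^{\frac k2} \big\| \jb{\nb}^{s_1} X(t, x) \big\|_{L^r_x L^2(\Omega)}.
\]

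\noi
Next I would use spatial homogeneity to eliminate the $x$-dependence: the quantity $\E[ |\jb{\nb}^{s_1} X(t, x)|^2]$ is independent of $x \in \T^d$ by assumption, so by Plancherel,
\[
\E\big[ |\jb{\nb}^{s_1} X(t, x)|^2\big]
= \sum_{n \in \Z^d} \jb{n}^{2s_1} \E\big[ |\ft X(n, t)|^2 \big]
\les \sum_{n \in \Z^d} \jb{n}^{2s_1 - d - 2 s_0} \les 1,
\]
where the first inequality uses \eqref{reg1} and the final sum converges since $2s_1 - d - 2s_0 < -d$. This proves \eqref{reg21} for $p \geq r$; the range $p \in [1, r)$ follows by H\"older. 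For \eqref{reg22}, the same steps apply verbatim with $X(t)$ replaced by $X_N(t) - X(t)$: the decay from \eqref{reg2} provides the additional factor $N^{-\g p/2 \cdot 2} = N^{-\g p}$ after squaring the $L^2(\Omega)$ bound and taking the $p$-th power. One should choose $s_1 \in (s, s_0)$ with a bit of room so that the extra factor $\jb{n}^\g$ from \eqref{reg2} is absorbed into the convergent tail; concretely, one picks $s_1$ with $2s_1 + \g - d - 2s_0 < -d$, which is possible provided $\g$ is small relative to $s_0 - s$ (and one may first dispose of the case $\g \geq s_0 - s$ by enlarging $s_0$ if needed, or simply replace $\g$ by $\min(\g, \frac12(s_0-s))$ since the hypothesis is monotone in $\g$).

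\smallskip

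\noi
Part (ii) follows the same blueprint with $X(t)$ replaced by $\dl_h X(t)$ and $\dl_h X_N(t) - \dl_h X(t)$, and with $s_0$ replaced by $s_0 - \frac{\theta}{2}$. Here $s < s_0 - \frac{\theta}{2}$ lets one pick $s_1 \in (s, s_0 - \frac{\theta}{2})$ large enough to absorb the loss $\jb{n}^\theta$ in \eqref{reg3} and \eqref{reg4}; the factor $|h|^\theta$ carries through the chain of inequalities untouched, yielding \eqref{reg23} and \eqref{reg24}. The only mild obstacle is the bookkeeping of exponents in the Sobolev embedding so that the Fourier tail remains summable after absorbing the losses $\g$ (in $N$) and $\theta$ (in $h$); this is handled uniformly by taking $s_1 - s$ strictly bigger than $\frac{d}{r} + \eps + \frac{\g + \theta}{2}$ and choosing the parameters $r$ and $\eps$ accordingly, which is always possible under the hypotheses $s < s_0$ (respectively $s < s_0 - \frac{\theta}{2}$).
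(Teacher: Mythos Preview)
Your approach is essentially the same as the paper's: Sobolev embedding $W^{s_1,r}\hookrightarrow W^{s,\infty}$ for suitable $s_1\in(s,s_0)$ and finite $r$, Minkowski to swap $L^p_\o$ and $L^r_x$ for $p\ge r$, the Wiener chaos estimate to descend to $L^2(\O)$, and then spatial homogeneity to reduce to the diagonal Fourier sum controlled by \eqref{reg1}--\eqref{reg4}. The paper makes the specific choice $s_1=\tfrac{s+s_0}{2}$ and spells out the orthogonality $\E[\ft X(n_1,t)\ft X(n_2,t)]=0$ for $n_1+n_2\ne 0$ that underlies your ``by Plancherel'' step, but the logic is identical.

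One small over-complication: you worry about absorbing an extra factor $\jb{n}^\g$ coming from \eqref{reg2}, but \eqref{reg2} has the \emph{same} spatial decay $\jb{n}^{-d-2s_0}$ as \eqref{reg1}; the $N^{-\g}$ is a pure prefactor. So no extra room in $s_1$ is needed for \eqref{reg22}, and your discussion about replacing $\g$ by $\min(\g,\tfrac12(s_0-s))$ is unnecessary. The genuine loss in the Fourier exponent appears only in part~(ii) via the $\jb{n}^\theta$ in \eqref{reg3}--\eqref{reg4}, which is exactly why the hypothesis there is $s<s_0-\tfrac\theta2$; you handle that correctly.
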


\begin{proof}
We only consider the proof of \eqref{reg21} since the remaining estimates follow from the same argument with \eqref{reg2}, \eqref{reg3}, and \eqref{reg4}.

From $s<s_0$ and Sobolev's inequality, there exists finite $r>1$ such that $W^{\frac{s_0-s}{2},r}(\T^d) \hookrightarrow L^{\infty}(\T^d)$.
Then, it follows from Lemma \ref{LEM:hyp} that
\begin{align}
\big\| \| X(t) \|_{W^{s,\infty}} \big\|_{L^p(\O)}
&\les \Big\| \big\| \jb{\nb}^{s+\frac{s_0-s}{2}} X(t) \big\|_{L^p(\O)} \Big\|_{L^r} \notag \\
&\les p^{\frac k2} \Big\| \big\| \jb{\nb}^{s+\frac{s_0-s}{2}} X(t) \big\|_{L^2(\O)} \Big\|_{L^r}
 \label{A-1}
\end{align}
for $p \ge r$.

Now, note that the spatially homogeneity yields that
\begin{align} \label{A0}
\E \big[ \ft X(n_1, t) \ft X(n_2, t) \big] =0
\end{align}
if $n_1+n_2 \neq 0$.
Indeed,
we have
\begin{align*}
\E \big[ \ft X(n_1, t) \ft X(n_2, t) \big]
&= \int_{\T^d} \int_{\T^d} \E [ X(x_1,t) X(x_2,t) ] e^{-i(n_1 \cdot x_1 + n_2 \cdot x_2)} dx_1 dx_2 \notag\\
&= \int_{\T^d} \int_{\T^d} \E [ X(x_1,t) X(x_2,t) ] e^{-i (n_1+n_2) \cdot x_1 + i n_2 \cdot (x_1-x_2)} 
dx_1 dx_2.
\end{align*}
The spatially homogeneity implies that $\E [ X(x_1,t) X(x_2,t) ]$ is a function of $x_1-x_2$.
Then, by a change of variables $y_2 = x_1 - x_2$, we have, for some function $F$ on $\T^d$, 
\begin{align*}
\E \big[ \ft X(n_1, t) \ft X(n_2, t) \big]
&= \int_{\T^d} \ft F(n_2)  e^{-i (n_1+n_2) \cdot x_1 } dx_1 ,
\end{align*}

\noi
which vanishes unless $n_1+ n_2 = 0$.
We thus obtain \eqref{A0}.
Therefore, from  \eqref{A0} and 
\eqref{reg1}, we have 
\[
\big\| \jb{\nb}^{s+\frac{s_0-s}{2}} X(x,t) \big\|_{L^2(\O)}^2
\les \sum_{n \in \Z^d} \jb{n}^{s+s_0} \E \big[ |\ft X(n, t)|^2 \big]
\les \sum_{n \in \Z^d} \jb{n}^{-d+s-s_0}
\les 1.
\]
By combining this with \eqref{A-1}, we obtain \eqref{reg21}.
\end{proof}

We now present the proof of Proposition \ref{PROP:reg}.

\begin{proof}[Proof of Proposition \ref{PROP:reg}]

(i)
From \eqref{reg21}, we have $X(t) \in W^{s,\infty}(\T^d)$ almost surely.
Given $j \in \N$, 
it follows from Chebyshev's inequality and \eqref{reg22} that 
\[
\sum_{N = 1}^\infty
P\bigg( \| X_N(t) - X (t) \|_{W^{s,\infty}} > \frac{1}{j} \bigg)
\les 
\sum_{N = 1}^\infty e^{-c N^\frac{2\g}{k} j^{-\frac{2}{k}}}< \infty.
\]

\noi
Therefore, we conclude from the Borel-Cantelli lemma that 
there exists $\O_j$ with $P(\O_j) = 1$
such that 
for each $\o \in \O_j$, 
there exists $M = M(\o) \in \N$ such that 
$\| X_N(t; \o) - X(t;\o) \|_{W^{s,\infty}} <\frac{1}{j}$ for any $N \geq M$.
By setting $\Sigma = \bigcap_{j = 1}^\infty \O_j$, 
we have $P(\Si) = 1$.
Hence, we conclude that $X_N(t)$ converges almost surely to $X(t)$ in $W^{s,\infty}(\T^d)$.
Note that the set of almost sure convergence depends on $t\in [0, T]$
at this point.

\smallskip

\noi
(ii)
Next, we prove the second part of Proposition \ref{PROP:reg}.
By \eqref{reg23}, Kolmogorov's continuity criterion implies that 
$X \in C([0, T]; W^{s,\infty}(\T^d))$
almost surely.
We now modify the proof of Kolmogorov's continuity criterion
to prove  almost sure convergence of $\{X_N\}_{N \in \N}$
in $C([0,T]; W^{s,\infty}(\T^d))$.

In the following, fix $t \in [0, T]$ and  $h \in [-1, 1]$ (such that $t + h \in [0, T]$). 
We choose $p\gg 1$ such that 
\begin{align}
\theta p \ge 1+ \eps > 1\qquad \text{and} \quad \g p>2.
\label{K6}
\end{align}

\noi
Let $Y_N = X_N - X$.
Then, for any $\al > 0$, it follows from Chebyshev's inequality, \eqref{reg24},  and~\eqref{K6} that 
\begin{align*}
P\Big( & \sup_{N \in \N} \max_{j = 1, \dots, 2^\l}
N^{\frac{\g}{2}}
\big\|Y_N\big(\tfrac{j}{2^\l}\big) - Y_N\big(\tfrac{j-1}{2^\l}\big)\big\|_{W^{s,\infty}}
\geq 2^{-\al\l}
\Big)\notag\\
& = P\Big(\bigcup_{N \in \N} \bigcup_{j = 1}^{2^\l}
\big\|Y_N\big(\tfrac{j}{2^\l}\big) - Y_N\big(\tfrac{j-1}{2^\l}\big)\big\|_{W^{s,\infty}}
\geq N^{-\frac \g2} 2^{-\al\l}
\Big)\notag\\
& \leq  \sum_{N=1}^{\infty} \sum_{j = 1}^{2^\l}
P\Big(\big\|Y_N\big(\tfrac{j}{2^\l}\big) - Y_N\big(\tfrac{j-1}{2^\l}\big)\big\|_{W^{s,\infty}}
\geq N^{-\frac \g2} 2^{-\al\l}
\Big)\notag\\
& \leq  \sum_{N=1}^{\infty} \sum_{j = 1}^{2^\l}
 N^{\frac{\g p}{2}} 2^{\al p\l}
\, \E\Big[\big\|Y_N\big(\tfrac{j}{2^\l}\big) - Y_N\big(\tfrac{j-1}{2^\l}\big)\big\|_{W^{s,\infty}}^p\Big]\notag\\
& \les 2^{(\al p -\eps)\l} \sum_{N=1}^{\infty} N^{-\frac{\g p}{2}}
\les 2^{(\al p -\eps)\l}.
\end{align*}

\noi
Now, 
let  $\al \in (0, \frac{\eps}{p})$, i.e.~$\al p - \eps < 0$. 
Then, summing over $\l \in \N$, we obtain 	
\begin{align*}
\sum_{\l = 0}^\infty P\Big( & \sup_{N \in \N} \max_{j = 1, \dots, 2^\l}
N^{\frac \g2}
\big\|Y_N\big(\tfrac{j}{2^\l}\big) - Y_N\big(\tfrac{j-1}{2^\l}\big)\big\|_{W^{s,\infty}}
\geq 2^{-\al\l}
\Big) < \infty.
\end{align*}

\noi
Hence, by the Borel-Cantelli lemma, 
there exists a set $\wt \Sigma \subset \O$ with $P(\wt \Si) = 1$
such that, for each $\o \in \wt \Si$, we have 
\begin{align*}
\sup_{N \in \N} \max_{j = 1, \dots, 2^\l}
N^{\frac \g2}
\big\|Y_N\big(\tfrac{j}{2^\l};\o\big) - Y_N\big(\tfrac{j-1}{2^\l};\o\big)\big\|_{W^{s,\infty}}
\leq 2^{-\al\l}
\end{align*}

\noi
for all $\l \geq L = L(\o)$.
This in particular implies that there exists $C = C(\o)>0$ such that 
\begin{align}
\max_{j = 1, \dots, 2^\l}
\big\|Y_N\big(\tfrac{j}{2^\l};\o\big) - Y_N\big(\tfrac{j-1}{2^\l};\o\big)\big\|_{W^{s,\infty}}
\leq C(\o) N^{- \frac \g2} 2^{-\al\l}
\label{K7}
\end{align}

\noi
for any $\l \geq 0$,
uniformly in $N \in \N$.

For simplicity, let $T = 1$
and $t \in [0, 1]$.
Express $t$ in the following binary expansion:
\begin{align}
 t = \sum_{j = 1}^\infty \frac{b_j}{2^j}
\label{K7a}
 \end{align}

\noi
where $b_j \in \{0, 1\}$.
Let $t_\l = \sum_{j = 1}^\l \frac{b_j}{2^j}$
and $t_0 = 0$.
Then, from \eqref{K7}, we have
\begin{align}
\|Y_N(t; \o)\|_{W^{s,\infty}}
& \leq \sum_{\l = 1}^\infty  \|Y_N(t_\l;\o)- Y_N(t_{\l-1};\o)\|_{W^{s,\infty}} + \|Y_N(0;\o)\|_{W^{s,\infty}}\notag\\
& \leq C(\o) N^{- \frac \g2}
\sum_{\l = 1}^\infty
2^{-\al\l} + \|Y_N(0;\o)\|_{W^{s,\infty}}\notag\\
& \leq C'(\o) N^{- \frac \g2}
 + \|Y_N(0;\o)\|_{W^{s,\infty}},\label{K8}
\end{align}

\noi
for $\o \in \wt \Si$.
Note that the right-hand side of \eqref{K8} is independent of $t\in [0, 1]$.
Hence, by taking a supremum in $t \in [0, 1]$, we obtain 
\begin{align*}
\|X_N(\o) - X(\o)\|_{C([0, 1];W^{s,\infty}(\T^d))}
& \leq C'(\o) N^{- \frac \g2}
 + \|Y_N(0;\o)\|_{W^{s,\infty}}\notag\\
 & \too 0, 
\end{align*}

\noi
as $N \to \infty$.
Here, we used Part (i) of Proposition \ref{PROP:reg};
$Y_N(0) = X_N(0) - X(0)$ converges to $0$ in $W^{s,\infty}(\T^d)$, almost surely.
This yields almost sure convergence of $\{X_N\}_{N \in \N}$
in $C([0, 1]; W^{s,\infty}(\T^d))$, which completes the proof of Proposition \ref{PROP:reg}.
\end{proof}

\begin{remark}\rm
By slightly modifying the argument, 
we can also prove 
that  $X_N$ converges almost surely to $X$
in $C^\al([0, 1]; W^{s,\infty}(\T^d))$ for $\al < \frac{\eps}{p}$
(and hence $\al < \theta$ by taking $p \to \infty$ in view of \eqref{K6}).

Let $t, \tau \in [0, 1]$ such that $\frac{1}{2^{j-1}} \leq |t - \tau| \leq \frac{1}{2^{j}}$.
Express $t$ and $\tau$ in the binary expansions
\eqref{K7a} and
\begin{align*}
 \tau  = \sum_{j = 1}^\infty \frac{c_j}{2^j}
 \end{align*}

\noi
where $c_j \in \{0, 1\}$, 
and set
$\tau_\l = \sum_{j = 1}^\l \frac{c_j}{2^j}$.
Then, from \eqref{K7}, 
we have
\begin{align*}
\|Y_N(t; \o) - Y_N(\tau; \o)\|_{W^{s,\infty}}
& \leq \sum_{\l = j+1}^\infty  \|Y_N(t_\l;\o)- Y_N(t_{\l-1};\o)\|_{W^{s,\infty}} \notag\\
& \hphantom{X} + 
     \|Y_N(t_j;\o)- Y_N(\tau_{j};\o)\|_{W^{s,\infty}} \notag\\
& \hphantom{X} + 
 \sum_{\l = j+1}^\infty  \|Y_N(\tau_\l;\o)- Y_N(\tau_{\l-1};\o)\|_{W^{s,\infty}}\notag\\
& \leq C(\o) N^{- \frac \g2}
\sum_{\l = j}^\infty
2^{-\al\l} \notag\\
& \leq C'(\o) N^{- \frac \g2} 2^{-\al j}
\end{align*}

\noi
for $\o \in \wt \Si$.
Then, dividing both sides by  $2^{-\al j}$ and taking a supremum in $t \ne \tau$, we obtain
\[ \|X_N - X\|_{C^\al([0, 1]; W^{s,\infty}(\T^d))}  \leq C''(\o) N^{- \frac \g2},\]

\noi
which tends to 0 as $N \to \infty$.

\end{remark}

\begin{remark} \label{REM:cor} \rm
If $\{ X_N \}$ satisfies the assumption of Corollary \ref{COR:reg1}, then 
by proceeding as in the proof of Lemma \ref{LEM:A1}, 
we have

\smallskip
\noi\textup{(i)}
For $p \ge 1$, $s<s_0$, $t \in [0,T]$, and $M \ge N \ge 1$, we have
\begin{align}
\E \big[ \| X_N(t) \|_{W^{s,\infty}}^p \big]
&\les p^{\frac{kp}{2}},
\label{reg21c} \\
\E \big[ \| X_N(t) - X_M(t) \|_{W^{s,\infty}}^p \big]
&\les p^{\frac{kp}{2}} N^{-\g p}. \label{reg22c}
\end{align}

\smallskip
\noi\textup{(ii)}
For $p \ge 1$, $s<s_0-\frac \theta2$, $t \in [0, T]$,  $h \in [-1, 1]$, and $M \ge N \geq 1$, we have
\begin{align}
\E \big[ \| \dl_h X_N(t) \|_{W^{s,\infty}}^p \big]
&\les_p |h|^{\theta p}, \label{reg23c} \\
\E \big[ \| \dl_h X_N(t) - \dl_h X_M(t) \|_{W^{s,\infty}}^p \big]
&\les_p N^{-\g p} |h|^{\theta p}.
\label{reg24c}
\end{align}

It follows from  \eqref{reg21c} and \eqref{reg22c}
that $X_N(t)$ converges to some $X(t)$
in $L^p(\O;W^{s,\infty}(\T^d))$ and also in $W^{s, \infty}(\T^d)$, almost surely.
Moreover, \eqref{reg21} and \eqref{reg22} hold.
Then, by applying  Fatou's lemma applied to \eqref{reg23c} and \eqref{reg24c} 
(in taking $N \to \infty$), we obtain \eqref{reg23} and~\eqref{reg24}, 
which allows us to repeat the proof of Proposition \ref{PROP:reg}.

\end{remark}

\begin{ackno}\rm
T.O.~was supported by the European Research Council (grant no.~637995 ``ProbDynDispEq''
and grant no.~864138 ``SingStochDispDyn").
M.O.~was supported by JSPS KAKENHI Grant numbers
JP16K17624 and JP20K14342.
N.T.~was supported by the ANR grant ODA (ANR-18-CE40-0020-01).
T.O.~would like to thank 
the  Centre de Recherches Math\'ematiques, Montr\'eal, Canada, for its hospitality.
M.O.~would like to thank the School of Mathematics at the University of Edinburgh
for its hospitality during his visit in 2019, when this manuscript was prepared.
Lastly, the authors would like to 
thank the anonymous referees for their helpful comments
that have improved the presentation of the paper.
\end{ackno}

\end{document}